\pdfoutput=1
\documentclass[oneside,english]{amsart}
\usepackage{ae,aecompl}
\usepackage[T1]{fontenc}
\usepackage[utf8]{inputenc}
\usepackage{geometry}
\usepackage{xcolor}
\usepackage{babel}
\usepackage{amstext}
\usepackage{amsthm}
\usepackage{amssymb}
\usepackage{undertilde}
\usepackage{setspace}
\usepackage[authoryear]{natbib}
\usepackage[pdfusetitle,
 bookmarks=true,bookmarksnumbered=false,bookmarksopen=false,
 breaklinks=false,pdfborder={0 0 1},backref=false,colorlinks=false]
 {hyperref}

\makeatletter

\providecommand{\tabularnewline}{\\}

\numberwithin{equation}{section}
\numberwithin{figure}{section}
\theoremstyle{plain}
\newtheorem{thm}{\protect\theoremname}
\theoremstyle{definition}
\newtheorem{defn}[thm]{\protect\definitionname}
\theoremstyle{plain}
\newtheorem{fact}[thm]{\protect\factname}
\theoremstyle{plain}
\newtheorem{lem}[thm]{\protect\lemmaname}
\theoremstyle{plain}
\newtheorem{cor}[thm]{\protect\corollaryname}
\theoremstyle{plain}
\newtheorem{prop}[thm]{\protect\propositionname}
\theoremstyle{remark}
\newtheorem*{rem*}{\protect\remarkname}

\usepackage[noload]{qtree}
\usepackage{amsfonts}
\usepackage{bussproofs}
\usepackage{pdflscape}

\AtBeginDocument{\hypersetup{hidelinks}}
\makeatother

\providecommand{\corollaryname}{Corollary}
\providecommand{\definitionname}{Definition}
\providecommand{\factname}{Fact}
\providecommand{\lemmaname}{Lemma}
\providecommand{\propositionname}{Proposition}
\providecommand{\remarkname}{Remark}
\providecommand{\theoremname}{Theorem}

\begin{document}
\title{Loose Inductive Set Theories}
\author{Elliot Glazer and Toby Meadows}
\begin{abstract}
We prove that $KP$, with Full Set Induction, is neither solid nor
tight. More generally, every consistent theory obtained from $KPC$
by adding a quantifier bounded set of sentences has a model with a
parameter-free definable class surjection from its natural numbers
onto its universe. Using such enumerations to define generic extensions
by relative interpretations, we show that theories with the stable
definable ground property are not tight. Ground definability over
$MOST$ and continuum coding over $ZFC^{\Pi_{3}}$ yield applications
to weak inductive set theories with large cardinals and forcing axioms.
Finally, solidity for quantifier bounded extensions of $MOST$ with
Full Set Induction implies a Ground Axiom formulated using a fixed
finite level of genericity; under the same solidity hypothesis, the
corresponding extension of full $ZFC$ proves the ordinary Ground
Axiom.
\end{abstract}

\thanks{We thank Ali Enayat and Farmer Schlutzenberg for helpful comments
and correspondence, especially concerning the results on theories
of bounded-complexity consequences.}

\maketitle

\newpage{}

\tableofcontents{}

\newpage{}

It's well known that no theory with an infinite model that is articulated
in first order logic can be categorical: compactness says there will
always be a bigger model. However, in recent years logicians have
studied a number of properties that a theory can possess which appear
to, in some sense, approximate categoricity \citep{EnayatLelykFOCat}.
As a rule of thumb, the kinds of theories that have categoricity proofs
in second order logic tend to satisfy these properties when they are
converted into first order theories using schematic techniques. For
example, the second-order versions of $PA$ and $ZFC$ are respectively
categorical and quasi-categorical,\footnote{See \citep{Shapiro} for proofs of these facts.}
and both $PA$ and $ZFC$ possess these new properties \citep{Visser2006,EnayatVisTheme}.

Our broad goal in this paper is to investigate set theories that only
use bounded versions of the Separation, Collection and Replacement
schemas while remaining fully inductive in the sense that they satisfy
the Full Set Induction schema; i.e., any definable nonempty class
has an $\in$-minimal element. The main upshot of this paper is that
these approximate categoricity properties fail for broad classes of
such theories. This places our results in the program of identifying
natural set and class theories that fail to be tight or solid. \citet{freire2020biinterpretation}
showed that Zermelo set theory and $ZFC$ without Powerset are not
tight. \citet{FreireWilliams2025} established corresponding failures
for fragments of Kelley-Morse set theory and second-order arithmetic.

This paper extends results of \citet{EnayatLelykFOCat}. In particular,
they show that for any $n\in\omega$, the $\Pi_{n}$-consequences
of $ZF$ are neither \emph{solid} nor \emph{tight}.\footnote{Definitions of these properties will be provided shortly.}
The results of this paper improve this in two directions. First, we
include Full Set Induction in our theories. Second, we do not limit
ourselves to sub-theories of $ZF$, but rather, show that an open
ended collection of theories extending a weak base theory are neither
solid nor tight. In particular, we will show that for a natural class
of these theories, solidity implies that the theory proves a version
of the Ground Axiom.

Aside from the main results, this paper makes use of a couple of strange
techniques that are worth discussing. First, we show that some models
of these bounded theories are capable of defining generics over their
posets and the generic extensions based on those generics.\footnote{The use of definable Cohen generics to obtain bi-interpretations in
\citet{FreireWilliams2025} is closely related to our approach here.} This runs counter to the orthodox pedagogy of set theory where one
learns that generic sets must exist outside their models and thus,
generic extensions are not within the purview of relative interpretation.
The key to this counterintuitive approach is the ability to find models
of these bounded theories that are able to see that they are countable,
in the sense that they can define a surjection from the natural numbers
onto their entire domain. For sub-theories of $ZF$, we can exploit
well known fine-structural results to obtain such enumerations in
models of $V=L$ using the natural well-ordering of $L$.\footnote{For example, the techniques used in the uniformization theorems for
$L$ are easily adapted to this context \citep{DevlinC}.} But for theories incompatible with $L$ this technique is not available.
As such, we prove a very general lemma showing that our bounded theories
are always consistent with the existence of a definable countable
enumeration of the universe. This is the most novel result of the
paper and, as such, it takes up the most space. 

The paper is divided up as follows. In the next section we give proper
definitions of our target properties and explain why they are interesting.
In Section \ref{sec:KP}, we show that $KP$ satisfies neither of
these properties. To achieve this, we develop the theory of definable
forcing over $L_{\omega_{1}^{CK}}$ and then generalize this further.
Aside from the fact that $KP$ is a theory of obvious foundational
interest, this setting provides a convenient location to explain the
underlying ideas of the paper in a setting that is likely to be familiar
to its readers. Section \ref{sec:Quantifier-bounded-theories} then
provides the framework for a substantial generalization of these results
to theories that are bounded extensions of a weak sub-theory of $ZFC$
that includes Full Set Induction. This gives us a theorem template
(Theorem \ref{thm:MainTemplate}) that allows us to show that when
such theories possess a crucial property (Definition \ref{def:StableDefGprop})
they can never be tight. The theorem template is then demonstrated
on the basis of a lemma that we prove in the following section. The
Definable Countable Enumeration Lemma (Lemma \ref{count-enum-thm})
is then proved in Section \ref{sec:DCET}. This is the most original
and unusual contribution of the paper. In order to set up for applications
of this template, we need tools from set-theoretic geology. Section
\ref{sec:GeoSmall} starts from Schlutzenberg’s ground-model definability
theorem over the weak theory $MOST$ \citep{Schlutzenberg2022LocalMantles}.\footnote{The theory $MOST$ is described in Section \ref{sec:GeoSmall}. It
originates with \citet{MathiasStMacST}.} We adapt it to use a smaller parameter and then code that parameter
into the continuum function, so that the ground can be recovered without
parameters. The small-parameter argument works over $MOST$; the coding
construction is carried out over $ZFC^{\Pi_{3}}$. Finally, in Section
\ref{sec:Applications}, we bring these results to bear on a variety
of natural examples.

\section{Background definitions}

\label{sec:Background-definitions}

Let us start by introducing the property of tightness.
\begin{defn}
\citep{EnayatVisTheme} Say that a theory $T$ is \emph{tight }if
any pair of bi-interpretable extensions of $T$ are the same theory
(in the sense that they have the same deductive closure).
\end{defn}

Informally, a theory is tight if it cannot be extended in distinct,
but equivalent ways. To get more of a feel for this, let's contrast
a non-tight theory with a tight one. For a non-tight theory, let $Empty$
be the empty theory of binary relations articulated in $\mathcal{L}_{Empty}=\{R\}$.
Now consider the extensions $S_{0}$ and $S_{1}$ of $Empty$ that
say that $R$ is, respectively, reflexive and irreflexive. $S_{0}$
and $S_{1}$ are clearly bi-interpretable, indeed, they are definitionally
equivalent. From a graph theoretic perspective, we add loops in one
direction and remove the loops in the other direction. But they are
not the same theory. We might say that $Empty$ is, unsurprisingly,
insufficiently \emph{specific} to be tight.

For a contrast, we observe that $PA$ is tight \citep{Visser2006}.
So $PA$ appears to be more specific than $Empty$. To make more sense
of this, let's try to pull on some intuitions. We know that $PA$
has an intended model, $\mathbb{N}=\langle\omega,0,s,+,\times\rangle$,
and we also know that $PA$ is incomplete, so $PA\subsetneq Th(\mathbb{N})$.
Now imagine, for a moment, that $PA$ was not tight and that this
was witnessed by another theory $S\supsetneq PA$ that was bi-interpretable
with $Th(\mathbb{N})$. This would be a strange situation. Since $S$
would be distinct from $Th(\mathbb{N})$, we'd have another way of
understanding $0,s,+,\times$ that was distinct from the standard
interpretation and yet still satisfied the axioms of $PA$. There
would, thus, be an arithmetic sentence true in one interpretation
and false in the other; and little, if any, good reason to choose
between them. Such a situation would attest to a different kind of
incompleteness in $PA$. Fortunately, we know this cannot occur. Indeed,
we don't need to go as far as the true theory of arithmetic. Any pair
of bi-interpretable extensions of $PA$, true or not, must determine
the same theory.

Solidity is a more technical notion that was uncovered by Enayat in
an effort to identify the model theoretic core of tightness. 
\begin{defn}
\citep{EnayatVisTheme} A theory $T$ is \emph{solid} if for all models
$\mathcal{M}_{0},\mathcal{N}$ and $\mathcal{M}_{1}$ of $T$, whenever
we have 
\[
\mathcal{M}_{0}\triangleright_{par}\mathcal{N}\triangleright_{par}\mathcal{M}_{1}
\]
and an isomorphism $f:\mathcal{M}_{0}\cong\mathcal{M}_{1}$ that is
(parametrically) definable over $\mathcal{M}_{0}$, then there is
another isomorphism $g:\mathcal{M}_{0}\cong\mathcal{N}$ that is (parametrically)
definable over $\mathcal{M}_{0}$.
\end{defn}

It is not difficult to show that a solid theory must be tight.\footnote{See Remark 8(b) in \citep{EnayatLelykFOCat}.}
Indeed, proving solidity is the main pathway to establishing the tightness
of a theory. However, the definition can also be given a more familiar
mathematical gloss. The definition above is nicely understood as taking
place in the class, $mod(T)$, of models of $T$. The antecedent clauses
of the definition require that $\mathcal{M}_{0}$ and $\mathcal{M}_{1}$
are ``visibly'' isomorphic to each other. Given this, we might as
well treat them as ``identical.'' The arrangement of $\mathcal{M}_{0},\mathcal{N}$
and $\mathcal{M}_{1}$ then tells us that there is a way of interpreting
a model $\mathcal{N}$ in $\mathcal{M}_{0}$ from which we can then
interpret $\mathcal{M}_{0}$ (up to isomorphism) again. We thus, have
a kind of structure preserving map, known as a retraction, from $\mathcal{M}_{0}$
to $\mathcal{N}$. It preserves structure in the sense that $\mathcal{M}_{0}$
can still be recovered from $\mathcal{N}$ (up to isomorphism). Solidity
then addresses whether such maps can be \emph{non-trivial}; i.e.,
is the output, $\mathcal{N}$, distinct from the input, $\mathcal{M}_{0}$
(up to isomorphism). If we can find an arrangement of models of $T$
where the output is distinct (up to isomorphism), $T$ is not solid.
Thus, intuitively speaking, solidity tells us that $mod(T)$ has no
non-trivial structure preserving self-interpretations.

\section{$KP$ is neither solid nor tight}

\label{sec:KP}

Let's move to our first batch of results. Let $KP$ be the Kripke-Platek
set theory using Full Set Induction.\footnote{We follow \citep{Barwise,MansfieldRDST} here. More recently, sources
like \citep{MathiasStMacST} have employed a version of $KP$ that
removes Full Set Induction and replaces it with $\Sigma_{1}$-Set
Induction (aka $\Pi_{1}$-Foundation). That approach has the pleasing
feature of making $KP$ finitely axiomatizable. } Our target theorem is the following.
\begin{thm}
$KP$ is neither solid nor tight.\label{thm:KPnotTight}
\end{thm}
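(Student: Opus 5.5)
Since solidity implies tightness (Remark 8(b) of \citet{EnayatLelykFOCat}), it suffices to show that $KP$ is not tight; the failure of solidity will be read off from the same construction. The plan is to find two bi-interpretable extensions of $KP$ that disagree on some sentence. I take $S_{0}=KP+V=L+\mathrm{Th}(L_{\omega_{1}^{CK}})$, i.e.\ the complete theory of $L_{\omega_{1}^{CK}}$, and for $S_{1}$ the theory of a generic extension $L_{\omega_{1}^{CK}}[G]$ by a Cohen real $G$ that is \emph{definable} over $L_{\omega_{1}^{CK}}$. Then $S_{1}\vdash V\neq L$ while $S_{0}\vdash V=L$, so the two theories differ, and it remains to show they are bi-interpretable.

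The key input is that $L_{\omega_{1}^{CK}}$ defines, without parameters, a surjection $e:\omega\to L_{\omega_{1}^{CK}}$: every element is $\Sigma_{1}$-definable there, so one maps $n$ to the object defined by the $n$-th $\Sigma_{1}$ formula, uniformized via the canonical $<_{L}$-least witness. Using $e$, enumerate the dense subsets of Cohen forcing $\mathbb{P}=2^{<\omega}$ lying in $L_{\omega_{1}^{CK}}$ as $\langle D_{n}\rangle$, and build by recursion on $\omega$ a descending sequence $p_{n+1}\le p_{n}$ with $p_{n+1}\in D_{n}$, taking the $<_{L}$-least extension. This recursion is definable, though not $\Delta_{1}$, over $L_{\omega_{1}^{CK}}$, so $G=\bigcup p_{n}$ is a definable class real which is $\mathbb{P}$-generic over $L_{\omega_{1}^{CK}}$. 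Next I interpret $M=L_{\omega_{1}^{CK}}[G]$ inside $L_{\omega_{1}^{CK}}$ as the names modulo the relation $\sigma\sim\tau$ iff some $p\in G$ forces $\sigma=\tau$, with $\in$ interpreted analogously. This uses definability of the forcing relation for each fixed formula, established by ranked recursion over $L_{\omega_{1}^{CK}}$, together with the truth lemma, which holds because $G$ meets every definable dense class of conditions appearing in the forcing relation for formulas of bounded complexity. The admissibility of $L_{\omega_{1}^{CK}}[G]$ is standard for Cohen forcing over admissible sets, and by the same truth-lemma argument $M$ satisfies Full Set Induction, since any definable counterexample would translate to a definable class in the ground. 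In the other direction, $L^{M}=L_{\omega_{1}^{CK}}$, because $\omega_{1}^{CK}$ is preserved, and the map $\check{x}\mapsto[\check{x}]$ is definable in the ground; the composite $M\mapsto L^{M}\mapsto$ names interpreted by $G$ is definable in $M$, since $M$ contains $G$ and evaluates names via $\mathrm{val}$. This yields a bi-interpretation between $S_{0}$ and $S_{1}$, with $S_{1}$ defined as the theory of the interpreted model.

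For non-solidity, set $\mathcal{M}_{0}=\mathcal{M}_{1}=L_{\omega_{1}^{CK}}$ and $\mathcal{N}=L_{\omega_{1}^{CK}}[G]$. The identity is a definable isomorphism, but no isomorphism $\mathcal{M}_{0}\cong\mathcal{N}$ exists at all, since $\mathcal{N}\models V\neq L$. I expect the main obstacle to be the truth lemma and the forcing theorem for a class-definable generic over a model with only $\Delta_{0}$-separation: the generic meets only the dense sets in $L_{\omega_{1}^{CK}}$, and one must verify that the dense sets needed to decide formulas of arbitrary complexity are all elements, or are handled uniformly through the enumeration $e$, so that Full Set Induction transfers to $\mathcal{N}$. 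My first approach would be to make $G$ meet every parameter-definable dense class, each of which is a class of $\mathbb{P}$ and hence, since $\mathbb{P}$ is countable, a set enumerated by $e$. The construction then runs uniformly, one complexity level at a time, for each formula in the metatheory.
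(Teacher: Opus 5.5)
There is a genuine gap in the level of genericity. The filter in your second paragraph meets only the dense subsets of $\mathbb{P}$ that are \emph{elements} of $L_{\omega_1^{CK}}$. For such a Cohen real, admissibility of $L_{\omega_1^{CK}}[G]$ is not ``standard'': it can fail (equivalently, one can have $\omega_1^{G}>\omega_1^{CK}$). Then $\Delta_0$-Collection fails in $\mathcal{M}$, and your $S_1$ need not extend $KP$; the paper flags exactly this point. The standard preservation results need $G$ to meet \emph{definable} dense classes, and your repair does not supply them as stated. First, a parameter-definable dense class of conditions is generally not a set of $L_{\omega_1^{CK}}$ (only $\Delta_1$ ones are, by $\Delta_1$-Separation), so it is not in the range of $e$. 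Second, no definable $G$ can meet every definable dense class, since $\mathbb{P}\setminus G$ is itself one. Third, $G$ cannot vary ``for each formula in the metatheory'': the bi-interpretation needs one fixed interpreted model satisfying all of $KP$ at once.

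The missing idea is to fix a single finite level $k$. Enumerate formula--parameter pairs $\langle i,e(j)\rangle$ and use the definable partial satisfaction predicate for $B\Sigma_k$. At each stage pass to the $<_L$-least extension lying in the named class, if there is one. This gives one parameter-free definable filter meeting every $B\utilde{\Sigma}_k$ dense class. The non-induction axioms of $KP$ are all $B\Sigma_3$, so a fixed $k$ (the paper uses $\mathsf{b}(3)$) gives the truth lemma needed to transfer them. Full Set Induction has no complexity bound and must be handled separately, by external well-foundedness of $\mathcal{M}$ or the minimal-rank-name argument, not by the truth lemma. With that repair the rest of your plan works. Your use of the complete theories $\mathrm{Th}(L_{\omega_1^{CK}})$ and $\mathrm{Th}(\mathcal{M})$ is a legitimate shortcut: a parameter-free bi-interpretation of the two structures transfers automatically to their theories. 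The paper's axiomatized $S_0=KP+V=L_{\omega_1^{CK}}$ and $S_1=KP+\Psi_\gamma$ instead require checking uniformity over nonstandard models. Two small corrections remain. For the back isomorphism, $G$ must be parameter-free definable in $\mathcal{M}$, not merely an element of it; it is, via the formula defining $G$ over $L_{\omega_1^{CK}}$ relativized to $L$. In your solidity triple, $\mathcal{M}_1$ is $L^{\mathcal{N}}$ interpreted through names, so the required map is $x\mapsto[\check{x}]_G$, not the identity.
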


Since solidity implies tightness, it would suffice for us to merely
show that $KP$ is not tight. However, given that the techniques used
below are a little unorthodox, we have opted to give a detailed proof
that $KP$ is not solid first. Then we adapt that proof to the more
general claim that $KP$ is not tight. We think this will be helpful
as the proof that $KP$ is not solid takes place in the familiar world
of the hyperarithmetic sets. By taking our time here and building
some intuitions, we believe this will make things much easier for
the rest of the paper. There are also techniques developed here that
will be useful for the investigation of solidity (independent of tightness).

\subsection{Solidity failure}

\label{subsec:Solidity-failure}

We start by sketching the idea of the proof. Recall that $L_{\omega_{1}^{CK}}$
is the smallest model of $KP$ in the sense that it is the intersection
of all $\omega$-models of that theory. To save a little clutter,
let's write $N$ instead of $L_{\omega_{1}^{CK}}$. We plan to show
that $N$ can define a sufficiently generic Cohen real $c$ and, using
that, a version of the corresponding generic extension, $N[c]$. Thus,
it can be seen that we have
\[
N\triangleright N[c]\triangleright L^{N[c]}.
\]
Moreover, we shall ensure that $N$ is able to define an isomorphism
$\eta:N\cong L^{N[c]}$, while, of course, $N\not\cong N[c]$. Thus,
we shall see that $KP$ is not solid. 

This ability to define a generic is a little unusual and warrants
some discussion about what that means and how it will be done. First,
observe that $c\subseteq\omega$ must be defined as a proper class
over $N$ rather than a set. Otherwise, we'd fall into the standard
contradiction where the complement of the generic filter associated
with $c$ is a dense set missed by the filter. Second, we note that
in order to ensure this, the complexity of the definition of $c$
must be greater than that of the underlying Separation Axiom. To see
this, note that in the presence of the full axiom of Separation, if
$c$ is definable then it can be separated as a sub\emph{set} of $\omega$.
Fortunately, we only have $\Delta_{1}$-Separation available in $N$.
Thirdly, in order to obtain a generic we wish to follow the standard
Baire-Category strategy. As such, we want the dense sets of $Add(\omega,1)$
to be definably countable so that we can run a version of the standard
inductive argument. This rules out theories that satisfy full Replacement.
Fortunately, we only have $\Sigma_{1}$-Collection in $N$. Fourth
and finally, note that in order for $N[c]$ to satisfy $KP$, and
in particular, $\Sigma_{1}$-Collection, an ordinary generic filter
meeting all dense subsets of $Add(\omega,1)$ that are elements of
$N$ will not suffice.\footnote{See Section 6 and the discussion around Lemma 6.5 and Proposition
6.6 in \citep{GREENBERG_MONIN_2017}; and, more specifically, Proposition
13 of \citep{KRUSCHEWSKI_SCHLUTZENBERG_2025}.} We shall thus introduce a stronger notion of genericity.

With the sketch complete, let us fix our convention for measuring
the complexity of formulae. We use prenex formulae: all quantifiers
are at the front, followed by a quantifier-free formula. We count
alternating blocks of quantifiers, bounded or unbounded, as far as
the last unbounded quantifier; any bounded quantifiers after it do
not contribute to the count. Thus $\forall x\exists y\in z\forall w\,\psi$,
with $\psi$ bounded, is $\Pi_{3}$. A formula with only bounded quantifiers
is $\Delta_{0}$. When obtaining a prenex formula, we first expand
bounded quantifiers with their usual restrictions as necessary and
then use ordinary logical equivalences. This respects empty bounds
and requires no Collection. The classes $\Sigma_{k}$ and $\Pi_{k}$
below use this convention, with harmless padding by unused quantifiers
allowed.

Let us say that a formula is $B\Sigma_{k}$ if it is a finite Boolean
combination of $\Sigma_{k}$ formulae. A subset of $N$ is $B\utilde{\Sigma}_{k}$
if it is definable by such a formula, possibly using parameters from
$N$. Since this paper is focused on interpretability and thus on
definability without parameters, whenever we say that an object is
definable, we mean definable without parameters; we shall always say
when parameters are required.
\begin{defn}
Let $\mathbb{P}$ be a poset. We say that $G\subseteq\mathbb{P}$
is $B\utilde{\Sigma}_{k}$\emph{-generic} over $N$ if $G$ is a filter
on $\mathbb{P}$ and for all $B\utilde{\Sigma}_{k}$ dense subsets
$D$ of $\mathbb{P}$, $G\cap D\neq\emptyset$. \label{def:BSigmaKGen}
\end{defn}

We adopt the common assumption that forcing posets have a greatest
condition, which we denote as $1$. We assume that $\Vdash$ is the
syntactic forcing relation used in \citep{KunenST2}. Note that we
cannot uniformly define the forcing relation for arbitrary formulae,
since this would contradict Tarski's theorem. For bounded formulae,
forcing is uniformly $\Delta_{1}$-definable over $KP$. For higher
formulae, we shall use a uniform finite bound. For $k\in\omega$,
let 
\[
\mathsf{b}(k)=2k+3.
\]
For each fixed $k$, forcing for $B\Sigma_{k}$ formulae is uniformly
$\Delta_{\mathsf{b}(k)}$-definable over $KP$. The definition is
uniform in the formula code, the tuple of names, the condition and
the forcing poset. We prove this bound below, after recalling the
density lemma. The bound is chosen for convenience; we do not claim
that it is optimal.

Using a suitably generic $G\subseteq\mathbb{P}$, we can then define
a generic extension of the ground model. However, given that our eventual
goal is to use a definable generic and interpret the generic extension,
we cannot use the familiar $Val$ function since that construction
would try (and fail) to collapse the generic extension in the ground
model. As such, we shall use something like an ultrapower construction
that interprets our generic extension as a pre-structure. Thus, we
have a structure where the identity relation symbol, $=$, is interpreted
as the congruence relation $\approx^{*}$. 
\begin{defn}
For $G\subseteq\mathbb{P}$, let $N^{\mathbb{P}}/G=\langle N^{*},\in^{*},\approx^{*}\rangle$
where: 
\begin{itemize}
\item $N^{*}=N^{\mathbb{P}}$;
\item $\dot{x}\in^{*}\dot{y}$ iff $\exists p\in G$ $p\Vdash\dot{x}\in\dot{y}$,
for all $\dot{x},\dot{y}\in N^{\mathbb{P}}$; and
\item $\dot{x}\approx^{*}\dot{y}$ iff $\exists p\in G$ $p\Vdash\dot{x}=\dot{y}$,
for all $\dot{x},\dot{y}\in N^{\mathbb{P}}$.
\end{itemize}
Note that $\approx^{*}$ is an equivalence relation that is a congruence
for $\in^{*}$.
\end{defn}

Given that $N$ satisfies $V=L$, we could also define this as a genuine
structure by using $L$-least witnesses. However, for our purposes
this is more trouble than it's worth. Observe that if we have such
a $G$ definable over $N$ without parameters, then $N^{*}$, $\in^{*}$
and $\approx^{*}$ are also all definable without parameters over
$N$. We may then prove a version of the truth lemma for $B\Sigma_{k}$
formulae using a $B\utilde{\Sigma}_{\mathsf{b}(k)}$-generic filter.
First we recall a standard, but very useful, fact about syntactic
forcing.
\begin{fact}
Let $\varphi$ be a fixed formula for which the syntactic forcing
relation has been defined. Then $KP$ proves, for every forcing poset
$\mathbb{P}$ and every $p\in\mathbb{P}$, that the following are
equivalent:\label{fact:Density}\footnote{This is the standard density lemma for the syntactic forcing relation;
compare \citep{KunenST2}. The usual atomic name-induction can be
formulated with a $\Delta_{1}$ induction predicate, so $\Sigma_{1}$-Set
Induction (equivalently, $\Pi_{1}$-Foundation) suffices here; Full
Set Induction is not needed.}
\begin{enumerate}
\item $p\Vdash\varphi$; and
\item $\{q\in\mathbb{P}\mid q\Vdash\varphi\}$ is dense below $p$.
\end{enumerate}
\end{fact}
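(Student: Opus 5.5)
We prove the density lemma (Fact \ref{fact:Density}) by induction on the construction of $\varphi$, following the clauses of the syntactic forcing relation in \citep{KunenST2}. Since $\varphi$ is fixed, this is an induction in the metatheory on subformulae: for each subformula $\psi$ of $\varphi$, we show that $KP$ proves the equivalence for $\psi$, for all $\mathbb{P}$, all $p$ and all tuples of names. The direction (1)$\Rightarrow$(2) is immediate from monotonicity of $\Vdash$: if $p\Vdash\psi$ and $q\le p$, then $q\Vdash\psi$, so the set is dense below $p$, as witnessed by $q$ itself. So we must check monotonicity clause by clause, and the real work is (2)$\Rightarrow$(1).

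For the atomic case we follow Kunen's definition of $p\Vdash\dot{x}=\dot{y}$ and $p\Vdash\dot{x}\in\dot{y}$, a simultaneous recursion on pairs of names. Both the relation and the density property are built into that definition: the clauses read ``for all $\langle\dot{z},s\rangle\in\dot{x}$, the set of $q\le p$ with $q\le s\to q\Vdash\dot{z}\in\dot{y}$ is dense below $p$,'' and similarly for membership. So density-below-$p$ of the forcing set unwinds to the defining clause. We prove monotonicity and the density property together by induction on (pairs of) names, using the rank of names.

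This is where we expect the main obstacle. In $KP$ we cannot simply do recursion on rank and then induct on an arbitrary statement. Instead, we first note that the atomic forcing relation is $\Delta_{1}$-definable by $\Sigma_{1}$-recursion. This is available in $KP$ and yields a $\Delta_{1}$ predicate $F(\dot{x},\dot{y},p,\mathbb{P},\text{type})$. The inductive claim ``monotonicity and density hold at $(\dot{x},\dot{y})$'' then quantifies over conditions inside $\mathbb{P}$, a set, so it is $\Delta_{1}$ (bounded quantifiers over $\Delta_{1}$ matrices, using $\Delta_{1}$-closure under bounded quantification in $KP$). Hence $\Pi_{1}$-Foundation, in the form of $\Sigma_{1}$-Set Induction, suffices, as the footnote indicates. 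We must verify that transitivity of density-below is handled without Collection beyond $\Delta_{0}$/$\Sigma_{1}$. The key observation: if $D$ is dense below $p$ and each $r\in D$ has $E$ dense below $r$, then $E$ is dense below $p$. This is a purely bounded argument.

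For the inductive steps we use Kunen's clauses. For negation, $p\Vdash\neg\psi$ iff no $q\le p$ forces $\psi$. Monotonicity is trivial. If the $q$ forcing $\neg\psi$ are dense below $p$, then any $r\le p$ forcing $\psi$ would have an extension forcing $\neg\psi$, contradicting monotonicity for $\psi$. For conjunction, we apply the hypothesis componentwise. For $\exists$, or $\forall$ if taken as primitive with $\exists$ as $\neg\forall\neg$, Kunen defines $p\Vdash\forall v\,\psi$ iff $p\Vdash\psi(\dot{\tau})$ for all names $\dot{\tau}$. Density passes through each instance by the hypothesis for $\psi(\dot{\tau})$, uniformly in names, since the hypothesis is proved for all name tuples. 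Bounded quantifiers are treated via their unfolding, consistent with the prenex convention. The only issue at non-atomic stages is that the forcing relation there is a class, not a set. But since $\varphi$ is fixed, each statement is just a single formula of $KP$, and the arguments above are pure first-order logic plus monotonicity. They therefore require no further induction or Collection.
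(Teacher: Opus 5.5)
Your proposal is correct and matches what the paper intends: the paper gives no separate proof, citing Kunen's density lemma by metatheoretic induction on the fixed formula and noting (as you do) that the atomic name-induction can use a $\Delta_1$ induction predicate, so $\Sigma_1$-Set Induction suffices. One small remark: once the atomic relation exists by $\Sigma$-recursion, its clauses (and Kunen's primitive $\exists$-clause) already have the form ``$\{q\le p:\Theta(q)\}$ is dense below $p$'' with $\Theta$ independent of $p$, so monotonicity and density-closure follow from your transitivity-of-density observation alone, and the extra induction on pairs of names is not actually needed.
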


\begin{lem}
\label{lem:O2ForcingBound}(Uniform forcing bound.) For each fixed
$k\in\omega$, forcing for $B\Sigma_{k}$ formulae is uniformly $\Delta_{\mathsf{b}(k)}$-definable
over $KP$. For bounded formulae alone, the uniform bound is $\Delta_{1}$.
\end{lem}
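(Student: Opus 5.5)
We argue by induction on the logical complexity of (prenex) formulae, following the clauses of the syntactic forcing relation of \citep{KunenST2}. The base case is the bounded formulae. For atomic formulae, $p\Vdash\dot{x}\in\dot{y}$ and $p\Vdash\dot{x}=\dot{y}$ are defined by a simultaneous recursion on pairs of names, in which each clause refers only to the relation on hereditarily smaller names and to the poset. This recursion is $\Sigma_{1}$-definable in $KP$ by $\Sigma_{1}$-recursion along the well-founded name rank. Its negation is also $\Sigma_{1}$, because the recursion produces a total function with values in $\{0,1\}$ on each set of name-pairs. Hence atomic forcing is $\Delta_{1}$, uniformly in $\mathbb{P}$. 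For arbitrary $\Delta_{0}$ formulae, we extend this to a single $\Sigma_{1}$ recursion on the pair (formula code, names). In the bounded-quantifier clause, $p\Vdash\exists z\in\dot{y}\,\psi$ reduces to density below $p$ of conditions $q$ with some $(\dot{z},r)\in\dot{y}$, $q\le r$, $q\Vdash\psi(\dot{z})$. Every quantifier in these clauses ranges over the set $\mathbb{P}$ or over $\dom(\dot{y})$, so each stage is $\Delta_{0}$ in the previous stage. The whole relation is therefore again the graph of a total $\Sigma_{1}$-recursive function, hence $\Delta_{1}$, uniformly in the code.

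For the inductive step we track unbounded quantifier blocks. For negation, $p\Vdash\neg\varphi$ iff $\forall q\le p\,\neg(q\Vdash\varphi)$. The quantifier is bounded by $\mathbb{P}$, but bounding over a $\Delta_{n}$ relation in $KP$ without enough Collection can cost a level; we simply charge one level here. For the unbounded existential, $p\Vdash\exists z\,\varphi$ iff $\forall q\le p\,\exists r\le q\,\exists\dot{z}\,(r\Vdash\varphi(\dot{z}))$. Using Fact \ref{fact:Density}, this costs one unbounded $\exists$ together with bounded $\forall\exists$ over $\mathbb{P}$, which again we charge as at most one further level. Conjunction is free.

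Putting this together, passing from $\Pi_{j}$ to $\Sigma_{j+1}$ (a negation followed by an unbounded existential block) costs at most two levels. Starting from $\Delta_{1}$ for bounded formulae, forcing for $\Sigma_{k}$ and $\Pi_{k}$ formulae is then $\Delta_{2k+1}$. A finite Boolean combination of these adds one negation layer and the density reformulation of conjunctions and disjunctions; charging two further levels gives $\Delta_{2k+3}=\Delta_{\mathsf{b}(k)}$. The bound is generous, so this crude charging is acceptable. Uniformity follows because each clause is a single formula with the formula code as a parameter, restricted to codes of $B\Sigma_{k}$ formulae of bounded shape, via a partial satisfaction predicate for $\Delta_{\mathsf{b}(k)}$ that is available for fixed $k$.

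The main obstacle is the base case: showing that the recursion defining forcing for atomic and bounded formulae is genuinely $\Delta_{1}$ in $KP$. This needs $\Sigma_{1}$-recursion along names and a totality argument, which uses only $\Sigma_{1}$-Induction, in line with the footnote to Fact \ref{fact:Density}. Totality is what makes the complement $\Sigma_{1}$. A secondary point needing care is the level-counting when bounded quantifiers over $\mathbb{P}$ interleave with $\Delta_{n}$ relations. Rather than invoking $\Sigma_{n}$-Collection, which $KP$ lacks for $n>1$, we absorb each such bounded quantifier by spending an extra level, which the margin in $\mathsf{b}(k)$ permits.
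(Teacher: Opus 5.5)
Your base case is fine, and more explicit than the paper, which simply cites the usual $\Delta_{1}$ definition of bounded forcing. The gap is in the level counting of the inductive step, specifically the existential clause. Suppose forcing for $\varphi$ is $\Delta_{m}$. Then
\[
\forall q\leq p\ \exists r\leq q\ \exists\dot{z}\ (r\Vdash\varphi(\dot{z}))
\]
is $\Pi_{m+1}$ under the paper's counting convention, but you have no $\Sigma_{m+1}$ definition of it. Getting one would mean moving $\exists\dot{z}$ in front of $\forall q\leq p$, which is exactly the $\Sigma_{m}$-Collection you rightly say is unavailable. So, as a $\Delta$-bound, the existential step costs two levels, not the one you charge.

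You decompose each alternation as a negation followed by an existential. The negation clause $\forall q\leq p\,\neg(q\Vdash\cdot)$ also places a bounded universal over conditions in front of the previous definition. Counted honestly, that is three levels per alternation, so your clauses give a bound growing roughly like $3k$. This overshoots $\mathsf{b}(k)=2k+3$, certainly once your separate Boolean layer is added. Relatedly, you assert that $\Pi_{k}$ forcing sits at the same level $2k+1$ as $\Sigma_{k}$ forcing, although by your own charges the negation producing it should add a level. The figures you state are in fact true, but your argument does not establish them.

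The missing idea is to keep negations out of the quantifier prefix altogether. The paper inducts on the number $r$ of alternating blocks, proving that forcing has both $\Sigma_{2r+1}$ and $\Pi_{2r+1}$ definitions:
\begin{itemize}
\item A universal block uses the derived clause $p\Vdash\forall\bar{x}\,\psi(\bar{x})\iff\forall\bar{\sigma}\ (p\Vdash\psi(\bar{\sigma}))$. This introduces no quantifier over conditions, so a $\Pi_{h}$ definition stays $\Pi_{h}$.
\item An existential block uses one application of density for the whole block, coding the tuple of names as a single set. This gives $\Pi_{h+1}$. Treating the quantifiers one at a time would inflate the count further.
\item Both are then padded to $\Delta_{h+2}$.
\end{itemize}
Finally, there is no separate Boolean layer on top. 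The $B\Sigma_{k}$ formula is first rewritten as a prenex formula with at most $k+1$ alternating blocks, which yields $2(k+1)+1=\mathsf{b}(k)$. This also makes uniformity in the code automatic, since each stage of the induction is a single formula covering all codes with the given number of blocks.
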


\begin{proof}
We first consider prenex formulae with at most $r$ alternating blocks
before the final bounded formula. We show, by induction on $r$, that
their forcing relation has both $\Sigma_{2r+1}$ and $\Pi_{2r+1}$
definitions, uniformly in the formula code and the other arguments.
When $r=0$, this is the usual uniform $\Delta_{1}$ definition of
bounded forcing.

Suppose the assertion holds for $r$, and let $h=2r+1$. For a universal
block we have
\[
p\Vdash\forall\bar{x}\,\psi(\bar{x},\bar{\tau})\quad\Longleftrightarrow\quad\forall\bar{\sigma}\ p\Vdash\psi(\bar{\sigma},\bar{\tau}),
\]
where $\bar{\sigma}$ ranges over tuples of names of the required
length. Using the $\Pi_{h}$ definition of forcing for $\psi$, the
right-hand side is $\Pi_{h}$.

For an existential block we use density once for the entire block:
\[
\begin{split}p\Vdash\exists\bar{x}\,\psi(\bar{x},\bar{\tau})\quad\Longleftrightarrow\quad\forall q\leq p\ \exists u\leq q\ \exists\bar{\sigma}\,u\Vdash\psi(\bar{\sigma},\bar{\tau}).\end{split}
\]
This is equivalent to the successive existential forcing clauses.
Given a condition below $p$, refine it successively to obtain the
finitely many witnessing names. Conversely, a condition forcing $\psi$
with those names forces the existential sentence, so the density lemma
applies. A tuple of names is coded by one set. Full Set Induction
proves the existence of the required finite sequences, including when
their length is internally finite in a nonstandard model.

Using the $\Sigma_{h}$ definition of forcing for $\psi$, everything
after $\forall q\leq p$ has a $\Sigma_{h}$ definition. Under our
counting convention the whole expression is therefore $\Pi_{h+1}$.
By adding an unused quantifier, it has both $\Sigma_{h+2}$ and $\Pi_{h+2}$
definitions. These bounds also cover the universal case. The tests
for formula codes and tuples of names are $\Delta_{1}$ and can be
written in the appropriate form without increasing the bound. This
proves the induction.

Finally, a finite Boolean combination of $\Sigma_{k}$ formulae can
be put into prenex form with at most $k+1$ alternating blocks before
a bounded formula. Move negations inward, rename bound variables,
and pad the formulae to a common alternating pattern before joining
their quantifier blocks. This is an effective logical manipulation,
not an application of Collection. Applying the preceding induction
with $r=k+1$ gives
\[
2(k+1)+1=\mathsf{b}(k).
\]
The ordinary logical laws for syntactic forcing identify forcing for
the original formula with forcing for this prenex presentation. The
resulting definition is uniform in its code. Correctness is asserted
for each standard formula; accepting internal formula codes as arguments
does not assert an external truth relation for nonstandard formulae.
\end{proof}
\begin{lem}
\label{lem:O2HYPTruth}($B\Sigma_{k}$-truth lemma.) Suppose $k\geq1$
and $G$ is $B\utilde{\Sigma}_{\mathsf{b}(k)}$-generic over $N$.
Then, for every $B\Sigma_{k}$ formula $\varphi$ and every appropriate
tuple $\bar{\tau}$ of names from $N$,
\[
N^{\mathbb{P}}/G\models\varphi(\bar{\tau})\quad\Longleftrightarrow\quad\exists p\in G\ (p\Vdash\varphi(\bar{\tau}))^{N}.
\]
For bounded formulae, $B\utilde{\Sigma}_{1}$-genericity suffices.
\end{lem}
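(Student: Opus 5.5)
We prove the truth lemma by induction on the complexity of prenex formulae, in the same format used in the proof of Lemma \ref{lem:O2ForcingBound}. We first reduce the $B\Sigma_{k}$ case to prenex formulae with at most $k+1$ alternating blocks before a bounded matrix. The ordinary logical laws for syntactic forcing make forcing of the original formula equivalent to forcing of its prenex form. Satisfaction in the pre-structure $N^{\mathbb{P}}/G$ also respects these logical manipulations, since $\approx^{*}$ is a congruence. So it suffices to prove the following claim, by induction on $r\leq k+1$. For every prenex formula $\varphi$ with at most $r$ blocks and every tuple $\bar{\tau}$,
\[
N^{\mathbb{P}}/G\models\varphi(\bar{\tau})\Longleftrightarrow\exists p\in G\,(p\Vdash\varphi(\bar{\tau}))^{N},
\]
provided $G$ meets all dense sets definable with parameters by formulae of complexity $B\Sigma_{2r+1}$. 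For $r=k+1$ this level is exactly $B\utilde{\Sigma}_{\mathsf{b}(k)}$.

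The base case $r=0$ is the bounded truth lemma. Here we run the usual induction on bounded formulae, with atomic cases handled by induction on names. The key step in every case is the following. For a bounded $\psi$ and names $\bar{\tau}$, the set
\[
D_{\psi,\bar{\tau}}=\{q\mid q\Vdash\psi(\bar{\tau})\ \vee\ q\Vdash\neg\psi(\bar{\tau})\}
\]
is dense by Fact \ref{fact:Density}. It is $\Delta_{1}$ in the parameters $\bar{\tau}$ by Lemma \ref{lem:O2ForcingBound}, so $G$ meets it. Since $G$ is a filter, no condition in $G$ forces both $\psi$ and $\neg\psi$, since such conditions are incompatible. Hence exactly one of $\psi$ and $\neg\psi$ is forced by a member of $G$. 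The connective steps then go through as usual. For a bounded quantifier $\exists x\in\sigma$, we unfold forcing into a disjunction over $\mathrm{dom}(\sigma)$ with conditions. In the atomic steps ($\in^{*}$, $\approx^{*}$), we use the $\Delta_{1}$-definability of the atomic forcing clauses and meet the appropriate dense sets below a condition. These are the standard Kunen arguments, now using only $B\utilde{\Sigma}_{1}$-genericity.

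For the inductive step, let $\varphi=\exists\bar{x}\,\psi$, where $\psi$ has $r$ blocks. For the direction from forcing to truth, suppose $p\in G$ and $p\Vdash\exists\bar{x}\psi$. The set
\[
\{u\mid\exists\bar{\sigma}\,u\Vdash\psi(\bar{\sigma},\bar{\tau})\}\cup\{u\mid u\perp p\}
\]
is dense. By Lemma \ref{lem:O2ForcingBound} it is $\Sigma_{2r+1}$-definable, so $G$ meets it. A filter element cannot be incompatible with $p$, so some $u\in G$ forces $\psi(\bar{\sigma},\bar{\tau})$, and the induction hypothesis gives truth. For the converse, truth of $\psi(\bar{\sigma},\bar{\tau})$ gives, by induction, some $u\in G$ forcing it, and then $u$ forces $\varphi$. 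For the universal case $\varphi=\forall\bar{x}\psi$, we reduce to the existential case via negation. We consider the set $E$ of conditions that either force $\varphi$ or force $\exists\bar{x}\neg\psi$. It is dense by Fact \ref{fact:Density}, and its definition is a Boolean combination of the $\Pi_{2r+1}$ and $\Sigma_{2r+3}$ bounds; after padding it is within the genericity level for $r+1$. Meeting $E$ shows that either a member of $G$ forces $\varphi$, or $\neg\varphi$ holds by the existential case applied to $\neg\psi$. The latter is contradicted by truth of $\varphi$ together with the filter property. In the other direction, if $p\in G$ forces $\varphi$, then $p$ forces each instance, so each instance is true by induction.

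The main obstacle is the complexity bookkeeping. We must check that each dense set invoked at stage $r+1$ is definable within $B\Sigma_{2(r+1)+1}$, so that the single hypothesis of $B\utilde{\Sigma}_{\mathsf{b}(k)}$-genericity covers all stages. The most delicate case is the negation-decision set $E$ at the top level. There, forcing $\varphi$ carries a $\Delta_{2r+3}$ bound from Lemma \ref{lem:O2ForcingBound}, which is exactly absorbed by the level $2r+3$. A related subtlety is that the induction ranges only over standard formulae, as stressed at the end of the proof of Lemma \ref{lem:O2ForcingBound}. We prove a separate statement for each fixed $\varphi$, and use the uniformity in names only to quantify over $\bar{\sigma}$ inside the forcing definitions.
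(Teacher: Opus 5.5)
Your proposal is correct and is essentially the paper's argument. Both run an induction over prenex presentations in which each quantifier or negation step is settled by meeting a deciding dense class whose complexity comes from Lemma~\ref{lem:O2ForcingBound}; the differences are organizational. You track a stagewise level $B\Sigma_{2r+1}$ and treat the existential block first, reducing $\forall$ to it via $(\neg\psi)^{\text{prenex}}$, whereas the paper runs a simultaneous induction on formulae and their negations under the single $\Sigma_{\mathsf{b}(k)}/\Pi_{\mathsf{b}(k)}$ bound, handles $\forall$ first using $D_{1}=\{q:\exists\dot{y}\,q\Vdash\neg\psi(\dot{y})\}$, and reads $\exists$ as $\neg\forall\neg$ (and the atomic cases there are immediate, since $\in^{*}$ and $\approx^{*}$ are defined by forcing).
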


\begin{proof}
Proceed by simultaneous induction on formulae and their negations.
For $\Delta_{0}$ formulae, the atomic cases are immediate from the
definitions of $\in^{*}$ and $\approx^{*}$. Conjunction follows
from directedness of $G$. For negation, use the dense deciding class
\[
D_{\psi}=\{q\in\mathbb{P}:q\Vdash\psi\ \text{or}\ q\Vdash\neg\psi\}.
\]
Since forcing for bounded formulae is $\Delta_{1}$-definable, $D_{\psi}$
is met by $G$, and the usual compatibility argument gives the negation
clause. 

The bounded-universal case uses the analogous deciding-class argument.
We let
\[
\begin{aligned}D_{0} & =\{q:q\Vdash\forall y\in\dot{z}\,\psi(y)\}\ \text{and }D_{1}=\{q:\exists\dot{y}\ q\Vdash\neg(\dot{y}\in\dot{z}\to\psi(\dot{y}))\}.\end{aligned}
\]
The union is dense and $\Sigma_{1}$-definable. If the bounded universal
is true in the quotient, the inductive hypothesis excludes the second
alternative, so some condition in $G$ forces the bounded universal.
Conversely, if a condition in $G$ forces it, the forcing clauses
and the inductive hypothesis show that it holds of every member of
$\dot{z}$ in the quotient. This establishes the bounded case.

For the higher levels, use the prenex presentations from the uniform
forcing-bound lemma. All the forcing relations needed in the induction
have both $\Sigma_{\mathsf{b}(k)}$ and $\Pi_{\mathsf{b}(k)}$ definitions.
In the negation case the same deciding class $D_{\psi}$ is therefore
$\Sigma_{\mathsf{b}(k)}$-definable, so it is met by $G$. For a universal
formula, use
\[
\begin{aligned}D_{0} & =\{q:q\Vdash\forall y\,\psi(y)\}\ \text{and}\ D_{1}=\{q:\exists\dot{y}\ q\Vdash\neg\psi(\dot{y})\}.\end{aligned}
\]
Both classes have $\Sigma_{\mathsf{b}(k)}$ definitions: for $D_{1}$,
use the $\Sigma_{\mathsf{b}(k)}$ definition of forcing before adjoining
the existential name quantifier. Their union is dense by the forcing
clauses and the density lemma.

If the universal formula is true in the quotient, the inductive hypothesis
rules out the second alternative. Genericity then supplies a condition
in $G$ forcing the universal formula. Conversely, suppose $p\in G$
forces $\forall y\,\psi(y)$. Then $p\Vdash\psi(\dot{y})$ for every
name $\dot{y}\in N$, so the inductive hypothesis shows that $\psi$
holds of every element of the quotient. Existential quantification
is handled by expressing $\exists y\,\psi$ as $\neg\forall y\,\neg\psi$.
Repeated quantifiers of the same kind are handled by induction on
their finite length. This proves the equivalence for the prenex formulae
under consideration; the propositional cases give finite Boolean combinations,
and logical equivalence with the original formula completes the proof.
\end{proof}
Our next goal is to show that $N^{\mathbb{P}}/G$ satisfies $KP$.
We start by observing that standard arguments establish that $KP$
can prove that all of its axioms are forced. 
\begin{lem}
\label{lem:KP-forcing-axioms}For each fixed axiom instance $\varphi$
of $KP$, $KP$ proves that, for every forcing poset $\mathbb{P}$,
\[
1\Vdash_{\mathbb{P}}\varphi.
\]
\end{lem}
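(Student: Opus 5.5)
The plan is to go through the axioms of $KP$ one at a time: Extensionality, Pairing, Union, Infinity, $\Delta_{0}$-Separation, $\Delta_{0}$-Collection, and the Full Set Induction schema. For each fixed instance $\varphi$ we argue inside $KP$, with an arbitrary poset $\mathbb{P}$ and arbitrary names as parameters. By Fact \ref{fact:Density}, it suffices to show that every condition $p$ has some extension $q\leq p$ that forces the relevant instance; for the most part we shall just produce an explicit name witnessing the existential.

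\textbf{Extensionality.} This comes from the standard clauses for $\Vdash\dot{x}=\dot{y}$ and $\Vdash\dot{x}\in\dot{y}$, as in \citep{KunenST2}. Those clauses are defined by $\Delta_{1}$ recursion on names, which $KP$ supports.

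\textbf{Pairing, Union, Infinity.} We use the usual names $\{\langle\dot{x},1\rangle,\langle\dot{y},1\rangle\}$, the union name built from $\bigcup\operatorname{ran}$, and $\check{\omega}$. Their existence needs only $\Delta_{0}$-Separation and Union in the ground model, and the check-name map is $\Delta_{1}$-recursive.

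\textbf{$\Delta_{0}$-Separation.} Fix $\dot{a}$ and a bounded $\psi$. Form
\[
\dot{b}=\{\langle\sigma,q\rangle:\sigma\in\operatorname{dom}(\dot{a}),\ q\in\mathbb{P},\ q\Vdash(\sigma\in\dot{a}\wedge\psi(\sigma))\}.
\]
By Lemma \ref{lem:O2ForcingBound}, forcing for bounded formulae is $\Delta_{1}$. So this is $\Delta_{1}$-Separation applied to $\operatorname{dom}(\dot{a})\times\mathbb{P}$, which $KP$ proves. Checking $1\Vdash\forall x(x\in\dot{b}\leftrightarrow x\in\dot{a}\wedge\psi)$ is then routine from the forcing clauses and density.

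\textbf{Induction.} Suppose $p\Vdash\exists x\,\theta(x)$ but no condition below $p$ forces that $\theta$ has an $\in$-minimal witness. Then consider the class of names $\sigma$ for which some $q\leq p$ forces $\theta(\sigma)$. This class is definable, but not with bounded complexity: the complexity is that of forcing for $\theta$, which is fine since Full Set Induction is a full schema. Take a name $\sigma$ in this class that is minimal in the name-rank order; this order is well founded by set induction on $\operatorname{rank}$. Some $q\leq p$ forces $\theta(\sigma)$, and by assumption we may extend it to an $r\leq q$ forcing $\exists y\in\sigma\,\theta(y)$. Then some $r'\leq r$ and some $\tau\in\operatorname{dom}(\sigma)$ have $r'\Vdash\theta(\tau)$. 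But $\tau$ has smaller rank than $\sigma$, contradicting minimality.

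\textbf{$\Delta_{0}$-Collection: the main obstacle.} Suppose $p\Vdash\forall x\in\dot{a}\,\exists y\,\psi(x,y)$ with $\psi$ bounded. For each pair $\langle\sigma,q\rangle$ with $\sigma\in\operatorname{dom}(\dot{a})$ and $q\leq p$, density gives some $r\leq q$ and a name $\tau$ with $r\Vdash(\sigma\in\dot{a}\to\psi(\sigma,\tau))$. That statement is $\Sigma_{1}$ (the inner forcing is $\Delta_{1}$), so $\Sigma_{1}$-Collection, which $KP$ proves, gives a set $T$ of such names $\tau$ covering all pairs. The name $\dot{b}=T\times\{1\}$ then works: every extension of $p$ can be further extended to meet a witness in $\dot{b}$, so density yields $p\Vdash\forall x\in\dot{a}\,\exists y\in\dot{b}\,\psi$. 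The delicate part is exactly this step. Collection has to be applied to a $\Sigma_{1}$ relation, not a bounded one, which is why the $\Delta_{1}$ bound of Lemma \ref{lem:O2ForcingBound} is essential. We also need the quantification over $q\leq p$ to range over a set, which it does, since $\mathbb{P}$ is a set.
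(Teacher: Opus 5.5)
Your proposal is correct and follows the standard route the paper intends. The paper defers this lemma to Bell and Kunen, but later uses exactly your constructions: the separation name obtained by $\Delta_{1}$-Separation from bounded forcing, Collection by gathering witness names with $\Sigma_{1}$-Collection into a name with condition $1$, and the least-rank name argument for induction instances (Section~\ref{sec:GeoSmall} and the proof of Proposition~\ref{prop:MG-single}). There are two cosmetic slips: the union name should be built from $\bigcup\{\mathsf{dom}(\sigma):\sigma\in\mathsf{dom}(\dot{a})\}$ rather than from ranges, and forming sets of pairs such as $T\times\{1\}$ in $KP$ uses $\Delta_{0}$-Collection rather than only Separation and Union, though that is of course available.
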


We leave the proof of this to the reader although they might like
to consult \citep{BellBVM} or \citep{KunenST2}. As our next step
toward showing $N^{\mathbb{P}}/G$ satisfies $KP$, we first show
that it satisfies Full Set Induction.
\begin{lem}
For all $\mathbb{P}\in N$ and $G$ that is $B\utilde{\Sigma}_{1}$-generic
over $N$, $N^{\mathbb{P}}/G$ satisfies Full Set Induction.\label{lem:FullSetInduction}
\end{lem}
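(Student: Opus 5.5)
The plan is to exploit the fact that $N=L_{\omega_{1}^{CK}}$ is a genuine transitive, well-founded structure, so every name $\dot{y}\in N^{\mathbb{P}}$ has an actual (external) ordinal rank. I will not try to verify Full Set Induction internally in $N$. This matters because $G$ is only assumed to be $B\utilde{\Sigma}_{1}$-generic, not definable over $N$, so a class definable over $N^{\mathbb{P}}/G$ need not be definable over $N$. Instead I will show that $\in^{*}$ is, up to $\approx^{*}$, witnessed by names of strictly smaller rank. Full Set Induction then follows from external well-foundedness: every nonempty class of the quotient has an $\in^{*}$-minimal element, definable or not.

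The key step is the following claim. If $\dot{x}\in^{*}\dot{y}$, then there is $\sigma\in\mathrm{dom}(\dot{y})$ with $\sigma\approx^{*}\dot{x}$. To prove it, fix $p\in G$ with $p\Vdash\dot{x}\in\dot{y}$. Consider the class
\[
D=\{q\in\mathbb{P}:\exists(\sigma,r)\in\dot{y}\ (q\leq r\wedge q\Vdash\dot{x}=\sigma)\}\cup\{q\in\mathbb{P}:q\perp p\}.
\]
By the syntactic forcing clause for atomic membership (as in \citep{KunenST2}), together with the density Fact \ref{fact:Density}, the first part is dense below $p$, so $D$ is dense. Since bounded forcing is uniformly $\Delta_{1}$ by Lemma \ref{lem:O2ForcingBound}, $D$ is $\Sigma_{1}$-definable with parameters $\dot{x},\dot{y},p$. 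Hence $G$ meets $D$. Because $G$ is a filter containing $p$, the element $q\in G\cap D$ is compatible with $p$, so it lies in the first part. It therefore yields $(\sigma,r)\in\dot{y}$ with $q\Vdash\dot{x}=\sigma$, which gives $\sigma\approx^{*}\dot{x}$. Since $\sigma\in\mathrm{dom}(\dot{y})$, we have $\mathrm{rank}(\sigma)<\mathrm{rank}(\dot{y})$.

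Now let $C\subseteq N^{\mathbb{P}}$ be the extension in $N^{\mathbb{P}}/G$ of some formula with parameters, and suppose $C$ is nonempty. Since $\approx^{*}$ is a congruence for $\in^{*}$ and interprets equality, $C$ is closed under $\approx^{*}$. Externally, choose $\tau\in C$ of minimal rank, which is possible because $N$ is well-founded. If $\dot{x}\in^{*}\tau$ with $\dot{x}\in C$, the claim gives $\sigma\approx^{*}\dot{x}$ with $\sigma\in C$ and $\mathrm{rank}(\sigma)<\mathrm{rank}(\tau)$, a contradiction. So $\tau$ is $\in^{*}$-minimal in $C$, and every instance of Full Set Induction holds in the quotient.

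I expect the main obstacle to be the claim itself. It requires checking that the atomic forcing clause really does give density of the witnessing conditions below $p$, and that the class $D$ stays within $\Sigma_{1}$ so that mere $B\utilde{\Sigma}_{1}$-genericity applies. By comparison, the final minimality argument is routine once one notices that it can be run externally.
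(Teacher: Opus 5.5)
Your proof is correct and follows essentially the same route as the paper: both use the atomic-membership density class to find a witness $\sigma\in\mathsf{dom}(\dot{y})$ with $\sigma\approx^{*}\dot{x}$, and then conclude by the external well-foundedness of $N$. The differences are minor. You make the class globally dense by adjoining the conditions incompatible with $p$, a step the paper leaves implicit. You also choose a name of minimal $N$-rank in the class directly, whereas the paper rules out infinite descending $\in^{*}$-chains.
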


\begin{proof}
It will suffice to show that $N^{\mathbb{P}}/G$ is externally well-founded.
We first record a consequence of the forcing clause for membership.
Suppose that $\dot{y}\in^{*}\dot{x}$ and choose $p\in G$ such that
$p\Vdash\dot{y}\in\dot{x}$. Let
\[
D=\{q\leq p\mid\exists\langle\dot{z},r\rangle\in\dot{x}\ (q\leq r\wedge q\Vdash\dot{y}=\dot{z})\}.
\]
By the syntactic forcing clause for membership, $D$ is dense below
$p$. Moreover, since forcing for atomic equality is $\Delta_{1}$-definable,
$D$ is $\Delta_{1}$-definable over $N$ with parameters. Since $p\in G$
and $D$ is dense below $p$, we may fix some $q\in G\cap D$ and
$\dot{z}\in\mathsf{dom}(\dot{x})$ such that $q\Vdash\dot{y}=\dot{z}$.
Thus
\[
\dot{y}\in^{*}\dot{x}\quad\Longrightarrow\quad\exists\dot{z}\in\mathsf{dom}(\dot{x})\ \dot{z}\approx^{*}\dot{y}.
\]
Now suppose toward a contradiction that there is an infinite descending
sequence
\[
\dot{x}_{0}\ni^{*}\dot{x}_{1}\ni^{*}\dot{x}_{2}\ni^{*}\cdots.
\]
Recursively choose names $\dot{y}_{n}$ such that $\dot{y}_{n}\approx^{*}\dot{x}_{n}$
and
\[
\dot{y}_{n+1}\in\mathsf{dom}(\dot{y}_{n})
\]
for every $n\in\omega$. Start with $\dot{y}_{0}=\dot{x}_{0}$. Given
$\dot{y}_{n}$, the congruence of $\approx^{*}$ gives $\dot{x}_{n+1}\in^{*}\dot{y}_{n}$,
so the preceding observation supplies the required $\dot{y}_{n+1}$.
Consequently,
\[
\mathsf{rank}^{N}(\dot{y}_{n+1})<\mathsf{rank}^{N}(\dot{y}_{n})
\]
for every $n\in\omega$. Since $N=L_{\omega_{1}^{CK}}$ is well-founded,
these are genuine ordinals, giving an impossible infinite descending
sequence of ordinals. Therefore $N^{\mathbb{P}}/G$ is well-founded
and hence satisfies Full Set Induction.
\end{proof}
It is then easy to verify that $N^{\mathbb{P}}/G$ satisfies $KP$
when $G$ is sufficiently generic.
\begin{lem}
For all $\mathbb{P}\in N$ and all $G\subseteq\mathbb{P}$ that are
$B\utilde{\Sigma}_{\mathsf{b}(3)}$-generic over $N$, the quotient
$N^{\mathbb{P}}/G$ satisfies $KP$.\label{lem:genSatKP}
\end{lem}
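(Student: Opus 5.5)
The plan is to verify the axioms of $KP$ one at a time in $N^{\mathbb{P}}/G$. Full Set Induction is already available from Lemma \ref{lem:FullSetInduction}, since $B\utilde{\Sigma}_{\mathsf{b}(3)}$-genericity implies $B\utilde{\Sigma}_{1}$-genericity. For every remaining axiom I will combine Lemma \ref{lem:KP-forcing-axioms} with the truth lemma, Lemma \ref{lem:O2HYPTruth}, taking $k=3$.

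First, I will check that each remaining axiom instance, after its universal closure is separated off, has low complexity. Extensionality, Pairing, Union and Infinity are all $\Pi_{2}$ or simpler, so each of their instances is $B\Sigma_{3}$. $\Delta_{0}$-Separation reads $\forall\bar{w}\forall a\exists b\forall x(x\in b\leftrightarrow x\in a\wedge\psi)$ with $\psi$ bounded, and this is $\Pi_{3}$. $\Delta_{0}$-Collection reads
\[
\forall\bar{w}\forall a\,[\forall x\in a\exists y\,\psi\to\exists b\,\forall x\in a\exists y\in b\,\psi].
\]
Its antecedent is $\Pi_{2}$, since the unbounded $\exists y$ comes after a bounded quantifier and so counts as a block, and its consequent is $\Sigma_{1}$. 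The matrix is therefore $B\Sigma_{2}$, and the whole universal closure is $\Pi_{3}$. Every axiom instance, other than Full Set Induction, is thus $B\Sigma_{3}$ under the counting convention of the paper. This is the reason the hypothesis requires $\mathsf{b}(3)$-genericity.

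Next, fix such an instance $\varphi$. Because $N\models KP$, Lemma \ref{lem:KP-forcing-axioms} gives $N\models(1\Vdash_{\mathbb{P}}\varphi)$. Every filter contains $1$, so $1\in G$. Applying Lemma \ref{lem:O2HYPTruth} with $k=3$ to the sentence $\varphi$, which may also be read as a formula with an empty tuple of names, gives $N^{\mathbb{P}}/G\models\varphi$. One further point needs care. The structure $N^{\mathbb{P}}/G$ interprets equality as the congruence $\approx^{*}$. Since $\approx^{*}$ is a congruence for $\in^{*}$, the quotient by $\approx^{*}$ is an honest $\in$-structure that satisfies the same sentences, and the equality axioms therefore hold.

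The main obstacle I anticipate is the complexity bookkeeping. I have to make sure that every instance really lies in $B\Sigma_{3}$ in prenex form, and that it is handled as a single sentence rather than through parameters. The parameters $\bar{w}$ and $a$ are universally quantified inside the sentence itself, so the truth lemma only ever needs to be applied to finitely many standard sentences. If a Collection instance turned out to need $\Pi_{4}$ under the convention, the fallback would be to move the universal parameter quantifiers outside. I would then apply the truth lemma to the $B\Sigma_{2}$ matrix with names substituted for $\bar{w}$ and $a$, using that $1\Vdash\varphi$ implies $1\Vdash$ every name-instance. This alternative needs only a lower level of genericity.
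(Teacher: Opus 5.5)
Your proposal is correct and follows essentially the same route as the paper. Full Set Induction comes from Lemma \ref{lem:FullSetInduction} via $B\utilde{\Sigma}_{1}$-genericity, and every other axiom instance is $B\Sigma_{3}$, with $\Delta_{0}$-Collection as the critical $\Pi_{3}$ case, so it transfers from $1\Vdash\varphi$ by the truth lemma at $k=3$. The only difference is cosmetic: the paper writes $\Delta_{0}$-Separation with the quantifiers after $\exists b$ bounded, which puts it below $\Pi_{3}$, whereas you count it as $\Pi_{3}$; either way it lies in $B\Sigma_{3}$.
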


\begin{proof}
By Lemma \ref{lem:KP-forcing-axioms}, every fixed axiom instance
of $KP$ is forced by $\mathbb{P}$. The basic axioms and the instances
of $\Delta_{0}$-Separation have complexity below $\Pi_{3}$. The
relevant point is $\Delta_{0}$-Collection: its instances can be expressed
as $\Pi_{3}$ sentences. Thus every axiom instance other than Full
Set Induction is a Boolean combination of $\Sigma_{3}$ sentences.
Since $\mathsf{b}(3)=9$, the revised truth lemma transfers all these
forced axioms to $N^{\mathbb{P}}/G$. Finally, $B\utilde{\Sigma}_{\mathsf{b}(3)}$-genericity
implies $B\utilde{\Sigma}_{1}$-genericity, so Lemma \ref{lem:FullSetInduction}
applies. Therefore $N^{\mathbb{P}}/G\models KP$.

\end{proof}
Next we obtain a countable enumeration of the universe of $N$ that
is definable over $N$. 
\begin{lem}
There is a parameter-free $B\Sigma_{1}$-definable, and hence $\Delta_{2}$-definable,
class function $f:\omega\twoheadrightarrow N$.\label{lem:HYPEnumeration}
\end{lem}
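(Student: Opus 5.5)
We need a parameter-free definable surjection $f:\omega\twoheadrightarrow N$ with $N=L_{\omega_{1}^{CK}}$, with $B\Sigma_{1}$ complexity. The plan is to use the fact that every element of $N$ is hyperarithmetically coded, i.e.\ $N$ is pointwise $\Sigma_1$-definable in a suitable sense: each $x\in N$ is $\Sigma_{1}$-definable over $N$ without parameters. Granting this, define $f(n)=x$ iff $n$ codes a $\Sigma_{1}$ formula $\varphi_{n}(v)$ such that $x$ is the unique witness in the sense described below, and $f(n)=\emptyset$ otherwise.

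\textbf{Step 1 (pointwise definability).} I would show that every $x\in L_{\omega_{1}^{CK}}$ is $\Sigma_{1}$-definable without parameters over $N$. Fix $x\in L_{\alpha}$, $\alpha<\omega_{1}^{CK}$. Since $\alpha$ is recursive, $\alpha$ is the order type of a recursive well-order $e$ of $\omega$. The statement ``$\beta$ is the order type of $W_{e}$'' is $\Sigma_{1}$ over $N$: assert the existence of an order isomorphism between $W_e$ and $\beta$, which lies in $N$ by $\Sigma_1$-recursion in $KP$. Hence $\alpha$ is $\Sigma_{1}$-definable. Then $L_{\alpha+1}$ is $\Sigma_{1}$-definable from $\alpha$, and $x$ is the $n$-th element of $L_{\alpha+1}$ in the canonical $<_{L}$-order restricted to $L_{\alpha+1}$, or is definable over $L_{\alpha}$ by a formula with parameters in $L_\alpha$; iterating, or more simply using that $L_{\alpha+1}$ is countable in $L_{\alpha+2}$ via a $\Delta_{1}$ enumeration, yields a $\Sigma_{1}$ definition $\exists y\,\theta(x,y)$ with $\theta$ bounded and $x$ the unique solution. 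A cleaner alternative is to take the enumeration of $L_{\alpha+1}$ by pairs (formula, finite parameter tuple), with parameters handled by induction on $\alpha$ via the recursive code.

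\textbf{Step 2 (uniqueness without unbounded reflection).} Given a $\Sigma_1$ formula $\varphi_n(v)\equiv\exists y\,\theta_n(v,y)$, define $f(n)=x$ iff there is a pair $\langle x,y\rangle$ witnessing $\theta_n$ which is $<_{L}$-least among all witnessing pairs. ``Exists a witness pair $\langle x,y\rangle$ and all $<_L$-smaller pairs fail $\theta_n$'' is $\Sigma_{1}$, because $\{z:z<_{L}w\}$ is a $\Delta_{1}$-definable set from $w$ and $\theta_{n}$ is bounded, so the universal quantifier is bounded. Since the function $n\mapsto$ witness is then $\Sigma_1$ on its domain, and the domain is $\Sigma_1$, set $f(n)=\emptyset$ when no witness exists. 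The graph is then ``($\Sigma_1$ clause) or ($x=\emptyset$ and not $\exists$ witness)'', a Boolean combination of $\Sigma_{1}$ formulae, hence $B\Sigma_{1}$ and thus $\Delta_{2}$. Surjectivity follows from Step 1, since the least witness for a formula uniquely defining $x$ has first coordinate $x$.

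\textbf{Main obstacle.} The delicate point is Step 1: it requires that the defining formulas can be taken $\Sigma_{1}$ and parameter-free, and this depends on $\omega_{1}^{CK}$ being the least admissible. In particular, each $\alpha<\omega_1^{CK}$ must be the order type of a recursive well-order, and the collapse map must exist inside $N$. I would verify that the isomorphism $W_{e}\cong\alpha$ belongs to $L_{\omega_{1}^{CK}}$, which holds because it is obtained by a $\Sigma_{1}$-recursion of length $\alpha$. It may be easier still to use the standard fact that $L_{\omega_1^{CK}}$ is the $\Sigma_{1}$-Skolem hull of $\emptyset$ in itself: the hull is transitive and admissible by condensation, so it equals $N$ by minimality. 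This directly gives Step 1.
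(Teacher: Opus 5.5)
Your argument is correct, but it follows a different route from the paper's.

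\textbf{How the two proofs differ.} The paper takes its indices from a partial $\Sigma_{1}$ parametrization of hyperarithmetic rooted relations on $\omega$. It sets $f(n)$ to be the Mostowski collapse of the root when the $n$-th relation is extensional and witnessedly well-founded. Surjectivity comes from the classical fact that every element of $L_{\omega_{1}^{CK}}$ has such a code. Functionality is automatic from uniqueness of the collapse, so $<_{L}$ plays no role in defining $f$.

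You instead index $\Delta_{0}$ matrices of $\Sigma_{1}$ formulae and make $f$ functional by taking $<_{L}$-least witness pairs. Surjectivity then comes from pointwise $\Sigma_{1}$-definability of $N$. Both proofs end in the same shape: a $\Sigma_{1}$ graph on a $\Sigma_{1}$ domain, with a default value elsewhere.

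\textbf{What each route buys.} Yours needs nothing from hyperarithmetic theory beyond recursive indices for ordinals. It is essentially the $\Sigma_{1}$ instance of the least-witness Skolem-hull construction in Lemma \ref{lem:def-countable-enumeration-L}, in the degenerate case where the hull is already all of $N$. The paper's code-and-collapse route does not rely on $V=L$ for uniqueness, and it anticipates the rooted-code decoding in Lemma \ref{lem:defCountEnum}.

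\textbf{Details to tidy.}
\begin{enumerate}
\item In Step 1, $x$ is generally not the $n$-th element of $(L_{\alpha+1},<_{L})$ for a natural number $n$. It is the $\gamma$-th element of $(L_{\alpha},<_{L})$ for some $\gamma<\omega_{1}^{CK}$. The collapse of $<_{L}\restriction L_{\alpha}$ lies in $N$ by admissibility, and $\gamma$ is $\Sigma_{1}$-definable from a recursive index just as $\alpha$ is. Together these give the required parameter-free $\Sigma_{1}$ definition.
\item In the hull alternative, condensation only tells you that the hull collapses to some $L_{\beta}$. Transitivity of the hull needs a further observation: every set is countable in $N$, and the hull is closed under $x\mapsto$ the $<_{L}$-least surjection from $\omega$ onto $x$. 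Once the hull equals $L_{\beta}$ with $\beta>\omega$, $\Sigma_{1}$-elementarity in $N$ makes $L_{\beta}$ admissible, and minimality gives $\beta=\omega_{1}^{CK}$.
\item For $f$ to be a single formula, quantify over codes through the $\Delta_{1}$ satisfaction predicate for $\Delta_{0}$ formulae, rather than treating $\theta_{n}$ as a fixed bounded formula. The bounded universal over $\mathsf{pr}_{L}(\langle x,y\rangle)$ is still absorbed into a $\Sigma_{1}$ definition in $KP$, so the $B\Sigma_{1}$ bound stands.
\end{enumerate}
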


\begin{proof}
Working in $N$, fix the standard partial $\Sigma_{1}$ parametrization
$n\mapsto E_{n}$ of hyperarithmetic binary relations on $\omega$,
each with a distinguished root, using numerical indices for witnessed
hyperarithmetic computations.\footnote{For partial parametrization and the Spector-{}-Gandy theorem, see
Y. N. Moschovakis, Descriptive Set Theory, 2nd ed. (2009), Theorems
4D.2 and 4F.3, pp. 166-{}-167 and 185-{}-187.} Call $n$ \emph{good} if $E_{n}$ is defined and below its root,
it is extensional and well-founded in the sense that there is a strictly
order preserving map in $N$ between the segment below its root and
the ordinals of $N$. Let $\mathsf{Val}(n,x)$ assert $n$ is good
and that the transitive collapse of $E_{n}$ sends the root to $x$.
By standard arguments, we see that $\mathsf{Val}(n,x)$ is $\Sigma_{1}$
and functional. Let
\[
f(n)=\begin{cases}
\text{the unique }x\text{ such that }\mathsf{Val}(n,x), & \text{if }n\text{ is good},\\
\varnothing, & \text{otherwise}.
\end{cases}
\]
It should be clear then that $f:\omega\to N$. The graph of $f$ is
defined by
\[
F(n,x)\ \leftrightarrow\ n\in\omega\ \wedge\ \bigl(\mathsf{Val}(n,x)\ \vee\ (\neg\mathsf{Good}(n)\wedge x=\varnothing)\bigr).
\]
This is a Boolean combination of $\Sigma_{1}$ formulae and hence
$\Delta_{2}$. Finally, $f$ is surjective by the well-known fact
that every set in $N$ admits a rooted hyperarithmetic code of the
kind described above.
\end{proof}
We then use this enumeration to define suitably generic sets. 
\begin{lem}
For all $k\in\omega$ and $\mathbb{P}\in N$, there is some $G\subseteq\mathbb{P}$
that is definable over $N$ without parameters and is $B\utilde{\Sigma}_{k}$-generic
over $N$. For $k\geq1$, $G$ may be chosen $\Delta_{k+2}$-definable;
for $k=0$ it may be chosen $\Delta_{2}$-definable.\label{DefiningGenOverHYP}
\end{lem}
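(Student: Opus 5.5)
We will run the usual Baire-category construction of a generic filter, but carry it out definably inside $N$, using the enumeration $f:\omega\twoheadrightarrow N$ from Lemma \ref{lem:HYPEnumeration}. Fix $k$ and $\mathbb{P}\in N$.

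\emph{Enumerating the dense classes.} A $B\utilde{\Sigma}_{k}$ subset of $\mathbb{P}$ is given by a $B\Sigma_{k}$ formula $\varphi(x,y)$ together with a parameter $a\in N$. For $k\geq1$ we use the universal $\Sigma_{k}$ satisfaction predicate $\mathrm{Sat}_{\Sigma_{k}}$, which is $\Sigma_{k}$-definable over $KP$. A $B\Sigma_{k}$ formula is a Boolean combination of $\Sigma_{k}$ formulae, so $B\Sigma_{k}$ satisfaction is a Boolean combination of instances of $\mathrm{Sat}_{\Sigma_{k}}$, hence $\Delta_{k+1}$. Pairs (code, index) are enumerated by $\omega$, with the parameter taken to be $f(m)$. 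We obtain a parameter-free list $D_{i}=\{p\in\mathbb{P}:\varphi_{e}(p,f(m))\}$, $i=\langle e,m\rangle$, containing every $B\utilde{\Sigma}_{k}$ subset of $\mathbb{P}$. Since $f$ is $\Delta_{2}$, membership $p\in D_{i}$ is $\Delta_{k+1}$ in $(i,p)$ uniformly.

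\emph{Building the descending sequence.} We define $p_{0}=1$. Given $p_{i}$, if $D_{i}$ is dense below $p_{i}$ we let $p_{i+1}$ be the first element of $D_{i}$ below $p_{i}$ in the enumeration $f$: namely $f(n)$ for the least $n$ with $f(n)\in D_{i}$ and $f(n)\leq p_{i}$. Otherwise we let $p_{i+1}=p_{i}$. Density only needs to be tested when needed; to meet every dense $D$ it suffices that, whenever $D_{i}$ is dense, we step into it. The test ``$D_{i}$ is dense below $p_{i}$'' is $\Pi_{k+2}$ at worst. To stay within $\Delta_{k+2}$ we can instead use the rule ``if some $q\leq p_{i}$ lies in $D_{i}$, take the $f$-least such'', which is a $\Sigma_{k+1}$ condition. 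If $D_{i}$ is dense, such a $q$ exists, so dense classes are still met.

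The whole sequence $\langle p_{i}\rangle$ is then defined by recursion on $\omega$. We express $p_{j}=x$ as the statement that there exists a finite sequence $s$ of length $j+1$ obeying the recursion rule at each step. The rule is a Boolean combination of $\Sigma_{k+1}$ conditions, so each step is $\Delta_{k+2}$. The existence of $s$ uses Full Set Induction on $i$ (or $\Sigma_{k+2}$-induction) applied to the statement ``a correct sequence of length $i$ exists''. Uniqueness follows by induction, and that gives functionality. We then set $G=\{q\in\mathbb{P}:\exists i\,(p_{i}\leq q)\}$.

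\emph{Main obstacle.} The delicate point is the complexity count for the recursion. The finite sequence quantifier $\exists s$ must be bounded by the step predicate. We need that $\exists s$ (correct) and its equivalent $\forall s$ (correct $\to s(j)=x$) are $\Sigma_{k+2}$ and $\Pi_{k+2}$ respectively, and for this the inner ``correct'' must be $\Delta_{k+2}$. The quantifier over $i<\mathrm{length}(s)$ is bounded and sits in front of a $\Delta_{k+2}$ matrix, so we invoke $\Sigma_{k+1}$-Collection or analogues. In $N$ we only have $\Sigma_{1}$-Collection, but bounded quantifiers over \emph{finite} ranges can be pushed through by Full Set Induction on $\omega$, which is available. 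With this, $G$ is $\Delta_{k+2}$ and is a filter, since the $p_{i}$ descend. It meets every dense $D_{i}$ by construction. For $k=0$, the $D_{i}$ are $\Delta_{1}$ with parameters, the search is $\Sigma_{1}$ via $f$, and the same bookkeeping gives $\Delta_{2}$.
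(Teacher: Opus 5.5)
Your argument is essentially the paper's. You enumerate formula--parameter pairs through $f$, and at stage $i$ you step to a chosen extension in the $i$th class whenever one exists, rather than testing density. You then take $G$ to be the upward closure and bound the complexity by quantifying over finite histories. The one variation is that you pick the $f$-least extension where the paper picks the $<_{L}$-least one. Your choice does not use $V=L$, and it is what the paper itself does later, in Theorem~\ref{thm:MainTemplate} and Section~\ref{subsec:solid-GA}, where no $<_{L}$ is available. Its cost is in the count. Your minimality clause ranges over the earlier indices $n'<n$, so the obvious formulation of the step is $\Sigma_{k+2}$. Your claim that it is a Boolean combination of $\Sigma_{k+1}$ conditions therefore needs an extra argument at that point, for instance the finite-collection principle from Full Set Induction that you invoke later. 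With $<_{L}$ the minimality clause ranges over the set $\mathsf{pr}_{L}(q)$, so the step is directly $\Pi_{k+1}$ and no absorption of bounded quantifiers is needed.

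Two things need repair. First, as written your $G$ is defined from the parameter $\mathbb{P}$, through $1_{\mathbb{P}}$, $\leq_{\mathbb{P}}$ and the classes $D_{i}$. So you have not yet shown parameter-free definability, which is the point of the lemma. Fix this as the paper does: since $N$ is an $\omega$-model, $\mathbb{P}=f(m_{0})$ for a standard $m_{0}$. You can then write $f(\underline{m_{0}})$ in place of $\mathbb{P}$ without raising the bounds.

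Second, your $k=0$ remark is inaccurate. The graph of $f$ is only $B\Sigma_{1}$, so uniformly in $i$ the classes $D_{i}$ are $\Delta_{2}$, and the search for the $f$-least extension is not $\Sigma_{1}$. The $\Delta_{2}$ bound does survive, for two reasons. Every predicate used in a single step is $\Delta_{2}$, and finite collection keeps the history check at $\Delta_{2}$, so $p_{j}=x$ and $G$ are $\Delta_{2}$. Alternatively, you can existentially record the finitely many $f$-values used, as the paper does.
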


\begin{proof}
Fix $k\in\omega$ and $\mathbb{P}\in N$. Since $N\models V=L$, fix
the canonical parameter-free definable well-order $<_{L}$ of $N$.
Let $f:\omega\twoheadrightarrow N$ be as in Lemma \ref{lem:HYPEnumeration}.
Choose $m\in\omega$ with $f(m)=\mathbb{P}$. Since $m$ is a fixed
natural number, it can be represented by its numeral in the defining
formula. Thus the construction below does not use $\mathbb{P}$ as
a parameter. Fix an effective enumeration
\[
\langle\Phi_{i}(x,y)\mid i\in\omega\rangle
\]
of the $B\Sigma_{k}$ formulae with one object variable and one parameter
variable. Using a recursive pairing function, define
\[
e(\langle i,j\rangle)=\langle i,f(j)\rangle.
\]
Thus every pair consisting of a $B\Sigma_{k}$ formula and a parameter
from $N$ occurs in the range of $e$. Let $\mathsf{Sat}_{B\Sigma_{k}}$
be the uniform satisfaction predicate for these formulae. Thus
\[
N\models\Phi_{i}(q,b)\ \Leftrightarrow\ N\models\mathsf{Sat}_{B\Sigma_{k}}(i,q,b).
\]
We now give our modification of the standard Baire category argument.
Thus, we recursively define a descending sequence
\[
\langle p_{n}\mid n\in\omega\rangle
\]
in $\mathbb{P}$ that hits every dense $B\utilde{\Sigma}_{k}$ subset
of $\mathbb{P}$. Let $p_{0}$ be the top element of $\mathbb{P}$.
Then suppose $p_{n}$ has been defined and $e(n)=\langle i,b\rangle$.
If there is $q\leq p_{n}$ such that
\[
N\models\mathsf{Sat}_{B\Sigma_{k}}(i,q,b),
\]
let $p_{n+1}$ be the $<_{L}$-least such $q$; otherwise let $p_{n+1}=p_{n}$.
This is enough to ensure that every dense $B\utilde{\Sigma}_{k}$
subset of $\mathbb{P}$ is hit by some $p_{n}$. Finally let
\[
G=\{q\in\mathbb{P}\mid\exists n\in\omega\ (p_{n}\leq q)\}
\]
which clearly satisfies the requirements of the lemma.

For the complexity calculation, first suppose $k\geq1$. Use the uniform
partial truth predicates for the prenex $\Sigma_{k}$ and $\Pi_{k}$
formulae under our counting convention. Evaluating a finite Boolean
combination gives $\Sigma_{k+1}$ definitions of both its truth and
its falsity: record the finite truth-value assignment and verify its
positive and negative constituents. Hence $\mathsf{Sat}_{B\Sigma_{k}}$
is $\Delta_{k+1}$. The relation saying that $q$ is the selected
successor of $p$ is $\Pi_{k+1}$: use the $\Pi_{k+1}$ presentations
to verify that $q$ qualifies, that no earlier condition qualifies,
or that no condition qualifies and $q=p$. The enumeration values
can be specified universally using the total $B\Sigma_{1}$ graph
of $f$. Consequently, checking that a finite sequence begins with
$p_{0}$ and follows the recursion is a $\Pi_{k+1}$ condition, so
asserting the existence of such a sequence of length $n+1$ ending
in $p$ gives a $\Sigma_{k+2}$ definition of $p=p_{n}$. For $k=0$,
bounded truth and $<_{L}$ are $\Delta_{1}$; recording the finitely
many enumeration values together with such a finite sequence instead
gives a $\Sigma_{2}$ definition. The displayed definition of $G$
therefore gives a $\Sigma_{k+2}$ definition for $k\geq1$, and a
$\Sigma_{2}$ definition for $k=0$. To obtain the complementary definition,
for each $q\in\mathbb{P}$ use the dense $\Delta_{0}$ class $D_{q}=\{p\in\mathbb{P}:p\leq q\text{ or }p\perp q\}$.
Since the constructed filter meets $D_{q}$, we have $q\notin G$
iff some $p_{n}$ is incompatible with $q$. This has the same complexity
bound. Thus $G$ is $\Delta_{k+2}$ for $k\geq1$ and $\Delta_{2}$
for $k=0$. Finally, eliminate the parameter $\mathbb{P}$ using its
fixed numeral index under $f$; this does not increase these bounds.
\end{proof}
Since $G$ is definable over $N$, $N$ can interpret $N^{\mathbb{P}}/G$.
We then observe that we may define the required isomorphism between
$N$ and the constructible universe of $N^{\mathbb{P}}/G$. To make
this a little more concrete, we use $Add(\omega,1)$.
\begin{lem}
\label{DefiningLiso}Let $G\subseteq Add(\omega,1)$ be a parameter-free
definable $B\utilde{\Sigma}_{\mathsf{b}(3)}$-generic filter over
$N$, supplied by the preceding generic-construction lemma. Then there
is an isomorphism
\[
\eta:N\cong L^{N^{Add(\omega,1)}/G}
\]
that is definable over $N$.
\end{lem}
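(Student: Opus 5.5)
The plan is to let $\eta$ send each $x\in N$ to its canonical check name $\check{x}$. The target $L^{N^{Add(\omega,1)}/G}$ is the class of names $\dot{y}$ such that the quotient satisfies ``$\dot{y}$ is constructible'', with $\in^{*}$ as membership and $\approx^{*}$ as equality. Since this is a pre-structure, an isomorphism means a map that is $\approx^{*}$-surjective onto this class and converts $\in$ into $\in^{*}$ and $=$ into $\approx^{*}$. The map $x\mapsto\check{x}$ is $\Delta_{1}$-definable over $N$ by $\in$-recursion, which $KP$ supports. The relations $\in^{*}$ and $\approx^{*}$ are parameter-free definable because $G$ is, so the whole arrangement is parameter-free definable over $N$.

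The first step is preservation of atomic relations. By the usual $\in$-induction on check names, $1\Vdash\check{x}\in\check{y}$ iff $x\in y$, and $1\Vdash\check{x}=\check{y}$ iff $x=y$. If $x\notin y$, then every condition forces $\check{x}\notin\check{y}$, and since $G$ is a filter no condition in $G$ forces $\check{x}\in\check{y}$. So $x\in y\leftrightarrow\check{x}\in^{*}\check{y}$, and likewise for equality. This makes $\eta$ an embedding into the quotient. The same check applies to ordinals: for $\alpha$ an ordinal of $N$, the quotient thinks $\check{\alpha}$ is an ordinal, since ``ordinal'' is $\Delta_{0}$ and the bounded truth lemma (Lemma \ref{lem:O2HYPTruth}) applies.

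The second step is to show that every ordinal of the quotient is represented by some $\check{\alpha}$, and this is where I expect the main difficulty. Suppose the quotient satisfies ``$\dot{y}$ is an ordinal''. By Lemma \ref{lem:O2HYPTruth}, some $p\in G$ forces this. A standard rank argument should show that $p$ forces $\dot{y}\in\check{\beta}\cup\{\check{\beta}\}$, where $\beta=\mathsf{rank}(\dot{y})$; this is a $\Delta_{1}$ fact provable in $KP$. I then want a single $\alpha\le\beta$ with $\dot{y}\approx^{*}\check{\alpha}$. For this I use the class
\[
D=\{q\le p\mid\exists\alpha\le\beta\ q\Vdash\dot{y}=\check{\alpha}\},
\]
which is $\Sigma_{1}$ with parameters and dense below $p$. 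Density is proved by induction on $\dot{y}$ using the bounded forcing clauses; equivalently, by Fact \ref{fact:Density} applied to the bounded formula $\dot{y}\in\check{\beta}\cup\{\check{\beta}\}$, which gives $\bigvee_{\alpha\le\beta}$ decided pointwise. Since $G$ is $B\utilde{\Sigma}_{\mathsf{b}(3)}$-generic, it meets $D$, and this yields the required $\alpha$. The same density reasoning gives the basic fact that $\in^{*}$ restricted to ordinals below $\check{\beta}$ is represented by check names.

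The third step extends this to all constructible sets. The quotient satisfies $KP$ by Lemma \ref{lem:genSatKP}, so inside it $L$ is given by the $\Sigma_{1}$ (indeed $\Delta_{1}$) hierarchy $\alpha\mapsto L_{\alpha}$. The construction of $L_{\alpha}$ is absolute between $N$ and the quotient. Concretely, $KP$ proves that $1\Vdash$ ``$\check{L_{\alpha}}=L_{\check{\alpha}}$'', by induction on $\alpha$; this uses the $\Delta_{1}$ definability of the $L$-hierarchy together with Lemma \ref{lem:KP-forcing-axioms} and the forced absoluteness of $\Delta_{1}$ formulas. Transferring this through the truth lemma at complexity $B\Sigma_{1}$, which $\mathsf{b}(3)$-genericity covers, gives the following. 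If the quotient satisfies $\dot{y}\in L$, then it satisfies $\dot{y}\in L_{\dot{\alpha}}$ for some ordinal name $\dot{\alpha}$. By step two, $\dot{\alpha}\approx^{*}\check{\alpha}$, so the quotient satisfies $\dot{y}\in\check{L_{\alpha}}$. The membership observation in the proof of Lemma \ref{lem:FullSetInduction} then gives $\dot{y}\approx^{*}\check{z}$ for some $z\in\mathsf{dom}(\check{L_{\alpha}})$, that is, some $z\in L_{\alpha}$.

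It remains to check that every $\check{x}$ does lie in $L$ of the quotient. This holds because $N\models V=L$: for each $x$ we have $x\in L_{\alpha}$ for some $\alpha$, and this transfers to the quotient as above. So $\eta$ is surjective onto the quotient's $L$ up to $\approx^{*}$, and $\eta$ is the required definable isomorphism. The main care throughout is in steps two and three, keeping every dense class used within the $B\utilde{\Sigma}_{\mathsf{b}(3)}$ bound; these classes are all essentially $\Sigma_{1}$, so this should pose no problem.
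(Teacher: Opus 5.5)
Your proof is correct. Its outline matches the paper's: send $x$ to $[\check{x}]_G$, check the atomic facts, identify the ordinals, then match the $L$-hierarchies. However, you argue both substantive steps differently.

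For the ordinals, the paper's proof of this lemma works externally. Because $N=L_{\omega_1^{CK}}$ is genuinely well-founded, a descending $\in^{*}$-chain would give names of strictly decreasing $N$-rank. So the quotient is well-founded, its ordinals have order type below $\omega_1^{CK}$, and each is represented by some $\check{\alpha}$. You instead use the internal rank bound $p\Vdash\dot{y}\le\check{\beta}$ and let $G$ meet the $\Sigma_1$ class $D$. This is exactly the argument the paper saves for Lemma \ref{lem:tightLRecovery}, where the ground may be ill-founded. Density of $D$ below $p$ is just the syntactic membership clause applied to $p\Vdash\dot{y}\in\check{(\beta+1)}$, so neither an induction on $\dot{y}$ nor a ``pointwise decided disjunction'' is needed.

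For the constructible sets, the paper shows by induction on $\alpha$ that the check-name images of $L_\alpha^N$ form the quotient's $L_\alpha$. Its successor step rests on the fact that an isomorphism of the $\alpha$-th levels carries definable subsets to definable subsets. That step tacitly uses that the well-founded quotient computes $\mathrm{Def}$ correctly. You replace this with upward absoluteness of the $\Sigma_1$ definition of $x=L_\alpha$. The check name of a witnessing computation, including its satisfaction data, satisfies the $\Delta_0$ matrix in the quotient by the bounded truth lemma. The membership observation from Lemma \ref{lem:FullSetInduction} then pulls every element of $L_{\check{\alpha}}$ back to a check name.

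Done this way, your stated induction on $\alpha$ is not actually required, and nothing compares internal with external satisfaction. So your argument runs unchanged over arbitrary, possibly $\omega$-nonstandard, models of $KP$. The paper's version is more direct for the specific model $L_{\omega_1^{CK}}$, but it depends on that model's external well-foundedness.
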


\begin{proof}
Let
\[
\mathcal{M}=N^{Add(\omega,1)}/G.
\]
We first identify the ordinals of $\mathcal{M}$ directly. If $\dot{y}\in^{*}\dot{x}$,
then the syntactic forcing clause for membership and $B\utilde{\Sigma}_{1}$-genericity
give some $\dot{z}\in\mathsf{dom}(\dot{x})$ such that $\dot{z}\approx^{*}\dot{y}$.
Hence any external descending $\in^{*}$-chain can be represented
by names of strictly decreasing $N$-rank.\footnote{A similar argument is given in more detail in the proof of Lemma \ref{lem:FullSetInduction}.}
Since $N=L_{\omega_{1}^{CK}}$ is well-founded, no such chain exists.
Induction on the rank of names also shows that the well-founded rank
of an element represented by $\dot{x}$ is bounded by $\mathsf{rank}^{N}(\dot{x})$.
It follows that every ordinal of $\mathcal{M}$ has order type below
$\omega_{1}^{CK}$. Conversely, every $\alpha<\omega_{1}^{CK}$ is
represented by $[\check{\alpha}]_{G}$. Thus $\mathcal{M}$ and $N$
have the same ordinals. Define a relation $E$ over $N$ by
\[
xE\dot{y}\ \Leftrightarrow\ \exists p\in G\ (p\Vdash\check{x}=\dot{y}).
\]
Because $G$ and atomic forcing are definable over $N$, so is $E$.
On equivalence classes it represents the action of a function such
that
\[
\eta(x)=[\check{x}]_{G}.
\]
The atomic forcing facts for canonical names show that $\eta$ is
an $\in$-embedding of $N$ into $\mathcal{M}$. For each common ordinal
$\alpha$, let
\[
C_{\alpha}=\{[\check{x}]_{G}\mid x\in L_{\alpha}^{N}\}.
\]
We show by induction on $\alpha$ that $C_{\alpha}=L_{\alpha}^{\mathcal{M}}.$
The limit step is immediate. For the successor step, suppose $C_{\alpha}=L_{\alpha}^{\mathcal{M}}$.
The restriction of $\eta$ is an isomorphism between $L_{\alpha}^{N}$
and $L_{\alpha}^{\mathcal{M}}$. Therefore the subsets of $L_{\alpha}^{N}$
definable over that structure with parameters correspond exactly to
the subsets of $L_{\alpha}^{\mathcal{M}}$ definable there with the
corresponding parameters. Hence, $C_{\alpha+1}=L_{\alpha+1}^{\mathcal{M}}.$
Thus the induction holds for every common ordinal. Since $N=L_{\omega_{1}^{CK}}$
and the two structures have the same ordinals, $\mathsf{ran}(\eta)=L^{\mathcal{M}}.$
Consequently $\eta:N\cong L^{\mathcal{M}}$, and the relation $E$
shows that the isomorphism is definable over $N$.
\end{proof}
Finally, we can put all this together to get our first result.
\begin{thm}
$KP$ is not solid. 
\end{thm}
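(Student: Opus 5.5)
The plan is to assemble the configuration sketched at the start of this subsection. Take $\mathcal{M}_{0}=\mathcal{M}_{1}=N=L_{\omega_{1}^{CK}}$ and $\mathcal{N}=\mathcal{M}=N^{Add(\omega,1)}/G$. Here $G$ is the parameter-free definable $B\utilde{\Sigma}_{\mathsf{b}(3)}$-generic filter given by Lemma \ref{DefiningGenOverHYP} with $k=\mathsf{b}(3)$.

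\emph{The three models satisfy $KP$.} By Lemma \ref{lem:genSatKP}, $\mathcal{M}\models KP$.

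\emph{The interpretations.} Since $G$ is definable without parameters, the domain $N^{Add(\omega,1)}$ and the relations $\in^{*},\approx^{*}$ are definable over $N$. Hence $N\triangleright\mathcal{M}$ is an interpretation, with $\approx^{*}$ interpreting identity. Conversely, $L^{\mathcal{M}}$ is parameter-free definable in $\mathcal{M}$, so $\mathcal{M}\triangleright L^{\mathcal{M}}$. By Lemma \ref{DefiningLiso}, $\eta:N\cong L^{\mathcal{M}}$ is definable over $N$. Composing the interpretation $N\triangleright\mathcal{M}\triangleright L^{\mathcal{M}}$ with $\eta$ gives the $N$-definable isomorphism $f:\mathcal{M}_{0}\cong\mathcal{M}_{1}$ required in the antecedent of solidity. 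Formally, $\mathcal{M}_{1}$ is the structure $L^{\mathcal{M}}$ as interpreted in $N$, which is isomorphic to $N$ via $\eta$.

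\emph{The conclusion of solidity fails.} It then suffices to show there is no isomorphism $g:N\cong\mathcal{M}$ at all, definable or not. I expect this to be the main, though modest, obstacle. Here is the argument. Let $c=\bigcup G\subseteq\omega$ and let $\dot{c}$ be the canonical name $\{\langle\check{n},p\rangle: p(n)=1\}$. In $\mathcal{M}$, $[\dot{c}]_{G}$ is a subset of $\omega$ whose $\in^{*}$-members are exactly $[\check{n}]_{G}$ for $n\in c$, by the truth lemma for atomic formulae. Since $\mathcal{M}$ is well-founded with standard $\omega$, any isomorphism $g$ would send $\omega^{N}$ to $\omega^{\mathcal{M}}$ preserving each natural number. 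So $c$ would be a real in $N$, that is, $c$ would be hyperarithmetic.

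But $c$ is $Add(\omega,1)$-generic over $N$. For each $x\in N$ with $x\subseteq\omega$, the set $D_{x}=\{p: \exists n\in\mathsf{dom}(p)\ (p(n)=1\leftrightarrow n\notin x)\}$ is a dense $\Delta_{0}$ subset with parameter $x$. It is therefore $B\utilde{\Sigma}_{1}$, so $G$ meets it and $c\neq x$. Hence $c\notin N$, a contradiction.

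Alternatively, one can argue via $V=L$: $N\models V=L$, while $\mathcal{M}\models V\neq L$. The latter holds because $[\dot{c}]_{G}\notin L^{\mathcal{M}}=\mathrm{ran}(\eta)$, which is the same computation. This distinction is first-order and is preserved by any isomorphism.

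Since solidity demands a definable isomorphism $g:\mathcal{M}_{0}\cong\mathcal{N}$ and none exists, $KP$ is not solid.
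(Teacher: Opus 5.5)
Your proposal is correct and follows the paper's proof. You use the same configuration $N\triangleright\mathcal{M}\triangleright L^{\mathcal{M}}$, the same $N$-definable $\eta$ from Lemma \ref{DefiningLiso}, and you conclude that $N\not\cong\mathcal{M}$ (the paper gets this from $N\models V=L$ versus $\mathcal{M}\models V\neq L$, which is your alternative argument, while your primary argument via the generic real supplies the justification for $\mathcal{M}\models V\neq L$ that the paper leaves implicit; one small slip is that $\bigcup G$ is a function $\omega\to 2$, so you should set $c=\{n:(\bigcup G)(n)=1\}$).
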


\begin{proof}
Let $\mathbb{P}=Add(\omega,1)$, and let $G\subseteq\mathbb{P}$ be
the parameter-free definable $B\utilde{\Sigma}_{\mathsf{b}(3)}$-generic
filter supplied by Lemma \ref{DefiningGenOverHYP}. Let
\[
\mathcal{M}=N^{\mathbb{P}}/G.
\]
Because $G$ is definable over $N$, we see that 
\[
N\triangleright\mathcal{M}\triangleright L^{\mathcal{M}}.
\]
where the final $\triangleright$ follows since Lemma \ref{lem:genSatKP}
tells us that $\mathcal{M}\models KP$ and so it can define its version
of $L$. Lemma \ref{DefiningLiso} then gives an isomorphism
\[
\eta:N\cong L^{\mathcal{M}}
\]
that is definable over $N$. Finally, we see that $N\not\equiv\mathcal{M}$
since $N$ satisfies $V=L$ while $\mathcal{M}$ does not. Thus, $N\not\cong\mathcal{M}$
and so we see that $KP$ is not solid.
\end{proof}

\subsection{Tightness failure}

Our next goal is to show that $KP$ is not tight. The proof will build
on the techniques of the previous subsection, but generalize them
to the context of arbitrary models of a well-chosen theory, rather
than a particular model. For this purpose, we let
\[
S_{0}=KP+V=L_{\omega_{1}^{CK}}.
\]
The additional axiom may equivalently be expressed as $V=L$ along
with the assertion that there are no admissible ordinals. We then
define a second extension $S_{1}$ by saying that the universe is
the canonically definable Cohen extension of its constructible universe.
The next two lemmas isolate the uniformity required for this definition.
First, we observe that arbitrary models of $S_{0}$ can produce a
suitable version of a definable countable enumeration.
\begin{lem}
There is a parameter-free formula $F(n,x)$ such that every $\mathcal{M}\models S_{0}$
satisfies that $F^{\mathcal{M}}$ is the graph of a surjective class
function\label{lem:tightUniformEnumeration}
\[
f^{\mathcal{M}}:\omega^{\mathcal{M}}\twoheadrightarrow M.
\]
\end{lem}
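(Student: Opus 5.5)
The plan is to make the enumeration of Lemma~\ref{lem:HYPEnumeration} uniform by replacing hyperarithmetic codes with parameter-free $\Sigma_{1}$ definitions, so that no appeal to the standard model is needed. Fix an effective enumeration $\langle\theta_{n}(x)\mid n\in\omega\rangle$ of the $\Sigma_{1}$ formulae in one free variable. Using the $\Delta_{1}$ well-order $<_{L}$, available in any model of $KP+V=L$, the usual ``least witness'' trick makes each $\theta_{n}$ functional: $x$ is selected by $n$ iff there is a pair $\langle x,w\rangle$ such that $w$ witnesses the bounded matrix of $\theta_{n}(x)$ and no $<_{L}$-earlier pair does. Let $F(n,x)$ say that either $n$ selects $x$ in this sense, or $n$ selects nothing and $x=\varnothing$. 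Exactly as in Lemma~\ref{lem:HYPEnumeration}, this is a parameter-free $B\Sigma_{1}$ formula. Functionality and totality on $\omega^{\mathcal{M}}$ are provable in $KP+V=L$, uniformly in the internal formula code, using the uniform $\Sigma_{1}$ satisfaction predicate. This holds even for nonstandard $n$.

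The real content is surjectivity. I need that every $\mathcal{M}\models S_{0}$ satisfies: every set is $\Sigma_{1}$-definable without parameters, in the internal sense of $\mathsf{Sat}_{\Sigma_{1}}$. Let $H$ be the class of elements selected by some $n\in\omega^{\mathcal{M}}$. Since $F$ is internal, $H$ is a class definable over $\mathcal{M}$. The first step is to show that $H$ is a $\Sigma_{1}$-elementary substructure of $\mathcal{M}$. This is the usual Tarski--Vaught argument with $<_{L}$-least witnesses, carried out internally: from a code for $\exists y\,\psi(y,\bar{x})$ with $\bar{x}$ given by codes, one computes a code for the least witness. The second step is to show that $H\cap\mathrm{Ord}$ is an initial segment, and more strongly that $H$ is transitive. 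For this I would show that each $x\in H$ is internally countable via some surjection $g:\omega\to x$. The $<_{L}$-least such $g$ is then $\Sigma_{1}$-definable from $x$, hence lies in $H$, and each $g(m)$ lies in $H$.

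The main obstacle is internal countability, and this is where the axiom ``there are no admissible ordinals'' must be used. For each $\alpha$, I would argue in $S_{0}$ as follows. Consider the $\Sigma_{1}$ Skolem hull of $\omega\cup\{\alpha\}$ inside $L$, taken stage by stage as $\bigcup_{\beta}\mathrm{Hull}^{L_{\beta}}_{\Sigma_{1}}$. If $L_{\alpha}$ were not internally countable, the hull's ordinals would be bounded below some least $\gamma$ outside it. A $\Sigma_{1}$-reflection argument, using $\Sigma_{1}$-Collection on the hull pieces indexed by $\omega$, should then make $L_{\gamma}$ (or the collapse of the hull) satisfy $\Delta_{0}$-Collection, so that $\gamma$ is an internally admissible ordinal. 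That contradicts $S_{0}$. Once $L_{\alpha}$ has a bijection with $\omega$ for every $\alpha$, the $<_{L}$-least such bijection gives the surjection $g$ used above.

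Finally, I would show $H=M$. Suppose some $x\notin H$, and take an $\in$-minimal such $x$ using Full Set Induction, noting that $H$ is definable. Then $x\subseteq H$. I would then argue that $x$ itself is $\Sigma_{1}$-definable, for instance as $L_{\alpha}$ for an $\alpha$ in $H$, or from a code in $H$ of an enumeration of $x$. The cleanest route is to run the minimality argument on ordinals: show that $\alpha\in H$ implies $\alpha+1\in H$, and that a limit of elements of $H$ bounded in $H$ lies in $H$. Unboundedness of $H\cap\mathrm{Ord}$ is again supplied by the no-admissibles argument, since otherwise the supremum of $H\cap\mathrm{Ord}$ would be internally admissible. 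This closure step is the other delicate point, but it reduces to the same reflection lemma.
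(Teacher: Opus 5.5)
Your route is correct, but it is genuinely different from the paper's. The paper says in one line that the hyperarithmetic-code enumeration of Lemma~\ref{lem:HYPEnumeration} formalizes in $S_0$, so that $S_0\vdash\forall x\,\exists n\in\omega\,\mathsf{Val}(n,x)$. That enumeration uses rooted codes from the partial $\Sigma_1$ parametrization, decoded by Mostowski collapse.

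Its formula $F$ is literally the one used for the standard model, which buys brevity and continuity with Lemma~\ref{lem:HYPEnumeration}. The cost is that the use of ``no admissible ordinals'' is hidden inside an asserted internal coding theorem.

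You instead take $F$ to be parameter-free $\Sigma_1$ definability with $<_L$-least witnesses, and prove internally that the definable class $H$ of such sets is everything:
\begin{itemize}
\item Tarski--Vaught gives $H\prec_{\Sigma_1}V$.
\item Internal countability gives transitivity. Note that you also need $\omega^{\mathcal{M}}\subseteq H$ for nonstandard $m$; this follows by $\Sigma_1$-induction on $m$.
\item If $H\cap\mathrm{Ord}$ were bounded, Full Set Induction gives a least $\gamma\notin H$. Then $H=L_\gamma$, and a transitive $\Sigma_1$-elementary substructure of a model of $KP$ is admissible.
\end{itemize}
This is essentially the proof that $L_{\omega_1^{CK}}$ is pointwise $\Sigma_1$-definable, close in spirit to the hull argument of Lemma~\ref{lem:def-countable-enumeration-L}. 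It is more self-contained and shows exactly where the extra axiom of $S_0$ is used.

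One sub-step is muddled as written. The $\Sigma_1$ hull of $\omega\cup\{\alpha\}$ contains $\alpha$ and everything $\Sigma_1$-definable from it, so its ordinals are not ``bounded below a least $\gamma$ outside it''. You also never identify which set is supposed to be admissible. Replace this with the direct argument:
\begin{itemize}
\item By $\Sigma_1$-induction, $L_\beta$ is countable whenever $\beta$ is, and under $V=L$ every set is a subset of some $L_\beta$. So if some set is uncountable, some ordinal is, and Full Set Induction gives a least one, $\omega_1$. It is a limit ordinal above $\omega$.
\item Take $a\in L_{\omega_1}$ and a $\Delta_0$ instance whose witnesses lie in $L_{\omega_1}$. $\Sigma_1$-Replacement yields the set of least witness levels $\beta_x$ for $x\in a$.
\item Applying $\Sigma_1$-Replacement again to $<_L$-least surjections from $\omega$, the supremum of these levels is countable, hence below $\omega_1$.
\item So $L_{\omega_1}$ satisfies $\Delta_0$-Collection, uniformly in internal formula codes, and is admissible, contradicting $S_0$.
\end{itemize}
With this in place, your argument goes through in every model of $S_0$, including ill-founded ones.
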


\begin{proof}
With the internal reading of $V=L_{\omega_{1}^{CK}}$ fixed above,
the proof of Lemma \ref{lem:HYPEnumeration} formalizes in $S_{0}$.
The internal witnessed coding construction gives $S_{0}\vdash\forall x\,\exists n\in\omega\,\mathsf{Val}(n,x)$.
Functionality and the default-value construction are provable in $KP$.
Hence the same parameter-free formula $F$ defines the required surjection
in every $\mathcal{M}\models S_{0}$.
\end{proof}
We then observe that the required generics can also be defined over
models of $S_{0}$.
\begin{lem}
\label{lem:tightUniformGeneric}There is a parameter-free formula
$\gamma(p)$ such that, for every $\mathcal{M}\models S_{0}$,
\[
G^{\mathcal{M}}=\{p\in Add(\omega,1)^{\mathcal{M}}:\mathcal{M}\models\gamma(p)\}
\]
is $B\utilde{\Sigma}_{\mathsf{b}(3)}$-generic over $\mathcal{M}$.
\end{lem}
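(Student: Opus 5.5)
The plan is to formalize the construction of Lemma \ref{DefiningGenOverHYP} uniformly inside $S_{0}$, with $k=\mathsf{b}(3)$ and $\mathbb{P}=Add(\omega,1)$. Since $Add(\omega,1)$ is $\Delta_{0}$-definable without parameters, the numeral trick for $\mathbb{P}$ is unnecessary. We may write $\mathbb{P}$ directly into the formula.

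The first step is to fix the ingredients. Fix the parameter-free formula $F$ from Lemma \ref{lem:tightUniformEnumeration}, the canonical $\Delta_{1}$ order $<_{L}$ (available since $S_{0}\vdash V=L$), an effective enumeration $\langle\Phi_{i}\mid i\in\omega\rangle$ of $B\Sigma_{k}$ formulae, and the uniform satisfaction predicate $\mathsf{Sat}_{B\Sigma_{k}}$. Define $\gamma(p)$ to say the following: there exist $n\in\omega$ and a finite sequence $\langle p_{0},\dots,p_{n}\rangle$ with $p_{0}=1$, each step following the recursion, and $p_{n}\leq p$. At each step, with $e(m)=\langle i,f(j)\rangle$ read off through $F$, $p_{m+1}$ is the $<_{L}$-least $q\leq p_{m}$ satisfying $\mathsf{Sat}_{B\Sigma_{k}}(i,q,f(j))$ if such a $q$ exists, and $p_{m}$ otherwise. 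This is the same formula as in Lemma \ref{DefiningGenOverHYP}, so it is uniform in $\mathcal{M}$.

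The second step is to show, in an arbitrary (possibly nonstandard, ill-founded) $\mathcal{M}\models S_{0}$, that the recursion is total and single-valued. For every internal $n\in\omega^{\mathcal{M}}$ there should be a unique such sequence of length $n+1$. Uniqueness follows by internal induction along $\omega^{\mathcal{M}}$. Existence uses Full Set Induction applied to the $\Sigma_{k+2}$ formula ``there is a correct sequence of length $n+1$''. At the successor step, take the $<_{L}$-least qualifying $q$; its existence is given by Full Set Induction on $<_{L}$, which is set-like and well-founded in the internal sense, applied to the definable class of qualifying $q$. This is where Full Set Induction is essential: Collection alone would not let us build these sequences for nonstandard $n$. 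The sequence $\langle p_{n}\rangle$ is then descending, so $G^{\mathcal{M}}$ is a filter: upward closure is immediate, and compatibility holds because any two $p_{n},p_{m}$ are comparable.

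The third step is genericity. Let $D$ be dense and $B\utilde{\Sigma}_{k}$ over $\mathcal{M}$, say $D=\{q:\mathcal{M}\models\Phi_{i}(q,b)\}$ for a standard $i$ and some $b\in M$. By surjectivity of $f^{\mathcal{M}}$, pick $j$ (possibly nonstandard) with $f(j)=b$, and set $n=\langle i,j\rangle$. Density gives some $q\leq p_{n}$ in $D$. Since $i$ is standard, $\mathsf{Sat}_{B\Sigma_{k}}(i,q,b)$ agrees with $\Phi_{i}(q,b)$ in $\mathcal{M}$, so $p_{n+1}\in D\cap G^{\mathcal{M}}$.

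The main obstacle is keeping this external claim about standard $i$ separate from the internal recursion, which may run through nonstandard indices. The recursion must use the internal $\mathsf{Sat}$ uniformly, while the correctness of $\mathsf{Sat}$ is invoked only for the standard $i$ that codes $D$. A secondary point is checking that the $\Sigma_{k+2}$ bookkeeping from Lemma \ref{DefiningGenOverHYP} stays within the instances of Full Set Induction available in $S_{0}$. Since $S_{0}$ has full induction, any fixed complexity suffices.
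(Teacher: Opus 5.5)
Your proposal is correct and follows the paper's own argument: you formalize the least-extension construction from Lemma \ref{DefiningGenOverHYP} inside $S_{0}$ with $k=\mathsf{b}(3)$, using the uniform enumeration $F$ of Lemma \ref{lem:tightUniformEnumeration}, and you use Full Set Induction to obtain every internally finite initial sequence. You also keep the internal recursion over possibly nonstandard indices separate from the external use of the satisfaction predicate's correctness for the standard index $i$ coding a given dense class, which is the point the paper leaves implicit.
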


\begin{proof}
Use the generic construction from Section \ref{subsec:Solidity-failure},
now with the uniform enumeration supplied by the preceding Lemma \ref{lem:tightUniformEnumeration}.
Thus we enumerate the pairs of $B\Sigma_{\mathsf{b}(3)}$ formulae
and parameters and recursively form a descending sequence of conditions.
At the stage corresponding to a formula-parameter pair, if the class
it defines contains an extension of the current condition, choose
the $<_{L}$-least such extension; otherwise keep the current condition.
Full Set Induction proves the existence of every internally finite
initial sequence. The filter generated by this sequence meets every
$B\utilde{\Sigma}_{\mathsf{b}(3)}$-definable dense class. All parts
of this construction are given by fixed formulae and are provable
in $S_{0}$, so the resulting definition $\gamma$ is uniform over
models of $S_{0}$.
\end{proof}
Fix the formula $\gamma$ from Lemma \ref{lem:tightUniformGeneric}.
Let $\Psi_{\gamma}$ be the sentence asserting that there is a set
$g$ such that:
\begin{itemize}
\item $p\in g\text{ iff }p\in Add(\omega,1)^{L}\ \wedge\ L\models\gamma(p);$
\item $g$ is $Add(\omega,1)^{L}$-generic over $L$;
\item $L$ has no admissible ordinals; and
\item $V=L[g]$.
\end{itemize}
Since $\gamma$ is fixed, this is a single first-order sentence. Let
\[
S_{1}=KP+\Psi_{\gamma}.
\]
The set $g$ occurring in $\Psi_{\gamma}$ is unique.

We now verify that the canonical generic extensions of models of $S_{0}$
satisfy $KP$. First we record the form of the truth lemma needed
for arbitrary models.
\begin{lem}
\label{lem:O2GeneralTruth}(Truth lemma for arbitrary models.) Suppose
$\mathcal{M}\models KP$, $\mathbb{P}\in M$ is a forcing poset, and
$k\geq1$ is standard. If $G$ is $B\utilde{\Sigma}_{\mathsf{b}(k)}$-generic
over $\mathcal{M}$, then, for every standard $B\Sigma_{k}$ formula
$\varphi$ and every appropriate tuple $\bar{\tau}$ of $\mathcal{M}$-names,
\[
\mathcal{M}^{\mathbb{P}}/G\models\varphi(\bar{\tau})\quad\Longleftrightarrow\quad\exists p\in G\ \mathcal{M}\models p\Vdash\varphi(\bar{\tau}).
\]
For bounded formulae, $B\utilde{\Sigma}_{1}$-genericity suffices.
\end{lem}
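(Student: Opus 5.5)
The plan is to transpose the proof of Lemma \ref{lem:O2HYPTruth} to an arbitrary $\mathcal{M}\models KP$, checking that nothing in it used well-foundedness or standardness of $N$. We argue externally, by induction on the standard formula $\varphi$, taken in the prenex presentation from Lemma \ref{lem:O2ForcingBound}. The induction proves the equivalence simultaneously for $\varphi$ and $\neg\varphi$. Since $k$ and $\varphi$ are standard, this is an ordinary metatheoretic induction of finite length. Lemma \ref{lem:O2ForcingBound} and Fact \ref{fact:Density} are theorems of $KP$, so they hold inside $\mathcal{M}$. Every class of conditions below is therefore definable over $\mathcal{M}$, with parameters from $\mathcal{M}$, by a formula of the stated complexity.

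The atomic cases hold by the definition of $\in^{*}$ and $\approx^{*}$. Conjunction follows from $G$ being a filter, since a common extension of two conditions in $G$ forces both conjuncts. For negation we use the deciding class $D_{\psi}=\{q:q\Vdash\psi\vee q\Vdash\neg\psi\}$. It is dense by the forcing clauses and the density lemma applied inside $\mathcal{M}$. It is $\Delta_{1}$ for bounded $\psi$ and $\Sigma_{\mathsf{b}(k)}$ in general, so $G$ meets it. Compatibility inside $G$ then rules out both $\psi$ and $\neg\psi$ being forced by members of $G$, which gives the negation clause.

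Bounded quantifiers $\forall y\in\dot{z}$ are handled with the pair of classes $D_{0},D_{1}$ exactly as before. The witness clause of syntactic forcing for $\neg(\dot{y}\in\dot{z}\to\psi(\dot{y}))$ ranges over $\mathsf{dom}(\dot{z})$, which is a set of $\mathcal{M}$, so $\Sigma_{1}$-genericity suffices. For unbounded $\forall y\,\psi$, the union $D_{0}\cup D_{1}$ is dense and $\Sigma_{\mathsf{b}(k)}$. Its elements are conditions forcing $\forall y\,\psi$, or conditions forcing $\neg\psi(\dot{y})$ for some name $\dot{y}\in M$; the existential quantifier over names is absorbed into the $\Sigma_{\mathsf{b}(k)}$ bound. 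Suppose first that $\mathcal{M}^{\mathbb{P}}/G\models\forall y\,\psi$. If the filter met $D_{1}$ at some $\dot{y}$, the inductive hypothesis would give $\neg\psi(\dot{y})$ in the quotient, a contradiction; so $G$ meets $D_{0}$. Conversely, if $p\in G$ forces $\forall y\,\psi$, then $p\Vdash\psi(\dot{y})$ holds in $\mathcal{M}$ for every $\mathcal{M}$-name $\dot{y}$. Every element of the quotient is such a name, so the inductive hypothesis gives $\psi$ throughout. Existentials reduce to $\neg\forall\neg$. The Boolean combinations and the passage from $\varphi$ to its prenex form use the logical laws of forcing, which are provable in $KP$.

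The main obstacle is nonstandardness. Tuples of names in quantifier blocks, and the finite sequences used in Lemma \ref{lem:O2ForcingBound}, are internal objects whose lengths may be nonstandard. To keep this under control, the external induction will be only over the standard syntax of $\varphi$. Any internal finite-sequence manipulation will be delegated to $KP$ (via Full Set Induction inside $\mathcal{M}$), and truth will be asserted only for standard formulae, never through an internal satisfaction predicate. A second check is that no step used well-foundedness of $\mathcal{M}$; unlike Lemma \ref{lem:FullSetInduction}, the argument above is purely about $G$ meeting definable dense classes. We must also ensure that genericity is applied only to classes dense below the top condition. Where density holds only below some $p\in G$, we replace the class $D$ by $D\cup\{q:q\perp p\}$, which is dense and of the same complexity.
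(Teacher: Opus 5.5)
Your proposal is correct and is essentially the paper's own argument: the paper proves this lemma by noting that the proof of Lemma \ref{lem:O2HYPTruth} carries over unchanged, since the external induction on standard formulae uses only $KP$, the forcing clauses, the density lemma and the stated genericity, and never uses external well-foundedness of the ground. Your additional remarks, namely restricting the bounded witness to $\mathsf{dom}(\dot{z})$ and enlarging a class that is dense only below $p$ by $\{q:q\perp p\}$, are harmless refinements along the same route.
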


The proof is the same as in Section \ref{subsec:Solidity-failure}:
the induction is external on standard formulae and uses only $KP$,
the forcing clauses, the density lemma and the indicated genericity.
It does not require the ground model to be externally well-founded.

As before, we treat Full Set Induction separately. Here definability
of the generic is used to apply Full Set Induction in the ground model.
\begin{lem}
Suppose $\mathcal{M}\models KP$, $\mathbb{P}\in M$, and $G\subseteq\mathbb{P}$
is definable over $\mathcal{M}$ and $B\utilde{\Sigma}_{1}$-generic
over $\mathcal{M}$. Then $\mathcal{M}^{\mathbb{P}}/G$ satisfies
Full Set Induction.\label{lem:tightFullSetInduction}
\end{lem}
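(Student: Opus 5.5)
Since $\mathcal{M}$ need not be externally well-founded, the external argument of Lemma \ref{lem:FullSetInduction} is unavailable. Instead we internalize it, using definability of $G$ to turn a counterexample to Full Set Induction in $\mathcal{M}^{\mathbb{P}}/G$ into a definable class in $\mathcal{M}$ with no $\in$-minimal element. Fix a formula $\varphi(x,\bar{a})$ and names $\bar{\sigma}$ representing the parameters. Because $G$, $\in^{*}$ and $\approx^{*}$ are all definable over $\mathcal{M}$ (parameters allowed, e.g.\ $\mathbb{P}$), satisfaction of the fixed formula $\varphi$ in the interpreted quotient is given by a fixed formula $\varphi^{*}(\dot{x},\bar{\sigma})$ over $\mathcal{M}$, obtained by relativizing $\varphi$ to the interpretation. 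Suppose $\mathcal{M}^{\mathbb{P}}/G\models\exists x\,\varphi(x,\bar{\sigma})$, and consider the class
\[
C=\{\dot{x}\in\mathcal{M}^{\mathbb{P}}\mid\mathcal{M}\models\varphi^{*}(\dot{x},\bar{\sigma})\},
\]
which is nonempty and definable over $\mathcal{M}$.

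Apply Full Set Induction in $\mathcal{M}$, in the form of an $\in$-minimal element for a definable class, to the class of $\mathcal{M}$-ranks of members of $C$. Rank is $\Sigma_{1}$-definable in $KP$, so this class is definable. We obtain a name $\dot{x}_{0}\in C$ of minimal $\mathcal{M}$-rank. We then claim that $[\dot{x}_{0}]$ is $\in^{*}$-minimal among the elements satisfying $\varphi$. Suppose instead that $\dot{y}\in^{*}\dot{x}_{0}$ and $\varphi^{*}(\dot{y},\bar{\sigma})$ holds. Then the observation from the proof of Lemma \ref{lem:FullSetInduction} yields $\dot{z}\in\mathsf{dom}(\dot{x}_{0})$ with $\dot{z}\approx^{*}\dot{y}$. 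Since $\approx^{*}$ is a congruence for the interpreted structure, $\varphi^{*}(\dot{z},\bar{\sigma})$ holds, so $\dot{z}\in C$. But $\dot{z}$ has strictly smaller $\mathcal{M}$-rank than $\dot{x}_{0}$, contradicting minimality.

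That observation needs to be re-checked in the present setting, so we verify it separately. The set $D$ of conditions $q\leq p$ forcing $\dot{y}=\dot{z}$ for some $\langle\dot{z},r\rangle\in\dot{x}_{0}$ with $q\leq r$ is dense below $p$. This follows from the forcing clause for membership together with the density lemma (Fact \ref{fact:Density}), both provable in $KP$. Moreover $D$ is $\Delta_{1}$ with parameters, so the class $D\cup\{q\mid q\perp p\}$ is dense and $\Delta_{1}$, and therefore $G$ meets it by $B\utilde{\Sigma}_{1}$-genericity. Since $p\in G$, the meeting point lies in $D$. None of this uses external well-foundedness.

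The main obstacle is the congruence step. Its proof is an external induction on the fixed standard formula $\varphi$, using that $\approx^{*}$ is an equivalence relation and a congruence for $\in^{*}$, which is noted after the definition of $N^{\mathbb{P}}/G$. Some care is also needed so that the minimal-rank choice only uses the Full Set Induction instance for the definable class $\varphi^{*}$, and does not require any Separation beyond what $KP$ provides. A choice of minimal ordinal rank among a definable class of ordinals is exactly such an instance.
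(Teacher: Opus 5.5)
Your proposal is correct and follows essentially the same route as the paper. Definability of $G$ makes the class $C$ of names satisfying the formula in $\mathcal{M}^{\mathbb{P}}/G$ definable over $\mathcal{M}$, Full Set Induction in $\mathcal{M}$ gives a name of minimal $\mathcal{M}$-rank, and the $\Delta_{1}$ class from the membership forcing clause, together with the congruence of $\approx^{*}$, yields a lower-rank name in $C$; your direct presentation and the explicit step of adjoining the conditions incompatible with $p$ are only cosmetic refinements of the paper's argument.
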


\begin{proof}
Let $\mathcal{N}=\mathcal{M}^{\mathbb{P}}/G$ and suppose some instance
of Full Set Induction fails in $\mathcal{N}$. For a formula witnessing
this failure, let $C$ be the class of names satisfying that formula
in $\mathcal{N}$. Since $G$ is definable over $\mathcal{M}$, the
class $C$ is definable over $\mathcal{M}$. By Full Set Induction
in $\mathcal{M}$, choose $\dot{x}\in C$ of minimal $\mathcal{M}$-rank.
Failure of Set Induction gives $\dot{y}\in C$ with $\dot{y}\in^{*}\dot{x}$.
Choose $p\in G$ with $p\Vdash\dot{y}\in\dot{x}$. By the forcing
clause for membership, the class of $q\leq p$ for which there is
some $\dot{z}\in\mathsf{dom}(\dot{x})$ with $q\Vdash\dot{y}=\dot{z}$
is dense below $p$ and $\Delta_{1}$-definable. Hence $G$ meets
it. For the resulting $\dot{z}$, we have $\dot{z}\approx^{*}\dot{y}$
and therefore $\dot{z}\in C$, while
\[
\mathcal{M}\models\mathsf{rank}(\dot{z})<\mathsf{rank}(\dot{x}).
\]
This contradicts the choice of $\dot{x}$.
\end{proof}
The verification of the rest of the axioms of $KP$ then follows easily.
\begin{cor}
If $\mathcal{M}\models S_{0}$ and $G^{\mathcal{M}}$ is the canonical
generic from Lemma \ref{lem:tightUniformGeneric}, then\label{cor:tightKPPreservation}
\[
\mathcal{M}^{Add(\omega,1)}/G^{\mathcal{M}}\models KP.
\]
\end{cor}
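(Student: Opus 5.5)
The plan is to mirror the proof of Lemma \ref{lem:genSatKP}, replacing the well-founded ground model $N$ by an arbitrary $\mathcal{M}\models S_{0}$. We use Lemma \ref{lem:O2GeneralTruth} in place of Lemma \ref{lem:O2HYPTruth}, and Lemma \ref{lem:tightFullSetInduction} in place of Lemma \ref{lem:FullSetInduction}. Write $\mathcal{N}=\mathcal{M}^{Add(\omega,1)}/G^{\mathcal{M}}$. By Lemma \ref{lem:tightUniformGeneric}, $G^{\mathcal{M}}$ is defined over $\mathcal{M}$ by the parameter-free formula $\gamma$. It is also $B\utilde{\Sigma}_{\mathsf{b}(3)}$-generic over $\mathcal{M}$, and $Add(\omega,1)^{\mathcal{M}}\in M$ since $\mathcal{M}\models KP$.

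First, handle every axiom instance other than Full Set Induction. Fix a standard such instance $\varphi$. By Lemma \ref{lem:KP-forcing-axioms}, $KP\vdash 1\Vdash_{\mathbb{P}}\varphi$ for every poset $\mathbb{P}$. Since $S_{0}\supseteq KP$, we get $\mathcal{M}\models 1\Vdash\varphi$. As noted in the proof of Lemma \ref{lem:genSatKP}, the basic axioms and the $\Delta_{0}$-Separation instances lie below $\Pi_{3}$, and each $\Delta_{0}$-Collection instance is a $\Pi_{3}$ sentence. Hence $\varphi$ is a standard $B\Sigma_{3}$ sentence, with no names, or only the parameters occurring as universally quantified variables. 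Since $1\in G^{\mathcal{M}}$, because $G^{\mathcal{M}}$ is a nonempty filter, Lemma \ref{lem:O2GeneralTruth} with $k=3$ gives $\mathcal{N}\models\varphi$.

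Second, Full Set Induction. The genericity level $\mathsf{b}(3)\geq1$ implies $B\utilde{\Sigma}_{1}$-genericity, and $G^{\mathcal{M}}$ is definable over $\mathcal{M}$. Lemma \ref{lem:tightFullSetInduction} therefore applies directly and gives Full Set Induction in $\mathcal{N}$.

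The only point needing care is the first step. One must check that the truth lemma is being applied to standard formulae only. This holds because the axiom instances are fixed external sentences. One must also check that no well-foundedness of $\mathcal{M}$ is used. That is exactly what Lemmas \ref{lem:O2GeneralTruth} and \ref{lem:tightFullSetInduction} were formulated to avoid. With both steps in place, $\mathcal{N}\models KP$.
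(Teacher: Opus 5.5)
Your proposal is correct and is essentially the paper's own proof: the non-induction axioms of $KP$ are forced (Lemma \ref{lem:KP-forcing-axioms}), are $B\Sigma_{3}$, and transfer to the quotient by Lemma \ref{lem:O2GeneralTruth} using the $B\utilde{\Sigma}_{\mathsf{b}(3)}$-genericity of the canonical generic, while Full Set Induction comes from Lemma \ref{lem:tightFullSetInduction} using the definability of $G^{\mathcal{M}}$. Your remarks that $1\in G^{\mathcal{M}}$, and that only standard sentences are fed to the truth lemma, simply make explicit details the paper leaves implicit.
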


\begin{proof}
The proof of $KP$-preservation in Section \ref{subsec:Solidity-failure}
applies to all axioms other than Full Set Induction. Those axioms
are forced and are $B\Sigma_{3}$ sentences. The canonical generic
is $B\utilde{\Sigma}_{\mathsf{b}(3)}$-generic, so the truth lemma
for arbitrary models transfers them to the quotient. Full Set Induction
is supplied by the preceding lemma.
\end{proof}
Next we show that one of the isomorphisms required to witness bi-interpretability
is suitably definable.
\begin{lem}
Let $\mathcal{M}\models S_{0}$ and let
\[
\mathcal{Q}=\mathcal{M}^{Add(\omega,1)}/G^{\mathcal{M}}.
\]
There is an isomorphism
\[
\eta^{\mathcal{M}}:\mathcal{M}\cong L^{\mathcal{Q}}
\]
 which is uniformly definable over $\mathcal{M}$.\label{lem:tightLRecovery}
\end{lem}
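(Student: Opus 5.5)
The plan is to re-run the argument of Lemma \ref{DefiningLiso} internally, replacing every appeal to external well-foundedness of $N$ with Full Set Induction in $\mathcal{M}$. The generic $G^{\mathcal{M}}$ is given by the fixed formula $\gamma$. Atomic forcing is uniformly $\Delta_{1}$. Hence the relation
\[
xE\dot{y}\ \Leftrightarrow\ \exists p\in G^{\mathcal{M}}\ (p\Vdash\check{x}=\dot{y})
\]
is defined over $\mathcal{M}$ by a single parameter-free formula, independent of $\mathcal{M}$. It represents $\eta^{\mathcal{M}}(x)=[\check{x}]_{G}$. The atomic forcing facts for check names, provable in $KP$, show that $\eta^{\mathcal{M}}$ is well defined, injective, and an $\in$-embedding into $\mathcal{Q}$.

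Next I will show that $\mathcal{M}$ and $\mathcal{Q}$ have the same ordinals, in the sense that every ordinal of $\mathcal{Q}$ is $\approx^{*}$ some $\check{\alpha}$. Fix a name $\dot{x}$ with $\mathcal{Q}\models$ ``$\dot{x}$ is an ordinal''. I will prove, by Full Set Induction in $\mathcal{M}$ on $\mathsf{rank}(\dot{x})$, that $\dot{x}\approx^{*}\check{\beta}$ for some $\beta\leq\mathsf{rank}^{\mathcal{M}}(\dot{x})$. This is legitimate because the class of names that are ordinals in $\mathcal{Q}$ is $\mathcal{M}$-definable, via the truth lemma (Lemma \ref{lem:O2GeneralTruth}) or simply via definability of $G^{\mathcal{M}}$. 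For the induction step, each $\in^{*}$-element of $\dot{x}$ is $\approx^{*}$ some $\dot{z}\in\mathsf{dom}(\dot{x})$, as in the proof of Lemma \ref{lem:tightFullSetInduction}. By induction each such $\dot{z}$ that is an ordinal is $\approx^{*}$ some $\check{\beta}_{\dot{z}}$. Collecting these $\beta$'s requires $\Sigma_{1}$-Collection in $\mathcal{M}$, but the relation ``$\dot{z}\approx^{*}\check{\beta}$'' is only $\Delta_{\mathsf{b}(0)+2}$-ish definable, so Collection might not apply directly. I would instead argue without collecting: $\dot{x}$ is forced to be a transitive set of ordinals each below $\check{\rho}$ with $\rho=\mathsf{rank}(\dot{x})$. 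A bounded statement is forced, so some $p\in G$ forces $\dot{x}\subseteq\check{\rho}$. Then $\dot{x}\approx^{*}\check{\beta}$ where $\beta$ is the least $\beta\leq\rho$ with $\check{\beta}\notin^{*}\dot{x}$. This $\beta$ exists by Full Set Induction, using the $\mathcal{M}$-definable class, and it uses only the $\Delta_{1}$-decidability of each $\check{\alpha}\in\dot{x}$ by $G$.

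With ordinals identified, I will prove internally in $\mathcal{M}$, by Full Set Induction on $\alpha$, that
\[
\{[\check{x}]_{G}:x\in L_{\alpha}^{\mathcal{M}}\}=L_{\alpha}^{\mathcal{Q}}.
\]
The point is that $\eta^{\mathcal{M}}$ maps the $\mathcal{M}$-sequence $\langle L_{\beta}\rangle$ onto the $\mathcal{Q}$-sequence. This follows because $KP$ forces ``$\check{L}_{\alpha}^{\mathcal{M}}=L_{\check{\alpha}}$'', the map $\alpha\mapsto L_{\alpha}$ being $\Delta_{1}$ and absolute between $\mathcal{M}$ and its forcing extension. So for each $\alpha$, the $\Sigma_{1}$ statement ``$\check{L_{\alpha}}=L_{\check{\alpha}}$'' is forced by $1$ and transfers by the truth lemma. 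The transferred equality holds in $\mathcal{Q}$ without any external induction, which also sidesteps nonstandard levels. Combining this with the ordinal identification, every element of $L^{\mathcal{Q}}$ lies in some $L_{\check{\alpha}}^{\mathcal{Q}}=[\check{L_{\alpha}}]$, and its elements are represented by check names. Hence $\mathsf{ran}(\eta^{\mathcal{M}})=L^{\mathcal{Q}}$.

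The main obstacle I expect is the ordinal identification in nonstandard or ill-founded $\mathcal{M}$. There the external rank argument of Lemma \ref{DefiningLiso} fails, and one must carefully check that each induction is over an $\mathcal{M}$-definable class. This is where the definability of $G^{\mathcal{M}}$ and Full Set Induction are essential. Uniformity then follows because every formula used ($\gamma$, atomic forcing, $E$) is fixed in advance.
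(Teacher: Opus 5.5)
Your proposal is correct, but it departs from the paper's proof at two points.

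\textbf{Ordinals.} The paper uses the same rank bound, $p\Vdash\dot\alpha\le\check\delta$. It then meets the $\Delta_1$ class $D=\{q\le p:\exists\beta<\delta+1\ q\Vdash\dot\alpha=\check\beta\}$, which is dense below $p$ by the membership clause. This produces the check name in one genericity step. Your least-$\beta$ argument also works. It applies Full Set Induction to the definable class $\{\alpha\le\rho:\check\alpha\notin^{*}\dot x\}$. You then still have to check in $\mathcal{Q}$ that $\dot x=\check\beta$, using transitivity and trichotomy of ordinals and Extensionality, all available since $\mathcal{Q}\models KP$. You leave that verification implicit, and the abandoned Collection-based attempt should simply be dropped.

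\textbf{Constructible hierarchy.} The paper proves $\eta^{\mathcal{M}}[L_\alpha^{\mathcal{M}}]=L_\alpha^{\mathcal{Q}}$ by Full Set Induction on $\alpha$. Its successor step is ``definable subsets are preserved under isomorphism''. You instead obtain $[\check{L_\alpha}]_{G}=L_{[\check\alpha]}^{\mathcal{Q}}$ for every $\alpha$ at once from the $\Sigma_1$ definition of $y=L_\alpha$. A check-name witness transfers its $\Delta_0$ matrix by the bounded truth lemma, and $KP$ in $\mathcal{Q}$ proves that definition functional. That is the precise content of your absoluteness remark, and you should state it this way rather than as a bare appeal to absoluteness.

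\textbf{Comparison.} Your route needs no induction on $\alpha$. It also makes explicit what the paper's successor step uses tacitly when $\mathcal{M}$ is nonstandard: the internal satisfaction relations defining $\mathrm{Def}(L_\alpha)$, including nonstandard formula codes, must correspond under $\eta^{\mathcal{M}}$. The paper's route has the advantage of repeating the argument of Lemma \ref{DefiningLiso} essentially verbatim.
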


\begin{proof}
We first show that every ordinal of $\mathcal{Q}$ is represented
by a canonical ordinal name. Suppose $\mathcal{Q}\models\dot{\alpha}\in Ord.$
By the $\Delta_{0}$ truth lemma, choose $p\in G^{\mathcal{M}}$ such
that $(p\Vdash\dot{\alpha}\in Ord)^{\mathcal{M}}$. Working in $\mathcal{M}$,
the usual rank-bounding argument for ordinal names\footnote{This rank bound is provable in $KP$. One argues by induction on the
rank of the name: if $p\Vdash\dot{x}\in Ord$, then $KP$ constructs
an ordinal $\delta$ bounding all possible ordinal values of $\dot{x}$
and proves $p\Vdash\dot{x}\leq\check{\delta}$. No genericity is used
in this rank-bound itself; the genericity of $G$ is used immediately
afterward to meet the resulting definable dense class.} gives some $\delta\in Ord^{\mathcal{M}}$ with $p\Vdash\dot{\alpha}\leq\check{\delta}.$
Hence $p\Vdash\dot{\alpha}\in\check{\delta}+1$. By the syntactic
forcing clause for membership,
\[
D=\{q\leq p:\exists\beta<\delta+1\ q\Vdash\dot{\alpha}=\check{\beta}\}
\]
is dense below $p$. This class is $\Delta_{1}$-definable over $\mathcal{M}$,
so $G^{\mathcal{M}}$ meets it. Thus every ordinal of $\mathcal{Q}$
is represented by $\check{\beta}$ for some $\beta\in Ord^{\mathcal{M}}$.
The converse is immediate, so the canonical-name map identifies the
ordinals of $\mathcal{M}$ and $\mathcal{Q}$. Define $\eta^{\mathcal{M}}$
by the relation
\[
\eta^{\mathcal{M}}(x)=[\check{x}]_{G^{\mathcal{M}}}.
\]
Since $G^{\mathcal{M}}$ is defined by the fixed formula $\gamma$,
this relation is uniformly definable over $\mathcal{M}$. The atomic
forcing facts show that it is an $\in$-embedding. Using the preceding
identification of ordinals, Full Set Induction in $\mathcal{M}$ gives,
for every $\alpha\in Ord^{\mathcal{M}}$,
\[
\eta^{\mathcal{M}}[L_{\alpha}^{\mathcal{M}}]=L_{\alpha}^{\mathcal{Q}}.
\]
The successor step follows because definable subsets are preserved
under isomorphism, and the limit step is immediate. Since $\mathcal{M}\models V=L$
and the two structures have the same ordinals, the range of $\eta^{\mathcal{M}}$
is exactly $L^{\mathcal{Q}}$.
\end{proof}
Then, analogously to the previous subsection, we see that models of
$S_{0}$ can interpret models of $S_{1}$.
\begin{cor}
For every $\mathcal{M}\models S_{0}$, the quotient
\[
\mathcal{M}^{Add(\omega,1)}/G^{\mathcal{M}}
\]
 is a model of $S_{1}$. Hence $S_{0}$ interprets $S_{1}$.\label{cor:tightS0InterpretsS1}
\end{cor}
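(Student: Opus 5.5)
The plan is to fix $\mathcal{M}\models S_{0}$ and let $\mathcal{Q}=\mathcal{M}^{Add(\omega,1)}/G^{\mathcal{M}}$. By Corollary \ref{cor:tightKPPreservation}, $\mathcal{Q}\models KP$, so what remains is to verify $\Psi_{\gamma}$ in $\mathcal{Q}$. Let $\dot{c}$ be the canonical name $\{\langle\check{p},p\rangle:p\in Add(\omega,1)\}$ for the generic. In $\mathcal{Q}$ it represents a set $g$. I would check the four clauses of $\Psi_{\gamma}$ for this $g$, transporting everything about $L^{\mathcal{Q}}$ back to $\mathcal{M}$ along the isomorphism $\eta^{\mathcal{M}}:\mathcal{M}\cong L^{\mathcal{Q}}$ of Lemma \ref{lem:tightLRecovery}. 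Since $\gamma$ and $\eta^{\mathcal{M}}$ are fixed formulae, the map $\mathcal{M}\mapsto\mathcal{Q}$ is a single parameter-free interpretation. This gives the final clause ``$S_{0}$ interprets $S_{1}$'' once the model-wise claim is established.

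For the first clause, I would note that $\mathcal{Q}\models\check{p}\in\dot{c}$ iff $p\in G^{\mathcal{M}}$, by the atomic truth lemma and the usual fact that $q\Vdash\check{p}\in\dot{c}$ iff $q\leq p$ for conditions extending $p$. Because $\eta^{\mathcal{M}}$ is an isomorphism onto $L^{\mathcal{Q}}$, we have $L^{\mathcal{Q}}\models\gamma(\eta(p))$ iff $\mathcal{M}\models\gamma(p)$, and $Add(\omega,1)^{L^{\mathcal{Q}}}=\eta[Add(\omega,1)^{\mathcal{M}}]$. So $g=\{p\in Add(\omega,1)^{L}:L\models\gamma(p)\}$ holds in $\mathcal{Q}$.

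The third clause, that $L$ has no admissible ordinals, transfers through $\eta$ from $S_{0}$. For genericity, a dense $D\subseteq Add(\omega,1)$ that is a set in $L^{\mathcal{Q}}$ is $\eta(D')$ for some dense $D'\in\mathcal{M}$. Then $D'$ is $\Delta_{0}$-definable with a parameter, so $G^{\mathcal{M}}$ meets it. The filter property of $g$ transfers in the same way.

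The step I expect to be the main obstacle is $V=L[g]$. First I would interpret $L[g]$ internally in $\mathcal{Q}$ via the $L_{\alpha}[g]$ hierarchy, which $KP$ supports. Then I would show by induction on the rank of names, using Full Set Induction in $\mathcal{M}$ (legitimate because $G^{\mathcal{M}}$ is definable), that every $[\dot{x}]_{G}$ lies in $L[g]^{\mathcal{Q}}$. The key point is that $[\dot{x}]_{G}=\{[\dot{y}]_{G}:\exists p\in g\,\langle\dot{y},p\rangle\in\dot{x}\}$, where the quotient's version of this set is the evaluation of $\dot{x}$ by $g$. Also, $\dot{x}$ itself lies in $L$ via $\eta$. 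Hence the value of $\dot{x}$ is obtained by a $\Sigma_{1}$ recursion, namely $\mathrm{val}(\dot{x},g)$ computed inside $\mathcal{Q}$ from $\eta(\dot{x})\in L$ and $g$. This recursion lands in $L[g]$.

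The delicate part is proving that $\mathcal{Q}$ thinks $\mathrm{val}(\eta(\dot{x}),g)=[\dot{x}]_{G}$. I would handle this by showing that the $\Sigma_{1}$ statement ``$\mathrm{val}(\check{\dot{x}},\dot{c})=\dot{x}$'' is forced by $1$ in $\mathcal{M}$, as a standard $KP$ fact about forcing. Then I would apply the truth lemma, Lemma \ref{lem:O2GeneralTruth}, which needs only $B\utilde{\Sigma}_{\mathsf{b}(1)}$-genericity, and this is available. Uniqueness of $g$ is then immediate from the first clause.
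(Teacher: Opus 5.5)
Your proposal is correct and follows essentially the paper's own argument: $KP$ comes from Corollary~\ref{cor:tightKPPreservation}, the no-admissibles clause, the identification of $g$ with the $\gamma$-defined filter and the genericity clause are all transported along $\eta^{\mathcal{M}}$, and $V=L[g]$ comes from the valuation recursion. Your treatment of that last step, forcing $\mathrm{val}(\check{\dot{x}},\dot{c})=\dot{x}$ and transferring it with Lemma~\ref{lem:O2GeneralTruth} at level $\mathsf{b}(1)$, just spells out what the paper compresses into ``the standard valuation recursion.''
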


\begin{proof}
Let $\mathcal{Q}$ be the quotient. By Corollary \ref{cor:tightKPPreservation},
$\mathcal{Q}\models KP$. By Lemma \ref{lem:tightLRecovery}, $L^{\mathcal{Q}}$
is isomorphic to $\mathcal{M}$ and therefore has no admissible ordinals.
Let $g$ be the interpretation in $\mathcal{Q}$ of the standard name
for the generic filter. Under the isomorphism $\eta^{\mathcal{M}}$,
this is exactly the subset of $Add(\omega,1)^{L^{\mathcal{Q}}}$ defined
over $L^{\mathcal{Q}}$ by $\gamma$. It is generic over $L^{\mathcal{Q}}$,
and the standard valuation recursion gives
\[
\mathcal{Q}=L^{\mathcal{Q}}[g].
\]
Thus $\mathcal{Q}\models\Psi_{\gamma}$.
\end{proof}
Finally, we show that the other isomorphism, required for the bi-interpretation,
is suitably definable.
\begin{lem}
If $\mathcal{N}\models S_{1}$, then $L^{\mathcal{N}}\models S_{0}$.
Moreover, if $g$ is the unique set specified by $\Psi_{\gamma}$,
there is a uniformly definable isomorphism\label{CohenIso}
\[
\nu^{\mathcal{N}}:(L^{\mathcal{N}})^{Add(\omega,1)}/g\cong\mathcal{N}.
\]
\end{lem}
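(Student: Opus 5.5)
The plan has two parts. First I show that $L^{\mathcal{N}}\models S_{0}$. Then I define $\nu^{\mathcal{N}}$ as the valuation map and check that it is an isomorphism.

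\textbf{Step 1: $L^{\mathcal{N}}\models S_{0}$.} Since $\mathcal{N}\models KP$, the class $L^{\mathcal{N}}$ is definable. It is a model of $KP+V=L$; Full Set Induction relativizes because $L$ is a definable transitive class. The clause of $\Psi_{\gamma}$ saying that $L$ has no admissible ordinals then gives $L^{\mathcal{N}}\models V=L_{\omega_{1}^{CK}}$, in the internal reading fixed for $S_{0}$. Hence $L^{\mathcal{N}}\models S_{0}$. Two consequences follow from this:
\begin{itemize}
\item Lemma \ref{lem:tightUniformGeneric} applies to $L^{\mathcal{N}}$. The set $g$ provided by $\Psi_{\gamma}$ is exactly $G^{L^{\mathcal{N}}}$, so $g$ is $B\utilde{\Sigma}_{\mathsf{b}(3)}$-generic over $L^{\mathcal{N}}$.
\item By Corollary \ref{cor:tightKPPreservation}, the quotient $\mathcal{Q}=(L^{\mathcal{N}})^{Add(\omega,1)}/g$ satisfies $KP$.
\end{itemize}

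\textbf{Step 2: the map.} Inside $\mathcal{N}$ I define the valuation
\[
\mathsf{Val}(\dot{x},g)=\{\mathsf{Val}(\dot{y},g):\exists p\in g\ \langle\dot{y},p\rangle\in\dot{x}\},
\]
for names $\dot{x}\in L^{\mathcal{N}}$. This is a $\Delta_{1}$ recursion on name rank, so it is available in $KP$ via $\Sigma$-recursion. Set $\nu^{\mathcal{N}}([\dot{x}]_{g})=\mathsf{Val}(\dot{x},g)$. The definition uses only $L$ and the formula defining $g$ from $\Psi_{\gamma}$, so it is uniform in $\mathcal{N}$.

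\textbf{Step 3: well-defined, $\in$-preserving, injective.} I need
\[
\dot{x}\approx^{*}\dot{y}\iff\mathsf{Val}(\dot{x})=\mathsf{Val}(\dot{y})\quad\text{and}\quad\dot{x}\in^{*}\dot{y}\iff\mathsf{Val}(\dot{x})\in\mathsf{Val}(\dot{y}).
\]
\begin{itemize}
\item Forward direction: if some $p\in g$ forces the atomic statement, it holds in $\mathcal{N}$. I prove this by Full Set Induction in $\mathcal{N}$ on pairs of names, unfolding the syntactic clauses for atomic forcing. The density sets that appear are $\Delta_{1}$ over $L^{\mathcal{N}}$, so $g$ meets them.
\item Converse direction: if the atomic statement holds in $\mathcal{N}$, some condition in $g$ forces it. For this I use the dense deciding set for the atomic formula, which $g$ meets. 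If the condition met forced the negation, the forward direction would make the negation true in $\mathcal{N}$, a contradiction.
\end{itemize}
This is the standard proof of the atomic truth lemma, now run with the valuation computed in $\mathcal{N}$ rather than in an external model. Because atomic forcing is $\Delta_{1}$, the induction can be carried out inside $\mathcal{N}$ and does not need external well-foundedness.

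\textbf{Step 4: surjective.} This follows from the clause $V=L[g]$ of $\Psi_{\gamma}$, read as saying that every set is $\mathsf{Val}(\dot{x},g)$ for some name $\dot{x}\in L$.

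\textbf{Main obstacle.} I expect Step 4 to need the most care. If $V=L[g]$ is instead phrased as ``every set lies in some $L_{\alpha}[g]$'', then I must show by induction on $\alpha$ that every element of $L_{\alpha}[g]$ has a name in $L$. At successor stages, a set definable over $L_{\alpha}[g]$ gets a name built from the forcing relation over $L_{\alpha}$ restricted to names of rank below $\alpha$. Forming that name requires $\Delta_{0}$-Separation and $\Sigma_{1}$-Collection in $L^{\mathcal{N}}$, together with the bounded truth lemma, which needs only $B\utilde{\Sigma}_{1}$-genericity. I would first try to sidestep this entirely by fixing the name-based reading of $V=L[g]$ in $\Psi_{\gamma}$, which Corollary \ref{cor:tightS0InterpretsS1} already verifies via the valuation recursion.
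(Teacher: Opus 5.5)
Your proposal follows the paper's own route. $KP+V=L$ in $L^{\mathcal{N}}$, together with the no-admissibles clause, gives $S_{0}$. The map $\nu^{\mathcal{N}}$ is the valuation map, and the atomic forcing theorem gives well-definedness, injectivity and $\in$-preservation. Surjectivity comes from $V=L[g]$ read as ``every set is the value of an $L$-name'', which is the reading the paper also uses and the one verified in Corollary \ref{cor:tightS0InterpretsS1}. Uniformity comes from $g$ being defined over $L$ by $\gamma$. Your point that the atomic induction must be run as Full Set Induction inside a possibly ill-founded $\mathcal{N}$ is a detail the paper leaves implicit.

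One step in Step 3 does not follow as written. You prove the forward direction only for atomic statements, but your converse argument applies it to a condition in $g$ forcing a \emph{negated} atomic statement. That case needs more. To get $\mathsf{Val}(\dot{x})\notin\mathsf{Val}(\dot{y})$ from $q\in g$ and $q\Vdash\neg(\dot{x}\in\dot{y})$, you must exclude some $\langle\dot{z},r\rangle\in\dot{y}$ with $r\in g$ and $\mathsf{Val}(\dot{z})=\mathsf{Val}(\dot{x})$. For this you need, at the lower pair $(\dot{x},\dot{z})$, either the converse direction or the forward direction for a negated statement, and your atomic forward induction supplies neither. As organized, the argument is therefore circular.

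The standard repair is to prove the converse by its own induction on pairs of names.
\begin{itemize}
\item For $\in$: use the converse hypothesis for the equality with a member-name.
\item For $=$: meet the deciding class for $\dot{x}=\dot{y}$. If the condition met forces $\dot{x}\neq\dot{y}$, the forcing clause for equality makes the following $\Delta_{1}$ class dense below it: conditions $q$ for which some $\langle\dot{w},s\rangle\in\dot{x}$ has $q\leq s$ and $q\Vdash\dot{w}\notin\dot{y}$, or symmetrically. Meet this class. Then the converse hypothesis for $\dot{w}\in\dot{y}$, together with compatibility in $g$, gives a contradiction.
\end{itemize}
Alternatively, run the forward induction on atomic and negated atomic statements simultaneously, after which your deciding-class argument for the converse works as stated. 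With either change your proof is complete.
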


\begin{proof}
The constructible universe of a model of $KP$ again satisfies $KP$.
By $\Psi_{\gamma}$, $L^{\mathcal{N}}$ has no admissible ordinals,
and of course it satisfies $V=L$. Hence $L^{\mathcal{N}}\models S_{0}$.
The same axiom says that $g$ is the canonical $Add(\omega,1)^{L^{\mathcal{N}}}$-generic
filter and that $\mathcal{N}=L^{\mathcal{N}}[g]$. Let $\nu^{\mathcal{N}}$
be induced by the familiar valuation function:\footnote{See Chapter VII of \citep{KunenST}.}
\[
\nu^{\mathcal{N}}([\dot{x}]_{g})=\mathsf{val}_{g}(\dot{x}).
\]
The forcing theorem gives that this is well-defined and preserves
$\in$, and every element of $L^{\mathcal{N}}[g]$ is the value of
an $L^{\mathcal{N}}$-name. Thus $\nu^{\mathcal{N}}$ is an isomorphism.
Since $g$ is defined without parameters from $L^{\mathcal{N}}$ by
the fixed formula $\gamma$, the valuation relation, and hence $\nu^{\mathcal{N}}$,
is uniformly definable over $\mathcal{N}$.
\end{proof}
And now we have our second, more powerful, result.
\begin{thm}
$KP$ is not tight.\label{thm:KPisNotTight}
\end{thm}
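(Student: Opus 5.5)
The plan is to exhibit $S_{0}=KP+V=L_{\omega_{1}^{CK}}$ and $S_{1}=KP+\Psi_{\gamma}$ as two extensions of $KP$ that are bi-interpretable but not deductively equivalent. The interpretations are the obvious ones. The interpretation $I:S_{0}\triangleright S_{1}$ sends a model $\mathcal{M}\models S_{0}$ to $\mathcal{M}^{Add(\omega,1)}/G^{\mathcal{M}}$. This is parameter-free and uniform because $\gamma$ is a fixed formula, and by Corollary \ref{cor:tightS0InterpretsS1} it yields a model of $S_{1}$. The interpretation $J:S_{1}\triangleright S_{0}$ sends $\mathcal{N}\models S_{1}$ to $L^{\mathcal{N}}$, which satisfies $S_{0}$ by the first clause of Lemma \ref{CohenIso}. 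Both are interpretations with domain given by a fixed formula and with $\in$ and $=$ interpreted by fixed formulae, as in the quotient construction.

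Next I check the two composites. For $J\circ I$, starting from $\mathcal{M}\models S_{0}$, the model is $L^{\mathcal{Q}}$ with $\mathcal{Q}=\mathcal{M}^{Add(\omega,1)}/G^{\mathcal{M}}$. Lemma \ref{lem:tightLRecovery} provides $\eta^{\mathcal{M}}:\mathcal{M}\cong L^{\mathcal{Q}}$, uniformly definable over $\mathcal{M}$. So one parameter-free formula witnesses, provably in $S_{0}$, that the composite is isomorphic to the identity interpretation.

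For $I\circ J$, starting from $\mathcal{N}\models S_{1}$, the model is $(L^{\mathcal{N}})^{Add(\omega,1)}/G^{L^{\mathcal{N}}}$. By $\Psi_{\gamma}$, the filter $G^{L^{\mathcal{N}}}$ defined by $\gamma$ inside $L^{\mathcal{N}}$ is exactly the unique $g$. Lemma \ref{CohenIso} then gives the uniformly definable isomorphism $\nu^{\mathcal{N}}$ onto $\mathcal{N}$. Since everything is uniform in the models, these semantic facts can be turned into provability in $S_{0}$ and $S_{1}$ respectively, by completeness. For this step I must confirm that the defining formulae for $\eta$ and $\nu$ do not depend on the model; the cited lemmas assert exactly this.

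Finally, $S_{0}\neq S_{1}$ as deductively closed theories, because $S_{0}\vdash V=L$ while $S_{1}\vdash V\neq L$. The latter holds since $S_{1}$ asserts that $g$ is $Add(\omega,1)^{L}$-generic over $L$, so $g\notin L$: the complement of $g$ inside $L$ would be a dense set missed by $g$, which is a dense set in $L$, and it is met because genericity over $L$ includes meeting such sets. Two theories proving contradictory sentences have different deductive closures. Both are consistent, since $L_{\omega_{1}^{CK}}\models S_{0}$, so $S_{1}$ is consistent too. Hence $KP$ is not tight.

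I expect the main obstacle to be the bookkeeping in $I\circ J$. One must verify that the formula $\gamma$ evaluated in $L^{\mathcal{N}}$ yields precisely $g$, so that the re-interpreted quotient is the one $\nu^{\mathcal{N}}$ handles. One must also check that $\nu$ is definable in $\mathcal{N}$ in a way compatible with the interpretation's coding of names and $\approx^{*}$. This is where I would spend care, using the uniqueness of $g$ and the valuation recursion in $KP$.
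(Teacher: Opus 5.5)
Your proposal is correct and follows essentially the same route as the paper's proof. Both use the pair $S_{0}=KP+V=L_{\omega_{1}^{CK}}$ and $S_{1}=KP+\Psi_{\gamma}$, the quotient interpretation one way and the constructible universe the other, Lemma \ref{lem:tightLRecovery} and Lemma \ref{CohenIso} for the two round-trip isomorphisms, and the contrast between $V=L$ and $V\neq L$ to show the theories differ; your extra remarks on consistency and on why $g\notin L$ only fill in details the paper leaves implicit.
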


\begin{proof}
Let $t$ be the interpretation of $S_{1}$ in $S_{0}$ supplied by
Corollary \ref{cor:tightS0InterpretsS1}, and let $s$ be the interpretation
of $S_{0}$ in $S_{1}$ given by the constructible universe. Lemma
\ref{lem:tightLRecovery} gives a uniformly definable isomorphism
from each $\mathcal{M}\models S_{0}$ to $s\circ t(\mathcal{M})$,
while Lemma \ref{CohenIso} gives a uniformly definable isomorphism
from $t\circ s(\mathcal{N})$ to each $\mathcal{N}\models S_{1}$.
Thus $S_{0}$ and $S_{1}$ are bi-interpretable. They are not the
same theory. Indeed, $S_{0}\vdash V=L$, whereas $S_{1}\vdash V\neq L$,
since the set $g$ asserted by $\Psi_{\gamma}$ is Cohen generic over
$L$. Hence $KP$ is not tight.
\end{proof}

\subsection{Some generalizations}

Our goal now is to generalize these results to suitable theories that
are consistent with $V=L$. The reader will have observed there are
two crucial ingredients to the proofs above: 
\begin{enumerate}
\item We need a definable countable enumeration of the universe in order
to define generic filters and generic extensions. 
\item We need a ground model to generically extend from and which can be
defined in the generic extension. 
\end{enumerate}
If we force out of a model of $V=L$, then (2) is easy to achieve.
For theories consistent with $V=L$, (1) can also be obtained straightforwardly
using elementary fine-structural techniques as long as we don't have
Full Separation and Collection. With that in mind, let us say that
a set $S$ of sentences from $\mathcal{L}_{\in}$ is \emph{quantifier}
\emph{bounded} if there is some $k\in\omega$ such that every sentence
$\varphi\in S$ is $\Sigma_{k}$.

We then observe that whenever we extend $KP$ by a quantifier bounded
set of sentences, we have a theory that can be extended to say that
the universe has a definable countable enumeration.
\begin{lem}
Let $T=U+KP$ be consistent with $V=L$, where $U$ is quantifier
bounded. Then there is a parameter-free formula $\theta(n,x)$ such
that
\[
T+V=L+``\theta\text{ defines a class surjection from }\omega\text{ onto }V''
\]
 is consistent.\footnote{This lemma is closely related to \citet[Theorem 3(a,b)]{enayat2026corrigendumaddendumcategoricitylike}.
Their construction gives a partially elementary hull $K_{n}(\mathcal{M})$
of elements definable by standard $\Sigma_{n}$-formulae and establishes
its idempotence. In a nonstandard model this hull need not be definable
in $\mathcal{M}$, so Full Set Induction does not follow merely by
relativization. Here the hull is definable in the ambient model and
has the inherited membership relation. It therefore satisfies Full
Set Induction, without any assumption of $\omega$-standardness.}\label{lem:def-countable-enumeration-L}
\end{lem}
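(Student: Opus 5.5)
Take a model $\mathcal{M}\models T+V=L$ and fix $k$ such that every sentence of $U$ is $\Sigma_{k}$. Choose $n\geq k+3$ large enough that every $KP$ axiom instance other than Full Set Induction is $\Pi_{n}$, say $n\geq\max(k,3)+2$. We replace $\mathcal{M}$ by its parameter-free $\Sigma_{n}$-definable hull $H$. Concretely, $x\in H$ iff there is a $\Sigma_{n}$ formula $\varphi(v)$ with $\mathcal{M}\models\exists!v\,\varphi(v)\wedge\varphi(x)$. To make the witnesses canonical, we use $V=L$ and the $\Sigma_{1}$-definable well-order $<_{L}$. The element $x$ is then the $<_{L}$-least $v$ satisfying the $\Pi_{n-1}$ matrix of some $\Sigma_{n}$ formula, projected to a coordinate. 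Membership in $H$ is defined in $\mathcal{M}$ using the uniform $\Sigma_{n}$ satisfaction predicate, so $H$ is a class of $\mathcal{M}$ definable without parameters, and we give it the inherited membership relation. The map sending (the code of) the $i$th $\Sigma_{n}$ formula to the element it defines, with default value $\varnothing$, is likewise parameter-free definable in $\mathcal{M}$. The plan is to show three things: $H\prec_{\Sigma_{n}}\mathcal{M}$ (or at least $\Sigma_{n-1}$-elementarity), $H\models T+V=L$, and $H$ itself defines a surjection $\omega\twoheadrightarrow H$ by the same formula.

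The first step is elementarity, via a Tarski--Vaught argument. Suppose $\mathcal{M}\models\exists y\,\psi(y,\bar{a})$ with $\psi\in\Pi_{n-1}$ and $\bar{a}\in H$ defined by $\Sigma_{n}$ formulas. Then the $<_{L}$-least witness is definable by a $\Sigma_{n}$ formula: existentially quantify over the definitions of $\bar{a}$, and express leastness by a bounded quantifier over $<_{L}$-predecessors. Pushing that bounded quantifier through the $\Pi_{n-1}$ part needs $\Sigma_{n-1}$-Collection, which $KP$ alone lacks. I would handle this with a standard trick: define witnesses to be least pairs $\langle y,w\rangle$ where $w$ witnesses the outer existential, so that the leastness clause involves only a $\Delta_{1}$ bounded check against a $\Pi_{n-1}$ property. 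If that still needs collection, I will drop a level and settle for $\Sigma_{n-1}$-elementarity, which is why $n$ is taken a couple of steps above $k$. Given elementarity, $H$ satisfies every $\Pi_{n-1}$ consequence of $\mathcal{M}$, in particular $U$, the non-induction part of $KP$, and $V=L$. The last holds because $V=L$ is $\Pi_{2}$ and $L_{\alpha}$ computations are $\Delta_{1}$ and absolute.

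Full Set Induction in $H$ comes for free. Since $H$ is a definable class with the true $\in$, any formula $\chi$ defining a nonempty class over $H$ relativizes to a formula $\chi^{H}$ over $\mathcal{M}$. Full Set Induction in $\mathcal{M}$ then gives an $\in$-minimal element, and it is minimal in $H$ because $H$ carries the inherited membership. This is exactly the point of the footnote.

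The remaining and main obstacle is idempotence: $H$ must recognise itself as its own hull, so that the same parameter-free formula $\theta$, interpreted in $H$, defines a surjection from $\omega^{H}$ onto $H$. We take $\theta(i,x)$ to say that $i$ codes a $\Sigma_{n}$ formula defining $x$ uniquely, with $x=\varnothing$ otherwise. Here $\omega^{H}=\omega^{\mathcal{M}}$, since every natural number of $\mathcal{M}$, even a nonstandard one, is $\Sigma_{1}$-definable from its own code; but the codes $i$ range over nonstandard formulas too. So I must ensure that truth of $\mathsf{Sat}_{\Sigma_{n}}$ computed in $H$ agrees with $\mathcal{M}$ on $H$, including for nonstandard $i$. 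This amounts to $\Sigma_{n}$-elementarity for the uniform satisfaction predicate itself, and it follows by applying the Tarski--Vaught step to the single standard formula $\mathsf{Sat}_{\Sigma_{n}}$, which is $\Sigma_{n}$, rather than to each formula separately. Granting this, every $x\in H$ is $\theta$-defined in $H$ by its $\mathcal{M}$-index, and conversely $\theta^{H}$ is functional. Hence $H\models T+V=L+$ ``$\theta$ is a surjection from $\omega$ onto $V$''. I expect this uniformity over nonstandard codes, together with the Collection issue in the leastness clause, to be where the real work lies.
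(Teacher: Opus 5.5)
Your overall architecture matches the paper's: a parameter-free Skolem hull built from $<_L$-least witnesses, a Tarski--Vaught argument, Full Set Induction from the inherited $\in$ on a definable class, and the same formula enumerating the hull inside itself. The gap is your choice of $H$ as the class of elements uniquely definable by a single (internal) $\Sigma_n$ formula.

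\textbf{Why closure fails.} Closing such an $H$ under witnesses forces you to compose definitions. The clause $\forall y'<_L y\,\neg\psi(y',\bar a)$ is a bounded universal in front of a $\Sigma_{n-1}$ formula. Without $\Sigma_{n-1}$-Collection it is only $\Pi_n$, so the least witness comes out $\Pi_n$-definable (or $\Sigma_{n+1}$ via $\exists\bar a$), not $\Sigma_n$. Your pair trick does not change this. It absorbs the existential quantifiers from the parameters' definitions, but leastness still says that no $<_L$-earlier tuple satisfies a $\Pi_{n-1}$ property, which is exactly the quantifier you cannot collapse.

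\textbf{Why dropping a level fails.} It does give $H\prec_{\Sigma_{n-1}}\mathcal{M}$, which suffices for $T+V=L$ in $H$, but it breaks idempotence. For $\theta^{H}$ to be a surjection onto $H$, the clause ``$i$ codes a $\Sigma_n$ formula satisfied exactly by $x$'' must be absolute between $H$ and $\mathcal{M}$. That is a $B\Sigma_n$ condition, and it needs the Tarski--Vaught property for the $\Pi_{n-1}$ matrix of $\mathsf{Sat}_{\Sigma_n}$, precisely the level you gave up. Applying Tarski--Vaught ``to the single formula $\mathsf{Sat}_{\Sigma_n}$'' is that same instance, not a way around it. With only $\Sigma_{n-1}$-elementarity, $\Sigma_n$ truth is merely upward absolute. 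For $x\in H$, the witnesses to $\mathcal{M}\models\mathsf{Sat}_{\Sigma_n}(i,x)$ may be missing from $H$, so nothing guarantees that $H$ sees any index defining $x$. Raising the definability level to restore closure just moves the mismatch up a level.

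\textbf{The missing idea.} Do not index hull elements by single definitions. Index them by finite sequences of formula codes, evaluated by an iterated Skolem function whose graph has a fixed complexity matching the elementarity level.
\begin{itemize}
\item The paper takes $h_0(\ulcorner\psi\urcorner,a)$ to be the $<_L$-least pair satisfying a $\Pi_m$ formula $\psi(x,y,a)$. It writes leastness with an \emph{unbounded} universal: for all $u,v,w$, if $w=\mathsf{pr}_L(\langle x,y\rangle)$ and $\langle u,v\rangle\in w$, then $\neg\mathsf{Sat}_{\Pi_m}(\ulcorner\psi\urcorner,\langle u,v,a\rangle)$. So the graph of $h_0$ is $\Pi_{m+1}$ with no Collection.
\item It then runs $h_0$ along $s\in\omega^{<\omega}$, using the tuple of earlier outputs as the parameter. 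The extra $\forall i<lh(s)$ merges into the leading universal block, so $h$ still has a $\Pi_{m+1}$ graph.
\item The hull $h[\omega^{<\omega}]$ is closed under witnesses by construction, so Tarski--Vaught gives $\Sigma_{m+1}$-elementarity. That is exactly what is needed to transfer the $\Pi_{m+1}$ graph of $h$, so the same formula defines $h$ inside the hull and yields the self-enumeration.
\end{itemize}
Hull membership is $\Sigma_{m+2}$, but it never has to transfer; only the graph of $h$ does.
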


\begin{proof}
Our plan is to work in $T+V=L$ and define a sufficiently strong Skolem
function on the universe that will give us a hull upon which that
function delivers our required enumeration. Working now in $T+V=L$,
let $m+1\geq3$ be sufficiently large that the axioms of $U$ and
$KP+V=L$, except Full Set Induction, can be articulated as $\Pi_{m+1}$
sentences. Let $\mathsf{Sat}_{\Pi_{m}}$ be a satisfaction predicate
for $\Pi_{m}$ formulae with three free variables, so we have
\[
\mathsf{Sat}_{\Pi_{m}}(\ulcorner\psi\urcorner,\langle x,y,a\rangle)\;\Leftrightarrow\ \psi(x,y,a).
\]
Note that $\mathsf{Sat_{\Pi_{m}}}$ can be articulated as a $\Pi_{m}$
formula. We assume that our Gödel coding is in bijection with $\omega$.

We now define a $\Sigma_{m+1}$-Skolem function in three steps. First,
we let $h_{0}:\omega\times V\rightharpoondown V\times V$ be such
that $h_{0}(\ulcorner\psi\urcorner,a)=\langle x,y\rangle$ iff 
\[
\underbrace{\mathsf{Sat}_{\Pi_{m}}(\ulcorner\psi\urcorner,\langle x,y,a\rangle)}_{\Pi_{m}}\wedge\underbrace{\forall u,v,w(\underbrace{w=\mathsf{pr}_{L}(\langle x,y\rangle)}_{\Delta_{1}}\wedge\langle u,v\rangle\in w\to\underbrace{\neg\mathsf{Sat}_{\Pi_{m}}(\ulcorner\psi\urcorner,\langle u,v,a\rangle)}_{\Sigma_{m}})}_{\Pi_{m+1}}
\]
where $\mathsf{pr}_{L}(z)$ is the $\Delta_{1}$ definable set of
$L$-predecessors of $z$.\footnote{See Lemma II.3.5 in \citep{DevlinC}.}
$h_{0}$ takes the Gödel code of a $\Pi_{m}$-formula along with a
parameter and returns the $L$-least pair satisfying it. Note that
$h_{0}$ is $\Pi_{m+1}$ definable. Next we let $h_{1}:\omega^{<\omega}\rightharpoondown V^{<\omega}$
be such that $h_{1}(s)=x$ iff
\[
\underbrace{x_{0}=h_{0}(s_{0},\emptyset)}_{\Pi_{m+1}}\wedge\underbrace{\forall i+1<lh(s)\ \underbrace{x_{i+1}=h_{0}\bigl(s_{i+1},\langle\pi_{0}(x_{0}),\ldots,\pi_{0}(x_{i})\rangle\bigr)}_{\Pi_{m+1}}}_{\Pi_{m+1}}
\]
where $\pi_{0}$ gives us the first element of an ordered pair. This
function takes a sequence of formulae and returns a sequence of sets
that are inductively defined from using those formulae. This is intended
to replicate the inductive steps in a standard argument for the downward
Löwenheim-Skolem theorem. Finally, we let $h(s)=y$ iff $y$ is the
first member of the final pair in $h_{1}(s)$.

A standard Tarski-Vaught argument then establishes that $h[\omega^{<\omega}]$
is a $\Sigma_{m+1}$-Skolem hull of the universe. Moreover, since
$h_{0}$ is $\Pi_{m+1}$ definable over, it can then be seen that
the formula defining $h$ in $V$ also defines $h$ in $h[\omega^{<\omega}]$.
Clearly then, $h:\omega\to h[\omega^{<\omega}]$ is a partial surjection
that is definable over $h[\omega^{<\omega}]$. It is then a simple
matter to define a total surjection $h^{*}:\omega\twoheadrightarrow h[\omega^{<\omega}]$
in $h[\omega^{<\omega}]$. To close things out, we then observe that
$h[\omega^{<\omega}]$ satisfies $T+V=L$ since we chose $m$ to be
sufficiently large for all axioms with the exception of Full Set Induction,
and the latter holds since $h[\omega^{<\omega}]$ is a definable submodel
of a universe satisfying Full Set Induction that shares its membership
relation.
\end{proof}
We then note that if we follow the template suggested by the proof
of Theorem \ref{thm:KPisNotTight}, we want to ensure that our theory
is preserved when we move to a generic extension.
\begin{defn}
Let $T$ be a theory extending $KP$ and let $\mathbb{P}$ be a forcing
notion uniformly definable without parameters over models of $T$.
We say that $T$ is \emph{generically invariant} with respect to $\mathbb{P}$
if there is some $k\in\omega$ such that whenever $\mathcal{M}\models T$
and $G\subseteq\mathbb{P}^{\mathcal{M}}$ is definable over $\mathcal{M}$
and $B\utilde{\Sigma}_{k}$-generic over $\mathcal{M}$, then $\mathcal{M}^{\mathbb{P}}/G\models T$.
\end{defn}

Now we may state a more general theorem.
\begin{thm}
Let $T=U+KP$ be consistent with $V=L$ where $U$ is quantifier bounded.
Moreover suppose that $T$ is generically invariant with respect to
some nontrivial set forcing uniformly definable without parameters
over $T$. Then $T$ is not tight.\footnote{See Proposition \ref{prop:MG-single} and the subsequent remark for
further non-tightness results, including applications to theories
incompatible with $V=L$.} \label{thm:generic-invariance-not-tight}
\end{thm}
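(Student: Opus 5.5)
We follow the template of the proof of Theorem \ref{thm:KPisNotTight}, with $S_{0}$ replaced by the theory supplied by Lemma \ref{lem:def-countable-enumeration-L}. Let $\mathbb{P}$ be the nontrivial forcing with respect to which $T$ is generically invariant, with witness $k_{0}$. Let $\theta$ be as in Lemma \ref{lem:def-countable-enumeration-L}, and set
\[
S_{0}=T+V=L+``\theta\text{ defines a class surjection from }\omega\text{ onto }V",
\]
which is consistent by that lemma. Choose a standard $k\geq\max(k_{0},\mathsf{b}(3))$ large enough that $\mathsf{b}(k)$ also covers the complexity of the finitely many sentences we need transferred, such as $V=L[g]$ and ``$g$ is generic''. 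Working uniformly over models $\mathcal{M}\models S_{0}$, we repeat the proof of Lemma \ref{lem:tightUniformGeneric}. We use $\theta$ in place of the hyperarithmetic enumeration, $<_{L}$ as the selection order, and Full Set Induction to obtain the internally finite initial segments. This gives a fixed parameter-free formula $\gamma$ that defines a $B\utilde{\Sigma}_{k}$-generic filter $G^{\mathcal{M}}$ on $\mathbb{P}^{\mathcal{M}}$ in every $\mathcal{M}\models S_{0}$.

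Next, let $\mathcal{Q}=\mathcal{M}^{\mathbb{P}}/G^{\mathcal{M}}$. Generic invariance gives $\mathcal{Q}\models T$ directly, since $G^{\mathcal{M}}$ is definable and $B\utilde{\Sigma}_{k_{0}}$-generic. The argument of Lemma \ref{lem:tightLRecovery} does not depend on the poset: it uses only the rank bound for ordinal names and the $\Delta_{1}$ dense classes. It therefore gives a uniformly definable isomorphism $\eta^{\mathcal{M}}:\mathcal{M}\cong L^{\mathcal{Q}}$. Let $S_{1}=T+\Psi'_{\gamma}$, where $\Psi'_{\gamma}$ asserts the existence of a set $g$ with the following properties:
\begin{itemize}
\item $p\in g$ iff $p\in\mathbb{P}^{L}$ and $L\models\gamma(p)$;
\item $g$ is $\mathbb{P}^{L}$-generic over $L$;
\item $L\models$ ``$\theta$ defines a surjection from $\omega$ onto $V$'';
\item $V=L[g]$.
\end{itemize}
As in Corollary \ref{cor:tightS0InterpretsS1}, $\mathcal{Q}\models S_{1}$. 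Here one checks that $\theta^{L^{\mathcal{Q}}}$ is still a surjection, since $\eta^{\mathcal{M}}$ transports it. Conversely, if $\mathcal{N}\models S_{1}$, then $L^{\mathcal{N}}\models S_{0}$. This needs $L^{\mathcal{N}}\models T$, which is the delicate point, discussed below. Once that holds, the valuation map $[\dot{x}]_{g}\mapsto\mathsf{val}_{g}(\dot{x})$ gives a uniformly definable isomorphism $(L^{\mathcal{N}})^{\mathbb{P}}/g\cong\mathcal{N}$, exactly as in Lemma \ref{CohenIso}. The two interpretations and two isomorphisms witness that $S_{0}$ and $S_{1}$ are bi-interpretable. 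They are distinct theories because $S_{0}\vdash V=L$, whereas $S_{1}\vdash V\neq L$, since $\mathbb{P}$ is nontrivial and $g$ is generic over $L$ and so not in $L$. Hence $T$ is not tight.

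The main obstacle is the claim $L^{\mathcal{N}}\models T$, since $U$ need not be absolute to $L$. My first attempt is to avoid proving it and instead build it into $S_{1}$: add to $S_{1}$ the relativizations $U^{L}$, together with $(KP+V=L+\theta\text{ surjective})^{L}$. These all hold in $\mathcal{Q}$, because $L^{\mathcal{Q}}\cong\mathcal{M}\models S_{0}$. Full Set Induction in $L^{\mathcal{N}}$ is automatic, since $L^{\mathcal{N}}$ is a definable class of $\mathcal{N}$ with the inherited membership relation. A secondary point to verify is that, in $\mathcal{N}$, the condition ``$g$ is generic over $L$'' gives enough genericity, namely $B\utilde{\Sigma}_{k}$ over $L$, for the truth lemma (Lemma \ref{lem:O2GeneralTruth}) to make the valuation map well defined. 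I would secure this by stating that condition in $\Psi'_{\gamma}$ as $B\utilde{\Sigma}_{k}$-genericity over $L$, which is a single first-order assertion for the fixed $k$.
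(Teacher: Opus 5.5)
Your proposal is correct and is essentially the paper's proof: the same $S_{0}$ from Lemma \ref{lem:def-countable-enumeration-L}, the same least-extension generic defined from $\theta$ and $<_{L}$, generic invariance for $\mathcal{Q}\models T$, ordinal recovery for $L^{\mathcal{Q}}\cong\mathcal{M}$, valuation for the reverse isomorphism, and $V=L$ versus $V\neq L$ for distinctness. Your fix for the delicate point is exactly the paper's, whose $S_{1}$ explicitly asserts that the constructible universe satisfies $S_{0}$. Your genericity strengthening is harmless but automatic, since $S_{0}$ already proves that $\gamma$ defines a $B\utilde{\Sigma}_{k}$-generic filter. The inequality actually needed is $k\geq\mathsf{b}(j)$ for the complexity $j$ of the transferred sentences, not that $\mathsf{b}(k)$ covers $j$; your ``large enough'' choice of $k$ secures it.
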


\begin{proof}
Let $k$ witness generic invariance. Enlarge $k$, if necessary, so
that $k\geq\mathsf{b}(j)$ for a bound $j$ on the fixed additional
formulae used in the forcing and truth arguments below. By Lemma \ref{lem:def-countable-enumeration-L},
fix a parameter-free formula $\theta(n,x)$ such that the theory
\[
S_{0}=T+V=L+\text{``}\theta\text{ defines a class surjection from }\omega\text{ onto }V\text{''}
\]
is consistent. Let $\gamma$ be the fixed parameter-free formula obtained
by applying the least-extension construction from the earlier tightness
proof to $\mathbb{P}$ using $\theta$. Thus, in every model of $S_{0}$,
$\gamma$ defines a $B\utilde{\Sigma}_{k}$-generic filter $G$ on
$\mathbb{P}$. By generic invariance, the corresponding generic extension
is again a model of $T$.

Let $S_{1}$ extend $T$ by asserting that its constructible universe
satisfies $S_{0}$ and that there is a set $g$ such that
\[
p\in g\ \Leftrightarrow\ L\models\gamma(p),
\]
$g$ is $\mathbb{P}^{L}$-generic over $L$, and $V=L[g]$. If $\mathcal{M}\models S_{0}$
and $G$ is its canonical generic, then generic invariance gives
\[
\mathcal{M}^{\mathbb{P}}/G\models T.
\]
 The same ordinal-recovery argument used in the earlier $KP$ tightness
proof identifies the constructible universe of this extension with
$\mathcal{M}$, while the standard valuation recursion gives
\[
\mathcal{M}^{\mathbb{P}}/G=(L[G])^{\mathcal{M}^{\mathbb{P}}/G}.
\]
 Hence $S_{0}$ interprets $S_{1}$.

Conversely, if $\mathcal{N}\models S_{1}$, then $L^{\mathcal{N}}\models S_{0}$.
Interpreting $S_{0}$ by $L$ and then taking its canonical $\mathbb{P}$-generic
extension recovers $\mathcal{N}$ up to the usual definable valuation
isomorphism. In the other direction, the constructible universe of
the canonical generic extension of a model of $S_{0}$ is definably
isomorphic to the original model, exactly as in the earlier proof.
Thus $S_{0}$ and $S_{1}$ are bi-interpretable.

Finally, $S_{0}\vdash V=L$, whereas $S_{1}\vdash V\neq L$. Indeed,
the canonical $g$ cannot belong to $L$: otherwise $\mathbb{P}^{L}\setminus g$
would be a dense set in $L$ disjoint from $g$, using the nontriviality
of $\mathbb{P}^{L}$. Hence the two theories have different deductive
closures. They are distinct bi-interpretable extensions of $T$, so
$T$ is not tight.
\end{proof}
Thus, to obtain theories that aren't tight, we simply need to find
quantifier bounded theories that are generically invariant and consistent
with $V=L$. Here are some obvious examples. 
\begin{cor}
Let $\Phi$ be any of the statements below. If $KP+\text{Powerset}+\Phi+V=L$
is consistent, then $KP+\text{Powerset}+\Phi$ is not tight.
\begin{enumerate}
\item There is an inaccessible cardinal; 
\item There is a Mahlo cardinal;
\item There is a weakly compact cardinal, where weak compactness is formulated
as inaccessibility together with the tree property.
\end{enumerate}
\end{cor}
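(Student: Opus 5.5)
The plan is to apply Theorem \ref{thm:generic-invariance-not-tight} with $T=KP+\text{Powerset}+\Phi$ and $\mathbb{P}=Add(\omega,1)$. That theorem has three hypotheses. We must check that $T$ is $U+KP$ with $U$ quantifier bounded, that $T$ is consistent with $V=L$, and that $T$ is generically invariant with respect to $Add(\omega,1)$.

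Quantifier boundedness is immediate. Powerset is a single $\Pi_{2}$ sentence, and each $\Phi$ is a single sentence of fixed complexity. For $\Phi$ we use the formulations given: inaccessibility, Mahloness, and inaccessibility plus the tree property. The second hypothesis is the assumption of the corollary. The poset $Add(\omega,1)$ is uniformly $\Delta_{1}$-definable without parameters and is nontrivial.

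The substance is generic invariance. Fix $\mathcal{M}\models T$ and a $G\subseteq Add(\omega,1)^{\mathcal{M}}$ that is definable over $\mathcal{M}$ and $B\utilde{\Sigma}_{k}$-generic for a suitable standard $k$. Corollary \ref{cor:tightKPPreservation} and Lemma \ref{lem:tightFullSetInduction} give $\mathcal{M}^{\mathbb{P}}/G\models KP$; those arguments use only $KP$ in the ground, not $S_{0}$. For Powerset and $\Phi$, we first show in $KP+\text{Powerset}$ that $1\Vdash_{Add(\omega,1)}\psi$ for each sentence $\psi$ among Powerset and $\Phi$. Then we take $k\geq\mathsf{b}(j)$, where $j$ bounds the complexity of these sentences. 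The truth lemma for arbitrary models, Lemma \ref{lem:O2GeneralTruth}, then transfers them to the quotient.

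The forcing facts are the classical ones, reproved inside $KP+\text{Powerset}$. For Powerset, the set of nice names for subsets of a name $\tau$ is built from $\mathcal{P}(\mathsf{dom}(\tau)\times\mathbb{P})$, which exists by Powerset and $\Delta_{0}$-Separation. For $\Phi$, we check the usual small-forcing argument: $Add(\omega,1)$ is countable, so every forced cardinal and cofinality statement about an uncountable regular $\kappa$ is decided by counting countably many antichains. Thus a ground inaccessible remains forced to be regular and a strong limit. Mahloness is preserved because closed unbounded subsets of $\kappa$ in the extension contain ground-model clubs, and stationarity of the inaccessibles below $\kappa$ is preserved. The tree property at $\kappa$ is preserved by the standard argument for small forcing. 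Given a name for a $\kappa$-tree, form the ground-model tree of pairs consisting of a node and the condition forcing it. Since $|\mathbb{P}|<\kappa$, this tree has levels of size $<\kappa$, so it has a cofinal branch in the ground model. Pigeonholing on conditions then yields a condition forcing a cofinal branch.

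The main obstacle is that all of this must be carried out in $KP+\text{Powerset}$, without full Separation or Replacement. Each step must use only $\Delta_{0}$-Separation, $\Sigma_{1}$-Collection and Powerset, for example when collecting the maximal antichains that decide a name for a function into $\kappa$. We also need the forced statements to be standard formulae of bounded complexity, so that a single $k$ suffices. In nonstandard models there is a further problem: the truth lemma is only used for the standard sentences Powerset and $\Phi$. The definable-generic hypothesis handles Full Set Induction via Lemma \ref{lem:tightFullSetInduction}. With generic invariance established, Theorem \ref{thm:generic-invariance-not-tight} gives that $T$ is not tight.
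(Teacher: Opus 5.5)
Your overall route is the paper's. You apply Theorem \ref{thm:generic-invariance-not-tight} with $\mathbb{P}=Add(\omega,1)$ and show in $KP+\text{Powerset}$ that Powerset and $\Phi$ are forced. You then transfer them with Lemma \ref{lem:O2GeneralTruth} at a level $k\geq\mathsf{b}(j)$ and get Full Set Induction from Lemma \ref{lem:tightFullSetInduction}. (Incidentally, Powerset is $\Pi_{3}$, not $\Pi_{2}$, under the paper's counting convention. This is harmless, since only a fixed bound matters.)

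The step that fails as you describe it is the preservation of the tree property. A ground ``tree of pairs'' $\langle x,p\rangle$ with $p\Vdash x\in\dot{T}$ is not in general a $\kappa$-tree.

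\emph{Pairs with different conditions compared.} Say $\langle x,p\rangle<\langle y,q\rangle$ iff $q\leq p$ and $q\Vdash x<_{\dot{T}}y$. Then $\langle x,p\rangle$ and $\langle x,p'\rangle$, with $p,p'\geq q$ incomparable, are two distinct predecessors of $\langle y,q\rangle$ at the same level, so this is not a tree.

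\emph{Only pairs with the same condition compared.} For fixed $p$, the nodes forced into $\dot{T}$ by $p$, ordered by what $p$ forces, do form a tree. But $p$ need not decide which level-$\delta$ node lies below a given level-$\gamma$ node. Such nodes then have small height in this tree, a single height-level can have size $\kappa$, and the ground tree property does not apply. A branch would also lie under a single condition, so nothing is left to pigeonhole. Your estimate ``levels of size $<\kappa$ because $|\mathbb{P}|<\kappa$'' counts nodes by their $\dot{T}$-level, not by their height in the new tree.

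The missing device is a tree of \emph{names}, which is the ``name-tree argument'' the paper invokes. Work below a condition forcing that $\dot{T}$ is a $\kappa$-tree.
\begin{itemize}
\item Let $S_{\gamma}$ be the set of names $\sigma$ with $1\Vdash\sigma\in\dot{T}_{\gamma}$, taken as mixtures over maximal antichains of check names for possible level-$\gamma$ nodes. Identify names under forced equality and order them by $1\Vdash\sigma<_{\dot{T}}\tau$.
\item Predecessors are linearly ordered, one per level, because the levels of names are decided.
\item Each $\tau\in S_{\gamma}$ has a predecessor in every $S_{\delta}$, $\delta<\gamma$, obtained by mixing names for its level-$\delta$ predecessor.
\item $|S_{\gamma}|<\kappa$: by ccc and regularity, the possible level-$\gamma$ nodes form a ground set of size $\mu<\kappa$, so there are at most $2^{\omega}\cdot\mu^{\omega}<\kappa$ such mixtures, using that $\kappa$ is a strong limit in the ground.
\end{itemize}
Thus $S$ is a ground $\kappa$-tree. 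A ground cofinal branch $\langle\sigma_{\gamma}:\gamma<\kappa\rangle$ is forced to evaluate to a cofinal branch of $\dot{T}$, with no pigeonholing needed. The name sets come from Powerset and $\Delta_{1}$-Separation, and maximal antichains in the countable poset can be built by recursion along $\omega$. So the argument runs in $KP+\text{Powerset}$.
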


\begin{proof}
Fix one of the displayed statements $\Phi$ and let $T=KP+\text{Powerset}+\Phi$.
By hypothesis, $T+V=L$ is consistent. Let $\mathbb{P}=Add(\omega,1)$.
This is a nontrivial forcing uniformly definable without parameters.
If $\kappa$ witnesses $\Phi$, then $|\mathbb{P}|<\kappa$. The standard
small-forcing argument preserves inaccessibility and Mahloness; in
the third case, the usual name-tree argument also preserves the tree
property. The usual name-bound argument for Powerset, together with
the preceding $KP$-preservation results and these small-forcing arguments,
shows that the non-induction axioms of $T$ are forced. Choose a fixed
$j$ bounding their complexity. The truth lemma for arbitrary models
transfers them whenever the generic is $B\utilde{\Sigma}_{\mathsf{b}(j)}$-generic.
For definable generics, the separate Full Set Induction lemma supplies
the remaining axioms. Thus $T$ is generically invariant with respect
to $\mathbb{P}$, and the result now follows from Theorem \ref{thm:generic-invariance-not-tight}.
\end{proof}

\section{Quantifier bounded theories generally aren't tight}

\label{sec:Quantifier-bounded-theories}

We now have a rich collection of theories compatible with $V=L$
that fail to be tight. But what if we wanted to consider theories
that countenance, say, the existence of a measurable cardinal or a
proper class of extendible cardinals? These extensions are not consistent
with $V=L$ and thus, the techniques described above are of little
use. Nonetheless, the underlying strategy can be adapted. In general,
we are looking for theories of the appropriate form that: 
\begin{enumerate}
\item are consistent with a countable definable enumeration; and
\item admit generic extensions where the ground model can be recovered by
definition.
\end{enumerate}
With regard to (2), we now identify a property of theories that makes
them suitable for the anti-tightness arguments we are about to give.
For this purpose, we make use of a simple extension of $KP$. In particular,
we let $KPC$ be $KP$ with the Axiom of Choice in the form: $\forall x\exists f\exists\alpha\ f:x\rightarrowtail\alpha$. 
\begin{defn}
Say that a theory $T$ has the \emph{stable definable ground property}
if there is an inner model $\mathsf{N}$ of $KPC$ definable over
$T$ and a term $\mathbb{P}$ defined over $T$ by a formula relativized
to $\mathsf{N}$ such that $T$ proves \label{def:StableDefGprop}
\begin{itemize}
\item $\mathbb{P}\in\mathsf{N}\text{ is a non-trivial forcing poset}$; 
\item $``(\Vdash_{\mathbb{P}}\psi)^{\mathsf{N}}"$ for all $\psi\in T$; 
\item $\exists g\in gen(\mathbb{P},\mathsf{N})\ V=\mathsf{N}[g]$; and
\item $(\Vdash_{\mathbb{P}}``\mathsf{N}^{V[\dot{G}]}=\check{V}")^{\mathsf{N}}$.
\end{itemize}
The last clause deserves some explanation. We are saying that any
$\mathbb{P}$-generic extension $\mathsf{N}[h]$ of the defined $\mathbb{P}$-ground
$\mathsf{N}$ is such that $\mathsf{N}[h]$'s version of $\mathsf{N}$
is still $\mathsf{N}$. This is why we call it stably definable.
\end{defn}

In essence, theories with the stable definable ground property are
theories whose models are generic extensions of inner models that
are robustly definable from their generic extensions. For example,
$KP$ together with the assertion that the universe is a Cohen extension
of $L$ has the stable definable ground property, witnessed by $\mathsf{N}=L$
and $\mathbb{P}=Add(\omega,1)^{L}$.

Moving onto (1), we prove a very general lemma about theories obtained
from extending a weak set theory with a quantifier bounded collection
of sentences. 
\begin{lem}
If $T=KPC+S$ is consistent and $S$ is quantifier bounded, then there
is a parameter free formula $\theta(n,x)$ such that 
\[
T+``\theta\text{ defines a class surjection from }\omega\text{ onto }V''
\]
is consistent.\label{count-enum-thm} The same conclusion holds for
a fixed finite expansion of the language by constant symbols.
\end{lem}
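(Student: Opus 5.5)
The plan is to imitate the Skolem-hull argument of Lemma \ref{lem:def-countable-enumeration-L}, replacing the global well-order $<_{L}$ by a definable selection principle built from ordinals, since $KPC$ only gives local well-orders. Fix a model $\mathcal{M}\models T$ and fix $m\geq3$ so large that every sentence of $S$ and every non-induction axiom of $KPC$ is $\Pi_{m+1}$. The hull will be a class $K\subseteq M$ that is parameter-free definable over $\mathcal{M}$ and carries the inherited membership relation. Full Set Induction then transfers to $K$ exactly as at the end of the proof of Lemma \ref{lem:def-countable-enumeration-L}: a failure of induction in $K$ is a definable class in $\mathcal{M}$ with no $\in$-minimal element. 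The remaining axioms transfer once $K\prec_{\Sigma_{m+1}}\mathcal{M}$.

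\textbf{First step: making the selection canonical.} The difficulty is that, without $V=L$, a $\Pi_{m}$ formula $\psi(x,y,a)$ may have many witnesses and no definable least one. I would first pass from $\mathcal{M}$ to a model in which witnesses can be selected definably. The route I would try first is to define, using the $\Pi_{m}$ satisfaction predicate, the class $OD_{m+1}$ of elements that are $\Sigma_{m+1}$-definable from finitely many ordinals. I would then show that Tarski--Vaught witnesses may be taken inside this class. Given a witness $x$, choice in $KPC$ gives $f:\mathrm{TC}(\{x\})\rightarrowtail\alpha$. This transfers the witness to an ordinal code $\langle\alpha,E\rangle$ with $E\subseteq\alpha\times\alpha$ such that the collapse of $E$ returns $x$. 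The statement ``some code of this shape decodes to a witness'' is again $\Sigma_{m+1}$, so one can minimise the ordinal $\alpha$.

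The problem is that minimising over $E$ still requires a definable well-order of subsets of ordinals, and I expect this to be the main obstacle. To overcome it, I would replace $\mathcal{M}$ by the hull generated in $\mathcal{M}$ by the ordinals together with the choice of minimal codes. Concretely, I would carry out a Löwenheim--Skolem construction in a nonstandard expansion: add new constants naming the selected codes, and use compactness to find a model of $T$ in which the selection is definable from those constants. This explains why the lemma is stated for finite expansions by constants. The idea is to prove the version with constants first and then eliminate the constants by absorbing them into the numeral-indexed hull.

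\textbf{Second step: the hull and its enumeration.} Once a $\Pi_{m+1}$-definable selection function $h_{0}$ is available, I would define $h_{1}$ and $h$ on $\omega^{<\omega}$ verbatim as in Lemma \ref{lem:def-countable-enumeration-L}, and set $K=h[\omega^{<\omega}]$. The Tarski--Vaught argument then gives $K\prec_{\Sigma_{m+1}}\mathcal{M}$. Since the defining formula of $h$ is $\Pi_{m+1}$, it is absolute to $K$, and it yields the required $\theta$ as a total surjection $\omega\twoheadrightarrow K$ definable in $K$. I would also verify that choice survives in $K$. The statement $\forall x\exists f\exists\alpha\,f:x\rightarrowtail\alpha$ is $\Pi_{3}$, hence transferred, and the internally finite sequences used by $h_{1}$ exist by Full Set Induction in $\mathcal{M}$. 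This would show that $T+$``$\theta$ enumerates $V$'' has the model $K$.
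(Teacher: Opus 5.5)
\textbf{The gap is the selection step, and your first step does not supply it.} Your second step is sound. In a model of $T$ with a parameter-free definable global well-order of bounded complexity, the $V=L$ hull argument goes through, and Full Set Induction is inherited by the definable subclass $K$. But the existence of such a model is essentially equivalent to the lemma itself: a definable surjection $\omega\twoheadrightarrow V$ yields a definable well-order, exactly as in the proposition on $\chi_{\mathrm{HOD}}$. So your first step carries the entire burden.

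The $OD_{m+1}$ idea fails in general, and not merely because codes $E$ cannot be well-ordered. Suppose $\mathcal{M}\models ZFC$ plus ``some real is not ordinal definable''. By reflection, every element of a hull $h[\omega^{<\omega}]$ with $h$ parameter-free definable is ordinal definable. Yet $\Sigma_{3}$-elementarity would put a non-ordinal-definable real into the hull, since $\exists x\subseteq\omega\,(x\notin OD)$ is $\Sigma_{3}$.

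The compactness remedy does not repair this, for three reasons. First, infinitely many selections cannot be named by finitely many constants. Second, axioms asserting that some formula selects witnesses are finitely satisfiable only if some model of $T$ already has, for that formula and all values of its remaining parameter, a selection definable from finitely many parameters. That is the original problem again. Third, even a selection definable from new constants $\bar{d}$ leaves $\bar{d}$ to be eliminated, and nothing makes $\bar{d}$ parameter-free definable in $K$. A hull generated by all ordinals is also incompatible with your $K=h[\omega^{<\omega}]$. Finally, the clause about constants in the statement is a generalization needed later, for $\Gamma_{U}$ in the Ground Axiom section; it is not a stepping stone.

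\textbf{The missing idea is that one never has to select elements of the universe.} The paper builds a term model $\mathcal{M}_{0}$ on the constants $c_{n}$ from a canonical tableau with a Special $\Sigma_{k}$-cut rule, guided by rank-satisfiability. There are only two kinds of non-syntactic choice. Ordinal assignments can be minimized lexicographically with no well-order of $V$. Branch choices are decided by the $\Delta_{k+1}$ satisfaction predicate with a leftmost tie-break. Hence the branch $b_{0}$ is a $\Delta_{k+1}$ class and $\mathcal{M}_{0}\equiv_{\Sigma_{k}}V$.

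The model $\mathcal{M}_{0}$ is not a substructure of $V$. Its well-foundedness and Full Set Induction come instead from the definable order-preserving map given by the accumulated ordinal assignment. The minor sentences record the rank-minimality of each choice. So running the same construction inside $\mathcal{M}_{0}$ reproduces the same sentences, with each $c_{i}$ assigned $\mathrm{rank}^{\mathcal{M}_{0}}(c_{i}^{\mathcal{M}_{0}})$. This idempotence gives $\mathcal{M}_{1}\cong\mathcal{M}_{0}$ via $c_{n}\mapsto c_{n}$. It lets $\mathcal{M}_{0}$ decode itself without parameters in three stages: ordinals from the rank assignment, sets of ordinals from membership in $\mathcal{M}_{1}$, and arbitrary sets from ordinal codes and Mostowski collapse.
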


Let's call this the \emph{Definable Countable Enumeration Lemma}.\emph{
}Its proof takes a little while and uses some unusual techniques.
As such, we defer its proof to the next section and offer a simple
demonstration of our main theorem template on the basis of this promise. 
\begin{thm}
Suppose $T=KPC+U$ is consistent where $U$ is quantifier bounded
and that $T$ has the stable definable ground property. Then $T$
is not tight.\label{thm:MainTemplate}
\end{thm}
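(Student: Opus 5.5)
We mimic the proof of Theorem \ref{thm:generic-invariance-not-tight}, replacing $L$ by the stably definable ground $\mathsf{N}$ and replacing the fine-structural enumeration by the Definable Countable Enumeration Lemma. Fix $\mathsf{N}$ and $\mathbb{P}$ witnessing the stable definable ground property for $T$. Let $T^{\mathsf{N}}$ be the theory consisting of $KPC$, the relativizations $\psi^{\mathsf{N}}$-style consequences $(\Vdash_{\mathbb{P}}\psi)$ for $\psi\in T$, and the stability clause $\Vdash_{\mathbb{P}}``\mathsf{N}^{V[\dot G]}=\check V"$, all read internally. Every model of $T$ yields, via $\mathsf{N}$, a model of this theory, so it is consistent. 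It is quantifier bounded because $T$ is, and forcing for $B\Sigma_j$ formulae is $\Delta_{\mathsf{b}(j)}$ by Lemma \ref{lem:O2ForcingBound}. The stability clause is a single sentence.

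Apply Lemma \ref{count-enum-thm} to $T^{\mathsf{N}}$. This gives a parameter-free $\theta$ such that
\[
S_0=T^{\mathsf{N}}+``\theta\text{ defines a surjection }\omega\twoheadrightarrow V"
\]
is consistent. In every model $\mathcal{M}\models S_0$, run the $<$-least-extension construction of Lemma \ref{DefiningGenOverHYP}. Here there is no $<_L$, but the enumeration $\theta$ itself gives a definable well-order of type $\omega^{\mathcal{M}}$, namely least index. This produces a uniformly parameter-free formula $\gamma$ defining a $B\utilde{\Sigma}_k$-generic $G^{\mathcal{M}}\subseteq\mathbb{P}^{\mathcal{M}}$, with $k\geq\mathsf{b}(j)$ for $j$ bounding the complexity of $U$ and of the finitely many further sentences needed.

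By Lemma \ref{lem:O2GeneralTruth}, Lemma \ref{lem:tightFullSetInduction} and the proof of Corollary \ref{cor:tightKPPreservation}, we get $\mathcal{Q}=\mathcal{M}^{\mathbb{P}}/G^{\mathcal{M}}\models KPC+U=T$. Here the forced axioms of $T$ transfer by the truth lemma, Choice is forced as well, and Full Set Induction uses definability of $G$. Moreover, the forced stability clause transfers, giving $\mathsf{N}^{\mathcal{Q}}=$ the image of $\check{\ }$. Thus $x\mapsto[\check x]_G$ is a uniformly definable isomorphism $\mathcal{M}\cong\mathsf{N}^{\mathcal{Q}}$.

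Define
\[
S_1=T+``\mathsf{N}\models S_0"+\exists g\,\bigl(g=\{p:\mathsf{N}\models\gamma(p)\}\wedge g\in gen(\mathbb{P},\mathsf{N})\wedge V=\mathsf{N}[g]\bigr).
\]
The above shows that $\mathcal{M}\mapsto\mathcal{Q}$ interprets $S_1$ in $S_0$. Conversely, $\mathcal{N}\mapsto\mathsf{N}^{\mathcal{N}}$ interprets $S_0$ in $S_1$. The two composites are definably isomorphic to the identity: via the canonical-name map as in Lemma \ref{lem:tightLRecovery} (using stability in place of the $L$-recomputation), and via the valuation map as in Lemma \ref{CohenIso}. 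Hence $S_0$ and $S_1$ are bi-interpretable extensions of $T$; note $S_0\supseteq T$ because $T$ is forced and reflected back.

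Actually we must check $S_0\supseteq T$. It is cleaner to set $S_0'$ as the set of sentences $\varphi$ with $\varphi^{t}\in S_1$ under the interpretation, or to note that we only need two distinct bi-interpretable extensions of $T$. For that, take the pair $S_1$ and $S_1'$, where $S_1'$ is $S_1$ transported along the bi-interpretation composed with a definitional variant. The simplest route is to show that $S_0$, read as the theory of $\mathcal{M}$, also contains $T$. This should follow from the third bullet, since each model of $T$ is $\mathsf{N}[g]$, but arbitrary models of $S_0$ need not be.

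I expect this to be the main obstacle. I would resolve it by strengthening $S_0$ to demand in addition $T$ itself, applying Lemma \ref{count-enum-thm} to $T+T^{\mathsf{N}}$-type sentences so that consistency is witnessed. Consistency here might fail, so the fallback is to witness non-tightness by two extensions of $T$ that are distinct: $S_1$ and its image under the automorphism replacing $\gamma$ by a $\gamma'$ defining a generic incompatible with $G$, e.g. one starting below a condition incompatible with the first step. Both are bi-interpretable with $S_0$, hence with each other, and they disagree on whether the canonical generic contains a fixed condition.
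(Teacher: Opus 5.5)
There is a genuine gap, and it sits in the step that shows your two final theories are different.

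Your diagnosis of the first attempt is correct. The ground-level theory $S_0$ is in general not an extension of $T$: in the paper's own example, $T=KP+$``$V$ is a Cohen extension of $L$'', the ground satisfies $V=L$. So the $KP$-style pairing of a ground with its extension cannot witness non-tightness here. Your fallback has the right shape: two extension-level theories $S_1,S_1'$, each bi-interpretable with $S_0$ and hence with each other.

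\textbf{The gap.} The sentence you propose does not separate $S_1$ from $S_1'$. Both theories assert $\mathsf{N}\models S_0$, and $S_0$ already decides which fixed conditions lie in $\gamma$ and which lie in $\gamma'$. So both theories prove the same facts about $\gamma^{\mathsf{N}}$ and $\gamma'^{\mathsf{N}}$. If ``the canonical generic'' instead means ``the $g$ with $V=\mathsf{N}[g]$'', it is not well defined, since one extension is generated by many generics (flip a bit of a Cohen real).

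The only real candidate is a sentence such as ``$\gamma^{\mathsf{N}}$ is a set''. It belongs to $S_1$, and $S_1'$ refutes it only if $\gamma^{\mathsf{N}}\notin\mathsf{N}[\gamma'^{\mathsf{N}}]$ in every model. Two runs of the least-extension construction at the same level $k$, differing only in the starting condition, give no such guarantee. The dense classes that would separate $g=\gamma^{\mathsf{N}}$ from every $\sigma^{g'}$ are defined using $g$ itself, and that definition sits strictly above level $k$. So $B\utilde{\Sigma}_{k}$-genericity of $g'$ says nothing about them. Nothing in your argument rules out $\mathsf{N}[g]=\mathsf{N}[g']$, and if that holds in every model of $S_0$, then $S_1=S_1'$.

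\textbf{How the paper closes it.} The missing ingredient is mutual genericity. The paper runs the least-extension construction once, for $\mathbb{P}\times\mathbb{P}$. This yields a canonical pair $\langle G_{0},G_{1}\rangle$, and $S_i$ says that the universe is the $\mathsf{N}$-extension by the $i$th coordinate of that pair. Product genericity makes $G_{1}$ meet every dense subset of $\mathbb{P}$ lying in $\mathsf{N}[G_{0}]$. Hence $G_{1}\notin\mathsf{N}[G_{0}]$, since otherwise $\mathbb{P}\setminus G_{1}$ would be such a dense set missed by $G_{1}$. Therefore $\mathsf{N}[G_{0}]\not\models S_{1}$, and the two theories differ.

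\textbf{A repair of your version.} Take $\gamma'$ generic at a level above the definitional complexity of $\gamma$. Then, for each name $\sigma$, consider the class of conditions forcing $\check r\in\sigma$ for some $r\notin\gamma$, or $\check r\notin\sigma$ for some $r\in\gamma$. This class is dense, using that $\gamma^{\mathsf{N}}$ is not a set of $\mathsf{N}$, and $\gamma'$ meets it. That gives $S_1'\vdash\neg$``$\gamma^{\mathsf{N}}$ is a set''.

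\textbf{Minor points.} Your ground theory should force the members of $U$ and assert that $\mathbb{P}$ is nontrivial. Forcing all of $T$ brings in the Full Set Induction instances, which are not quantifier bounded, although $KPC$ proves that they are forced.
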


\begin{proof}
Let $R$ be $KPC$ together with the assertion that $\mathbb{P}$
is non-trivial, the assertions $1_{\mathbb{P}}\Vdash\varphi$ for
$\varphi\in U$, and the assertion that forcing with $\mathbb{P}$
preserves the fixed definition of $\mathsf{N}$ in the sense of the
last clause of Definition \ref{def:StableDefGprop}. A model of $T$
together with its distinguished ground witnesses the consistency of
$R$. Choose a fixed $j$ such that the sentences in $U$, the non-induction
axioms of $KPC$, and the fixed sentence expressing stability of the
ground definition are all $B\Sigma_{j}$. By the uniform forcing-bound
lemma, their forcing translations have a common bound $\Delta_{\mathsf{b}(j)}$.
The remaining assertion in $R$, that $\mathbb{P}$ is nontrivial,
also has fixed finite complexity. Thus $R$ is of the form $KPC+S$
with $S$ quantifier bounded.

By the Definable Countable Enumeration Lemma (Lemma \ref{count-enum-thm}),
we may then fix a parameter-free formula $\theta$ and a model $W\models R$
in which $\theta$ defines a class surjection from $\omega$ onto
$W$. Choose $k\geq\mathsf{b}(j)$, enlarged as necessary for the
usual product-forcing argument, and apply the least-extension construction
to $\mathbb{P}\times\mathbb{P}$. At each stage, choose an eligible
extension with the least index in the enumeration defined by $\theta$.
This gives a canonical $B\utilde{\Sigma}_{k}$-generic pair $\langle G_{0},G_{1}\rangle$
over $W$, with each projection sufficiently generic for the formulae
just listed. Let $M_{i}=W[G_{i}]$ for $i\in2$. Set forcing forces
the non-induction axioms of $KPC$, and the forcing clauses in $R$
give the members of $U$. The truth lemma for arbitrary models transfers
these sentences to $M_{i}$, while definability of $G_{i}$ gives
Full Set Induction by the separate preservation lemma (Lemma \ref{lem:tightFullSetInduction}).
Hence $M_{i}\models T$. Stability gives $\mathsf{N}^{M_{i}}=W$ for
each $i\in2$. For $i\in2$, let $S_{i}$ extend $T$ by saying that
$\theta^{\mathsf{N}}$ enumerates $\mathsf{N}$ and that the universe
is the $i^{th}$ coordinate of the canonical $\mathbb{P}\times\mathbb{P}$-generic
pair thereby defined over $\mathsf{N}$. The models $M_{i}$ witness
consistency. In any model of $S_{i}$, first recover $\mathsf{N}$,
then $\theta$, and hence the other coordinate extension. When we
interpret back, the stability clause recovers the same $\mathsf{N}$
and therefore the same canonical pair, so the usual valuation map
gives the required definable isomorphism. Thus $S_{0}$ and $S_{1}$
are bi-interpretable. Finally, we observe that $S_{0}\neq S_{1}$
since $S_{0}$ says it's formed from the $0^{th}$ projection while
$S_{1}$ says it's formed from the $1^{st}$ projection. This then
suffices since the associated filters are mutually generic. Hence
$T$ is not tight.
\end{proof}
Thus, if we want to show that an extension of $KPC$, of the kind
discussed above, fails to be tight, it will suffice for us to show
that theory has a consistent extension with the stable definable ground
property. In Section \ref{sec:Applications}, we will show this for
a wide variety of theories.

\section{The Definable Countable Enumeration Lemma}

\label{sec:DCET}

Our goal now is to prove Lemma \ref{count-enum-thm}. This is the
most original part of the paper and it takes the most time and effort
to prove. The strategy is simple to sketch, but fussy to execute.
The argument has three main steps. We start by working in an arbitrary
model of a suitable theory $T$ which we treat as the universe. Working
in this context, we then describe a way of building a model of $T$
by approximation using something like a tableau proof system, model
existence game, inverted sequent, or analytic consistency property
\citep{SmullFO,Vaananen_2011,Pohlers,Barwise}. Such models are countable
as witnessed by the fact that they are defined from a countable set
of constant symbols. The model we obtain can then be interpreted in
our adopted universe and we call it $\mathcal{M}_{0}$. We shall also
make use of an ordinal side-condition that delivers a definable order-preserving
map from $\mathcal{M}_{0}$ into the ordinals of this background universe.
This is the sense in which the model is well-founded, and it ensures
Full Set Induction. For the second step, we shall be very careful
in setting up our model building process so that when we use the same
process inside $\mathcal{M}_{0}$, we define another model $\mathcal{M}_{1}$
with an isomorphism $j:\mathcal{M}_{0}\cong\mathcal{M}_{1}$ definable
in the background universe. Thus, there is a sense in which our model
building process is idempotent. This is the tricky part. For the final
step, we use $\mathcal{M}_{0}$'s countable presentation of $\mathcal{M}_{1}$
to define, without parameters and inside $\mathcal{M}_{0}$, a countable
enumeration of itself. Obtaining this internal definition requires
a further argument, supplied in Lemma \ref{lem:defCountEnum}. Thus,
$\mathcal{M}_{0}$ witnesses the claim.

So much for the sketch. The rest of this section is divided up as
follows. First, we recall some of the basics of tableau proof systems.
Then we describe some modifications of this system that will fulfill
the requirements of the sketch above. After that, we show that it
does what we need it to do.

\subsection{Tableau proof systems: building models by approximation}

Tableau proof systems originated with Beth, Hintikka and Kanger \citeyearpar{Beth1955,Hintikka1955,Kanger1957}
as a means of providing a simple proof theory for first order logic
whereby we take a sentence $\varphi$ and search for a counterexample
to it. This is done by taking $\neg\varphi$ and systematically unpacking
it to find out if it is satisfiable. If $\vdash\varphi$, then $\neg\varphi$
is inconsistent, and the unpacking process applied to $\neg\varphi$
eventually crashes revealing that $\neg\varphi$ is unsatisfiable;
i.e., $\models\varphi$. On the other hand, if $\neg\varphi$ is consistent,
the process never crashes and we end up defining a model in which
$\neg\varphi$ is satisfied; thus, $\not\models\varphi$. More generally,
the unpacking process can be used to take a simple, countable set
of sentences and ascertain whether it is satisfiable.\footnote{Of course, in general, it will take forever to ascertain that it is
satisfiable.} 

\subsubsection{Rules for $FOL$ without identity}

Let us now recall the process in a little more detail.\footnote{For a more thorough discussion, see \citep{SmullFO,TolTab,RestLog,PriestNCL}}
Suppose we are given a sentence $\varphi$ from some language $\mathcal{L}$
without identity and we wish to figure out whether there is a model
$\mathcal{M}$ satisfying $\varphi$.\footnote{We'll generalize to sets in a moment, but it is easy to start this
way. Let us also assume that $\mathcal{L}$ has no function or constant
symbols.} The tableau for $\varphi$ is a tree whose nodes are sentences of
the language $\mathcal{L}(C)$, which is $\mathcal{L}$ expanded with
a countable set $C=\{c_{n}\}_{n\in\omega}$ of new constant symbols.
The tree grows downward and at the top (the root node) we find $\varphi$.
The tableau is then produced by unpacking formulae that occur on the
tree by following a set of rules that we give below.

\begin{center}

\begin{tabular}{cccccccccccc}
 &  &  &  &  &  &  &  &  &  &  & \tabularnewline
($\vee$) & 
\Tree
[.{$\varphi \vee \psi$}
	[.{$\varphi\:\:\:$}
	]
	[.{$\:\:\:\psi$}
	]
] &  &  &  & ($\neg\vee$) & 
\Tree
[.{$\neg(\varphi \vee \psi)$}
	[.{$\neg \varphi$\\
		$\neg \psi$}
	]
] &  &  &  & ($\neg\neg$) & 
\Tree
[.{$\neg \neg \varphi$}
	[.{$\varphi$}
	]
]\tabularnewline
 &  &  &  &  &  &  &  &  &  &  & \tabularnewline
($\exists$) & \Tree
[.{$\exists x \varphi (x)$}
	[.{$\varphi (c) $}
	]
] &  &  &  & ($\neg\exists$) & \Tree
[.{$\neg\exists x \varphi (x)$}
	[.{$\neg \varphi (a)$}
	]
] &  &  &  &  & \tabularnewline
 &  &  &  &  &  &  &  &  &  &  & \tabularnewline
\end{tabular}

\end{center}

For the $\exists$ rule the constant symbol $c$ must be new to the
tree. For the $\neg\exists$ rule the constant symbol $a$ is arbitrary.
To save some space, we shall assume that other connectives and quantifiers
are defined in the usual way, and thus, can be given the obvious derived
rules that we'll often use below.

Next we describe what happens when the unpacking process crashes.
If we ever end up in a situation where a branch in the tableau tree
has both $\psi$ and $\neg\psi$ occurring upon it, we deem that branch
to be \emph{closed} and place a $\times$ below the final sentence
node to indicate that we are finished with that branch. 

Suppose we have a tableau tree $\mathcal{T}$ for $\varphi$ before
us. We say that a branch $b$ in $\mathcal{T}$ is \emph{complete}
if $b$ is closed or every rule that can be applied to every formula
occurring on $b$ has been applied.\footnote{For the $\neg\exists$ rule we must ensure that every constant symbol
occurring on the branch is eventually used.} We say that a branch is \emph{open} if it is complete but not closed.
We then say that the entire tableau tree is \emph{closed }if all its
branches are closed; and that it is \emph{open}, if it has at least
one open branch.

\subsubsection{Soundness and completeness}

We then obtain the following version of soundness and completeness. 
\begin{thm}
$\varphi$ is satisfiable iff every completed tableau $\mathcal{T}$
for $\varphi$ is open.\label{thm:FOLs=000026c}
\end{thm}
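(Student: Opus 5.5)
We prove the two directions separately. Throughout, a completed tableau is obtained by a fair systematic procedure: fix an enumeration of pairs (node, constant) and at each stage apply the next applicable rule, so that every formula on every branch is eventually unpacked and every $\neg\exists$ formula is eventually instantiated with every constant occurring on the branch. The statement reads ``$\varphi$ is satisfiable iff every completed tableau is open,'' so we must show that satisfiability yields openness of every completed tableau, and that non-satisfiability yields closure of some (indeed every) completed tableau. We argue the contrapositive of the second.

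\emph{Soundness (satisfiable $\Rightarrow$ open).} Suppose $\mathcal{M}\models\varphi$. We show by induction on the construction of a tableau $\mathcal{T}$ that at every stage there is a branch $b$ and an expansion $\mathcal{M}^{*}$ of $\mathcal{M}$, interpreting the constants occurring on $b$, such that $\mathcal{M}^{*}$ satisfies every sentence on $b$. The root case is given. For the propositional rules ($\vee$, $\neg\vee$, $\neg\neg$), one of the resulting extensions of $b$ remains true in $\mathcal{M}^{*}$. For the $\neg\exists$ rule, the instance $\neg\psi(a)$ is true however $a$ is interpreted; if $a$ is not yet interpreted, interpret it arbitrarily. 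For the $\exists$ rule, $c$ is new, so we may interpret it as a witness without disturbing the truth of earlier sentences. This step is where the freshness requirement is used. A branch satisfied by a structure cannot contain both $\psi$ and $\neg\psi$, so it never closes. To pass to the limit of the completed tableau, apply K\"onig's lemma to the finitely branching tree of ``good'' nodes, those for which some expansion satisfies the branch down to that node. This yields an infinite, or terminal, branch every finite initial segment of which is satisfiable, hence a branch that is never closed. So $\mathcal{T}$ is open.

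\emph{Completeness (open $\Rightarrow$ satisfiable).} Let $b$ be an open branch of a completed tableau. Define the term model $\mathcal{M}_{b}$ with domain the constants occurring on $b$, or $\{c_{0}\}$ if none occur. Interpret each relation symbol $R$ so that $R(\bar{a})$ holds iff the atomic sentence $R(\bar{a})$ occurs on $b$. By induction on the complexity of sentences $\psi$, we show two things: if $\psi\in b$ then $\mathcal{M}_{b}\models\psi$, and if $\neg\psi\in b$ then $\mathcal{M}_{b}\models\neg\psi$. The atomic and negated-atomic cases use openness, since $R(\bar{a})$ and $\neg R(\bar{a})$ do not both lie on $b$. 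The connective cases use completeness of $b$, which guarantees that the relevant rule was applied. The $\exists$ case uses the witness $c$ placed on $b$. The $\neg\exists$ case uses the fairness clause that every constant occurring on $b$ is instantiated, which together with the choice of domain covers every element. Since $\varphi$ is the root, $\mathcal{M}_{b}\models\varphi$.

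The main obstacle is bookkeeping rather than logic. One must make sure that the systematic construction really yields complete branches, which requires the fair enumeration interleaving $\neg\exists$ instantiations with newly introduced constants. One must also handle the infinite limit in the soundness direction, via either K\"onig's lemma or the observation that closure is witnessed at a finite stage. With these two points fixed, both inductions are routine.
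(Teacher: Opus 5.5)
Your proposal is correct and takes essentially the same route as the paper. Soundness lets a satisfying model guide the choice of a branch, with the $\vee$ case and the fresh-witness $\exists$ case doing the work; your K\"onig/finite-stage remark just spells out the paper's ``at the limit''. Completeness uses the term model on the constants of an open branch, verified by induction on complexity.

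One loose end, which the paper's own sketch shares, concerns your fallback domain $\{c_{0}\}$. If no constant occurs on $b$, the completion condition must still force some instance of each $\neg\exists$ formula (say with $c_{0}$). Otherwise a lone root $\neg\exists x\,\neg Rxx$ counts as a complete open branch, yet $\mathcal{M}_{b}$ falsifies it.
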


We give a quick sketch of this as it provides a simple illustration
of what we will be doing later.
\begin{proof}
($\to$, soundness) Suppose $\varphi$ is satisfied in $\mathcal{M}$
and without loss of generality suppose that $\mathcal{M}$ is countable.
Our goal is to show that the tableau $\mathcal{T}$ for $\varphi$
has an open branch. To do this we use $\mathcal{M}$ to guide us through
$\mathcal{T}$. This is done by induction on the length of the branch.
Thus, we suppose we have an incomplete branch $b$ in $\mathcal{T}$
of length $n$ and that every sentence occurring on $b$ is satisfiable
in an expansion of $\mathcal{M}$ that interprets the new constant
symbols added via the ($\exists$) rule. We then show that the rule
whose application gives a branch $b^{*}$ in $\mathcal{T}$ of length
$n+1$ extending $b$ is such that a possible further expansion of
$\mathcal{M}$ satisfies the new formula. The interesting cases are
($\vee$) (where we may need to use $\mathcal{M}$ to make a choice)
and ($\exists$) where we add new constant symbols. At the limit,
this gives us a complete branch upon which every formula is satisfied
in an expansion of $\mathcal{M}$. 

($\leftarrow$, completeness) Suppose $b$ is an open branch in the
tableau $\mathcal{T}$ for $\varphi$. We use this branch to define
a model $\mathcal{M}=\langle M,R^{\mathcal{M}}\rangle$ where, for
simplicity, we suppose that $\mathcal{L}$ just has one $2$-place
relation symbol. We let: 
\begin{itemize}
\item $M$ be the set of constant symbols occurring in sentences on $b$;
and
\item $R^{\mathcal{M}}$ be the set of pairs $\langle c,d\rangle$ of constant
symbols such that $``Rcd"$ occurs on $b$. 
\end{itemize}
We then argue by induction on the (tableau) complexity of sentences
that every sentence occurring on $b$ is true in $\mathcal{M}$. Thus,
for example, if $\forall x\psi(x)$ occurs on $b$, we know that since
$b$ is complete that for all constant symbols $c$ occurring on $b$,
$\psi(c)$ must occur on $b$. The induction hypothesis then tells
us that $\mathcal{M}\models\psi(c)$ for all $c$ occurring on $b$;
and so $\mathcal{M}\models\forall x\psi(x)$ as required. 
\end{proof}
This gives us the completeness theorem for sentences of first order
logic without identity. 

\subsubsection{Adding identity}

To deal with identity relations, we add two new rules and a new closure
condition. The new rules are set out below. The second new rule is
essentially the indiscernibility of identicals: 

\begin{center}

\begin{tabular}{ccccc}
 &  &  &  & \tabularnewline
 & \Tree
[.{$$}
	[.{$d=d$}
	]
] &  & \Tree
[.{$\varphi(c)$\\
	$d=c$}
	[.{$\varphi (d)$}
	]
] & \tabularnewline
 &  &  &  & \tabularnewline
\end{tabular}

\end{center}

The first of these rules uses an arbitrary constant symbol. It ensures
that the identity relation is reflexive. The second rule is essentially
the indiscernibility of identicals. For the purposes of this paper,
we shall only allow use of atomic and negated atomic sentences in
this rule; thus, $\varphi(c)$ must be an atomic or negated atomic
sentence.\footnote{This will be important later on as it will help us ensure that the
complexity of sentences occurring in our modified tableau is uniformly
bounded.}

The new closure condition says that whenever $c\neq c$ occurs on
a branch, then that branch is closed. The completeness proof is then
modified by letting the domain be a set of equivalence classes of
constant symbols determined by the identity statements occurring on
the branch. The identity rules then ensure that this interpretation
of identity is congruent with the interpretation of the rest of the
non-logical vocabulary.

\subsection{Modifying the tableaux}

Our goal is to modify the tableau process described above to fit our
target application. We have two main goals:
\begin{enumerate}
\item we should be able to build models of any quantifier bounded extension
of $KPC$; and
\item the model building process should be idempotent. 
\end{enumerate}
To address (1), we shall adjust our rules so that for any $k\in\omega$,
we can build a well-founded model that has the same $\Sigma_{k}$
theory as the background universe. This will allow us to absorb any
quantifier bounded extension of $KPC$. To achieve this, we make use
of the fact that $\Sigma_{k}$ satisfaction is definable in $KP$
and use this, like the model $\mathcal{M}$ in the proof of Theorem
\ref{thm:FOLs=000026c}, to guide the construction of an open branch.
In particular, we shall make sure that there is a bound on the complexity
of sentences occurring in the tree.

Regarding (2), we need to control the theory of the model we construct
so that when we repeat the construction inside this model, we get
essentially the same branch that determined this model. For this purpose
we introduce an unusual notion of satisfiability, that we will use
to guide the definition of a model in a similar fashion to the soundness
proof for Theorem \ref{thm:FOLs=000026c}.
\begin{defn}
Let $\Phi(\bar{c})$ be a finite set of sentences from $\mathcal{L}_{\in}(C)$
where $\bar{c}=c_{0},...,c_{n}$ and let $s:\{\bar{c}\}\to Ord$ be
an assignment of ordinals to the constant symbols occurring on $\Phi(\bar{c})$.
\begin{enumerate}
\item We say that $\Phi(\bar{c})$ is \emph{rank-satisfiable with} $s$
if there exists $\bar{a}=a_{0},...,a_{n}$ from the universe, $V$,
with $rank(a_{i})=s(c_{i})$ for all $i\leq n$ such that $\bigwedge\Phi(\bar{a})$
is true. We say that $\bar{a}$ is a \emph{realizer }of the rank-satisfiability
of $\Phi$ by $s$.
\item We say that $\Phi(\bar{c})$ is \emph{minimal-rank-satisfiable with
$s$ }if $\Phi(\bar{c})$ is rank-satisfiable with $s$ and there
is no $t:\{\bar{c}\}\to Ord$ lexicographically preceding $s$ such
that $\Phi(\bar{c})$ is rank-satisfiable with $t$. 
\end{enumerate}
Whenever assignments $s,t:\{c_{i_{0}},\ldots,c_{i_{m}}\}\to\mathrm{Ord}$
are compared lexicographically, we assume $i_{0}<\cdots<i_{m}$ and
compare the tuples $\langle s(c_{i_{0}}),\ldots,s(c_{i_{m}})\rangle$
and $\langle t(c_{i_{0}}),\ldots,t(c_{i_{m}})\rangle$.
\end{defn}

Less formally, $\Phi(\bar{c})$ is rank-satisfiable with some $s:\{\bar{c}\}\to Ord$
if there is a way of assigning objects to the constant symbols occurring
in sentences on $\Phi(\bar{c})$ whose ranks correspond to $s$ and
which satisfies each sentence in $\Phi(\bar{c})$. $\Phi(\bar{c})$
is then minimal-rank-satisfiable if it cannot be rank-satisfied using
an ordinal assignment that is lexicographically prior to $s$.

As before, we'll construct our tableau using sentences from the language
$\mathcal{L}_{\in}(C)$. Let us then note a few notational conventions.
We adopt the convention of writing $\varphi(\bar{x},\bar{c})$ to
mean that: $\varphi$ has at most the variables $\bar{x}=x_{0},...,x_{n}$
free; and that $\varphi$ contains at most the constant symbols $\bar{c}=c_{0},...,c_{m}$.
To save space and clutter on the page, we'll avoid being too specific
about how matters like substitution work leaving most of this to the
reader's imagination. However, at places where confusion seems likely
to arise we'll try to flag this and discuss how this might be resolved.

By $\bar{y}<_{\text{rank-lex}}\bar{d}$ we mean that $\langle rank(y_{0}),...,rank(y_{n})\rangle$
precedes $\langle rank(d_{0}),...,rank(d_{n})\rangle$ in the lexicographic
order. The natural adaptation of this definition also gives us $\leq_{\text{rank-lex}}$
and $=_{\text{rank-lex}}$. Note that each of these is easily seen
to be $\Delta_{1}$ in $KP$. While we are introducing this as a definition
in our background theory, we shall also freely use this notation as
a metalinguistic abbreviation in the sentences of $\mathcal{L}_{\in}(C)$
occurring in the tableau. This will be helpful in ensuring idempotence.
To save a little space on the page, we shall frequently write $\forall\bar{y}<_{\text{rank-lex}}\bar{d}\ \varphi(\bar{y},\bar{c})$
as a metalinguistic abbreviation for $\forall\bar{y}(\bar{y}<_{\text{rank-lex}}\bar{d}\to\varphi(\bar{y},\bar{c}))$.

We use the prenex and complexity conventions fixed in Section \ref{subsec:Solidity-failure}.
Write $\varphi^{\text{prenex}}$ for the result of applying a fixed
computable procedure that expands bounded quantifiers with their restrictions
as necessary and moves the quantifiers to the front by ordinary logical
equivalences. Thus $\varphi^{\text{prenex}}$ is logically equivalent
to $\varphi$. Bounded quantifiers before the last unbounded quantifier
contribute to the alternating-block count; no closure under arbitrary
bounded quantification is assumed. We shall use this procedure on
the sentences occurring in the tableau when calculating their complexity.

For our first modification, our new tableau will incorporate \emph{side
conditions} on the sentences occurring in the tree.\emph{ }Nodes on
the tree will, thus, be of the form $\langle\varphi(\bar{c}),s\rangle$
where $\varphi$ is a sentence of $\mathcal{L}_{\in}(C)$ using the
constant symbols $\bar{c}=c_{i_{0}},....,c_{i_{m}}$ and $s:\{c_{i_{0}},...,c_{i_{m}}\}\to Ord$.
We call $s$ an \emph{ordinal assignment}. The motivating idea here
is that $s$ will provide an assignment of ranks to the constant symbols
from which we shall build our model. Among other things, this will
ensure that the model is well-founded. When writing out the tableau,
we shall, however, save some space and only notate ordinal assignments
when we add new constant symbols to a branch, as in, say, an application
of the $\exists$ rule. We shall then speak of the \emph{accumulated
ordinal assignment }which is essentially the union of the ordinal
assignments that have occurred up to the appearance of some sentence.

In order to ensure the idempotence property holds, we also need to
modify the rules of our tableau. In particular, we shall introduce
a new cut rule and change the $\vee$ and $\exists$ rules. These
are rules that allow us to choose different ways of extending a branch.
The modifications are intended to ensure that the theory of the model
built from the tree is strong enough to ensure that we will get the
same open branch if we repeat the process.

\subsubsection{Modified rules}

Next we describe how each of the modified rules works and make some
observations that will assist us in applying them. First, we leave
the $\neg\vee,\neg\neg$ and identity rules unchanged. We modify the
$\neg\exists$ rule by demanding that if a negated block of $\exists$
quantifiers occurs at the front of some sentence, we must apply the
old $\neg\exists$ rule repeatedly until there are no more negated
$\exists$ quantifiers at the front of the sentence.\footnote{For example, to apply the modified $\neg\exists$ rule to $\neg\exists x_{1}...\exists x_{n}\psi(x_{1},...,x_{n})$
where $\psi(x_{1},...,x_{n})$ does not begin with an $\exists$ quantifier,
we would need to use a sentence of the form $\neg\psi(d_{1},...,d_{n})$
where $d_{1},...,d_{n}$ are arbitrary.} We call the rules we've discussed so far, \emph{ordinary rules}.
With regard to $\vee$ and $\exists$ a more complex modification
is required, which we describe below. In addition to this, we shall
add a modified cut rule to ensure that we can preserve truths from
the background universe. We call the rules that aren't ordinary, \emph{special
rules}.

In order to keep the complexity of sentences occurring on the tableau
under control we introduce a technical definition that partitions
sentences into \emph{major} and \emph{minor} sentences. Any output
of one of the ordinary rules will give us a major sentence. However,
the special rules below produce both major and minor sentences. In
the construction that is described below, we shall see that minor
sentences contain a kind of redundant information that somewhat obviously
holds of the model in which we are building the tableau, but which
we also wish to ensure holds in the model determined by the tableau.

\subsubsection*{Special $\Sigma_{k}$-cut.}

Let us fix some $k>1$. In a tableau system, the standard cut rule
tells us that for any sentence $\varphi$ we can extend a branch by
splitting into two paths with $\varphi$ on one side and $\neg\varphi$
on the other.\footnote{See \citep{TolTab} or \citep{MeadowsCarn} for examples of cut rules
in tableau systems.} However, in our situation, we don't want to consider arbitrary sentences
but those below a specific complexity bound. Thus, we shall use what
we might call a $\Sigma_{k}$-cut rule that considers arbitrary $\Sigma_{k}$
sentences. This will allow us to use a definable bounded satisfaction
relation to navigate the tableau.

Suppose we have a $\Sigma_{k}$ sentence from $\mathcal{L}_{\in}$
(so it contains no constant symbols from $C$) of the form
\[
\exists\bar{x}\underbrace{\varphi(\bar{x})}_{\Pi_{k-1}}.
\]
Then we may apply the following cut rule:

\Tree
[.{$$}
	[.{$\varphi(\bar d),s$\\
	   $\bigl[\forall \bar y <_{\text{rank-lex}} \bar d \:\:\neg \varphi (\bar y)\bigr]^{\text{prenex}}$}
	]
	[.{$\forall \bar x \neg \varphi(\bar x)$}
	]
]

where: 
\begin{itemize}
\item on the left, $\bar{d}=d_{0},...,d_{n}$ must be the next constant
symbols not already used on the branch; and 
\item $s:\{d_{0},...,d_{n}\}\to Ord$.
\end{itemize}
We say that $\varphi(\bar{d})$ is a major sentence and that $\bigl[\forall\bar{y}<_{\text{rank-lex}}\bar{d}\ \neg\varphi(\bar{y})\bigr]^{\text{prenex}}$
and $\forall\bar{x}\neg\varphi(\bar{x})$ are minor sentences.

Observe that we are eliminating all of the existential quantifiers
at the front of the sentence, not just the first one. Note that the
diagram for the rule is also a little misleading in that it looks
like there are just two choices. In fact, the left move is an abbreviation
for a proper class of different options according to our choice of
$s:\{\bar{d}\}\to Ord$. Also observe that: $\varphi(\bar{d})$ is
$\Pi_{k-1}$; $\forall\bar{x}\neg\varphi(\bar{x})$ is $\Pi_{k}$
and $\bigl[\forall\bar{y}<_{\text{rank-lex}}\bar{d}\ \neg\varphi(\bar{y})\bigr]^{\text{prenex}}$
is $\Pi_{k}$. For the last of these, we rely on the fact that we
have converted this entire sentence into prenex normal form.

The motivation behind the rule may be described as follows. In applying
the rule, we are considering whether or not to add $\exists\bar{x}\varphi(\bar{x})$.
If we take the right move, then we are choosing not to make this sentence
true. If we take one of the left moves, we are choosing to make it
true and so we witness with a sequence $\bar{d}$ of constant symbols
and an ordinal assignment $s$ of ranks to those constants. But more
than that, we add a second sentence which says that there is no way
of satisfying $\varphi(\bar{x})$ with a sequence of objects whose
ranks preceded those of $\bar{d}$. This will help us to ensure that
we get the same branch when we repeat the model building process.\footnote{Note that we are yet to describe this process. This will come with
Lemma \ref{lem:DefBranch}.}

\subsubsection*{Special $\vee$}

Next we consider a modification of the $\vee$ rule.

\Tree
[.{$\varphi(\bar c) \vee \psi(\bar c)$}
	[.{$\varphi(\bar c)$\\
		$\bigl[\forall \bar y <_{\text{rank-lex}} \bar d \:\:\neg (\psi(\bar y) \wedge \bigwedge \Psi(\bar y))\bigr]^{\text{prenex}}$}
	]
	[.{$\psi(\bar c)$\\
		$\bigl[\forall \bar y \leq_{\text{rank-lex}} \bar d \:\:\neg (\varphi(\bar y) \wedge \bigwedge \Psi(\bar y))\bigr]^{\text{prenex}}$}
	]
]

where:
\begin{itemize}
\item $\bar{d}=d_{0},...,d_{n}$ are the constant symbols that have already
occurred on the branch and which, thus, include $\bar{c}$; and
\item $\Psi(\bar{d})$ is the set of major sentences already occurring on
the branch.
\end{itemize}
We then say that $\varphi(\bar{c})$ and $\psi(\bar{c})$ are major
sentences while $\ensuremath{\bigl[\forall\bar{y}<_{\text{rank-lex}}\bar{d}\:\:\neg(\psi(\bar{y})\wedge\bigwedge\Psi(\bar{y}))\bigr]^{\text{prenex}}}$
and $\bigl[\forall\bar{y}\leq_{\text{rank-lex}}\bar{d}\:\:\neg(\varphi(\bar{y})\wedge\bigwedge\Psi(\bar{y}))\bigr]^{\text{prenex}}$
are minor sentences.

This is the same as the standard $\vee$ rule except for the extra
sentences below $\varphi(\bar{c})$ and $\psi(\bar{c})$. The reader
will also see that this rule is incredibly inefficient in that when
we apply the rule, we are forcing ourselves to essentially repeat
a lot of work that has already been done. Fortunately for our purposes,
this kind of efficiency is not important.

The motivating idea is to ensure that:
\begin{itemize}
\item when we turn left, there is no way to satisfy $\psi(\bar{y})$ with
an assignment sequence whose ranks lexicographically precede those
of the assignment for $\varphi(\bar{c})$ and the sentences already
occurring on the branch; and
\item when we turn right, there is no way to satisfy $\varphi(\bar{y})$
with an assignment sequence whose ranks lexicographically precede
or equal those of the assignment for $\psi(\bar{c})$ and the sentences
already occurring on the branch.
\end{itemize}
Note that if $\varphi(\bar{x})$ and all the sentences occurring in
$\Psi(\bar{y})$ (i.e., the major sentences already occurring on the
branch) are $\Sigma_{k}$ (or less), then $\bigl[\forall\bar{y}\leq_{\text{rank-lex}}\bar{d}\ \neg(\varphi(\bar{y})\wedge\bigwedge\Psi(\bar{y}))\bigr]^{\text{prenex}}$
is $\Pi_{k}$.

\subsubsection*{Special $\exists$}

Finally, we consider a modification of the $\exists$ rule. 

\Tree
[.{$\exists\bar{x}\ \varphi(\bar{x},\bar{c}_0)$}
	[.{$\varphi(\bar d, \bar c), s$\\
	   $\bigl[\forall \bar y,\bar z <_{\text{rank-lex}} \bar c, \bar d \:\:\neg (\varphi (\bar z, \bar y)\wedge \bigwedge \Psi(\bar y))\bigr]^{\text{prenex}}$}
	]
]

where: 
\begin{itemize}
\item $\bar{c}$ are the constant symbols that have already occurred on
the branch;\footnote{Note that this does not mean that every constant symbol from $\bar{c}$
is used in $\varphi(\bar{x},\bar{c})$.}
\item $\Psi(\bar{c})$ is the set of major sentences already occurring on
the branch;
\item $\bar{d}$ must be the next constant symbols not already used on the
branch; and
\item $s:\{\bar{d}\}\to Ord$.
\end{itemize}
We say that $\varphi(\bar{d},\bar{c})$ is a major sentence and that
$\bigl[\forall\bar{y},\bar{z}<_{\text{rank-lex}}\bar{c},\bar{d}\:\:\neg(\varphi(\bar{z},\bar{y})\wedge\bigwedge\Psi(\bar{y}))\bigr]^{\text{prenex}}$
is a minor sentence. Again note that the diagram for the rule is a
little misleading: there is a proper class of different options according
to the different ordinal assignments $s$ that go along with $\varphi(\bar{d},\bar{c})$.
Observe that this is the same as the ordinary $\exists$ rule except
that we add an extra sentence which is intended to ensure that there
can be no sequence $\bar{y},\bar{z}$ whose ranks precede those of
$\bar{c},\bar{d}$ that satisfies $\varphi(\bar{z},\bar{y})$ along
with every major sentence already occurring on the branch. Note that
if $\varphi(\bar{y},\bar{c})$ and every sentence in $\Psi(\bar{c})$
are $\Sigma_{k}$ (or less) then 
\[
\bigl[\forall\bar{y},\bar{z}<_{\text{rank-lex}}\bar{c},\bar{d}\:\:\neg(\varphi(\bar{z},\bar{y})\wedge\bigwedge\Psi(\bar{y}))\bigr]^{\text{prenex}}
\]
is $\Pi_{k}$.

\subsubsection{An important lemma}

The following lemma establishes that we can put a quantifier bound
on the complexity of sentences occurring in our tableau. This means
that we can use a definable satisfaction predicate to guide the definition
of an open branch.
\begin{lem}
Suppose $\mathcal{T}$ is a (possibly incomplete) tableau produced
using the system described above. Then every minor sentence on $\mathcal{T}$
is $\Pi_{k}$ and every major sentence is $\Sigma_{k}$.
\end{lem}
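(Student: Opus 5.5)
The plan is an induction on the construction of $\mathcal{T}$: on the number of rule applications producing a branch, or equivalently on the length of the branch up to a given node. The induction hypothesis is that every major sentence so far on the branch is $\Sigma_{k}$ and every minor sentence is $\Pi_{k}$. The base case is the empty tableau, or a root consisting of sentences supplied by the special $\Sigma_{k}$-cut, which introduces only sentences from $\mathcal{L}_{\in}$ of bounded complexity. At each step I check the newly added sentences, rule by rule, always measuring complexity on the prenex form under the convention of Section \ref{subsec:Solidity-failure}.

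\emph{Ordinary rules.} These are $\neg\vee$, $\neg\neg$, the modified $\neg\exists$, and the identity rules.
\begin{enumerate}
\item The identity rules only produce atomic or negated atomic sentences, by the restriction we imposed, so these are trivially $\Sigma_{k}$.
\item For $\neg\neg$ and $\neg\vee$, the outputs are subformulae, or negations of subformulae, of a major sentence. Passing to prenex form does not increase the alternating-block count.
\item The modified $\neg\exists$ rule is why all the negated existentials at the front are stripped at once. Applied to a major $\neg\exists\bar{x}\,\psi$, where the whole sentence is $\Sigma_{k}$ (hence $\psi$'s prenex form begins with $\forall$ or is bounded), it outputs $\neg\psi(\bar{d})$. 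This is of complexity at most that of the input.
\end{enumerate}
The delicate point is that the inputs to ordinary rules must be major; a minor sentence might be fed to an ordinary rule, so I will also need to check that. The clean way is to strengthen the hypothesis: every sentence on the branch is either $\Sigma_{k}$ or $\Pi_{k}$, and rule outputs from a $\Pi_{k}$ input are again $\Pi_{k}$ or lower (a $\Pi_{k}$ sentence such as $\forall\bar x\neg\varphi$ unpacks by $\neg\exists$ to $\neg\varphi(\bar d)$, which is $\Sigma_{k-1}\subseteq\Sigma_k$). Alternatively, I will argue that the construction only unpacks major sentences. I would first try the strengthened hypothesis, since it is robust.

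\emph{Special rules.} Here I use the observations already recorded after each rule.
\begin{enumerate}
\item For the $\Sigma_{k}$-cut, $\varphi(\bar{d})$ is $\Pi_{k-1}\subseteq\Sigma_{k}$, and the two minor sentences are $\Pi_{k}$.
\item For special $\vee$ and special $\exists$, the new major sentences are subformulae (instantiated) of a major $\Sigma_{k}$ sentence, hence $\Sigma_{k}$. The minor sentence has the form $[\forall\bar{y}\leq_{\text{rank-lex}}\bar{d}\ \neg(\chi(\bar{y})\wedge\bigwedge\Psi(\bar{y}))]^{\text{prenex}}$, where $\chi$ and all of $\Psi$ (the major sentences on the branch) are $\Sigma_{k}$ by the induction hypothesis.
\end{enumerate}
For that minor sentence, the conjunction of finitely many $\Sigma_{k}$ sentences prenexes to $\Sigma_{k}$ by interleaving aligned blocks; its negation is $\Pi_{k}$. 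The rank-lex guard is $\Delta_{1}$, so the implication $\bar y<_{\text{rank-lex}}\bar d\to\ldots$ prenexes with the guard's existential witnesses turned universal and absorbed into the leading $\forall$ block. Together with the outer $\forall\bar{y}$, this gives $\Pi_{k}$. This needs $k>1$ so that the $\Delta_{1}$ guard fits inside the first universal block; that is why $k>1$ was fixed.

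The main obstacle I expect is precisely this prenex bookkeeping. Merging the universal quantifiers coming from the $\Delta_{1}$ definition of $<_{\text{rank-lex}}$ (whose $\Sigma_{1}$ form becomes $\Pi_{1}$ in the antecedent) with the outer universal block requires care to avoid creating a new alternation. I would handle it by choosing the $\Pi_{1}$ presentation of rank comparison in antecedent position, so that its negation is $\Sigma_{1}$ and can be placed in the innermost existential block when $k\ge2$. Equivalently, I would choose the presentation whose negation merges with the first block of $\neg\bigwedge\Psi$. Writing the fixed prenex procedure so that it pads and aligns blocks, as in Lemma \ref{lem:O2ForcingBound}, then finishes the argument.
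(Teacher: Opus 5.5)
Your proposal is correct and takes essentially the same route as the paper: induction on branch length with a rule-by-rule check in prenex form. It uses the same two key facts. The modified $\neg\exists$ rule applied to a $\Pi_{k}$ minor sentence (such as the right move of a cut) leaves a $\Sigma_{k-1}$ sentence, and since $\Psi$ consists of $\Sigma_{k}$ sentences the minor sentences come out $\Pi_{k}$.

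One tightening: keep the invariant ``major $\Sigma_{k}$, minor $\Pi_{k}$'' rather than weakening it to ``each sentence is $\Sigma_{k}$ or $\Pi_{k}$''. Ordinary-rule outputs are always major and so must be $\Sigma_{k}$, which your $\Sigma_{k-1}$ computation supplies. Your handling of the $\Delta_{1}$ rank-lex guard is more careful than the paper's, which simply asserts the $\Pi_{k}$ bound.
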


\begin{proof}
We proceed by induction on the length of branches to show that: 
\begin{enumerate}
\item the complexity of any major sentence is $\Sigma_{k}$; and
\item the complexity of every minor sentence is $\Pi_{k}$.
\end{enumerate}
For the $\neg\vee,\neg\exists$ and $\neg\neg$ rules this is obvious:
applying those rules always adds sentences of the same or lower complexity
than the sentence to which those rules are applied. This suffices
since it can be seen that every sentence on $\mathcal{T}$ is in prenex
normal form and so we can only apply the $\neg\vee$ and $\neg\neg$
rules to $\Delta_{0}$-sentences. The $\neg\exists$ rule can apply
to sentences more complex than $\Delta_{0}$; for instance a $\neg\exists$
formula may result from taking the right move on a Special $\Sigma_{k}$-cut
rule and thus, be $\Pi_{k}$. The sentence resulting will always be
$\Sigma_{k-1}$ which suffices. For the identity rules, note that
we are only allowed to use them on atomic and negated atomic sentences.
So we just need to consider the special rules described above.

(Special $\Sigma_{k}$-cut) Supposing we cut on $\exists\bar{x}\varphi(\bar{x})$
where $\varphi(\bar{x})$ is $\Pi_{k-1}$, we see that: the major
sentence $\varphi(\bar{d})$ is $\Pi_{k-1}$ and thus $\Sigma_{k}$;
the minor sentences $\forall\bar{x}\neg\varphi(\bar{x})$ and $\bigl[\forall\bar{y}<_{\text{rank-lex}}\bar{d}\ \neg\varphi(\bar{y})\bigr]^{\text{prenex}}$
are both $\Pi_{k}$.

(Special $\vee$) It is not difficult to see that our rules ensure
that every sentence occurring in $\mathcal{T}$ is in prenex normal
form. This means that the only way for a disjunction, $\varphi(\bar{c})\vee\psi(\bar{c})$,
to occur in $\mathcal{T}$ is if both $\varphi(\bar{c})$ and $\psi(\bar{c})$
are $\Delta_{0}$. Thus, the major sentences are both $\Sigma_{k}$.
Moreover, by the induction hypothesis, we see that the set $\Psi(\bar{d})$
of major sentences occurring on the branch so far is a conjunction
of $\Sigma_{k}$ sentences. Thus, the minor sentences can both be
seen to be $\Pi_{k}$.

(Special $\exists$) Similarly, since every sentence occurring in
$\mathcal{T}$ is in prenex normal form, we see that the only way
for a sentence of the form $\exists\bar{x}\ \varphi(\bar{x},\bar{c})$
to occur on $\mathcal{T}$ is if it is $\Sigma_{k}$. Using the induction
hypothesis, we see that every sentence in $\Psi(\bar{c})$ is $\Sigma_{k}$,
and so the prenex normal form of
\[
\forall\bar{y},\bar{z}<_{\text{rank-lex}}\bar{c},\bar{d}\:\:\neg(\varphi(\bar{z},\bar{y})\wedge\bigwedge\Psi(\bar{y}))
\]
will be $\Pi_{k}$ as required.
\end{proof}

\subsubsection{A standard process}

\label{subsec:StanProc}

Before we move on to describing the model-building process and establishing
that it is idempotent, let us first suppose that we have fixed upon
a particular order in the applications of our rules. The purpose of
this is to give us a particular canonical tableau $\mathcal{T}$ that
we can refer to below. Similar processes can be found in many sources,
so we content ourselves with a quick sketch \citep{SmullFO}.

For one way of doing this, we essentially just work our way down a
branch applying the next rule that needs to be applied. This works
for rules that only require one application, like $\neg\vee$, Special
$\vee$ and Special $\exists$. However, for the Special $\exists$
rule let us suppose that we always add the next sequence $\bar{d}=d_{0},...,d_{m}$
of elements from $C$ to witness that $\exists\bar{x}\ \varphi(\bar{x},\bar{c})$.
Thus, if we've already used $\{c_{0},...,c_{n}\}$ on a particular
branch, then we let $\bar{d}=c_{n+1},...,c_{n+m+1}$.

For the identity and $\neg\exists$ rules, we need to make infinitely
many applications. So after every application of the other rules,
we should go back to the beginning of the branch and do the next application
of each identity and $\neg\exists$ rule before moving on. In applications
of the $\neg\exists$ rule and the reflexive identity rule, we only
allow the substitution of constant symbols that have already occurred
on the branch; and we demand that every such constant symbol is eventually
used.\footnote{It is not difficult to see that an open branch, using every instance
of $\Sigma_{k}$-cut will end up using every constant symbol in $C$.
This is because there are infinitely many true $\Sigma_{k}$-sentences
beginning with existential quantifiers.} After doing that we still need to deal with Special $\Sigma_{k}$-cut.
For this rule, we suppose we have a simple enumeration of the $\Sigma_{k}$
sentences and so, after doing the next round of applications to each
identity and $\neg\exists$ rule, we then do the next Special $\Sigma_{k}$-cut.
Finally, fix once and for all an initial branch (say the empty branch)
and a primitive-recursive fair scheduling procedure which, from the
code of a finite branch, determines the next pending ordinary-rule
application, the next identity or $\neg\exists$ instance, and the
next $\Sigma_{k}$ cut. The tableau generated using this fixed schedule
will be called the \emph{canonical} \emph{tableau} that we shall denote
as $\mathcal{T}$.

\subsection{Building models }

Let $T$ be a theory as described in the statement of Lemma \ref{count-enum-thm}.
Thus, we may fix some $k>3$ such that for all sentences $\varphi\in T\backslash KPC$,
$\varphi$ is $\Sigma_{k}$. For the remainder of this section, we
work in the theory $T$ unless stated otherwise. This means, for example,
if we say that some relation is well-founded, it is well-founded according
to $T$.\footnote{Moreover, since we could be using a theory as weak as $KP$, when
we say that a relation is well-founded, we mean that there is an order-preserving
relation from that relation into the ordinals.}

Let $\mathcal{T}$ be the canonical tableau using Special $\Sigma_{k}$-cut
described above. Our goal is to use minimal-rank-satisfiability as
a guide to defining an open branch through $\mathcal{T}$ and then
use that branch to interpret a well-founded model that is $\Sigma_{k}$
elementary equivalent to our background universe.

\subsubsection{Soundness}

We are ready to describe a process for finding an open branch through
$\mathcal{T}$. We start by making our lives a little easier with
a sequence of three little claims showing that it will actually suffice
to merely preserve rank-satisfiability (rather than minimal-rank-satisfiability).
First, we observe that the truth of minor sentences only depends on
the ranks of their constant symbols.
\begin{prop}
Suppose $\chi(\bar{c})$ is of the form of one of the minor sentences.
If $\bar{a}$ and $\bar{e}$ are such that $rank(a_{i})=rank(e_{i})$
for all $i<lh(\bar{a})$ then\label{claim:rank-invar} 
\[
\chi(\bar{a})\Leftrightarrow\chi(\bar{e}).
\]
\end{prop}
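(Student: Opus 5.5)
The plan is to read off the syntactic shape common to all the minor sentences and observe that the constant symbols enter only through rank comparisons. There are four forms: $\forall\bar{x}\neg\varphi(\bar{x})$ from the right branch of Special $\Sigma_{k}$-cut; $\bigl[\forall\bar{y}<_{\text{rank-lex}}\bar{d}\ \neg\varphi(\bar{y})\bigr]^{\text{prenex}}$ from the left branch; the two Special $\vee$ minors $\bigl[\forall\bar{y}\mathrel{R}\bar{d}\ \neg(\theta(\bar{y})\wedge\bigwedge\Psi(\bar{y}))\bigr]^{\text{prenex}}$ with $R$ either $<_{\text{rank-lex}}$ or $\leq_{\text{rank-lex}}$; and the Special $\exists$ minor $\bigl[\forall\bar{y},\bar{z}<_{\text{rank-lex}}\bar{c},\bar{d}\ \neg(\varphi(\bar{z},\bar{y})\wedge\bigwedge\Psi(\bar{y}))\bigr]^{\text{prenex}}$. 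In each case the matrix after the restricted block has its constant occurrences replaced by the bound variables $\bar{y}$ (and $\bar{z}$). The only places the constants themselves survive are the rank-lex restrictions. The first form contains no constants from $C$ at all, so it is trivially invariant.

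Since the prenex operation yields a logically equivalent sentence, I would argue about the unprenexed versions. Fix one of these forms, written $\chi(\bar{c})\equiv\forall\bar{w}\,(\bar{w}\mathrel{R}\bar{c}\to\neg\rho(\bar{w}))$, where $\rho$ contains no constants from $C$. Now take $\bar{a},\bar{e}$ with $rank(a_{i})=rank(e_{i})$ for all $i$. The relation $\bar{w}\mathrel{R}\bar{c}$ is defined purely by comparing the tuple $\langle rank(w_{0}),\ldots\rangle$ lexicographically with $\langle rank(c_{0}),\ldots\rangle$. Hence $\bar{w}\mathrel{R}\bar{a}\leftrightarrow\bar{w}\mathrel{R}\bar{e}$ for every $\bar{w}$, because both sides compare $\bar{w}$'s ranks with the same ordinal tuple. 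The two universally quantified sentences therefore range over the same $\bar{w}$ with the same matrix $\neg\rho(\bar{w})$, which gives $\chi(\bar{a})\Leftrightarrow\chi(\bar{e})$. In $KP$ this is an instance of the fact that $rank$ is a $\Delta_{1}$-definable total function, so the substitution is legitimate internally.

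The one step needing care is the substitution bookkeeping, which the paper deliberately left loose. In the Special $\exists$ and Special $\vee$ minors, the major sentences $\Psi(\bar{d})$ mention the actual constants. The minor sentence is built by replacing every constant in $\Psi$ and in $\varphi$ by the corresponding variable $y_{i}$ (or $z_{i}$), so that $\rho$ is genuinely constant-free. Similarly, in the Special $\exists$ minor, $\bar{y},\bar{z}$ must correspond to all of $\bar{c},\bar{d}$, including the constants in $\bar{c}$ that do not occur in $\varphi$. I will state this reading explicitly. Once it is fixed, the argument is purely syntactic, and I expect no further obstacle.
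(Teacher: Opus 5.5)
Your proposal is correct and follows the same route as the paper, which simply observes that the constants in a minor sentence occur only in the rank-lex restriction, so truth depends only on the ranks of their interpretations. Your version makes explicit what the paper leaves implicit: the case split over the minor forms, the passage from prenex to unprenexed forms by logical equivalence, and the substitution convention that makes the matrix constant-free.
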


\begin{proof}
Observe that minor sentences are only concerned with the ranks of
the constants they mention. For example,
\[
\forall\bar{y},\bar{z}<_{\mathrm{rank\text{-}lex}}\bar{c},\bar{d}\ \neg\left(\varphi(\bar{z},\bar{y})\wedge\bigwedge\Psi(\bar{y})\right)
\]

is clearly satisfied by any interpretations $\bar{a},\bar{e}$ for
$\bar{c},\bar{d}$ with the same rank.
\end{proof}
Now we shift our attention to branches in our tableau. We use the
proposition above to show that if all the sentences on some branch
are rank-satisfied with their accumulated ordinal assignment, then
any realizer of its major sentences will also realize its minor sentences.
\begin{prop}
Let $b$ be a branch in $\mathcal{T}$, $\Phi(\bar{c})$ be the sentences
occurring on $b$; and $\Psi(\bar{c})$ be the major sentences occurring
on $b$. Suppose that $\Phi(\bar{c})$ is rank-satisfiable with $s:\{\bar{c}\}\to Ord$.
Then every realizer $\bar{a}$ for $\Psi(\bar{c})$ being rank satisfiable
with $s$ is also a realizer for the rank-satisfiability of $\Phi(\bar{c})$
with $s$.\label{realizerShifting}
\end{prop}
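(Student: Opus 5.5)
The approach is a rank-transfer argument. We fix a realizer $\bar{e}$ of the rank-satisfiability of $\Phi(\bar{c})$ with $s$, which exists by hypothesis, and an arbitrary realizer $\bar{a}$ of the rank-satisfiability of $\Psi(\bar{c})$ with $s$. We must show that every sentence of $\Phi(\bar{c})$ is true at $\bar{a}$. Since $\Phi(\bar{c})$ is the union of $\Psi(\bar{c})$ with the minor sentences on $b$, and $\bar{a}$ already realizes $\Psi(\bar{c})$, it suffices to check the minor sentences.

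Let $\chi(\bar{c})$ be a minor sentence on $b$. Since $\bar{e}$ realizes all of $\Phi(\bar{c})$, $\chi(\bar{e})$ holds. Both $\bar{a}$ and $\bar{e}$ are realizers with respect to the same ordinal assignment $s$, so $rank(a_{i})=s(c_{i})=rank(e_{i})$ for every $i$. Proposition \ref{claim:rank-invar} then gives $\chi(\bar{a})\Leftrightarrow\chi(\bar{e})$, and hence $\chi(\bar{a})$. The constants occurring in $\chi$ may form only a subtuple of $\bar{c}$, but we apply Proposition \ref{claim:rank-invar} to the corresponding subtuples, whose ranks still agree coordinatewise.

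The one point needing care is confirming that each minor sentence on $b$ genuinely has one of the forms covered by Proposition \ref{claim:rank-invar}. This means its constant symbols occur only inside the $<_{\text{rank-lex}}$ or $\leq_{\text{rank-lex}}$ bounds, or not at all, as in $\forall\bar{x}\neg\varphi(\bar{x})$, which is an $\mathcal{L}_{\in}$ sentence without constants. We verify this by inspecting the three special rules. In each, the minor sentence quantifies over fresh variables $\bar{y}$ (and $\bar{z}$), and the displayed constants $\bar{c},\bar{d}$ appear only as rank bounds. The matrix sentences $\varphi$ and $\Psi$ are evaluated at the bound variables rather than at the constants, so no constant appears outside a rank bound. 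Passing to prenex form is a logical equivalence and does not affect this. With that check in place, the argument is complete.
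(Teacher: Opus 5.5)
Your proposal is correct and is essentially the paper's own argument: fix a realizer $\bar{e}$ of $\Phi(\bar{c})$ with $s$, note that only the minor sentences need checking at $\bar{a}$, and transfer each of them from $\bar{e}$ to $\bar{a}$ by Proposition \ref{claim:rank-invar}, since both tuples realize the same assignment $s$. Your additional inspection of the special rules, confirming that constants occur in minor sentences only inside the rank-lex bounds, is the observation the paper makes in the proof of Proposition \ref{claim:rank-invar} rather than here.
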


\begin{proof}
Let $\bar{e}$ be a realizer for the rank-satisfiability of $\Phi(\bar{c})$
with $s$. Then suppose that $\bar{a}$ is a realizer for the rank-satisfiability
of $\Psi(\bar{c})$ at $s$. We claim that $\bar{a}$ is also a realizer
for the satisfiability of $\Phi(\bar{c})$ with $s$. To see this,
let $\chi(\bar{c})$ be a minor sentence occurring on $b$. By our
assumption, $\chi(\bar{e})$ is true and so Proposition \ref{claim:rank-invar}
tells us that $\chi(\bar{a})$ is also true; and thus, we see that
$\bar{a}$ is a realizer for the rank-satisfiability of $\Phi(\bar{c})$
with $s$.
\end{proof}
We then put this together to show that for branches in a tableau,
rank-satisfiability implies minimal-rank-satisfiability.
\begin{lem}
Suppose that $b$ is a branch in $\mathcal{T}$. Let $\Phi(\bar{c})$
be the set of sentences occurring on $b$; let $\Psi(\bar{c})$ be
the set of major sentences occurring on $b$; and let $s$ be the
accumulated ordinal assignment. Suppose that $\Phi(\bar{c})$ is rank-satisfiable
with $s$. Then\label{lem:AllrankToAllmin}
\begin{enumerate}
\item $\Psi(\bar{c})$ is minimal-rank-satisfiable with $s$; and
\item $\Phi(\bar{c})$ is minimal-rank-satisfiable with $s$.
\end{enumerate}
\end{lem}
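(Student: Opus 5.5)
The plan is to prove (1) by contradiction, using only the minor sentences attached to the rules that introduce new constants. Clause (2) will then follow at once. Fix a realizer $\bar{e}$ of the rank-satisfiability of $\Phi(\bar{c})$ with $s$. Since $\Psi\subseteq\Phi$, $\bar{e}$ also shows that $\Psi(\bar{c})$ is rank-satisfiable with $s$. So for (1) it remains to rule out some $t$ lexicographically preceding $s$ such that $\Psi(\bar{c})$ is rank-satisfiable with $t$. Suppose such a $t$ exists, with realizer $\bar{a}$. Let $i$ be the first index with $t(c_{i})\neq s(c_{i})$. Then $t(c_{i})<s(c_{i})$, and $t$ agrees with $s$ on all earlier constants.

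The key observation concerns how constants enter a branch of the canonical tableau $\mathcal{T}$. New constants appear only through the Special $\exists$ rule or the left move of a Special $\Sigma_{k}$-cut. The $\neg\exists$ and reflexive identity rules only reuse constants already on the branch. By the fixed schedule of Section \ref{subsec:StanProc}, constants are always introduced in increasing index order. Hence $c_{i}$ belongs to a block $\bar{d}$ introduced at some stage, preceded by exactly the constants $\bar{c}'$ already on the branch at that stage. I split into the two cases.

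\emph{Special $\exists$.} The rule adds the major sentence $\varphi(\bar{d},\bar{c}')$ and the minor sentence $\bigl[\forall\bar{y},\bar{z}<_{\text{rank-lex}}\bar{c}',\bar{d}\ \neg(\varphi(\bar{z},\bar{y})\wedge\bigwedge\Psi'(\bar{y}))\bigr]^{\text{prenex}}$. Here $\Psi'\subseteq\Psi$ is the set of major sentences present at that stage. Restrict $\bar{a}$ to the constants $\bar{c}',\bar{d}$. This restriction satisfies $\varphi$ and every member of $\Psi'$, since all of these lie in $\Psi$. Its ranks agree with those of $\bar{e}$ before position $i$ and are strictly smaller at $i$. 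So it is rank-lex below the corresponding part of $\bar{e}$. But $\bar{e}$ realizes all of $\Phi$, hence satisfies this minor sentence, which asserts that no such tuple exists. This is a contradiction.

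\emph{Special $\Sigma_{k}$-cut, left move.} The argument is the same, using $\varphi(\bar{d})$ together with $\bigl[\forall\bar{y}<_{\text{rank-lex}}\bar{d}\ \neg\varphi(\bar{y})\bigr]^{\text{prenex}}$. Proposition \ref{claim:rank-invar} is what justifies evaluating these minor sentences purely in terms of the ranks supplied by $\bar{e}$.

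For (2), suppose $\Phi(\bar{c})$ were rank-satisfiable with some $t$ lexicographically preceding $s$. Then $\Psi(\bar{c})$ would be too, contradicting (1). Together with the given rank-satisfiability with $s$, this proves (2). The step I expect to need the most care is the bookkeeping in the key observation. It has three parts. First, the lexicographic order on all of $\bar{c}$ must restrict correctly to the tuple $\bar{c}',\bar{d}$ named in the minor sentence; this uses that constants are introduced in index order. Second, the relevant set $\Psi'$ must consist only of majors preceding the introduction of $c_{i}$. Third, the first disagreement must really fall inside the block $\bar{d}$ rather than earlier. The Special $\vee$ minor sentences are not needed for this lemma.
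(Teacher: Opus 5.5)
Your proof is correct. It is the paper's argument with the induction unrolled.

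The paper proves (1) by induction on the length of $b$. Rules that add no new constants are handled by $\Psi^{*}\supseteq\Psi$ and the induction hypothesis. At a Special $\exists$ step or a left Special $\Sigma_{k}$-cut step, a competing lexicographically smaller realizer of $\Psi^{*}$ is split into two cases. Either its ranks already drop on the old constants, which contradicts the induction hypothesis. Or they agree there and drop on the new block, which contradicts the newly added minor sentence.

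You go straight to the block containing the first disagreement and use that block's minor sentence once. The mechanism is the same. Your version avoids the case analysis over rules that add no constants. It also makes explicit the fact that constants enter in increasing index order, which the paper uses silently when it splits the competing tuple into an old part and a new part.

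Two small simplifications are available. Your third bookkeeping worry is vacuous, since $\bar{d}$ is defined as the block containing $c_{i}$. You also do not need Proposition \ref{claim:rank-invar}: $\bar{e}$ realizes all of $\Phi$, so you can evaluate the minor sentence directly at $\bar{e}$.

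For (2), the paper passes through Proposition \ref{realizerShifting}. Since rank-satisfiability of $\Phi$ with $s$ is a hypothesis of the lemma, your one-line monotonicity argument from $\Psi\subseteq\Phi$ already suffices.
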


\begin{proof}
We will prove (1) by induction on the length of branches. To see
that (2) follows, fix using (1), a realizer $\bar{a}$ for $\Psi(\bar{c})$
being minimal-rank-satisfiable with $s$. Then by Proposition \ref{realizerShifting},
we see that $\bar{a}$ is also a realizer for $\Phi(\bar{c})$ being
rank-satisfiable. Then since $\bar{a}$ is minimal for $\Psi(\bar{c})$
with $s$ and $\Psi(\bar{c})\subseteq\Phi(\bar{c})$, it must also
be minimal for $\Phi(\bar{c})$ as required.

We proceed by induction on the length of branches. Suppose that we
have established that (1) holds for $b$ and wish to show that (1)
still holds if we extend the branch to a new position $b^{*}$ in
$\mathcal{T}$ by following the rules. 

Suppose first that the rule used to get to $b^{*}$ from $b$ was
one that adds a formula that introduces no new constant symbols (i.e.,
an ordinary rule, Special $\vee$, or the right move in Special $\Sigma_{k}$-cut).
Let $\Phi^{*}(\bar{c})$ and $\Psi^{*}(\bar{c})$ be respectively
the sets of all and merely major sentences occurring on $b^{*}$;
and let $\Phi(\bar{c})$ and $\Psi(\bar{c})$ be the corresponding
sets for $b$. Let $s$ be the accumulated ordinal assignment for
$b$ and, thus, $b^{*}$ too. 

Then suppose that $\Phi^{*}(\bar{c})$ is rank-satisfiable with $s$.
Then $\Phi(\bar{c})$ is also rank-satisfiable with $\bar{s}$ and
so the induction hypothesis tells us that $\Psi(\bar{c})$ is minimal-rank-satisfiable
with $s$. But then since $\Psi^{*}(\bar{c})\supseteq\Psi(\bar{c})$,
we see that $\Psi^{*}(\bar{a})$ cannot be rank-satisfied by any $s_{0}$
preceding $s$.

Suppose next that the rule used to get from $b$ to $b^{*}$ does
add new constant symbols. Thus, it must be the left move of Special
$\Sigma_{k}$-cut or Special $\exists$. Let $\Phi^{*}(\bar{c},\bar{d})$
and $\Psi^{*}(\bar{c},\bar{d})$ be respectively the sets of all and
merely major sentences occurring on $b^{*}$ where $\bar{d}$ are
the new constant symbols; and let $\Phi(\bar{c})$ and $\Psi(\bar{c})$
be the corresponding sets for $b$. Let $s_{0}$ be the accumulated
ordinal assignment for $b$ and $s_{0}\cup s$ be the accumulated
ordinal assignment for $b^{*}$.

(Special $\Sigma_{k}$-cut, left move) Suppose that $b^{*}$ is extended
from $b$ using
\begin{align*}
 & \varphi(\bar{d}),s\\
 & \forall\bar{y}<_{\text{rank-lex}}\bar{d}\ \neg\varphi(\bar{y}).
\end{align*}
Now suppose that $\Phi^{*}(\bar{c},\bar{d})$ is rank-satisfiable
with $s_{0}\cup s$ as realized by $\bar{a},\bar{e}$. Suppose toward
a contradiction that $\Psi^{*}(\bar{c},\bar{d})$ is not minimal-rank-satisfiable
with $s_{0}\cup s$. Then we may fix $\bar{o},\bar{u}<_{\text{rank-lex}}\bar{a},\bar{e}$
such that $\Psi^{*}(\bar{o},\bar{u})$ is satisfiable. We break into
two cases. First, if $\bar{o}<_{\text{rank-lex}}\bar{a}$ then, $\Psi(\bar{o})$
is true, contradicting the induction hypothesis that told us $\Psi(\bar{c})$
was minimal-rank-satisfiable with $s_{0}$. Second, if $\bar{o}=_{\text{rank-lex}}\bar{a}$
and $\bar{u}<_{\mathrm{rank\text{-}lex}}\bar{e}$, then $\forall\bar{y}<_{\text{rank-lex}}\bar{d}\ \neg\varphi(\bar{y})$
is false, which contradicts our assumption that $\Phi(\bar{c},\bar{d})$
is rank-satisfiable with $s_{0}\cup s$.

(Special $\exists$) Suppose $b$ is extended to $b^{*}$ by adding
\begin{align*}
 & \varphi(\bar{d},\bar{c}),s\\
 & \bigl[\forall\bar{y},\bar{z}<_{\text{rank-lex}}\bar{c},\bar{d}\:\:\neg(\varphi(\bar{z},\bar{y})\wedge\bigwedge\Psi(\bar{y}))\bigr]^{\text{prenex}}.
\end{align*}
Now suppose that $\Phi^{*}(\bar{c},\bar{d})$ is rank-satisfiable
with $s_{0}\cup s$ as realized by $\bar{a},\bar{e}$. Then suppose
toward a contradiction that $\Psi^{*}(\bar{c},\bar{d})$ is not minimal-rank-satisfiable
with $s_{0}\cup s$. Then we may fix $\bar{o},\bar{u}<_{\text{rank-lex}}\bar{a},\bar{e}$
such that $\Psi^{*}(\bar{o},\bar{u})$ is satisfiable. We break into
two cases. First, if $\bar{o}<_{\text{rank-lex}}\bar{a}$ then, $\Psi(\bar{o})$
is true, contradicting the induction hypothesis that told us $\Psi(\bar{c})$
was minimal-rank-satisfiable with $s_{0}$. Second, if $\bar{o}=_{\text{rank-lex}}\bar{a}$
and $\bar{u}<_{\text{rank-lex}}\bar{e}$, then $\forall\bar{y},\bar{z}<_{\text{rank-lex}}\bar{c},\bar{d}\:\:\neg(\varphi(\bar{z},\bar{y})\wedge\bigwedge\Psi(\bar{y}))$
is false, contradicting our assumption that $\Phi^{*}(\bar{c},\bar{d})$
is rank satisfiable with $s_{0}\cup s$. 
\end{proof}
Finally, we are ready to build our open branch. Since the process
is inductive, we focus first on the process of extending a particular
branch. First a quick technical definition: let us say that a finite
branch $b$ in $\mathcal{T}$ is (\emph{minimal}-)\emph{rank-satisfiable},
if the sentences on $b$ are (minimal-)rank-satisfiable with their
accumulated rank-assignment.
\begin{lem}
(Branch Extension) Suppose $b$ is a finite branch in $\mathcal{T}$
that is minimal rank-satisfiable. Then there is a way of selecting
an extension $b^{*}$ in $\mathcal{T}$ of $b$ that is also minimal-rank-satisfiable.
Moreover, $b^{*}$ is uniformly $\Delta_{k+1}$ definable from $b$.\label{lem:BranchExtension}
\end{lem}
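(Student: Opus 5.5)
The argument is a case analysis on the rule that the canonical schedule of Section \ref{subsec:StanProc} applies next to $b$. Each case has two parts. First I show that some legal extension is rank-satisfiable with its accumulated ordinal assignment. Then I invoke Lemma \ref{lem:AllrankToAllmin} to upgrade this to minimal-rank-satisfiability. By Proposition \ref{realizerShifting}, I may fix a realizer $\bar a$ of the major sentences $\Psi(\bar c)$ of $b$ under the accumulated assignment $s_0$, and this $\bar a$ also realizes every sentence on $b$.

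\emph{Rules without a choice.} These are $\neg\vee$, $\neg\neg$, modified $\neg\exists$, and the identity rules. Their outputs are logical consequences of sentences on $b$ (for $\neg\exists$ and reflexivity, instantiated at constants already present). So $\bar a$ still realizes the extension, and $s_0$ is unchanged.

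\emph{Special $\Sigma_k$-cut on $\exists\bar x\varphi(\bar x)$.} If $\exists\bar x\varphi(\bar x)$ is false in $V$, take the right branch. It is then realized by $\bar a$, since it contains no constants. Otherwise, use Full Set Induction on the class of rank-tuples of witnesses; this is a definable class of finite sequences of ordinals, well-ordered lexicographically. Let $s$ be the lexicographically least rank-tuple of a witness. A witness $\bar e$ with those ranks realizes $\varphi(\bar d)$ and the minor sentence $\forall\bar y<_{\text{rank-lex}}\bar d\,\neg\varphi(\bar y)$. Together with $\bar a$, it realizes the whole extension under $s_0\cup s$.

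\emph{Special $\exists$ on $\exists\bar x\varphi(\bar x,\bar c)$.} Consider the class of pairs $\bar y,\bar z$ satisfying $\varphi(\bar z,\bar y)\wedge\bigwedge\Psi(\bar y)$. It is nonempty, since $\bar a$ together with a witness belongs to it. Its rank-lex least element has $\bar y$-part of rank-tuple $s_0$, by the minimality of $\bar a$ for $\Psi$. Fix such a pair whose $\bar y$-part has ranks $s_0$ and whose $\bar z$-part has the least admissible ranks $s$. The minor sentence then holds, and Proposition \ref{claim:rank-invar} lets me keep $\bar a$ as the realizer of the old minor sentences.

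\emph{Special $\vee$ on $\varphi(\bar c)\vee\psi(\bar c)$.} Compare the least rank-tuples $t_\varphi$ and $t_\psi$ realizing $\varphi\wedge\bigwedge\Psi$ and $\psi\wedge\bigwedge\Psi$ respectively; each equals $s_0$ if realizable at all, by minimality. If $\psi\wedge\bigwedge\Psi$ is realizable at $s_0$, choose the right branch when $\varphi\wedge\bigwedge\Psi$ is not realizable at rank $\le s_0$; otherwise choose the left branch. The left minor sentence then holds because nothing ranks below $s_0$, and the right minor sentence holds by the case hypothesis. Since $\bar a$ satisfies the disjunction, at least one disjunct is realizable at $s_0$, so one of the two cases occurs. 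I will check this bookkeeping carefully, using the $<$ versus $\le$ asymmetry.

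\emph{Definability.} The choices are made by asking whether certain $\Sigma_k$ or $\Pi_k$ statements about $V$ hold: whether $\exists\bar x\varphi$ is true, whether $\varphi\wedge\bigwedge\Psi$ is realizable at a given rank-tuple, and the least rank-tuple of such a realization. Each question uses $\mathsf{Sat}_{\Sigma_k}$ or $\mathsf{Sat}_{\Pi_k}$, with the rank conditions adding only $\Delta_1$ clauses. Under the counting convention, asserting that a tuple of ordinals is least requires one more universal quantifier over $\Sigma_k$ matrices, so the selection $b\mapsto b^*$ has both $\Sigma_{k+1}$ and $\Pi_{k+1}$ definitions. Uniqueness of the selected extension makes the complementary definition available. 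The main obstacle I expect is precisely this complexity count: the conjunction $\bigwedge\Psi$ has internally finite length, so I must use the uniform satisfaction predicate applied to the coded conjunction, rather than a fixed formula, and verify that the least-rank clause does not push the bound to $\Delta_{k+2}$.
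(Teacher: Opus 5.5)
Your proposal is correct and takes essentially the same route as the paper. You reduce to rank-satisfiability via Lemma \ref{lem:AllrankToAllmin}, take lexicographically least rank profiles in the cut and Special $\exists$ cases (transferring the old minor sentences by rank invariance), and choose the leftmost disjunct realizable at $s_0$ in the Special $\vee$ case. You get $\Delta_{k+1}$ because a realizer with a prescribed profile is $\Sigma_k$ and leastness adds a $\Pi_k$ clause. One slip in the Special $\vee$ case is harmless: the least rank-tuple realizing $\varphi\wedge\bigwedge\Psi$ is only lexicographically $\geq s_0$, not necessarily equal to $s_0$. Your case split uses only realizability at $s_0$, and at rank $\leq s_0$, which minimality reduces to $s_0$, so nothing depends on that remark.
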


\begin{proof}
We must consider what happens when each of the rules is applied. If
$b$ is extended with an application of an ordinary rule minimal-rank-satisfaction
is obviously preserved. So we shall just consider: Special $\Sigma_{k}$-cut,
Special $\vee$; and Special $\exists$. We shall make use of a uniformly
$\Delta_{k+1}$-definable satisfaction relation which is able to assess
the truth of Boolean combinations of $\Sigma_{k}$-formulae. We also
note that Lemma \ref{lem:AllrankToAllmin} makes our life easier by
establishing that we only need to show that the extension of $b$
is rank-satisfiable.

(Special $\Sigma_{k}$-cut) Suppose $b$ is extended to $b^{*}$ by
an application of the Special $\Sigma_{k}$-cut and that $\exists\bar{x}\varphi(\bar{x})$
is the cut-formula where $\varphi(\bar{x})$ is $\Pi_{k-1}$. If $\exists\bar{x}\varphi(\bar{x})$
is true, we extend $b$ to $b^{*}$ by taking the left move which
adds 
\begin{align*}
 & \varphi(\bar{d}),s\\
 & \forall\bar{y}<_{\text{rank-lex}}\bar{d}\ \neg\varphi(\bar{y})
\end{align*}
to $b$, where $s:\{\bar{d}\}\to Ord$ is the lexicographically least
assignment such that there is some $\bar{a}=a_{0},...,a_{n}$ from
$V$ with $rank(a_{i})=s(d_{i})$ for $i\leq n$ such that $\varphi(\bar{a})$
is true. The formula identifying such $s$ is $\Delta_{k+1}$. Indeed,
existence of a realizer with a prescribed rank profile is $\Sigma_{k}$,
and taking the lexicographically least such profile is therefore $\Delta_{k+1}$.
It is then clear that the extension of $b$ with the major sentence
$\varphi(\bar{d})$ is minimal-rank-satisfiable with $s$. Moreover,
our choice of $s$ for $\bar{d}$ ensures that $\forall\bar{y}<_{\text{rank-lex}}\bar{d}\ \neg\varphi(\bar{y})$
is satisfied, which means that $b^{*}$ is rank-satisfiable. If $\exists\bar{x}\varphi(\bar{x})$
is not true, we simply extend $b$ to $b^{*}$ by taking the right
move.

(Special $\vee$) Suppose $b$ is extended by an application of the
Special $\vee$ rule to $\varphi(\bar{c})\vee\psi(\bar{c})$. Then
by our assumptions, we see that $b$ is minimal-rank-satisfiable,
as witnessed by an ordinal assignment $s:\{\bar{d}\}\to Ord$ where
$\bar{d}$ are the constant symbols occurring in sentences on $b$.
Let $\Psi(\bar{d})$ be the set of major sentences occurring on $b$
and observe that the constant symbols $\bar{c}$ are included in $\bar{d}$.
Then clearly one of the extensions of $b$ by $\varphi(\bar{c})$
or $\psi(\bar{c})$ is rank-satisfiable, as witnessed by $s$. Let
$b^{*}$ be the extension of $b$ that adds the \emph{leftmost} of
those sentences.\footnote{Note that if the addition of either $\varphi(\bar{c})$ or $\psi(\bar{c})$
would result in a branch that is rank satisfied using $s$, we break
that tie by choosing to go left.} Our selection process then ensures that the minor sentence is satisfied
by a sequence of objects whose ranks accord with the ordinal assignment
for $\bar{d}$. If we turn left, the minor sentence follows from the
minimality of major sentences in $\Psi$. Similarly, if we turn right,
except that we have incorporated a tie-breaking condition. 

(Special $\exists$) Suppose $b$ is extended by an application of
the Special $\exists$ rules to $\exists\bar{x}\ \varphi(\bar{x},\bar{c})$.
Suppose that $s_{0}:\{\bar{c}\}\to Ord$ is the accumulated ordinal
assignment for $b$. Then we extend $b$ to $b^{*}$ by adding 
\begin{align*}
 & \varphi(\bar{d},\bar{c}),s_{1}\\
 & \bigl[\forall\bar{y},\bar{z}<_{\text{rank-lex}}\bar{c},\bar{d}\:\:\neg(\varphi(\bar{z},\bar{y})\wedge\bigwedge\Psi(\bar{y}))\bigr]^{\text{prenex}}
\end{align*}
below $b$ where: $\Psi(\bar{c})$ is the conjunction of the major
sentences occurring on $b$; and $s_{1}:\{\bar{d}\}\to Ord$ is the
lexicographically least assignment such that there exist $\bar{a}=a_{0},...,a_{n}$
and $\bar{e}=e_{0},...,e_{m}$ from $V$ with $rank(a_{i})=s_{0}(c_{i})$
for all $i\leq n$ and $rank(e_{i})=s_{1}(d_{i})$ for $i\leq m$
such that
\[
\varphi(\bar{e},\bar{a})\wedge\bigwedge\Psi(\bar{a})
\]
is true. Proposition \ref{realizerShifting} ensures that the minor
sentences on $b$ are also satisfied by this assignment, so it should
be clear that $b\cup\{\varphi(\bar{d},\bar{c})\}$ is rank-satisfiable
with $s\cup s_{1}$. Finally, it is clear from the induction hypothesis
and the way we selected $\bar{e},\bar{a}$ and $s_{1}$ that the minor
sentence is also satisfied by $\bar{a}$ and $\bar{e}$ and thus,
$b^{*}$ is rank-satisfiable\@.
\end{proof}
We may then repeatedly use this lemma to define an open branch.
\begin{lem}
There is a $\Delta_{k+1}$ definable open branch $b_{0}$ through
$\mathcal{T}$.\label{lem:DefBranch}
\end{lem}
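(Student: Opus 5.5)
We will obtain $b_{0}$ by iterating the Branch Extension Lemma (Lemma \ref{lem:BranchExtension}) along the fixed primitive-recursive schedule of Section \ref{subsec:StanProc}. Start from the empty branch; it is trivially minimal-rank-satisfiable with the empty assignment. Define $b_{n+1}$ to be the extension of $b_{n}$ selected by Lemma \ref{lem:BranchExtension} at the rule application that the schedule dictates from the code of $b_{n}$. Then let $b_{0}$ (the open branch) be the union $\bigcup_{n}b_{n}$; more precisely, a node lies on $b_{0}$ iff it lies on some $b_{n}$.

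For the definability bound, let $\mathrm{Ext}(b,b^{*})$ be the uniform $\Delta_{k+1}$ relation from Lemma \ref{lem:BranchExtension}. We will say that $\beta$ is the $n$-th stage iff there is a finite sequence $\langle\beta_{0},\dots,\beta_{n}\rangle$ with $\beta_{0}$ empty, $\beta_{n}=\beta$, and $\mathrm{Ext}(\beta_{i},\beta_{i+1})$ for all $i<n$. The quantifier over $i<n$ is bounded, and under the complexity convention fixed earlier a bounded universal in front of a $\Delta_{k+1}$ matrix stays within $\Sigma_{k+1}$ after prenexing. So ``$\beta$ is the $n$-th stage'' is $\Sigma_{k+1}$. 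Full Set Induction, applied to the $\Sigma_{k+1}$ class of $n$ for which a stage exists, shows that every internally finite stage exists. Functionality of $\mathrm{Ext}$, by induction again, gives uniqueness. Uniqueness yields the complementary $\Pi_{k+1}$ form: $\beta$ is the $n$-th stage iff every sequence following the recursion for $n$ steps ends in $\beta$. Membership of a node in $b_{0}$, and its negation (the node is absent from the stage of the relevant length), are therefore both $\Sigma_{k+1}$, so $b_{0}$ is $\Delta_{k+1}$.

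Finally, we must check that $b_{0}$ is open and complete. Closedness would require some $\psi,\neg\psi$ (or some $c\neq c$) to appear on a finite stage $b_{n}$. But each $b_{n}$ is rank-satisfiable, hence satisfiable in $V$, so no such contradiction can occur. Completeness follows from the fairness of the schedule: every pending rule application, every identity or $\neg\exists$ instance with constants already present, and every $\Sigma_{k}$-cut is eventually performed at some stage.

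The main obstacle is the complexity bookkeeping. The iteration must not raise the bound beyond $\Delta_{k+1}$, which requires that $\mathrm{Ext}$ be genuinely uniform in the code of $b$. That code includes the growing conjunction $\bigwedge\Psi$ and internally finite ordinal assignments. The uniform $\Delta_{k+1}$ satisfaction predicate for Boolean combinations of $\Sigma_{k}$ formulae is what makes this work, by handling nonstandard internal lengths. We would lean on it rather than on external induction, and use Full Set Induction rather than any Collection to secure existence of the stages.
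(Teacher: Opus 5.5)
Your route is the paper's. You iterate Lemma \ref{lem:BranchExtension} along the fixed fair schedule from the empty branch. Closure is ruled out because every finite stage is rank-satisfiable, and completeness comes from fairness. The bound comes from a $\Sigma_{k+1}$ description of the finite computation sequences together with uniqueness.

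The gap is in the one computation that carries the content of the bound. The paper's convention counts bounded quantifiers that precede unbounded ones, and it explicitly assumes no closure under bounded quantification. Moreover, $KPC$ has only $\Sigma_{1}$-Collection. So a bounded $\forall i<n$ in front of a $\Delta_{k+1}$ matrix does not stay within $\Sigma_{k+1}$ after prenexing. Applied to the $\Pi_{k+1}$ form of $\mathrm{Ext}$ it gives $\Pi_{k+1}$, and applied to its $\Sigma_{k+1}$ form it gives $\Pi_{k+2}$. Adding the outer $\exists\langle\beta_{0},\ldots,\beta_{n}\rangle$ therefore yields only $\Sigma_{k+2}$, and hence only $\Delta_{k+2}$ for $b_{0}$. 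This is exactly the count the paper performs for the analogous recursion in Lemma \ref{DefiningGenOverHYP}.

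The missing idea is to use Full Set Induction for the complexity bound, not only for the existence of stages.
\begin{itemize}
\item Induction on $m\leq n$ proves finite collection for arbitrary formulae: if $\forall i<n\,\exists u\,\pi(i,u)$, then some function $w$ with domain $n$ satisfies $\forall i<n\,\pi(i,w(i))$.
\item Write $\mathrm{Ext}(b,b^{*})$ in its $\Sigma_{k+1}$ form $\exists u\,\pi(b,b^{*},u)$ with $\pi\in\Pi_{k}$, and fold the witnesses into the sequence. Then $\beta$ is the $n$-th stage iff there are $\langle\beta_{0},\ldots,\beta_{n}\rangle$ and $w$ with $\beta_{0}=\emptyset$, $\beta_{n}=\beta$ and $\forall i<n\,\pi(\beta_{i},\beta_{i+1},w(i))$.
\item Expressed with bounded universals, the last clause merges into the leading universal block of $\pi$ and is $\Pi_{k}$. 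So the whole definition is $\Sigma_{k+1}$, and finite collection shows it is equivalent to your original one.
\item Your uniqueness argument then supplies the $\Pi_{k+1}$ form, quantifying over all sequences and witness functions that satisfy the same $\Pi_{k}$ condition.
\end{itemize}
With this repair your argument goes through. It is then more explicit than the paper's proof, which simply asserts the $\Sigma_{k+1}$ bound.
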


\begin{proof}
Define $b_{0}$ inductively using Lemma \ref{lem:BranchExtension}.
At each stage use the next rule prescribed by the canonical schedule
described in Section \ref{subsec:StanProc} and the preferred extension
supplied by Lemma \ref{lem:BranchExtension}. Every finite initial
segment is rank-satisfiable, so the resulting branch is not closed;
fairness of the schedule makes it complete, and hence open. This gives
a $\Sigma_{k+1}$ definition and since it is unique, it can also be
given a $\Pi_{k+1}$ definition. Note that $b_{0}$ is defined as
a class, not a set.
\end{proof}
Let us call $b_{0}$ the \emph{canonical} \emph{branch} of $\mathcal{T}$. 

\subsubsection{Completeness}

We now describe how to use the canonical branch $b_{0}$ through $\mathcal{T}$
to define a model satisfying all the true $\Sigma_{k}$ sentences
of our background universe. The definition is essentially the same
as that used to build models from tableaux for first order logic.
However, we need to take a little more care since, in general, we
are merely defining the model as a class (or interpretation) rather
than a set. In particular, we shall work with pre-structures rather
than genuine structures. Thus, we shall work with a defined congruence
relation $\approx$ rather than genuine identity.\footnote{This will allow us to make the domain of our defined models be the
entirety of our set of constant symbols, which seems convenient below.
It is not, however, required. Since $C$ is a simple set of constant
symbols indexed by $\omega$, they have a natural well-ordering that
could be used to pick distinguished elements.}
\begin{lem}
\label{lem:Comp}If $b_{0}$ is the canonical branch through $\mathcal{T}$,
then we may define a well-founded $\mathcal{L}_{\in}(C)$-model $\mathcal{M}$
from $b_{0}$. Moreover, we see that for all sentences $\varphi$
of $\mathcal{L}_{\in}(C)$\footnote{Strictly, we should write something like $\varphi^{\mathcal{M}}$
instead of $\mathcal{M}\models\varphi$ since, in general, $\mathcal{M}$
will not be a set. However, we allow ourselves this abuse of notation
so that the underlying ideas in an already busy forest of syntax are
easier to see.}
\[
\varphi\text{ occurs on }b_{0}\ \Rightarrow\ \mathcal{M}\models\varphi.
\]
\end{lem}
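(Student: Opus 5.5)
We follow the completeness half of the proof of Theorem \ref{thm:FOLs=000026c}, carried out inside the background universe with $b_{0}$ treated as a class. The plan has four steps.

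\textbf{Step 1: the pre-structure.} We take the domain to be the set of constant symbols occurring on $b_{0}$; by the remark in Section \ref{subsec:StanProc} this is all of $C$. We set $c\in^{\mathcal{M}}d$ iff the sentence $c\in d$ occurs on $b_{0}$, and $c\approx d$ iff $c=d$ occurs on $b_{0}$. All three are $\Delta_{k+1}$ definable, since $b_{0}$ is, by Lemma \ref{lem:DefBranch}.

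The identity rules, applied fairly along $b_{0}$, make $\approx$ a congruence for $\in^{\mathcal{M}}$. Reflexivity comes from the rule $d=d$. Symmetry and transitivity come from the indiscernibility rule applied to the atomic sentences $c=c$ and $c=e$. Compatibility with $\in$ comes from the same rule applied to atomic $\in$-sentences.

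\textbf{Step 2: well-foundedness.} We use the accumulated ordinal assignment $s$ along $b_{0}$, the union of the side conditions, which is a definable class function $C\to Ord$. Every finite initial segment of $b_{0}$ is rank-satisfiable with $s$. So if $c\in d$ occurs, some realizer has $rank(a_{c})<rank(a_{d})$, giving $s(c)<s(d)$. Likewise $c=d$ on the branch forces $s(c)=s(d)$. Hence $s$ induces a definable order-preserving map from $\langle C/{\approx},\in^{\mathcal{M}}\rangle$ into the ordinals. This is exactly the sense of well-foundedness fixed in this section, and Full Set Induction in the background then yields set induction for definable classes over $\mathcal{M}$.

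\textbf{Step 3: truth of the branch sentences.} We prove $\varphi$ on $b_{0}$ $\Rightarrow$ $\mathcal{M}\models\varphi$ simultaneously with the corresponding claim for negations. The induction is external, on the standard syntactic complexity of $\varphi$. This is legitimate because the preceding lemma bounds every sentence on $b_{0}$ by $\Sigma_{k}$ or $\Pi_{k}$, so each instance can be stated using the $\Delta_{k+1}$ satisfaction predicate relativized to the definable $\mathcal{M}$.

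The cases are as follows.
\begin{enumerate}
\item Atomic sentences hold by definition.
\item Negated atomic sentences hold by openness of $b_{0}$ together with the congruence property. If $\neg(c\in d)$ occurs while $c\in d$ (or an $\approx$-variant of it) also occurs, the identity rule propagates a contradiction and closes the branch. The case $c\neq c$ is handled by the closure condition.
\item The connectives $\neg\neg$, $\neg\vee$ and Special $\vee$ are handled because $b_{0}$ is complete: each such rule has been applied, and at least one of the major disjuncts lies on the branch. We ignore the minor sentences added by the special rules, except that they too must be shown true in $\mathcal{M}$; they are handled like any other sentence in their complexity class.
\item For $\exists\bar{x}\varphi$, Special $\exists$ or the left cut supplies witnessing constants $\bar{d}$ with $\varphi(\bar{d})$ on the branch.
\item For $\neg\exists\bar{x}\varphi$, the modified $\neg\exists$ rule is applied fairly to every tuple of constants on the branch. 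Every element of $\mathcal{M}$ is such a constant, so $\neg\varphi(\bar{e})$ holds for all $\bar{e}$ by the induction hypothesis.
\end{enumerate}

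\textbf{Main obstacle.} I expect the delicate point to be the universal case together with the metalinguistic abbreviation $<_{\text{rank-lex}}$ inside minor sentences. After prenexing, the bound $\bar{y}<_{\text{rank-lex}}\bar{d}$ becomes an unbounded quantifier block over witnesses to ranks. The induction must therefore run through those auxiliary quantifiers, with the $\neg\exists$ rule supplying all the required instances. I will check that the prenex procedure leaves only standard-length blocks, so that the external induction stays within the fixed bound $k$ and never needs a truth predicate beyond $\Delta_{k+1}$.
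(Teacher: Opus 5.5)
Your proposal is correct and follows the paper's proof. It defines the pre-structure on $C$ from the atomic sentences of $b_{0}$, gets congruence from the identity rules, and proves the truth lemma by induction on complexity, with Special $\exists$ supplying witnesses and fairness handling $\neg\exists$; well-foundedness comes from the accumulated ordinal assignment, which sends $\in^{\mathcal{M}}$ into $<$ and $\approx^{\mathcal{M}}$ into $=$ by rank-satisfiability of the finite initial segments. Two small remarks: the footnote's schematic reading needs no satisfaction predicate, since each instance is just the relativization $\varphi^{\mathcal{M}}$ with the constants quantified internally; and the rank-lex bounds are no real obstacle, since after prenexing they are ordinary quantifier blocks that the ordinary rules unpack like any others.
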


\begin{proof}
We let $\mathcal{M}=\langle C,\approx^{\mathcal{M}},\in^{\mathcal{M}},c_{i}^{\mathcal{M}}\rangle_{i\in\omega}$
so that: 
\begin{itemize}
\item $\approx^{\mathcal{M}}=\{\langle c_{i},c_{j}\rangle\in C\times C\ |\ ``c_{i}=c_{j}"\text{ occurs on }b_{0}\}$; 
\item $\in^{\mathcal{M}}=\{\langle c_{i},c_{j}\rangle\in C\times C\ |\ ``c_{i}\in c_{j}"\text{ occurs on }b_{0}\}$.
\end{itemize}
Observe that since there will be infinitely many true $\Sigma_{k}$-sentences:
the process we described in Section \ref{subsec:StanProc} and Lemma
\ref{lem:DefBranch}, for building our canonical tableau, ensures
that every constant symbol occurs on $b_{0}$. Also note that $\mathcal{M}$
is defined as a proper class and, as such, we cannot in general define
the associated quotient.\footnote{In particular, $\approx^{\mathcal{M}}$ and $\in^{\mathcal{M}}$ are
proper classes, although $C$ is not.} We leave it to the reader to show that the identity rules ensure
that $\approx^{\mathcal{M}}$ is a congruence relation with respect
to $\in^{\mathcal{M}}$ and $c_{i}^{\mathcal{M}}$ for all $i\in\omega$.\footnote{See \citep{PriestNCL,RestLog,SmullFO} to see how this works.}

We then proceed by induction on tableau complexity. This is a standard
proof that can be found in many places, so we just highlight some
representative cases.

(Base) Suppose $c_{i}=c_{j}$ occurs on $b_{0}$. Then $\langle c_{i},c_{j}\rangle\in\approx^{\mathcal{M}}$
and so $\mathcal{M}\models c_{i}=c_{j}$. The other cases are similar. 

(Step) Suppose $\exists\bar{x}\varphi(\bar{x},\bar{c})$ occurs on
$b_{0}$. Then since $b_{0}$ is complete, the Special $\exists$
rule guarantees that there is some $\bar{d}$ from $C$ such that
$\varphi(\bar{d},\bar{c})$ occurs on $b_{0}$. By the induction hypothesis,
we see that $\mathcal{M}\models\varphi(\bar{d},\bar{c})$ and so $\mathcal{M}\models\exists\bar{x}\varphi(\bar{x},\bar{c})$.
We leave the other cases to the reader. 

Finally, the well-foundedness of the model can be seen since the accumulation
of the ordinal assignments gives an order preserving map from the
interpreted model into the ordinals of our background universe. In
particular, observe that if $s$ is the accumulated ordinal assignment
determined by $b_{0}$, then rank-satisfiability ensures that: $c_{i}\approx^{\mathcal{M}}c_{j}$
implies $s(c_{i})=s(c_{j})$; and $c_{i}\in^{\mathcal{M}}c_{j}$ implies
$s(c_{i})<s(c_{j})$ for any $c_{i},c_{j}\in C$.
\end{proof}
Let $\mathcal{M}_{0}$ be the model defined using the canonical branch
$b_{0}$ through $\mathcal{T}$. It is not then difficult to see that: 
\begin{cor}
$\mathcal{M}_{0}\equiv_{\Sigma_{k}}V$; i.e., $\mathcal{M}_{0}$ and
$V$ satisfy the same $\Sigma_{k}$ sentences of $\mathcal{L}_{\in}$.\label{cor:Sigma_kSubMod}
\end{cor}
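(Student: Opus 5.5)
We prove the two inclusions separately. Both use Lemma \ref{lem:Comp} together with the way the canonical branch $b_{0}$ was chosen in Lemma \ref{lem:BranchExtension}. Fix a $\Sigma_{k}$ sentence $\sigma$ of $\mathcal{L}_{\in}$ and write it in prenex form as $\exists\bar{x}\,\varphi(\bar{x})$ with $\varphi$ a $\Pi_{k-1}$ formula. Since the canonical schedule is fair, the Special $\Sigma_{k}$-cut on $\sigma$ is applied at some finite stage of $b_{0}$.

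\emph{From $V$ to $\mathcal{M}_{0}$.} Suppose $\sigma$ is true in $V$. Then at the cut on $\sigma$, Lemma \ref{lem:BranchExtension} selects the left move. This places $\varphi(\bar{d})$ on $b_{0}$ for fresh constants $\bar{d}$. By Lemma \ref{lem:Comp} we get $\mathcal{M}_{0}\models\varphi(\bar{d})$, and therefore $\mathcal{M}_{0}\models\exists\bar{x}\,\varphi(\bar{x})$.

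\emph{From $\mathcal{M}_{0}$ to $V$.} Suppose $\sigma$ is false in $V$. Then the construction selects the right move, which puts the minor sentence $\forall\bar{x}\neg\varphi(\bar{x})$ on $b_{0}$. Lemma \ref{lem:Comp} then gives $\mathcal{M}_{0}\models\forall\bar{x}\neg\varphi(\bar{x})$, so $\mathcal{M}_{0}\models\neg\sigma$.

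Together the two directions show that $V\models\sigma$ iff $\mathcal{M}_{0}\models\sigma$ for every $\Sigma_{k}$ sentence. This is the claimed equivalence. It also covers $\Pi_{k}$ sentences, and hence Boolean combinations, by taking negations.

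The main point to check is that Lemma \ref{lem:Comp} genuinely applies to every sentence on $b_{0}$, including minor sentences such as $\forall\bar{x}\neg\varphi(\bar{x})$. The truth induction there runs on tableau complexity. For a universal sentence it needs the modified $\neg\exists$ rule to have been applied to every constant of $C$. This is guaranteed by fairness and by the fact that every constant eventually occurs on $b_{0}$. The induction then descends to $\neg\varphi(c)$ for all $c$, and these are in turn unpacked by the ordinary and special rules.

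A second point is formalization. The induction of Lemma \ref{lem:Comp} is external on standard sentences. For a fixed standard $\sigma$, we only need the finitely many instances of Lemma \ref{lem:Comp} attached to the subformulae of $\sigma$. So nothing beyond the fixed $\Delta_{k+1}$ satisfaction predicate used to define $b_{0}$ is required. In particular, no truth predicate for $\mathcal{M}_{0}$ is needed.
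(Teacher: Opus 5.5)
Your proof is correct and follows the paper's own argument. The canonical branch takes the left move on the cut for $\exists\bar{x}\varphi(\bar{x})$ exactly when that sentence is true in $V$, and Lemma \ref{lem:Comp} then transfers either $\varphi(\bar{d})$ or the minor sentence $\forall\bar{x}\neg\varphi(\bar{x})$ to $\mathcal{M}_{0}$. Your extra remarks are consistent with the paper's footnote reading $\mathcal{M}\models\varphi$ as the relativization $\varphi^{\mathcal{M}}$: Lemma \ref{lem:Comp} must cover minor sentences, and the corollary is a scheme proved by external induction for each standard $\sigma$.
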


\begin{proof}
For any $\Sigma_{k}$ sentence $\exists\bar{x}\varphi(\bar{x})$ of
$\mathcal{L}_{\in}$, we see that:
\begin{itemize}
\item if $\exists\bar{x}\varphi(\bar{x})$ is true then we take the left
move on that cut and add $\varphi(\bar{d})$; and
\item if $\exists\bar{x}\varphi(\bar{x})$ is false, then we take the right
move and add $\forall\bar{x}\neg\varphi(\bar{x})$. 
\end{itemize}
Then Lemma \ref{lem:Comp} tells us that 
\[
\mathcal{M}_{0}\models\exists\bar{x}\varphi(\bar{x})\ \Leftrightarrow\ \exists\bar{x}\varphi(\bar{x})\text{ is true.}
\]

\end{proof}
Thus, whenever $k>3$, we see that: 
\begin{prop}
$\mathcal{M}_{0}\models KPC$. 
\end{prop}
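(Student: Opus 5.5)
We split the axioms of $KPC$ into two groups: the non-induction axioms, which we transfer from the background universe using Corollary \ref{cor:Sigma_kSubMod}, and Full Set Induction, which we derive from the well-foundedness supplied by Lemma \ref{lem:Comp}.

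For the first group, we check that each non-induction axiom of $KPC$ is, in prenex form under our counting convention, a $\Pi_{3}$ (hence $\Sigma_{4}$, after padding) sentence of $\mathcal{L}_{\in}$. These axioms are Extensionality, Pairing, Union, Infinity, the instances of $\Delta_{0}$-Separation and $\Delta_{0}$-Collection, and Choice in the form $\forall x\exists f\exists\alpha\ f:x\rightarrowtail\alpha$. The only axioms requiring care are $\Delta_{0}$-Collection, whose instances are $\Pi_{3}$ as noted in the proof of Lemma \ref{lem:genSatKP}, and Choice, which is $\Pi_{2}$ with a bounded matrix asserting that $f$ is an injection into the ordinal $\alpha$. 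Since $k>3$, each such sentence, or its negation, is $\Sigma_{k}$, and it is true in the background universe because we work in $T\supseteq KPC$. Corollary \ref{cor:Sigma_kSubMod} gives $\mathcal{M}_{0}\equiv_{\Sigma_{k}}V$, and this extends to $\Pi_{k}$ sentences by negation. So every such axiom holds in $\mathcal{M}_{0}$. One point to note is that, as in the $KP$ case, all of these sentences are standard, so the external induction on formulae behind Lemma \ref{lem:Comp} applies.

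For Full Set Induction, fix an instance with formula $\chi(x,\bar{p})$, possibly with parameters from $C$. Suppose toward a contradiction that $\mathcal{M}_{0}$ satisfies $\exists x\,\chi(x,\bar{p})$ but has no $\in^{\mathcal{M}_{0}}$-minimal such $x$. Because $b_{0}$, and hence $\approx^{\mathcal{M}_{0}}$ and $\in^{\mathcal{M}_{0}}$, is $\Delta_{k+1}$-definable, the satisfaction of the fixed formula $\chi$ in $\mathcal{M}_{0}$ is definable over the background universe by relativization. Therefore the class
\[
A=\{s(c)\mid c\in C\ \wedge\ \mathcal{M}_{0}\models\chi(c,\bar{p})\}
\]
is a definable, nonempty class of ordinals, where $s$ is the accumulated ordinal assignment of $b_{0}$. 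By Full Set Induction in the background theory, $A$ has a least element $s(c)$. The failure of minimality gives some $c'$ with $c'\in^{\mathcal{M}_{0}}c$ and $\mathcal{M}_{0}\models\chi(c',\bar{p})$. By the last observation in the proof of Lemma \ref{lem:Comp}, $s(c')<s(c)$, which contradicts the choice of $c$.

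The main obstacle is bookkeeping rather than mathematics. We must confirm that the relativized formula defining $A$ is a legitimate first-order formula of the background language, which holds because $\chi$ is a fixed formula and $b_{0}$ is given by a fixed definition. We must also confirm that the order-preserving map $s$ is itself a definable class. It is: $s$ is read off from the ordinal assignments recorded on $b_{0}$, and these are uniquely determined at each stage by Lemma \ref{lem:BranchExtension}. Finally, we need that $c\approx^{\mathcal{M}_{0}}c'$ implies $s(c)=s(c')$, so that working with representatives in $C$ rather than with equivalence classes causes no harm.
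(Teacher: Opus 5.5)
Your proposal is correct and follows the paper's proof: the non-induction axioms of $KPC$ are $\Pi_{3}$ and transfer by Corollary \ref{cor:Sigma_kSubMod}, and Full Set Induction comes from the definable order-preserving assignment $s$ into the background ordinals combined with background Full Set Induction. The only difference is minor. You apply the least-element principle directly to the definable class $A$ of $s$-values. The paper instead recursively builds a definable $\in^{\mathcal{M}_{0}}$-descending sequence $\langle d_{n}\rangle$ and maps it to a descending sequence of ordinals, so your version avoids justifying that class recursion.
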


\begin{proof}
By Corollary \ref{cor:Sigma_kSubMod}, we see that $\mathcal{M}_{0}$
satisfies every axiom of $KPC$ with the possible exception of Full
Set Induction, since they can all be articulated as $\Pi_{3}$ sentences.
To see that Full Set Induction holds, we suppose toward a contradiction
that $\psi(x,a)$ with $a\in M_{0}$ defines a nonempty subclass of
$M_{0}$ that has no $\in^{\mathcal{M}_{0}}$-minimal element. We
shall use this to define an infinite decreasing chain of ordinals
in the background universe. 

First we define a $\in^{\mathcal{M}_{0}}$-descending (class) sequence
$\langle d_{n}\rangle_{n\in\omega}$ of elements of $M_{0}$. First,
we let $d_{0}$ be $c_{i}$ for the least $i\in\omega$ such that
$\psi(c_{i},a)$ is true. Then for each $n\in\omega$, let $d_{n+1}$
be $c_{i}$ for the least $i\in\omega$ such that $\psi(c_{i},a)$
is true and $c_{i}\in^{\mathcal{M}_{0}}d_{n}$. Note that this is
clearly definable as a class function. Now from Lemma \ref{lem:Comp},
we see that there is a definable order preserving map $\pi$ from
$\mathcal{M}_{0}$ into the ordinals. So let $\langle r_{n}\rangle_{n\in\omega}$
be such that for all $n\in\omega$, $r_{n}=\pi(d_{n})$. Then we see
that for all $n\in\omega$
\[
r_{n+1}\in r_{n}
\]
which is impossible since we have Full Set Induction in the background
universe.
\end{proof}
We have now constructed $\mathcal{M}_{0}\models KPC$, with the same
$\Sigma_{k}$ theory as the background universe. We next repeat the
construction inside $\mathcal{M}_{0}$; the goal of this repetition
is to obtain an enumeration definable in $\mathcal{M}_{0}$ itself.

\subsubsection{Idempotence}

The following lemma establishes that if we take the canonical branch
through $\mathcal{M}_{0}$'s version of $\mathcal{T}$, we essentially
get $b_{0}$ again. This gives us the idempotence property.
\begin{lem}
Let $\mathcal{T}^{\mathcal{M}_{0}}$ be the $\Sigma_{k}$-tableau
as we defined it above but now in $\mathcal{M}_{0}$. 
\begin{itemize}
\item Given $b_{0}$, let $\langle\varphi_{n}\rangle_{n\in\omega}$ be the
sequence of $\mathcal{L}_{\in}(C)$ sentences occurring along $b_{0}$
and let $\langle\alpha_{n}\rangle_{n\in\omega}$ be the assignment
of ordinals to elements of $C$. 
\item Let $b_{1}$ be obtained using the same sentence sequence $\langle\varphi_{n}\rangle_{n\in\omega}$
but where we use $rank(c_{i})^{\mathcal{M}_{0}}$ for all $i\in\omega$
to determine ordinal assignment moves. 
\end{itemize}
Then $b_{1}$ is the canonical branch through $\mathcal{T}^{\mathcal{M}_{0}}$.\label{lem:SameBranch}
\end{lem}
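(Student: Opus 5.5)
I will prove the lemma by induction along the canonical schedule: for every $n$, the first $n$ steps of the canonical branch through $\mathcal{T}^{\mathcal{M}_{0}}$ coincide with the first $n$ steps of $b_{1}$. That is, they have the same sentences and the same rule choices, and the ordinal side conditions are the $\mathcal{M}_{0}$-ranks of the constants. The schedule is primitive recursive and computed from the code of the finite branch, so it is absolute between $V$ and $\mathcal{M}_{0}$. The initial branch is also the same. Hence at each stage both constructions apply the same rule to the same formula. The ordinary rules are deterministic given the schedule, so only the choice points need attention. These are the Special $\Sigma_{k}$-cut, the Special $\vee$, and the ordinal assignments in the Special $\Sigma_{k}$-cut and Special $\exists$ rules. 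At each of them I will show that the choice made by the $\Delta_{k+1}$ selection of Lemma \ref{lem:BranchExtension}, evaluated inside $\mathcal{M}_{0}$, is the move recorded along $b_{0}$. For ordinal assignments, "the same move" means the assignment $c_{i}\mapsto rank(c_{i})^{\mathcal{M}_{0}}$.

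The cut case comes straight from Corollary \ref{cor:Sigma_kSubMod}. A $\Sigma_{k}$ sentence $\exists\bar{x}\varphi(\bar{x})$ holds in $\mathcal{M}_{0}$ iff it holds in $V$, so $\mathcal{M}_{0}$ makes the same left/right decision as $V$. For the ordinal assignment, the branch $b_{0}$ contains $\varphi(\bar{d})$ and the minor sentence $\forall\bar{y}<_{\text{rank-lex}}\bar{d}\ \neg\varphi(\bar{y})$. By Lemma \ref{lem:Comp} both hold in $\mathcal{M}_{0}$ with $\bar{d}$ interpreted by the constants themselves. So in $\mathcal{M}_{0}$ the tuple $\bar{d}$ realizes $\varphi$, and no tuple of lexicographically smaller rank does. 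Thus the lexicographically least rank profile computed in $\mathcal{M}_{0}$ is exactly $\langle rank(d_{i})^{\mathcal{M}_{0}}\rangle$. The Special $\exists$ case works the same way. The inductive hypothesis says the previous constants $\bar{c}$ have $\mathcal{M}_{0}$-ranks equal to the assignment used in $\mathcal{T}^{\mathcal{M}_{0}}$. Then the added minor sentence, true in $\mathcal{M}_{0}$, says that no pair $\bar{y},\bar{z}$ of smaller rank satisfies $\varphi(\bar{z},\bar{y})\wedge\bigwedge\Psi(\bar{y})$. The major sentences $\Psi$, true of $\bar{c},\bar{d}$ in $\mathcal{M}_{0}$, supply a realizer at exactly those ranks. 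To compare this with the minimization in Lemma \ref{lem:BranchExtension}, I fix the earlier coordinates at their ranks. The minimal-rank-satisfiability of $\Psi$ at stage $n$, which is itself a true minor consequence by Lemma \ref{lem:AllrankToAllmin}, ensures that these two minimizations agree.

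For Special $\vee$, suppose $b_{0}$ went left. Then $\varphi(\bar{c})$ is true in $\mathcal{M}_{0}$ of the constants, and their ranks witness rank-satisfiability of the left extension there. Since $\mathcal{M}_{0}$ also breaks ties to the left, it goes left too. Now suppose $b_{0}$ went right. The recorded minor sentence $\forall\bar{y}\leq_{\text{rank-lex}}\bar{d}\ \neg(\varphi(\bar{y})\wedge\bigwedge\Psi(\bar{y}))$ holds in $\mathcal{M}_{0}$. In particular no realizer with the current rank profile satisfies $\varphi$ together with $\Psi$. Proposition \ref{realizerShifting} applied inside $\mathcal{M}_{0}$ lets me ignore minor sentences when testing rank-satisfiability, so $\mathcal{M}_{0}$ finds the left extension not rank-satisfiable. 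Meanwhile the right extension is realized by the constants, so $\mathcal{M}_{0}$ goes right.

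The main obstacle is that this comparison must take place inside $\mathcal{M}_{0}$, which is only a class-sized pre-structure whose elements are constants. The assignment $c_{i}\mapsto rank(c_{i})^{\mathcal{M}_{0}}$ has to be defined in $V$ and related to the assignment $s$ along $b_{0}$. I expect to need that the rank-lex comparisons, being $\Delta_{1}$ in $KP$, are computed correctly in $\mathcal{M}_{0}$ on the constants. Their truth on $b_{0}$ transfers by Lemma \ref{lem:Comp}, because the minor sentences literally occur on the branch. I also need that "rank-satisfiable with profile $t$" is a $\Sigma_{k}$ fact about ordinals in $\mathcal{M}_{0}$. Both constructions are indexed along the standard $\omega$ of the background universe, and $\mathcal{M}_{0}$'s $\omega$ is read through the interpretation. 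So I will carry out the induction by Full Set Induction in $V$ on stage number, on the $\Delta_{k+2}$-definable set of stages where the two branches agree. The inductive step is the case analysis above.
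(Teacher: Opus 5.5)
Your proposal is correct and follows essentially the same route as the paper. It runs an induction along the canonical schedule in which the ordinary rules are forced, and settles each special rule (the cut decision, the $\vee$ decision, and the ordinal assignments in the cut and Special $\exists$ rules) by the truth in $\mathcal{M}_{0}$, via Lemma \ref{lem:Comp}, of the major and minor sentences recorded on $b_{0}$, with the inductive hypothesis that earlier constants are assigned their $\mathcal{M}_{0}$-ranks. The differences are cosmetic: you decide cuts through Corollary \ref{cor:Sigma_kSubMod} and the left $\vee$ case through left tie-breaking. Your appeals to Proposition \ref{realizerShifting} and Lemma \ref{lem:AllrankToAllmin} are not needed. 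A realizer of the whole branch trivially realizes its major sentences, and the recorded minor sentence, with the earlier coordinates held at their ranks, already gives the required minimality.
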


\begin{rem*}
Note that this proof is done from the perspective of our background
universe, $V$.
\end{rem*}
\begin{proof}
We proceed by induction on the length of $b_{1}$. First we avoid
some notational clutter. Since $\mathcal{M}_{0}$ is well-founded
from the perspective of the background universe, we can identify $\mathcal{M}_{0}$'s
natural numbers with those of the background via their unique isomorphism.
Analogously, we identify $\mathcal{M}_{0}$ and $V$'s rendering of
constant symbols and sentence codes. The syntactic process described
in Section \ref{subsec:StanProc} is primitive recursive and thus,
also behaves identically in $\mathcal{M}_{0}$ and $V$ via this isomorphism. 

Now suppose that we have verified that the canonical branch $b^{*}$
through $\mathcal{T}^{\mathcal{M}_{0}}$ is such that up to length
$n$, $b^{*}$ uses the same sentences as $b_{0}$ and the ordinal
assignments sending $c_{i}$ to $rank(c_{i})^{\mathcal{M}_{0}}$ for
all $c_{i}$ occurring in sentences so far. We wish to show that this
also holds at length $n+1$. 

If the rule used to extend $b_{1}\restriction n$ to $b_{1}\restriction(n+1)$
is $\neg\exists$, $\neg\vee,\neg\neg$ or an identity rule there
is nothing to do: the next sentence is predetermined by those rules
and there is no ordinal assignment move. So we focus on three cases:
Special $\Sigma_{k}$-cut; Special $\exists$ and Special $\vee$.

Suppose that Special $\Sigma_{k}$-cut is used on the $\Sigma_{k}$
sentence $\exists\bar{x}\varphi(\bar{x})$. 
\begin{itemize}
\item Suppose that in $b_{0}$, we take the left move and add: 
\begin{align*}
 & \varphi(\bar{d}),s\\
 & \forall\bar{y}<_{\text{rank-lex}}\bar{d}\ \neg\varphi(\bar{y}).
\end{align*}
Then we see that $\mathcal{M}_{0}\models\varphi(\bar{d})\wedge\forall\bar{y}<_{\text{rank-lex}}\bar{d}\ \neg\varphi(\bar{y})$.
Thus, $\mathcal{M}_{0}\models\exists\bar{x}\varphi(\bar{x})$ and
so $b^{*}$ must also turn left, meaning that $b_{1}$ and $b^{*}$
agree on their $(n+1)^{th}$ sentences as required. Moreover, since
$\mathcal{M}_{0}\models\forall\bar{y}<_{\text{rank-lex}}\bar{d}\ \neg\varphi(\bar{y})$,
we see that $\varphi(\bar{y})$ cannot be satisfied in $\mathcal{M}_{0}$
with any ordinal assignment lexicographically preceding the ranks
of $d_{0}^{\mathcal{M}_{0}},...,d_{m}^{\mathcal{M}_{0}}$. Thus,
we preserve minimal-rank-satisfaction and so if we use the ordinal
assignment corresponding to the ranks of $d_{0}^{\mathcal{M}_{0}},...,d_{m}^{\mathcal{M}_{0}}$,
$b_{1}$ will agree with $b^{*}$ as required.
\item Suppose that in $b_{0}$, we take the right move and add $\forall\bar{x}\neg\varphi(\bar{x})$.
Then we see that $\mathcal{M}_{0}\models\forall\bar{x}\neg\varphi(\bar{x})$.
But this means that $b^{*}$ cannot take the left turn and thus, must
also add $\forall\bar{x}\neg\varphi(\bar{x})$ as required. 
\end{itemize}
Suppose that Special $\vee$ was used on the sentence $\varphi(\bar{c})\vee\psi(\bar{c})$. 
\begin{itemize}
\item Suppose that in $b_{0}$, we turned left and added $\varphi(\bar{c})$.
Then 
\[
\mathcal{M}_{0}\models\varphi(\bar{c})\wedge\forall\bar{y}<_{\text{rank-lex}}\bar{d}\neg(\psi(\bar{y})\wedge\bigwedge\Psi(\bar{y}))
\]
where $\bar{d}$ are the constant symbols already occurring on $b_{0}$
and $\Psi(\bar{d})$ is the set of major sentences occurring so far.
This means that $b^{*}$ will also turn left at this point. Thus,
$b^{*}$ and $b_{1}$ both add $\varphi(\bar{c})$. No ordinal assignment
occurs in the application of the Special $\vee$ rules, so this suffices.
\item Suppose that in $b_{0}$, we turned right and added $\psi(\bar{c})$.
Then we see that 
\[
\mathcal{M}_{0}\models\psi(\bar{c})\wedge\forall\bar{y}\leq_{\text{rank-lex }}\bar{d}\ \neg(\varphi(\bar{y})\wedge\bigwedge\Psi(\bar{y}))
\]
where $\Psi(\bar{d})$ is the set of major sentences occurring on
the branch so far and $\bar{d}$ are the constant symbols occurring
in those sentences. Similarly, this means that $b^{*}$ must turn
right at this point, so $b^{*}$ and $b_{1}$ continue to agree. 
\end{itemize}
Suppose that Special $\exists$ was used on $\exists\bar{x}\ \varphi(\bar{x},\bar{c})$.
Then $b^{*}$ must also extend with: 
\begin{align*}
 & \varphi(\bar{d},\bar{c}),s\\
 & \forall\bar{y},\bar{z}<_{\text{rank-lex}}\bar{c},\bar{d}\:\:\neg(\varphi(\bar{z},\bar{y})\wedge\bigwedge\Psi(\bar{y}))
\end{align*}
where $\Psi(\bar{c})$ is the set of major sentences already occurring
and $\bar{c}$ are the constant symbols occurring in them. Clearly,
$b^{*}$ and $b_{1}$ agree on the sentence added next. Moreover,
we see that 
\[
\mathcal{M}_{0}\models\varphi(\bar{d},\bar{c})\wedge\forall\bar{y},\bar{z}<_{\text{rank-lex}}\bar{c},\bar{d}\ \neg(\varphi(\bar{z},\bar{y})\wedge\bigwedge\Psi(\bar{y}))
\]
where $\Psi(\bar{c})$ is the set of sentences occurring so far.
Now let $\bar{e}=\bar{c},\bar{d}$ and $s:\{\bar{e}\}\to Ord^{\mathcal{M}_{0}}$
be such that $s(e_{i})=rank(e_{i})^{\mathcal{M}_{0}}$ for all $e_{i}$
occurring in $\bar{e}$. We then see that this assignment agrees with
the canonical branch $b^{*}$ since the second clause of the conjunction
above ensures that there is no way to find an ordinal assignment preceding
$\langle rank(e_{0})^{\mathcal{M}_{0}},...,rank(e_{m})^{\mathcal{M}_{0}}\rangle$.
\end{proof}
Let $\mathcal{M}_{1}$ be the model interpreted in $\mathcal{M}_{0}$
using $b_{1}$. We then observe that from the perspective of our background
universe, these models are isomorphic.
\begin{lem}
There is an isomorphism $j:\mathcal{M}_{0}\cong\mathcal{M}_{1}$,
where $j$ is definable in the background universe, $V$.
\end{lem}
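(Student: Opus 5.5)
We define $j$ in $V$ as the identity on constant symbols: $j(c_i)=c_i$, with $c_i$ viewed on the left as an element of $\mathcal{M}_0$ and on the right as an element of $\mathcal{M}_1$. The latter is meaningful because $\mathcal{M}_1$ is interpreted in $\mathcal{M}_0$ with domain $C^{\mathcal{M}_0}$. Since $\mathcal{M}_0$ is well-founded from $V$'s point of view, the naturals of $\mathcal{M}_0$ are identified with those of $V$. Under this identification, $C^{\mathcal{M}_0}$, the sentence codes and the primitive-recursive schedule agree with their counterparts in $V$, as noted at the start of the proof of Lemma \ref{lem:SameBranch}. So $j$ is, concretely, the class relation $\{\langle c_i,e\rangle : e\in M_0 \text{ and } \mathcal{M}_0\models e=\ulcorner c_i\urcorner\}$. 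This relation is definable in $V$ from the definitions of $b_0$ (Lemma \ref{lem:DefBranch}) and $\mathcal{M}_0$ (Lemma \ref{lem:Comp}). The only point needing care is that the numeral $\ulcorner c_i\urcorner$ of $\mathcal{M}_0$ is located by following $\in^{\mathcal{M}_0}$ through the finite ordinals of $\mathcal{M}_0$. This is a definable recursion on $\omega$ in $V$, using Full Set Induction.

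Next, $j$ preserves and reflects $\approx$ and $\in$. By definition, $c_i\in^{\mathcal{M}_1}c_j$ holds iff $\mathcal{M}_0\models$ ``$c_i\in c_j$ occurs on $b^{*}$'', where $b^{*}$ is $\mathcal{M}_0$'s canonical branch, and similarly for $\approx^{\mathcal{M}_1}$. By Lemma \ref{lem:SameBranch}, $b^{*}$ is $b_1$. From $V$'s perspective, $b_1$ carries exactly the same sequence of $\mathcal{L}_\in(C)$ sentences $\langle\varphi_n\rangle$ as $b_0$; only the ordinal side conditions differ. Hence, for atomic sentences $c_i\in c_j$ and $c_i=c_j$, occurrence on $b_1$ (computed inside $\mathcal{M}_0$) is equivalent to occurrence on $b_0$ (computed in $V$). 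That is, $c_i\in^{\mathcal{M}_1}c_j\iff c_i\in^{\mathcal{M}_0}c_j$, and likewise for $\approx$. So $j$ is a well-defined, congruence-respecting embedding of prestructures. It is surjective because the domain of $\mathcal{M}_1$ is all of $C^{\mathcal{M}_0}$, which under the identification is $C$, the domain of $\mathcal{M}_0$. Therefore $j$ induces an isomorphism modulo $\approx$.

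The main obstacle is the step passing from the external statement ``$b_1$ is the canonical branch through $\mathcal{T}^{\mathcal{M}_0}$'' to the internal atomic facts of $\mathcal{M}_1$. This requires checking that, for each $n$, the $n$th sentence of $\mathcal{M}_0$'s branch is (the $\mathcal{M}_0$-code of) $\varphi_n$. That in turn needs Lemma \ref{lem:SameBranch}'s induction to be carried out in $V$ as a definable induction on $\omega$, so that Full Set Induction in $V$ applies to a class defined from $\mathcal{M}_0$'s $\Delta_{k+1}$ definition of its branch. I would make this explicit by noting that ``the $n$th node of $\mathcal{M}_0$'s canonical branch has code $\ulcorner\varphi_n\urcorner$'' is a single formula in $V$, evaluated via $\mathcal{M}_0$'s satisfaction for the fixed formula defining canonical branches. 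The set of $n$ for which it fails is then definable, and by Lemma \ref{lem:SameBranch} it has no least element, hence is empty. With that in hand, the atomic transfer above is immediate, and definability of $j$ follows.
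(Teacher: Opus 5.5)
Your proposal is correct and takes essentially the paper's route: the paper also sets $j(c_n^{\mathcal{M}_0})=c_n^{\mathcal{M}_1}$ on $\approx$-classes, and relies on Lemma \ref{lem:SameBranch} to see that the two branches carry the same sentences and hence induce the same atomic diagram. You make explicit what the paper leaves tacit: the atomic transfer, surjectivity onto $C^{\mathcal{M}_0}$, locating $\mathcal{M}_0$'s numerals, and running the induction of Lemma \ref{lem:SameBranch} as a definable least-failure argument in $V$ (there the failure set should record the ordinal side conditions $rank(c_i)^{\mathcal{M}_0}$ as well as the sentence codes, since that inductive step uses both).
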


\begin{proof}
The sentences occurring on an open branch of any tableau determine
the isomorphism type of the model it defines. Lemma \ref{lem:SameBranch}
establishes that $\mathcal{M}_{0}$ and $\mathcal{M}_{1}$ are determined
by branches with the same sentences. We then define the isomorphism
by letting $j(c_{n}^{\mathcal{M}_{0}})=c_{n}^{\mathcal{M}_{1}},$
or equivalently, we send the $\approx^{\mathcal{M}_{0}}$-class of
$c_{n}$ to the corresponding $\approx^{\mathcal{M}_{1}}$-class.
\end{proof}
The task is now to recover $j^{-1}(c_{n}^{\mathcal{M}_{1}})$ by a
definition inside $\mathcal{M}_{0}$ without parameters. The ordinal
assignment first recovers ordinals; membership then recovers sets
of ordinals; and rooted codes together with Mostowski collapse recover
arbitrary sets. For a little technical notation, we let $trcl$ be
the transitive closure operator and $trcoll$ be the transitive collapse
function.
\begin{lem}
There is a function $f$ that is definable over $\mathcal{M}_{0}$
without parameters such that\label{lem:defCountEnum} 
\[
f:\omega\twoheadrightarrow M_{0}.
\]
\end{lem}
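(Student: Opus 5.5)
Working inside $\mathcal{M}_{0}$, I will run the construction of the canonical tableau and its canonical branch there; by Lemma \ref{lem:DefBranch} (relativized, since $\mathcal{M}_{0}\models KPC$ and agrees with $V$ on $\Sigma_{k}$ sentences) this gives a parameter-free $\Delta_{k+1}$ class $b^{\mathcal{M}_{0}}$, which by Lemma \ref{lem:SameBranch} is $b_{1}$. Thus $\mathcal{M}_{0}$ defines, without parameters, the model $\mathcal{M}_{1}$ on the set of constants $C\subseteq\omega$, together with the ordinal assignment $c_{n}\mapsto\alpha_{n}$ carried by $b_{1}$; by construction $\alpha_{n}=rank(c_{n})^{\mathcal{M}_{0}}$. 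The plan is to define the inverse map $c_{n}\mapsto j^{-1}(c_{n}^{\mathcal{M}_{1}})$, i.e. the element of $\mathcal{M}_{0}$ denoted by $c_{n}$, in three steps. Step one (ordinals): if $b_{1}$ contains the sentence ``$c_{n}$ is an ordinal'' (it will, whenever $\mathcal{M}_{0}$ thinks so, by the $\Sigma_{k}$-cut and completeness), then $c_{n}^{\mathcal{M}_{0}}$ is an ordinal of rank $\alpha_{n}$, hence equals $\alpha_{n}$. So $g_{0}(n)=\alpha_{n}$ on ordinal constants is definable and correct. Step two (sets of ordinals): for $c_{n}$ a set of ordinals, let $g_{1}(n)=\{\alpha_{m}: c_{m}\in c_{n}\text{ occurs on }b_{1}\text{ and }c_{m}\text{ is an ordinal}\}$; this is a set because it is bounded by $\alpha_{n}$ and is definable, though I must check $\Delta_{0}$-Separation suffices—if not, use that $\mathcal{M}_{0}$ can separate via the $\Delta_{k+1}$ definition only if enough Separation holds, which is the worry below.

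Step three (arbitrary sets): using $KPC$ in $\mathcal{M}_{0}$, every $x$ has a code: a well-founded extensional relation $E$ on an ordinal $\beta$ with root, obtained from a bijection $trcl(\{x\})\to\beta$, whose $trcoll$ sends the root to $x$. Since $\mathcal{M}_{0}\equiv_{\Sigma_{k}}V$ and the branch is complete, for each $c_{n}$ there are constants $c_{m},c_{r}$ on $b_{1}$ naming such a code $E\subseteq\beta\times\beta$ (coded as a set of ordinals via Gödel pairing) and its root, with the code-relation sentence occurring on $b_{1}$. Define $f(n)$ as $trcoll(g_{1}(m))$ at root $g_{0}(r)$ for the least such pair $\langle m,r\rangle$, defaulting to $\emptyset$; Mostowski collapse of a set-sized well-founded extensional relation exists in $KP$. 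Correctness: via $j$ and Lemma \ref{lem:Comp}, the sentence ``$trcoll(c_{m})(c_{r})=c_{n}$'' holds in $\mathcal{M}_{1}$, and steps one and two identify $c_{m},c_{r}$ with their $\mathcal{M}_{0}$-values. Surjectivity follows since every element of $M_{0}$ is some $c_{n}^{\mathcal{M}_{0}}$.

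The main obstacle is step two: showing the definable class $g_{1}(n)$ is actually a set in $\mathcal{M}_{0}$, since only $\Delta_{0}$-Separation is available and $b_{1}$ is merely $\Delta_{k+1}$-definable. My first attempt is to avoid Separation by exploiting that the true value $c_{n}^{\mathcal{M}_{0}}$ already exists: define $f(n)$ as the unique $x$ such that $x$ is a set of ordinals of rank $\alpha_{n}$ and for every $\gamma<\alpha_{n}$, $\gamma\in x$ iff some ordinal constant $c_{m}$ with $\alpha_{m}=\gamma$ has $c_{m}\in c_{n}$ on $b_{1}$. Uniqueness is by extensionality, and existence is witnessed by $c_{n}^{\mathcal{M}_{0}}$ itself (checked from $V$ via $j$ and rank-invariance of ordinals), so no Separation is needed; the same ``unique witness'' trick handles step three, giving a parameter-free definition of $f$.
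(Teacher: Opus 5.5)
Your architecture is the paper's: ordinals come from the rank assignment $\alpha_n$, sets of ordinals from atomic membership, and arbitrary sets from rooted ordinal codes and Mostowski collapse. You also sidestep Separation by defining each value as a uniquely determined witness, which is exactly what the paper's clause ``$f_1(c_n)=X$ iff \ldots'' does.

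\textbf{The gap.} The problem is how you test compound facts. In step one $g_0$ is defined only at those $c_n$ for which the sentence ``$c_n$ is an ordinal'' occurs on $b_1$. Step two reuses that test for ``ordinal constants''. Step three requires the code-relation sentence for $c_m,c_r,c_n$ to occur on $b_1$. You justify occurrence by ``the $\Sigma_k$-cut and completeness'' (resp.\ by $\Sigma_k$-equivalence), and that justification fails for three reasons:
\begin{enumerate}
\item The Special $\Sigma_k$-cut is applied only to sentences of $\mathcal{L}_\in$ with no constants from $C$.
\item Corollary \ref{cor:Sigma_kSubMod} likewise concerns only such constant-free sentences.
\item Lemma \ref{lem:Comp} runs one way only: occurrence on the branch implies truth, not conversely.
\end{enumerate}
The branch is not a complete diagram. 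Its nodes are prenexed and then unpacked a quantifier block at a time, so a compound fact such as $\mathrm{Ord}(c_n)$ generally never appears as a node, even when $c_n^{\mathcal{M}_0}$ is an ordinal. As written, $g_0$ and $g_1$ can therefore be undefined, or wrong, at exactly the constants step three needs. Your existence claim for a code sentence on $b_1$ is likewise unsupported as stated. So totality and surjectivity of $f$ are not established.

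\textbf{The repair.} Do what the paper does. Read only atomic facts off the branch; that is precisely how $\in^{\mathcal{M}_1}$ is defined. Test compound properties by truth in the interpreted structure, i.e.\ by relativizing a fixed formula to the parameter-free interpretation $\mathcal{M}_1$. Then transfer to $\mathcal{M}_0$ along $j$, which sends $c_i^{\mathcal{M}_0}$ to $c_i^{\mathcal{M}_1}$.

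For steps one and two you can even drop the tests. First, $\alpha_n=c_n^{\mathcal{M}_0}$ whenever $c_n$ denotes an ordinal. Second, every member of a set of ordinals is an ordinal.

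Existence in step three then becomes immediate. In $\mathcal{M}_0$, choose a rooted ordinal code of $c_n^{\mathcal{M}_0}$. The code and root are named by some $c_m,c_r$, and via $j$ the code relation holds of $c_m,c_r,c_n$ in $\mathcal{M}_1$.

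\textbf{Comparison with the paper's step three.} With this change your step three is a legitimate variant of the paper's. The paper defines the graph $F_2(c_n,x)$ as follows: range over $x\in M_0$ and its codes $Y$, find $k$ with $f_1(c_k)=Y$, and check the collapse inside $\mathcal{M}_1$. You instead locate the code on the $\mathcal{M}_1$ side and perform the collapse inside $\mathcal{M}_0$. Both rest on the identification $f_1(c_k)=c_k^{\mathcal{M}_0}$ and on uniqueness of the collapse. Yours yields $f(n)$ directly as a computed value, while the paper's yields a graph whose totality it reads off from $j$ being a bijection.
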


\begin{proof}
Working in $\mathcal{M}_{0}$, let $b_{1}$ be the canonical branch
and let $s:C\to Ord$ be its accumulated ordinal assignment. We proceed
in four stages.

First, we define a partial surjection $f_{0}:C\rightharpoondown Ord^{M_{0}}$
such that $\langle c_{n},\alpha\rangle\in f_{0}$ iff $\mathcal{M}_{1}\models c_{n}\in Ord$
and $s(c_{n})=\alpha$. The key points then are that: 
\[
f_{0}(c_{n})=c_{n}^{\mathcal{M}_{0}}=j^{-1}(c_{n}^{\mathcal{M}_{1}})
\]
 whenever $\mathcal{M}_{1}\models c_{n}\in Ord$; and that since $c_{n}^{\mathcal{M}_{0}}$
is itself an ordinal, 
\[
\alpha=s(c_{n})=rank^{\mathcal{M}_{0}}(c_{n}^{\mathcal{M}_{0}})=c_{n}^{\mathcal{M}_{0}}.
\]
For $\beta\in Ord^{\mathcal{M}_{0}}$, we choose $c_{i}$ with $c_{i}^{\mathcal{M}_{0}}=\beta$.
Then $c_{i}^{\mathcal{M}_{1}}=j(\beta)$ is an ordinal and the definition
gives $f_{0}(c_{i})=\beta$.

Second, we define a partial surjection $f_{1}:C\rightharpoondown[Ord]^{<Ord}$
such that $f_{1}(c_{n})=X$ iff $\mathcal{M}_{1}\models c_{n}\subseteq Ord$
and for all $\alpha$
\[
\alpha\in X\ \Leftrightarrow\ \exists m\in\omega\ (f_{0}(c_{m})=\alpha\wedge\mathcal{M}_{1}\models c_{m}\in c_{n}).
\]
Similarly to the previous case, we then observe that 
\[
f_{1}(c_{n})=c_{n}^{\mathcal{M}_{0}}=j^{-1}(c_{n}^{\mathcal{M}_{1}}).
\]
From here it can be seen that:
\begin{enumerate}
\item $\alpha\in f_{1}(c_{n})\Longleftrightarrow\exists m\,(f_{0}(c_{m})=\alpha\wedge\mathcal{M}_{1}\models c_{m}\in c_{n})$;
\item $\exists m\,(f_{0}(c_{m})=\alpha\wedge\mathcal{M}_{1}\models c_{m}\in c_{n})\Longleftrightarrow j(\alpha)\in c_{n}^{\mathcal{M}_{1}}$;
and
\item $j(\alpha)\in c_{n}^{\mathcal{M}_{1}}\Longleftrightarrow\alpha\in c_{n}^{\mathcal{M}_{0}}.$
\end{enumerate}
If $Y\subseteq Ord^{\mathcal{M}_{0}}$ is a set, we choose $c_{i}$
with $c_{i}^{\mathcal{M}_{0}}=Y$. The displayed equivalences then
establish that $f_{1}(c_{i})=Y$.

Third, we define a relation $F_{2}$ on $C\times M_{0}$. Given
$c_{n}\in C$ and $x\in M_{0}$, let $F_{2}(c_{n},x)$ hold iff there
are an ordinal $\kappa$, a bijection $g:\kappa\to trcl(\{x\})$,
a distinguished point $r=g^{-1}(x)$, a relation $R\subseteq\kappa^{2}$
obtained by pulling membership back along $g$, a set of ordinals
$Y$ coding the rooted structure $\langle\kappa,R,r\rangle$, and
$k\in\omega$ such that $f_{1}(c_{k})=Y$ and, in $\mathcal{M}_{1}$,
the rooted relation decoded from $c_{k}$ has a Mostowski collapse
whose distinguished root is $c_{n}^{\mathcal{M}_{1}}$. All of this
is definable over $\mathcal{M}_{0}$ without parameters.

To see what this relation does, temporarily reason in the background
universe. If $f_{1}(c_{k})=Y$, then $c_{k}^{\mathcal{M}_{1}}=j(Y)$.
Hence the rooted relation decoded from $c_{k}$ is isomorphic to the
$j$-image of $\langle\kappa,R,r\rangle$, whose rooted collapse has
distinguished root $j(x)$. By uniqueness of the Mostowski collapse,
\[
F_{2}(c_{n},x)\ \Leftrightarrow\ c_{n}^{\mathcal{M}_{1}}=j(x).
\]
Since $j:\mathcal{M}_{0}\cong\mathcal{M}_{1}$ is a bijection and
the constants in $C$ exhaust $\mathcal{M}_{1}$, $F_{2}$ is the
graph of a total surjection $f_{2}:C\twoheadrightarrow M_{0}$. The
same calculation also shows that the definition is independent of
the choices of $g$ and of the particular rooted code.

Fourth and finally, we let $f:\omega\twoheadrightarrow M_{0}$ be
such that $f(n)=y$ iff $f_{2}(c_{n})=y$.
\end{proof}

\subsection{Main proof}

Finally, we can complete the proof of Lemma \ref{count-enum-thm}.
\begin{proof}
(of Lemma \ref{count-enum-thm}) Let $\mathcal{N}$ be a model of
$T=KPC+S$ and then work within $\mathcal{N}$. Let $k>3$ be such
that every sentence in $S$ has complexity $\Sigma_{k}$. Then let
$b_{0}$ be the canonical branch through the tableau $\mathcal{T}$
built using Special $\Sigma_{k}$-cut. Let $\mathcal{M}_{0}$ be the
model interpreted using $b_{0}$. Then we see that $\mathcal{M}_{0}$
satisfies $T$. Next we consider the tableau $\mathcal{T}^{\mathcal{M}_{0}}$
built inside $\mathcal{M}_{0}$ and its canonical branch $b_{1}$.
We let $\mathcal{M}_{1}$ be the model interpreted from $b_{1}$.
Then using Lemma \ref{lem:defCountEnum}, we see that there is a function
$f:\omega^{\mathcal{M}_{0}}\twoheadrightarrow\mathcal{M}_{0}$ definable
without parameters over $\mathcal{M}_{0}$, which is what we require.
\end{proof}

\section{Geology with little hammers}

\label{sec:GeoSmall}

With the definable countable enumeration lemma in hand, we have addressed
problem (1) from the sketch given at the beginning of Section \ref{sec:Quantifier-bounded-theories}.
However, before we give any examples, we need to address problem (2).
In particular, we need to obtain theories that have the stable definable
ground property. For this purpose, we make use of some set theoretic
geology. The idea is to make a small piece of the ground recognizable
in its generic extension, and then use that piece to recover the whole
ground.

There are two tasks here. First, we need a ground-model definition
that works over a weak set theory. Schlutzenberg has already provided
such a definition; we shall adapt it to use a smaller parameter. Second,
we need to arrange that a suitable parameter is itself definable without
parameters. We do this by coding it into the continuum function. The
first task can be carried out over a weaker theory than the second,
and we keep that distinction explicit.

As before, $KP$ includes Full Set Induction and $KPC$ adds Choice
in the form that every set injects into an ordinal. We use the following
axiomatization of $MOST$: Extensionality, set Foundation, Pairing,
Union, Infinity, Powerset, $\Sigma_{1}$-Separation, $\Sigma_{1}$-Collection,
and the assertion that every set is bijectable with an ordinal. Schlutzenberg
denotes this theory by $T_{1}$ \citep[Definition 2.2]{Schlutzenberg2022LocalMantles}.
Set Foundation says that every nonempty set has an $\in$-minimal
member. We write 
\[
MOST_{\mathrm{Ind}}=MOST+\text{Full Set Induction}=KPC+\text{Powerset}+\Sigma_{1}\text{-Separation}.
\]

We write $ZFC^{\Pi_{n}}$ for $KPC$ extended by Powerset, $\Pi_{n}$-Separation
and $\Pi_{n}$-Collection. Thus $ZFC^{\Pi_{0}}$ is $KPC$ with Powerset.
The coding construction in Section \ref{subsec:MPisDef} is carried
out over $ZFC^{\Pi_{3}}$. We give the preservation argument for every
$n\geq3$, since the later applications also use stronger fragments.

\label{sec:set-forcing-preservation} Our forcing constructions must
produce extensions satisfying the theory with which we began. Schlutzenberg
proves internally that set forcing forces every axiom of $MOST$ \citep[Lemma 2.5]{Schlutzenberg2022LocalMantles}.
Over $ZFC^{\Pi_{n}}$, we must also verify the stronger Separation
and Collection schemes. We first check these forcing assertions inside
the ground theory, and then use the truth lemma to transfer them to
a generic extension.

The usual name constructions give the required sets once the forcing
predicates have the right complexity. The main point is existential
forcing: local witness names can be collected and mixed into one name.
This is where Collection and Choice enter.
\begin{lem}
\label{lem:sharp-forcing} Let $n\geq3$ and work in $ZFC^{\Pi_{n}}$.
Fix a set forcing $\mathbb{P}$. For every fixed $\Sigma_{i}$ formula,
where $1\leq i\leq n$, its forcing relation has a $\Sigma_{i}$ definition.
The corresponding assertion holds for $\Pi_{i}$ formulas. Bounded
forcing has a $\Delta_{1}$ definition.
\end{lem}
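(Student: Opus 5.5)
We argue by induction on $i$, following the forcing clauses exactly as in the proof of the uniform forcing bound (Lemma \ref{lem:O2ForcingBound}). Now, however, we use $\Pi_{n}$-Separation, $\Pi_{n}$-Collection and Choice to remove the extra quantifier alternation that the density clause introduced there. The base case is the standard $\Delta_{1}$ definition of forcing for bounded formulae, which is already available in $KP$. Assume that for some $i<n$ forcing for $\Pi_{i}$ formulae has a $\Pi_{i}$ definition and forcing for $\Sigma_{i}$ formulae has a $\Sigma_{i}$ definition; for $i=0$ both are $\Delta_{1}$. For a $\Pi_{i+1}$ formula $\forall\bar{x}\,\psi$ with $\psi$ $\Sigma_{i}$, the clause $p\Vdash\forall\bar{x}\,\psi$ iff $\forall\bar{\sigma}\,p\Vdash\psi(\bar{\sigma})$ is immediately $\Pi_{i+1}$. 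So the work is in the existential case.

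For a $\Sigma_{i+1}$ formula $\exists\bar{x}\,\psi$ with $\psi$ $\Pi_{i}$, we aim to prove the equivalence
\[
p\Vdash\exists\bar{x}\,\psi(\bar{x},\bar{\tau})\quad\Longleftrightarrow\quad\exists\dot{\sigma}\ p\Vdash\psi(\dot{\sigma},\bar{\tau}),
\]
which is $\Sigma_{i+1}$ by the inductive hypothesis. Here $\dot{\sigma}$ codes a tuple of names, obtained from one name by the usual pairing of names. The right-to-left direction is immediate from the forcing clauses. For left-to-right, the density clause gives that below every $q\leq p$ there is some $u$ and a name $\bar{\sigma}_{u}$ with $u\Vdash\psi(\bar{\sigma}_{u})$. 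The set of such $u$ is defined by a $\Sigma_{i+1}$ formula, hence by a $\Pi_{n}$ formula since $i+1\leq n$. So $\Pi_{n}$-Separation gives it as a set $D\subseteq\mathbb{P}$, dense below $p$. Using Choice, fix a maximal antichain $A\subseteq D$ below $p$, built by a well-ordering of $\mathbb{P}$ and a recursion on ordinals. Then apply $\Pi_{n}$-Collection to the $\Pi_{i}$ relation $u\Vdash\psi(\bar{\sigma})$, for $u\in A$, to obtain a set of candidate witnesses. Choice then selects one $\bar{\sigma}_{u}$ per $u\in A$. The mixing lemma produces a single name $\dot{\sigma}$ with $u\Vdash\dot{\sigma}=\bar{\sigma}_{u}$ for each $u\in A$. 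Mixing is a $\Delta_{1}$ construction and only needs Pairing, Union and $\Delta_{0}$-Separation. Since forcing of $\psi$ respects forced equality of names, every $u\in A$ forces $\psi(\dot{\sigma})$. By density and maximality of $A$ below $p$, Fact \ref{fact:Density} gives $p\Vdash\psi(\dot{\sigma})$.

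Once the existential case is settled, the $\Sigma_{i+1}$ and $\Pi_{i+1}$ definitions follow. For $\Pi_{i+1}$ we also want the negation clause $p\Vdash\neg\varphi$ iff $\forall q\leq p\ \neg(q\Vdash\varphi)$. Applied to a $\Sigma_{i+1}$ formula $\varphi$, this yields a $\Pi_{i+1}$ definition, since the bounded quantifier over $q\leq p$ adds nothing and the inner negation turns the $\Sigma_{i+1}$ definition into $\Pi_{i+1}$. This keeps the induction going up to $i=n$; the last step uses Separation and Collection only for $\Pi_{n}$ formulae. Throughout, the definitions are uniform in the formula for fixed syntactic shape, exactly as in Lemma \ref{lem:O2ForcingBound}.

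The main obstacle is the left-to-right direction of the existential step. We must check that every instance of Separation and Collection used there has complexity at most $\Pi_{n}$; this is where $i+1\leq n$ is needed. We must also check that the fullness/mixing argument goes through without full Replacement, i.e.\ that the antichain recursion and the witness selection need only the collected set and Choice. A secondary point is to verify that forcing for $\Pi_{i}$ formulae is invariant under replacing $\bar{\sigma}_{u}$ by $\dot{\sigma}$ below $u$. This invariance is proved by the same formula induction, using only the atomic equality clauses.
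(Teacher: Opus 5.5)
Your proposal is correct and follows essentially the paper's proof: you establish $p\Vdash\exists\bar{x}\,\psi\leftrightarrow\exists\dot{\sigma}\,p\Vdash\psi(\dot{\sigma})$ via a maximal antichain, Collection, Choice and mixing, and obtain the $\Pi$ case from the negation clause. One small slip: $D$ is $\Sigma_{i+1}$-definable, which is not $\Pi_{n}$ when $i+1=n$, so at that level you need $\Sigma_{n}$-Separation (obtained from $\Pi_{n}$-Separation by complementation in $\mathbb{P}$); the paper avoids this by collecting witness pairs first and then separating only with the $\Pi_{i}$ forcing relation for $\psi$.
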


\begin{proof}
Begin with the $\Delta_{1}$ definition of bounded forcing and proceed
through the prenex quantifier blocks. Suppose the matrix $\psi$ is
$\Pi_{i-1}$ and its forcing relation already has the required definition.
If $p\Vdash\exists x\,\psi(x,\vec{\tau})$, then for every $q\leq p$
there are $s\leq q$ and a name $\dot{x}$ such that $s\Vdash\psi(\dot{x},\vec{\tau})$.
Collection gathers a set of such witness pairs, one available below
each $q\leq p$. At the first level $\Sigma_{1}$-Collection suffices,
since bounded forcing is $\Delta_{1}$; at later levels $\Pi_{i-1}$-Collection
suffices.

Using Separation, retain from the collected set precisely those pairs
$\langle s,\dot{x}\rangle$ with $s\leq p$, $\dot{x}$ a $\mathbb{P}$-name,
and $s\Vdash\psi(\dot{x},\vec{\tau})$. Using Choice, take a maximal
antichain from their witnessing conditions and select one witness
name for each antichain condition. The antichain is predense below
$p$. Mix its selected names into a single name $\dot{u}$ agreeing
with each selected witness below its corresponding condition. This
construction uses unions and bounded Separation on sets of names and
conditions. Substitution and the density lemma give $p\Vdash\psi(\dot{u},\vec{\tau})$.
Consequently 
\[
p\Vdash\exists x\,\psi(x,\vec{\tau})\quad\longleftrightarrow\quad\exists\dot{u}\;p\Vdash\psi(\dot{u},\vec{\tau}).
\]
The right side has a $\Sigma_{i}$ definition. Quantification over
dotted variables is over names; the $\Delta_{1}$ name predicate is
absorbed into this bound. Negation gives the dual $\Pi_{i}$ definition,
since a condition forces a negation exactly when no stronger condition
forces the negated formula. Repeated quantifiers of the same kind
are handled in the same block. This proves the induction for each
fixed formula.
\end{proof}
Working in $ZFC^{\Pi_{n}}$, we now verify that every instance of
$\Pi_{n}$-Separation and $\Pi_{n}$-Collection is forced. Fix a
$\Pi_{n}$ formula $\varphi$ and names $\dot{a},\vec{\tau}$. For
Separation, form 
\[
\dot{b}=\{\langle\dot{x},q\rangle\in\mathsf{dom}(\dot{a})\times\mathbb{P}:q\Vdash\dot{x}\in\dot{a}\ \land\ q\Vdash\varphi(\dot{x},\vec{\tau})\}.
\]
By Lemma \ref{lem:sharp-forcing}, this set exists by $\Pi_{n}$-Separation.
The forcing clauses give 
\[
1\Vdash\forall x\bigl(x\in\dot{b}\leftrightarrow(x\in\dot{a}\land\varphi(x,\vec{\tau}))\bigr).
\]

For Collection, suppose $p\Vdash\forall x\in\dot{a}\,\exists y\,\varphi(x,y,\vec{\tau})$.
For every $\langle\dot{x},r\rangle\in\dot{a}$ and $q\leq p,r$, there
are $s\leq q$ and a name $\dot{y}$ with $s\Vdash\varphi(\dot{x},\dot{y},\vec{\tau})$.
The triples $\langle\dot{x},r,q\rangle$ form a set, and the relation
describing a witness pair $\langle s,\dot{y}\rangle$ is $\Pi_{n}$.
Thus $\Pi_{n}$-Collection gathers enough witness pairs. Let $Y$
be the set of their name components, discarding any objects which
are not names, and let $\dot{b}=\{\langle\dot{y},1\rangle:\dot{y}\in Y\}$.
Below each relevant $q$ there is a condition forcing that a suitable
witness belongs to $\dot{b}$. The density lemma therefore gives 
\[
p\Vdash\forall x\in\dot{a}\,\exists y\in\dot{b}\,\varphi(x,y,\vec{\tau}).
\]
The remaining basic axioms use the usual name constructions, as in
the $MOST$ argument. In particular, Powerset is obtained from the
set of subnames of $\mathsf{dom}(\dot{a})\times\mathbb{P}$. Every
non-induction axiom instance of $ZFC^{\Pi_{n}}$ is therefore forced.

To pass to a generic extension we use the truth lemma. A definable
dense class need not be a set in a weak ground model, so meeting the
ground's dense sets does not by itself give the truth lemma at every
complexity. External well-foundedness does not remove this distinction.

Let $\mathcal{M}\models ZFC^{\Pi_{n}}$. For fixed $n$, the non-induction
axiom instances have a common finite complexity bound. Choose a standard
$j\geq1$ such that they are all $B\Sigma_{j}$, including any further
formulas needed in the argument when choosing $j$. Lemma \ref{lem:O2GeneralTruth}
then transfers these forced assertions to $\mathcal{M}^{\mathbb{P}}/G$
whenever $G$ is $B\utilde{\Sigma}_{\mathsf{b}(j)}$-generic over
$\mathcal{M}$. This applies to both externally well-founded and ill-founded
grounds. The bound $\mathsf{b}(j)$ supplies a sufficient genericity
level for transfer; the sharper definitions in Lemma \ref{lem:sharp-forcing}
justify the earlier uses of Separation and Collection. If $G$ is
definable over $\mathcal{M}$, Lemma \ref{lem:tightFullSetInduction}
supplies Full Set Induction separately.

\subsection{Ground model definability in a weak set theory}

Ground model definability was proved independently by Laver and Woodin;
see \citet{LaveRefDef} and the historical account in \citet{FuchsHamkinsReitz2015},
Section 1. Informally speaking, it tells us that the inner models
that generically extend to the universe, its grounds, are uniformly
definable using a parameter. Schlutzenberg’s version works over $MOST$.
We use it to supply the candidate grounds, and then compare candidates
to obtain a smaller parameter needed for coding.
\begin{thm}
\label{thm:SchlutzenbergGMD}(Schlutzenberg.) Over $MOST$ there is
a fixed formula $\mathsf{grd}(t,x)$ such that the classes $W_{t}=\{x:\mathsf{grd}(t,x)\}$
enumerate exactly the set-forcing grounds, including the universe
itself. If $V=W[G]$ and the forcing belongs to $t=(H_{\eta^{+}})^{W}$,
then $W_{t}=W$.
\end{thm}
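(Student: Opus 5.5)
The plan follows the Laver--Woodin argument as adapted by Schlutzenberg, keeping track of which axioms of $MOST$ each step uses. The argument has two halves: first show that every ground is uniformly definable from a small parameter, then show that every class defined by that fixed formula is actually a ground.

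For the first half, fix $V=W[G]$ with $G$ generic for some $\mathbb{P}\in W$. Choose $\eta$ with $\mathbb{P}\in(H_{\eta^{+}})^{W}$ and set $t=(H_{\eta^{+}})^{W}$. The key steps are as follows.

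\begin{enumerate}
\item Verify that $W$ has the $\eta^{+}$-approximation and cover properties in $V$. This is Hamkins' lemma. I would prove it using the forcing relation for bounded formulae, which is $\Delta_{1}$, together with $\Sigma_{1}$-Separation and Collection in $W$.
\item Show that a transitive $\Sigma_{1}$-correct class model $U\subseteq V$ with these properties and $(H_{\eta^{+}})^{U}=t$ agrees with $W$. Use the uniqueness theorem: compare $U$ and $W$ set by set via $\in$-induction on rank. A set of ordinals lies in $U$ iff all its small approximations do.
\item Take $\mathsf{grd}(t,x)$ to assert that $x$ belongs to the unique class built by recursion over $t$. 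Concretely, $x\in W$ iff some $V_{\alpha}$-like level admits a model $M\ni x$ with the approximation and cover properties relative to $t$. Powerset supplies the levels $\mathcal{P}(\gamma)$ needed to state approximation.
\end{enumerate}

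For the second half, I would show that whenever $\mathsf{grd}(t,\cdot)$ defines a class $W_{t}$ satisfying $MOST$ with these properties, $V$ is a set-generic extension of $W_{t}$. The route is Bukovsk\'y's characterization: uniform $\eta^{+}$-covering yields a forcing in $W_{t}$ and a generic adding the missing sets. The universe itself is $W_{t}$ for $t=H_{\omega_{1}}$ with trivial forcing.

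The main obstacle is carrying out the second half, and the recursive definition in step 3, without full Separation or Replacement. The defining formula must have bounded complexity, and the recursion along ordinals must use only $\Sigma_{1}$-Collection. I would handle this by replacing the global class recursion with local $V_{\alpha}$-approximations. Each instance then needs only Powerset and $\Sigma_{1}$-Separation, and Full Set Induction can be avoided in favour of set Foundation. The compatibility of local pieces would then follow from the uniqueness step 2 applied level by level.
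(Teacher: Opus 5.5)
\emph{Verdict: the proposal has a genuine gap in its second half, and also omits a default clause for indices that are not genuine grounds.}

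The paper does not reprove this theorem. It cites Schlutzenberg's Definition 2.9 and Corollary 2.10. There, membership in $W_t$ is given by a local-witness formula whose witnesses are local grounds of some $H_{\xi^{+}}$, and a default index gives $W_t=V$ for parameters that are not true indices. Your first half (small forcing gives $\eta^{+}$-approximation and cover, a Hamkins-type uniqueness lemma, local pieces glued by uniqueness) is broadly the method behind that result.

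The gap is your second half. Your $\mathsf{grd}$ only tests approximation and cover relative to $t$, and these do not make $V$ a set-forcing extension of the class they select. For example:
\begin{itemize}
\item Over $L$, add a Cohen real $g$. Then force over $L[g]$ with the countably closed Easton-support class product adding $\kappa^{++}$ Cohen subsets to each regular $\kappa\geq\omega_{1}$.
\item Every set of the final model $V$ is added by a set-sized closure-point initial part. So Hamkins's lemma gives $\omega_{1}$-approximation and cover for $L\subseteq V$, and $\omega_{1},\omega_{2}$ are preserved. Your formula with $t=L_{\omega_{1}}$ therefore defines $L$.
\item But a set forcing of size $\lambda$ over $L$ preserves $GCH$ above $\lambda$, so $V$ is not a set-forcing extension of $L$.
\end{itemize}
Bukovsk\'y's theorem cannot close this. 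It needs uniform covering of functions $A\to W$ by functions $A\to[W]^{<\kappa}$ in $W$. That is not implied by $\delta$-cover for sets, and your definition does not impose it. Its proof over $MOST$ would also need checking. Finally, you say nothing about parameters $t$ not of the form $(H_{\eta^{+}})^{W}$, yet the statement requires every $W_t$ to be a ground.

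The missing idea is that groundhood should be built into the definition, not derived from covering properties. Define the pieces as follows:
\begin{itemize}
\item Take as local witnesses transitive $N\subseteq H_{\lambda}$ with $(H_{\eta^{+}})^{N}=t$ and $N[g]=H_{\lambda}$ for some $g$ generic over $N$ for a poset in $t$.
\item Let $C_t$ be the union of these witnesses.
\item Let $W_t=C_t$ if some $\mathbb{P}\in t$ and some $C_t$-generic $G\subseteq\mathbb{P}$ satisfy $V=C_t[G]$, and $W_t=V$ otherwise. This test is first-order because $C_t$ is definable from $t$.
\end{itemize}
Every $W_t$ is then a ground by construction. For $t=(H_{\eta^{+}})^{W}$, your uniqueness argument shows each witness equals $W\cap H_{\lambda}$, so $W_t=W$.

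Two repairs are still needed in that uniqueness step:
\begin{itemize}
\item Use $H_{\lambda}$-levels rather than $V_{\alpha}$-levels, since $V_\alpha$ need not exist over $MOST$: $H_{\aleph_{\omega}}^{L}$ satisfies $MOST$ but contains no $V_{\omega+\omega}$.
\item With $\delta=\eta^{+}$, you must assume $\delta^{+}$-agreement, and upgrade the $<\delta$-sequences in $t$ to $\mathcal{P}(\delta)$ using approximation. The comparison then runs through common covers and codes below $\delta^{+}$, as in Lemma \ref{lem:small-parameter-uniqueness}, not through $\in$-induction on rank.
\end{itemize}
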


This is \citet{Schlutzenberg2022LocalMantles}, Definition 2.9 and
Corollary 2.10. A ground here satisfies $MOST$. If the ambient universe
has Full Set Induction, each of these definable transitive grounds
has it too: relativize the formula in an induction instance to the
ground and apply ambient induction. Thus this family supplies grounds
strong enough for the interpretation arguments of the paper.

The hereditary-size parameter in Theorem \ref{thm:SchlutzenbergGMD}
recalls the large initial segment $V_{\alpha}^{W}$ used in Laver's
first proof. \citet{LaveRefDef} published the ground-model definability
theorem using Hamkins's approximation and cover argument, which allows
the parameter $\mathcal{P}(\delta)^{W}$. Hamkins and Johnstone subsequently
observed that $(^{<\delta}2)^{W}$ suffices. See \citet[Theorem 6 and the discussion following Lemma 8]{FuchsHamkinsReitz2015}
for this history.

For the coding applications in Section \ref{subsec:MG-single}, we
need this smaller parameter. We shall make the ground definable without
parameters by coding a set that determines it into the continuum function.
In our construction that set is $r_{\kappa}=({}^{<\kappa^{+}}2)^{W}$,
where $\kappa$ is fixed before the coding begins. The coding forcing
can be much larger than $r_{\kappa}$, so we need a ground definition
whose parameter does not grow with the size of that forcing. Theorem
\ref{thm:PI_3GMDT} provides this over $MOST$.

Our strategy is to use the family supplied by Theorem \ref{thm:SchlutzenbergGMD}
and select its intended member using the smaller parameter. We require
a candidate ground to have the specified $<\delta$-sequences, to
have $\delta$-approximation and $\delta$-cover in the ambient universe,
and to agree with that universe on $\delta^{+}$. The intended ground
satisfies these requirements. Lemma \ref{lem:small-parameter-uniqueness}
will show that every candidate satisfying them is that ground. We
can therefore quantify over the indices in Theorem \ref{thm:SchlutzenbergGMD},
leaving only the smaller parameter in the definition.

Recall that $W\subseteq V$ has $\delta$-cover if every set $a\subseteq W$
with $|a|^{V}<\delta$ is contained in a set $b\in W$ with $|b|^{W}<\delta$.
It has $\delta$-approximation if a set $A\subseteq W$ belongs to
$W$ whenever $A\cap b\in W$ for every $b\in W$ with $|b|^{W}<\delta$.
Here $\delta$ will be an infinite cardinal regular in the models
in question.
\begin{thm}
\label{thm:PI_3GMDT}($MOST$) There is a single first-order formula
$\vartheta(x,r)$ with the following property. Suppose $V=W[G]$ is
a set-forcing extension of a proper inner model $W\models MOST$.
If $\delta$ is an infinite cardinal regular in both $W$ and $V$,
the extension has the $\delta$-cover and $\delta$-approximation
properties, and $(\delta^{+})^{W}=(\delta^{+})^{V}$, then, for $r=(^{<\delta}2)^{W}$,
\[
W=\{x\in V:V\models\vartheta(x,r)\}.
\]
In particular, every proper ground is uniformly definable from such
a parameter: if $G$ is generic for $\mathbb{P}\in W$, we may take
$\delta=(|\mathbb{P}|^{+})^{W}$.
\end{thm}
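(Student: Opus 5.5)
We define $\vartheta(x,r)$ by quantifying over the indices of Theorem \ref{thm:SchlutzenbergGMD}. Let $\vartheta(x,r)$ assert that $r$ has the form $({}^{<\delta}2)^{U}$ for some infinite cardinal $\delta$, read off as the least ordinal not the length of a sequence in $r$. It further asserts that there is an index $t$ such that $W_t$ satisfies four conditions, and that $\mathsf{grd}(t,x)$ holds. The conditions are:
\begin{enumerate}
\item $({}^{<\delta}2)^{W_t}=r$;
\item $\delta$ is regular in $W_t$;
\item $W_t\subseteq V$ has the $\delta$-cover and $\delta$-approximation properties;
\item $(\delta^{+})^{W_t}=(\delta^{+})^{V}$.
\end{enumerate}
All of these clauses are first order over $V$, since $W_t$ is uniformly definable from $t$. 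So $\vartheta$ is a single formula, and only $r$ remains as a parameter.

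Next I check that the intended ground qualifies. Suppose $V=W[G]$ with $\mathbb{P}\in W$. Take $t=(H_{\eta^+})^W$ with $\mathbb{P}\in t$. By Theorem \ref{thm:SchlutzenbergGMD}, $W_t=W$, so $W$ is a candidate witness and every $x\in W$ satisfies $\vartheta(x,r)$. For the final clause of the statement, put $\delta=(|\mathbb{P}|^+)^W$. The standard Hamkins arguments (the $\delta$-c.c. gives cover; approximation comes from the usual name-deciding argument) should go through in $MOST$. Regularity of $\delta$ is preserved by small forcing, and $\delta^+$ is preserved because $\mathbb{P}$ has the $\delta^+$-c.c. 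I would verify that these arguments use only $\Sigma_1$-Separation and Collection together with Choice. Antichains and nice names are sets by Powerset and the bijection-with-ordinals axiom.

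The main obstacle is the converse, which the paper calls Lemma \ref{lem:small-parameter-uniqueness}. Any two candidates $W,W'$ for the same $\delta$ and $r$ must be equal. I would adapt Hamkins's uniqueness proof. First show, by $\in$-induction on sets of ordinals, that $W$ and $W'$ have the same sets of ordinals; by the bijection-with-ordinals axiom, every set is coded by a set of ordinals, so $W=W'$ follows. The base step is agreement on subsets of ordinals of size $<\delta$ in the relevant models. Cover converts a small set into a subset of a small set in the ground, and $\delta$-sized sets of ordinals in either ground are coded from ${}^{<\delta}2$ together with $\mathcal{P}(\gamma)$ for $\gamma<\delta$. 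Here the agreement $(\delta^+)^W=(\delta^+)^{V}=(\delta^+)^{W'}$ and regularity are used to show that $\mathcal{P}(\delta)^W=\mathcal{P}(\delta)^{W'}$. Each $A\subseteq\delta$ in $W$ has all its initial segments in $r$. It should therefore lie in $W'$ by approximation once small pieces are known to agree. The small pieces $A\cap b$ with $|b|<\delta$ reduce, via cover, to bounded subsets of $\delta$, which are coded in $r$.

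For the inductive step, take $A\in W$ with $A\subseteq\lambda$. Every $b\in W'$ with $|b|^{W'}<\delta$ is covered by some $b'\in W$ with $|b'|^W<\delta$. Then $A\cap b'\in W$ is small, and by the base case $A\cap b'\in W'$. Hence $A\cap b=(A\cap b')\cap b\in W'$, and $\delta$-approximation for $W'$ gives $A\in W'$. The argument is symmetric. The delicate points are carrying out this $\in$-induction on rank with only $\Sigma_1$-Separation, and making the base step for $\mathcal{P}(\delta)$ precise. Full Set Induction is not assumed here. So I would phrase the induction as a minimal-counterexample argument over the ordinal $\lambda$, using set Foundation applied to a set of ordinals $\lambda$ obtained through Powerset.
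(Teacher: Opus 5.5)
Your definition of $\vartheta$, your check that the intended ground is a witness, and your argument for $\mathcal{P}(\delta)^{W}=\mathcal{P}(\delta)^{W'}$ all agree with the paper. That last argument runs: regularity bounds $b\cap\delta$, so $A\cap b=(A\cap\alpha)\cap b$ with $A\cap\alpha$ coded in $r$, and then approximation applies.

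The gap is your ``base case'' for uniqueness. This is the claim that every $s\in W$ with $|s|^{W}<\delta$ and $s\subseteq\lambda$, for an arbitrary ordinal $\lambda$, belongs to $W'$. Your inductive step rests entirely on it, since it never invokes a hypothesis about smaller $\lambda$. The justification you give does not work. Cover for $W'$ yields $c\in W'$, small in $W'$, with $s\subseteq c$. Let $\pi_{c}$ be the increasing enumeration of $c$; the code $\{\xi:\pi_{c}(\xi)\in s\}$ is indeed a bounded subset of $\delta$. But it is computed from $s\in W$ and $\pi_{c}\in W'$. So you have no reason to place it in $W$ (hence in $r$), nor in $W'$ unless you already know $s\in W'$. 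A code can only be transported through an ambient set lying in \emph{both} grounds.

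Supplying such a set is the key step of the paper's Lemma \ref{lem:small-parameter-uniqueness}. Alternate covers $X_{i}\in M$ and $Y_{i}\in N$ for $i<\delta$, and let $C=\bigcup_{i}X_{i}=\bigcup_{i}Y_{i}$. For each $b$ small in $M$, regularity of $\delta$ in $V$ bounds the stages at which members of $C\cap b$ appear. Hence $C\cap b=X_{i}\cap b\in M$ for some $i$, and approximation puts $C$ in $M$; symmetrically $C\in N$.

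The alternation must have length $\delta$ to get this stage bound. So $C$ has ambient size at most $\delta$, and its order type is only below $\delta^{+}$. This is where $(\delta^{+})^{M}=(\delta^{+})^{N}=(\delta^{+})^{V}$ is genuinely used. For each $\beta<\delta^{+}$, code a well-order of $\beta$ by a subset of $\delta$; this turns agreement on $\mathcal{P}(\delta)$ into agreement on $\mathcal{P}(\beta)$. You attribute the successor-cardinal agreement to the $\mathcal{P}(\delta)$ step, but that step needs only $r$, regularity and approximation.

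With a common cover $C$ of $b\cap\alpha$, the code of $A\cap C$ is a subset of $\mathrm{ot}(C)<\delta^{+}$. It therefore lies in both grounds, and approximation gives $A\in N$ directly. No induction on $\lambda$ is needed, so your concern about $\in$-induction without Full Set Induction disappears.

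The weak-theory point that does need care is different: $a\cap W_{t}$ must be a set for every ambient $a$. This is what makes $\mathcal{P}(\alpha)\cap W_{t}$ exist and lets the length-$\delta$ recursion run. The paper obtains it from Schlutzenberg's local-witness bound.
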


The definition has a simple form. We search the family $W_{t}$ for
a ground with the given $<\delta$-sequences and the approximation,
cover and cardinal-agreement properties in the theorem. There are
two things to prove: the intended ground meets these requirements,
and every ground meeting them is the intended one. Theorem \ref{thm:SchlutzenbergGMD}
provides an index for the first part. Lemma \ref{lem:small-parameter-uniqueness}
will give the second, once agreement on $<\delta$-sequences has been
upgraded to agreement on $\mathcal{P}(\delta)$.
\begin{lem}
\label{lem:small-parameter-uniqueness}($MOST$.) Suppose $M,N$ belong
to the ground family above, $\delta$ is regular in $M,N,V$, and
both inclusions into $V$ have $\delta$-approximation and $\delta$-cover.
If 
\[
\mathcal{P}(\delta)^{M}=\mathcal{P}(\delta)^{N},\qquad(\delta^{+})^{M}=(\delta^{+})^{N}=(\delta^{+})^{V},
\]
 then $M=N$.
\end{lem}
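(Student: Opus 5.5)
We follow the Hamkins uniqueness argument for grounds with approximation and cover (as in Laver's proof and Fuchs--Hamkins--Reitz), checking along the way that only $MOST$ is used. The target is to show $M\subseteq N$; the converse follows by symmetry. The proof has two stages. In the first, we show that $M$ and $N$ have the same sets of ordinals of size $<\delta$ in the relevant sense. In the second, we use $\delta$-approximation to transfer agreement from small pieces to all sets of ordinals, and then to all sets.

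The first stage begins with $a\subseteq\mathrm{Ord}$, $a\in M$, and $|a|^{M}<\delta$. Since $\delta$ is regular in $M$ and $\delta^{+}$ is computed correctly, $|a|^{V}<\delta$. By $\delta$-cover for $N$, there is $b\in N$ with $a\subseteq b$ and $|b|^{N}<\delta$. Let $\gamma=\mathrm{ot}(b)<\delta$ and let $\pi\in N$ be the collapse of $b$ onto $\gamma$. We would like to conclude $\pi[a]\in\mathcal{P}(\gamma)^{M}=\mathcal{P}(\gamma)^{N}$, but $\pi$ may not lie in $M$.

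We therefore argue in the usual Hamkins style. Both $M$ and $N$ contain covers of each other's small sets. Iterate covers $\omega$ (or $\delta$) times to obtain a set $c$ with two properties: $c\cap M$-coded and $c\cap N$-coded versions agree, and $c$ is closed under both collapses. Equivalently, we can prove by induction on ordinals $\theta$ that $M\cap\mathcal{P}(\theta)=N\cap\mathcal{P}(\theta)$.

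In the second stage we set up that induction on $\theta$. The base case $\theta\le\delta$ is the hypothesis. For the inductive step, take $A\subseteq\theta$ with $A\in M$. By $\delta$-approximation for $N$, it suffices to show $A\cap b\in N$ for every $b\in N$ with $|b|^{N}<\delta$. Such a $b$ is coded by a subset of some $\gamma<\delta$ via an order-isomorphism living in $N$. Here we use the Hamkins trick: show that $b\in M$ and that the collapse of $b$ lies in $M$ as well. We get $b\in M$ by $\delta$-approximation for $M$, since every small $M$-set $e$ satisfies $b\cap e\in N$ and is small. The sets $b\cap e$ and their collapse codes are subsets of ordinals $<\delta$ up to small-set codings. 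Then $A\cap b\in M$, and it is coded by a subset of $\gamma$, hence lies in $\mathcal{P}(\gamma)^{M}=\mathcal{P}(\gamma)^{N}$. Decoding in $N$ gives $A\cap b\in N$. Thus $A\in N$.

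Once $M$ and $N$ have the same sets of ordinals, we obtain $M=N$, because both satisfy $MOST$ and so every set is bijectable with an ordinal. Any $x\in M$ is therefore coded by a well-founded extensional relation on an ordinal, and that relation is itself a set of ordinals. Mostowski collapse is $\Sigma_1$ and absolute between transitive models of $MOST$ and $V$, so $x\in N$.

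The main obstacle is the step showing that small $N$-sets are in $M$. This requires a simultaneous induction, and we must verify that each appeal to approximation concerns only sets whose small pieces are already known to agree. The cardinal hypothesis $(\delta^{+})^{M}=(\delta^{+})^{N}=(\delta^{+})^{V}$ is where we expect it to enter: it ensures that "size $<\delta$" means the same thing in $M$, $N$ and $V$. It also ensures that regularity is preserved, so that the bounded pieces really are coded by subsets of ordinals below $\delta$. A further care point is that all constructions must use only $\Sigma_1$-Separation and Collection. This should hold because the inductive hypotheses are applied to set parameters, and the induction on $\theta$ uses the ambient Full Set Induction of the classes $M,N$ as they are defined via $\mathsf{grd}$.
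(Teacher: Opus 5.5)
Your second stage has a circular step, and it is the step that carries the lemma. You claim $b\in M$ ``by $\delta$-approximation for $M$, since every small $M$-set $e$ satisfies $b\cap e\in N$ and is small.''

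\begin{itemize}
\item Approximation for $M$ requires $b\cap e\in M$, not $b\cap e\in N$.
\item Even $b\cap e\in N$ is unjustified, since $e$ need not lie in $N$.
\end{itemize}

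Putting small $N$-sets of ordinals into $M$ is the whole difficulty, so this step gains nothing. The induction on $\theta$ does not repair it. It handles successor $\theta$, and $\theta$ with $\mathrm{cf}^{V}(\theta)\geq\delta$, where a small $b$ is bounded in $\theta$. But when $b$ is cofinal in a $\theta$ of cofinality below $\delta$, the induction hypothesis says nothing about $b$. Likewise, coding $A\cap b$ by a subset of some $\gamma<\delta$ needs an order collapse lying in both grounds. Neither the collapse of $b$ (which is in $N$) nor that of $e$ (which is in $M$) is known to lie in the other ground.

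The missing idea is the one your first stage gestures at but never carries out: a common cover lying in both grounds.

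\begin{itemize}
\item Alternate covers for $\delta$ stages, $b\cap\alpha\subseteq X_{0}\subseteq Y_{0}\subseteq X_{1}\subseteq\cdots$, with $X_{i}$ small in $M$ and $Y_{i}$ small in $N$. Unions at limit stages stay coverable by regularity of $\delta$ in $V$. For $\delta>\omega$, $\omega$ stages do not suffice, since a small $M$-set may meet every stage.
\item Let $C=\bigcup_{i<\delta}X_{i}=\bigcup_{i<\delta}Y_{i}$. Regularity puts $C\cap e$ inside a single $X_{i}$ for each small $e\in M$, so approximation gives $C\in M$. Symmetrically, $C\in N$.
\item This cover has size $\delta$, not less than $\delta$, so the codes live below $\mathrm{ot}(C)<\delta^{+}$ rather than below $\delta$.
\end{itemize}

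That is where $(\delta^{+})^{M}=(\delta^{+})^{N}=(\delta^{+})^{V}$ is actually used. It is not used to make ``size $<\delta$'' agree across the models. For each $\beta<\delta^{+}$, $M$ has a bijection from some $d\leq\delta$ onto $\beta$. The pulled-back order is coded by a subset of $\delta$, so it lies in $N$ and collapses to the same bijection there. Hence the two grounds agree on $\mathcal{P}(\beta)$.

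Now code $A\cap C$ through the increasing enumeration of $C$. This gives $A\cap C\in N$, hence $A\cap b\in N$, and approximation gives $A\in N$. This is direct, with no induction on $\theta$. That matters, because the lemma is stated over $MOST$, where your appeal to Full Set Induction is not available.

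You would also need to check that $\mathcal{P}(\alpha)\cap M$ and $\mathcal{P}(\alpha)\cap N$ are sets, so that the length-$\delta$ recursion choosing covers actually exists. The paper obtains this from Schlutzenberg's local witness bound.
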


The idea of Lemma \ref{lem:small-parameter-uniqueness} is to place
each small set inside a cover common to both grounds. Subsets of that
cover can then be coded below $\delta^{+}$, where the grounds agree.
Approximation turns this local agreement into agreement on all sets
of ordinals. We give the set-existence details because the grounds
are defined by a formula whose complexity need not be low.
\begin{proof}
We first explain why the sets used in the comparison exist. For every
ambient set $a$, the intersection $a\cap W_{t}$ is a set. For a
default index $W_{t}=V$, this is immediate. For a true index, membership
in $W_{t}$ is given by the local-witness formula $\varphi_{\mathsf{grd}}(t,x)$
of \citet{Schlutzenberg2022LocalMantles}, Definition 2.9. Choose
an infinite cardinal $\xi$ with $|\mathsf{tc}(\{t,a\})|\leq\xi$,
and let $B=H_{(2^{\xi})^{+}}$.

The witness bound in Schlutzenberg’s Remark 2.11 applies uniformly
to all $x\in a$: when a witness exists, it can be taken to be a local
ground of $H_{\xi^{+}}$, with the whole witnessing structure of hereditary
size at most $2^{\xi}$. The same remark gives absoluteness of the
witness test, so
\[
x\in W_{t}\quad\longleftrightarrow\quad B\models\varphi_{\mathsf{grd}}(t,x)\qquad(x\in a).
\]
Here $t$ is a true index in the ambient universe; we do not ask $B$
to decide that. Since $\varphi_{\mathsf{grd}}$ is fixed, the right-hand
side is just its relativization to the set $B$. Bounded Separation
on $a$ therefore gives $a\cap W_{t}$.

Fix an ordinal $\alpha$. Given $a\subseteq\alpha$ of ambient size
less than $\delta$, alternately cover in $M$ and $N$ to obtain,
for $i<\delta$, 
\[
a\cup\bigcup_{j<i}Y_{j}\subseteq X_{i}\subseteq Y_{i}\subseteq\alpha,
\]
 where $X_{i}$ is small in $M$ and $Y_{i}$ is small in $N$. This
is a recursion on sets. Indeed, all possible covers and their bijections
with ordinals below $\delta$ lie in the intersections with $M$ and
$N$ of 
\[
\mathcal{P}(\alpha)\cup\mathcal{P}(\delta\times\alpha).
\]
 These intersections are sets by the preceding paragraph. Well-order
the two families, and at each stage take the first suitable cover
together with its witnessing bijection. Regularity and cover ensure
that a choice is always possible. The partial histories form a subset
of a fixed function set; their uniqueness and compatibility give the
recursion by taking unions.

Let $C=\bigcup_{i<\delta}X_{i}=\bigcup_{i<\delta}Y_{i}$. The retained
bijections show that $|C|^{V}\leq\delta$. If $b\in M$ is small in
$M$, regularity bounds the stages at which members of $C\cap b$
enter the union. Thus $C\cap b=X_{i}\cap b\in M$ for some $i$, and
approximation gives $C\in M$. The same argument gives $C\in N$.
We have obtained a common cover of $a$, of ambient size at most $\delta$.

Agreement on $\mathcal{P}(\delta)$ gives agreement on the subsets
of every $\beta<\delta^{+}$. To see this, choose in $M$ a bijection
from some $d\leq\delta$ onto $\beta$ and pull back the ordinal order
to $d$. A fixed ordinal pairing codes this relation by a subset of
$\delta$. The code belongs to $N$, and its ordinal collapse there
is the same as in $M$. Consequently the bijection belongs to both
grounds, and it transports their common subsets of $d$ to the same
subsets of $\beta$.

Now let $A\subseteq\alpha$ belong to $M$, and let $b\in N$ be small
in $N$. Choose a common cover $C$ of $b\cap\alpha$ as above. Its
increasing enumeration belongs to both grounds. Its order type $\beta$
is below the common $\delta^{+}$, since $|C|^{V}\leq\delta$. The
code of $A\cap C$ is a subset of $\beta$, and hence belongs to both
grounds. It follows that $A\cap b\in N$. Approximation gives $A\in N$.
By symmetry, $M$ and $N$ have the same sets of ordinals.

Finally, every set in either ground has an ordinal code for its pointed
transitive closure. The same code belongs to the other ground, and
its collapse recovers the same set. The needed collapse is available
in $MOST$: $\Sigma_{1}$-Separation forms the set of nodes admitting
partial collapses, $\Sigma_{1}$-Collection collects compatible predecessor
collapses, and well-foundedness rules out a least failure. Uniqueness
of collapse gives agreement. Thus $M=N$.
\end{proof}
We can now recover a ground from its $<\delta$-sequences.
\begin{proof}
(of Theorem \ref{thm:PI_3GMDT}.) Recover $\delta$ from $r$ by $\delta=\bigcup\{\mathsf{dom}(s):s\in r\}$.
Let $C(t,r)$ say that
\[
r=(^{<\delta}2)^{W_{t}},\qquad(\delta^{+})^{W_{t}}=(\delta^{+})^{V},
\]
that $\delta$ is infinite regular in $W_{t}$ and $V$, and that
$W_{t}\subseteq V$ has $\delta$-approximation and $\delta$-cover.
Define
\[
\vartheta(x,r)\ \longleftrightarrow\ \exists t\bigl(C(t,r)\land\mathsf{grd}(t,x)\bigr).
\]
All occurrences of $W_{t}$ are replaced by its fixed defining formula.
Thus $r$ is the only parameter. The conditions in $C(t,r)$ are additional
requirements on a candidate; Theorem \ref{thm:SchlutzenbergGMD} does
not assert that every candidate satisfies them.

First, the intended ground qualifies. Fix $W,G,\delta,r$ as in the
theorem. Theorem \ref{thm:SchlutzenbergGMD} supplies an index $t_{0}$
with $W_{t_{0}}=W$. The hypotheses on $W\subseteq V$ give $C(t_{0},r)$.
Hence every member of $W$ satisfies $\vartheta(x,r)$.

Next, suppose $C(t,r)$ holds, and let $M=W_{t}$. Equality of the
$<\delta$-sequences gives agreement between $M$ and $W$ on all
bounded subsets of $\delta$. If $A\in W$ and $A\subseteq\delta$,
then $A\cap\alpha\in M$ for every $\alpha<\delta$. For any $b\in M$
with $|b|^{M}<\delta$, regularity bounds $b\cap\delta$ by some $\alpha<\delta$.
Thus
\[
A\cap b=(A\cap\alpha)\cap b\in M.
\]
Approximation for $M\subseteq V$ gives $A\in M$. The symmetric argument
gives $\mathcal{P}(\delta)^{M}=\mathcal{P}(\delta)^{W}$.

We can now apply Lemma \ref{lem:small-parameter-uniqueness}. Its
hypotheses hold for $W$ by assumption and for $M$ because $C(t,r)$
holds. Hence $M=W$. Every qualifying index therefore defines the
same ground, so $\vartheta(x,r)$ defines $W$. For forcing of size
less than $\delta$, the small-forcing argument gives $\delta$-cover
and $\delta$-approximation and preserves $\delta$ and its successor.
We may consequently take $\delta=(|\mathbb{P}|^{+})^{W}$.
\end{proof}
The small parameter selects the ground; it need not be an index in
Schlutzenberg’s family. We have used Theorem \ref{thm:SchlutzenbergGMD}
to quantify over definable candidates, and Lemma \ref{lem:small-parameter-uniqueness}
to show that the requirements leave only the intended ground.

\subsection{A uniquely definable ground theorem}

\label{subsec:MPisDef}Our goal is to construct set forcings whose
generic extensions can define their associated grounds without parameters,
for use in the stable definable ground property. We construct a family
$\mathbb{G}_{\kappa}$ for suitable regular cardinals $\kappa$. At
a fixed index $\kappa$, the universe has at most one ground from
which it arises by the designated forcing, and that ground is definable
from $\kappa$. For the preserved indices $\omega$ and $\omega_{2}$
used in our applications, this gives a definition without parameters.

Theorem \ref{thm:PI_3GMDT} reduces recovery of the ground to making
$r=(^{<\delta}2)^{V}$ recognizable. We first add a small generic
set, and then code $r$ much farther up the continuum function. This
strategy uses Hamkins's approximation and cover argument; see \citet[Theorem 6 and Lemma 8]{FuchsHamkinsReitz2015}.
Together the two steps give approximation and cover, while the closed
tail preserves the relevant low cardinals. The code must begin at
a recognizable place. Once the extension can locate and read it, Theorem
\ref{thm:PI_3GMDT} recovers the ground.

We first give the two-stage form and then construct the forcing used
at the second stage. Lemma \ref{lem:localCoding} will supply $\mathsf{Code}(r,\theta)$
whenever $\theta$ is infinite regular and $|\mathsf{tc}(\{r\})|<\theta$.
This forcing will be $<\theta^{+}$-directed closed,\footnote{A forcing is $<\gamma$-directed closed if every nonempty directed
family of fewer than $\gamma$ conditions has a common stronger condition.
A family is directed if any two of its members have a common stronger
member in the family.} preserve $\theta$ and all smaller cardinals, and make $r$ uniformly
recoverable from $\theta$.

Let $\kappa$ be an infinite regular cardinal with $\kappa^{<\kappa}=\kappa$,
and let $r_{\kappa}=({}^{<\kappa^{+}}2)^{V}$. Thus the approximation-and-cover
cardinal in Theorem \ref{thm:PI_3GMDT} will be $\delta=\kappa^{+}$.
In the intermediate $Add(\kappa,1)$-extension let $\theta=(2^{\kappa^{++}})^{+}$,
and let $\dot{\mathbb{Q}}_{\kappa}$ be the canonical name for $\mathsf{Code}(\check{r}_{\kappa},\dot{\theta})$.
Since $|\mathsf{tc}(\{r_{\kappa}\})|\leq(2^{\kappa})^{V}<\theta$,
this is defined. Let 
\[
\mathbb{G}_{\kappa}=Add(\kappa,1)*\dot{\mathbb{Q}}_{\kappa}.
\]
The name retains the original ground parameter $r_{\kappa}$.

The first factor has size $\kappa$, is $<\kappa$-directed closed,
and preserves cardinals. The tail's $<\theta^{+}$-directed closure
is more than is needed to preserve $\kappa^{<\kappa}=\kappa$ and
all cardinals through $\kappa^{++}$. Thus the whole forcing has these
preservation properties and is nontrivial and $<\kappa$-directed
closed. The tail also preserves the intermediate power set of $\kappa^{++}$
and the cardinals through $\theta$. Consequently the final universe
computes the same $\theta=(2^{\kappa^{++}})^{+}$ from $\kappa$.
It can then read an ordinal code $X\subseteq\theta$, recover $r_{\kappa}$,
and use Theorem \ref{thm:PI_3GMDT} to define the ground. Lemma \ref{M_P-is-def}
will verify this recovery.

For general $\kappa$ the definition uses $\kappa$ as a parameter.
In the applications $\kappa=\omega$ or $\omega_{2}$, subject to
$\kappa^{<\kappa}=\kappa$. Preservation of all cardinals through
$\kappa$ keeps this index equal to the extension's own $\omega$
or $\omega_{2}$, so the definition is parameter-free. We write $\mathbb{G}=\mathbb{G}_{\omega}$
where the assumption $\omega^{<\omega}=\omega$ is automatic.

For $\kappa=\omega$, this is the promised example
\[
V\subseteq V[c]\subseteq U=V[c][H],\qquad r=({}^{<\omega_{1}}2)^{V}.
\]
The tail codes this original $r$, not the $<\omega_{1}$-sequences
in $V[c]$. It is closed enough to add no ordinal sequences of length
at most $\omega_{2}$. We shall use Fact \ref{fact:Geol} for approximation
and cover, and preservation of $\omega_{2}$ for the remaining cardinal
agreement in Theorem \ref{thm:PI_3GMDT}.\footnote{Here we can see why the smaller parameter was needed. For $\kappa=\omega$,
$\mathrm{Add}(\omega,1)\in H_{\omega_{1}}^{V}$, but $\mathbb{G}_{\omega}\notin H_{\omega_{1}}^{V}$.
Theorem \ref{thm:SchlutzenbergGMD} defines $V$ from a sufficiently
large $t=(H_{\eta^{+}})^{V}$, provided the whole forcing belongs
to $t$. The tail, however, makes $r_{\omega}$ recoverable. If we
used this construction to code $t$ instead, we would have to begin
above $|\mathsf{tc}(\{t\})|$, and the resulting forcing would again
be too large to belong to $t$. Theorem \ref{thm:PI_3GMDT} lets us
keep $r_{\omega}$ fixed while placing its code farther up.}

For $\mathsf{Code}(r,\theta)$ we use a local version of the continuum
coding in \citet[Section 3, proof of Theorem 11]{ReitzTGA}. Compare
also the preparation of a coding interval in \citet[proof of Theorem 67]{FuchsHamkinsReitz2015}.
Each bit is recorded by whether $GCH$ holds at its coding cardinal,
after preparing the cardinal arithmetic there. The additional problem
is to construct this forcing and verify its preservation properties
in the weaker theory. Lemma \ref{lem:coding-presentation} supplies
that verification.

Let $r$ be a set and let $\theta$ be an infinite regular cardinal
with $|\mathsf{tc}(\{r\})|<\theta$. Use a fixed ordinal pairing to
code a domain, a binary relation and a distinguished root by a subset
of $\theta$. We consider all such codes $X\subseteq\theta$ of $\langle\mathsf{tc}(\{r\}),\in,r\rangle$.
Note that to obtain a particular code for $r$, we use Choice, but
this does not give us a uniformly definable choice of code. To get
around this, we use a lottery sum over all possible codes.\footnote{The lottery over ordinal codes is an additional device in this adaptation
of Reitz's continuum coding. Its purpose is to make the forcing uniformly
definable from $r,\theta$ without choosing a particular ordinal presentation.}

For each such code $X\subseteq\theta$, let $\mathbb{P}_{X}=\mathbb{R}_{\theta}*\dot{\mathbb{E}}_{X}$.
The first factor prepares $GCH$ throughout a sufficiently long interval
and is independent of $X$. The second codes $X$ at consecutive successor
cardinals in that interval. Define $\mathsf{Code}(r,\theta)$ to be
the lottery sum of these $\mathbb{P}_{X}$, with a greatest condition
adjoined. Every component codes the same fixed set $r$.

To prepare the interval we use the following collapse. For an infinite
regular cardinal $\rho$ and a cardinal $\chi\geq\rho$, let $Coll(\rho,\chi)$
consist of partial functions from $\rho$ to $\chi$ of size below
$\rho$, ordered by reverse inclusion. This forcing is $<\rho$-directed
closed and makes $|\chi|=\rho$.

Let $\mathbb{R}_{\theta}$ be a full-support iteration of length $\theta+1$.
Write $M_{i}$ for the extension by its first $i$ stages. Define
the cardinals $\mu_{i}$, for $i\leq\theta$, during the iteration
by
\[
\mu_{0}=\theta,\qquad\mu_{i+1}=(\mu_{i}^{+})^{M_{i+1}}\quad(i<\theta),\qquad\mu_{i}=\sup_{j<i}\mu_{j}\quad(0<i\leq\theta\text{ a limit}).
\]
At stage $i\leq\theta$, force over $M_{i}$ with $Coll(\mu_{i}^{+},2^{\mu_{i}})$,
with both parameters computed in $M_{i}$. The collapse therefore
uses a regular successor even when $\mu_{i}$ is singular.

In the completed preparation extension $W$, let $\eta=\mu_{\theta}$
and, for $\xi<\theta$, let
\[
\lambda_{\xi}=\theta^{+(\xi+1)},\qquad F_{X}(\lambda_{\xi})=\begin{cases}
\lambda_{\xi}^{+}, & \xi\in X,\\
\lambda_{\xi}^{++}, & \xi\notin X.
\end{cases}
\]
Define $\mathbb{E}_{X}$ in $W$ to be the full-support product
\[
\mathbb{E}_{X}=\prod_{\xi<\theta}Add(\lambda_{\xi},F_{X}(\lambda_{\xi})).
\]
Here $Add(\lambda,\nu)$ consists of partial functions from $\nu\times\lambda$
to $2$ of size below $\lambda$. All the cardinals, factors and their
sets of conditions in this product are computed in $W$. With only
$\theta$ coordinates, all above $\theta$, full support satisfies
the Easton support requirement.

For the large-cardinal applications in Section \ref{subsec:MG-single},
we also need the coding forcing to remain small relative to the cardinal
we wish to preserve. Inaccessibility suffices for this size estimate;
the stronger large-cardinal hypothesis enters the subsequent preservation
argument.
\begin{lem}
\label{lem:localCoding}($ZFC^{\Pi_{3}}$.) The forcing $\mathsf{Code}(r,\theta)$
is uniformly definable from $r,\theta$ and is $<\theta^{+}$-directed
closed. It preserves $\theta$ and all smaller cardinals. In every
generic extension, $r$ is uniformly definable from $\theta$: the
chosen code is 
\[
X=\{\xi<\theta:2^{\theta^{+(\xi+1)}}=\theta^{+(\xi+2)}\},
\]
 and its rooted collapse recovers $r$. Here the index operations
are ordinal operations and all the displayed cardinals are computed
in the final extension. If $\zeta$ is inaccessible and $\theta<\zeta$,
the forcing has a set of conditions of size below $\zeta$.
\end{lem}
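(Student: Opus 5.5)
I would prove Lemma \ref{lem:localCoding} by treating the lottery sum componentwise. The plan has four parts: uniform definability, closure, cardinal preservation, and decoding the code $X$ in the final extension; the size bound for inaccessible $\zeta$ comes at the end.

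\emph{Definability and closure.} Uniform definability is immediate from the construction. The set of ordinal codes $X\subseteq\theta$ of $\langle\mathsf{tc}(\{r\}),\in,r\rangle$ is obtained by Powerset and bounded Separation. The iteration $\mathbb{R}_{\theta}$ and the product $\dot{\mathbb{E}}_{X}$ are defined by recursions of length $\theta+1$. Set existence for these recursions comes from $\Sigma_{1}$-Collection together with the forcing-definability bounds of Lemma \ref{lem:sharp-forcing}; this is where $ZFC^{\Pi_{3}}$ is needed rather than $MOST$.

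For closure, note that each stage $Coll(\mu_{i}^{+},2^{\mu_{i}})$ is $<\mu_{i}^{+}$-directed closed with $\mu_{i}^{+}\geq\theta^{+}$. A full-support iteration of $<\theta^{+}$-directed closed forcings is $<\theta^{+}$-directed closed, by the standard coordinatewise argument. The same holds for the full-support product $\mathbb{E}_{X}$, whose factors are $<\lambda_{\xi}$-directed closed with $\lambda_{\xi}>\theta$. Two-step iterations preserve this property, and a lottery sum with a top adjoined is directed closed, because any directed family below the top lies inside a single component.

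Preservation of $\theta$ and all smaller cardinals then follows, since $<\theta^{+}$-closure adds no new sequences of length $\leq\theta$.

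\emph{Cardinal arithmetic in the final extension.} This is the main obstacle. I would first show, by induction along $\mathbb{R}_{\theta}$, the following claim about the preparation extension $W$: the cardinals $\theta^{+(\xi+1)}$ for $\xi\leq\theta$ are exactly the $\mu_{\xi+1}$, and GCH holds at each of them. Each collapse stage makes $2^{\mu_{i}}=\mu_{i}^{+}$. Later stages are $\leq\mu_{i}^{+}$-closed, so they neither disturb this nor collapse $\mu_{i}^{+}$ or $\mu_{i}^{++}$.

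Next I would verify that the product $\mathbb{E}_{X}$ forces $2^{\lambda_{\xi}}=F_{X}(\lambda_{\xi})$ and preserves every $\lambda_{\xi}^{+}$ and $\lambda_{\xi}^{++}$. I would use the standard Easton factoring at each coordinate $\xi$: split the product into the part above $\xi$, which is $\leq\lambda_{\xi}^{+}$-closed, and the part up to $\xi$. The latter has the $\lambda_{\xi}^{+}$-chain condition, which holds by GCH in $W$. The delicate points are the following.
\begin{enumerate}
\item The argument must run in $ZFC^{\Pi_{3}}$, without full Replacement. I would keep every name computation inside a sufficiently large set $H_{\nu}$ and use the $\Pi_{3}$ Collection and Separation to form the nice-name counts.
\item Cardinal values must be read in the final model, so that $\theta^{+(\xi+1)}$ computed there equals $\lambda_{\xi}$.
\end{enumerate}
With these in hand, $X=\{\xi<\theta:2^{\theta^{+(\xi+1)}}=\theta^{+(\xi+2)}\}$ in the extension, by the definition of $F_{X}$. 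Rooted collapse of the structure coded by $X$ then returns $r$, because collapse is absolute between transitive models of $MOST$. This decoding is uniform in the lottery component chosen.

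\emph{Size bound.} If $\zeta$ is inaccessible and $\theta<\zeta$, every relevant cardinal lies below $\zeta$:
\begin{itemize}
\item the cardinals $\mu_{i}$, $2^{\mu_{i}}$ and $\lambda_{\xi}$, for $i,\xi\leq\theta$, are below $\zeta$ by the strong limit property, computed in the ground using names;
\item there are fewer than $\zeta$ codes $X$.
\end{itemize}
Hence the dense set of conditions consisting of nice names of bounded hereditary size has size below $\zeta$.
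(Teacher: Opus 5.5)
Your outline matches the paper's proof: componentwise closure of the lottery, the induction through $\mathbb{R}_\theta$ giving $2^{\mu_i}=\mu_i^+$, Easton factorization at coding cardinals with a $\Delta$-system chain condition and a nice-name count, then decoding and a strong-limit size bound. The gap is the cardinal-preservation step on which the decoding rests. Two problems:

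\begin{enumerate}
\item The part of $\mathbb{E}_X$ above coordinate $\xi$ is not $\le\lambda_\xi^+$-closed. Its first factor is $Add(\lambda_\xi^+,\cdot)$, which adds subsets of $\lambda_\xi^+$, so the tail is only $<\lambda_\xi^+$-closed. That still gives, via Easton's lemma, that the tail adds no subsets of $\lambda_\xi$. Preservation of $\lambda_\xi^{++}=\lambda_{\xi+1}^+$ then has to come from the factorization at $\xi+1$. So this point is repairable.
\item Even after that repair, your coordinatewise scheme only preserves $\theta^+$ (by closure) and the coding cardinals $\lambda_{\xi+1}=\lambda_\xi^+$. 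It never reaches $\lambda_\xi$ for limit $\xi$, the successor of the singular cardinal $\theta^{+\xi}$. There the tail is harmless. The factor through $\xi$, however, has only the $\lambda_\xi^+$-chain condition, which says nothing about $\lambda_\xi$, and the full-support product of the coordinates below $\xi$ already has size $\lambda_\xi$. If such a $\lambda_\xi$ were collapsed, the final model's $\theta^{+(\xi+1)}$ would be a later coding cardinal, and the displayed test would read shifted bits. This is exactly your ``delicate point (2)'', left unproved.
\end{enumerate}

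The paper closes this with a global cofinality argument, which you should use instead of the per-coordinate claims. Suppose some $W$-cofinality drops to $\tau$.
\begin{enumerate}
\item $\tau$ is regular in the extension, hence in $W$.
\item If $\tau\le\theta$, closure forbids the drop.
\item Every limit cardinal in $(\theta,\eta)$ has the form $\theta^{+\xi}$ with $\xi<\theta$ a limit, so its cofinality is below $\theta$. Hence a $W$-regular $\tau$ in that interval is a coding cardinal. Factor at $\tau$: by Easton's lemma the new cofinal $\tau$-sequence lies in the extension by the coordinates $\le\tau$. That factor has the $\tau^+$-chain condition, so it preserves all cofinalities $\ge\tau^+$.
\item For drops to $\tau\ge\eta^+$, use $|\mathbb{E}_X|\le 2^\eta=\eta^+$.
\end{enumerate}
This preserves every cardinal, including the limit-indexed $\lambda_\xi$. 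Hence the coding cardinals are the final model's $\theta^{+(\xi+1)}$, as the decoding needs.

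Two smaller points. First, the $\Delta$-system step at $\lambda_\xi$ uses $\lambda_\xi^{<\lambda_\xi}=2^{\theta^{+\xi}}$. You therefore need GCH at $\theta$ and at the singular $\theta^{+\xi}$, not only at the successor cardinals your stated claim lists; your induction does supply this. Second, the preparation recursion computes power sets and cardinal successors in intermediate models, so $\Sigma_1$-Collection is not the right resource (and you then say this is why $MOST$ is insufficient, though $MOST$ has it). The paper uses $\Sigma_2$-Collection for those graphs and $\Sigma_3$ bounds on construction histories.
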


\begin{proof}
At preparation stage $i$, the collapse is $<\mu_{i}^{+}$-directed
closed, preserves $\mu_{i}$ and $\mu_{i}^{+}$, and adds no subsets
of $\mu_{i}$. It makes the existing power set have size $\mu_{i}^{+}$,
so it forces $2^{\mu_{i}}=\mu_{i}^{+}$. The next stage, and the full-support
tail after it, are closed enough to preserve this equality and its
two cardinals. At limits the supremum of the earlier preserved cardinals
remains a cardinal. Induction therefore gives, in $W$,
\[
\mu_{i}=\theta^{+i},\qquad2^{\mu_{i}}=\mu_{i}^{+}\quad(i\leq\theta).
\]
Thus $GCH$ holds at every cardinal from $\theta$ through $\eta=\theta^{+\theta}$,
including the singular limits and the endpoint. In particular, $2^{<\theta^{+}}=\theta^{+}$
and $2^{\eta}=\eta^{+}$.

Work now in $W$. The function $F_{X}$ is nondecreasing, and its
values have cofinality greater than their arguments. Each factor of
$\mathbb{E}_{X}$ is $<\theta^{+}$-directed closed. Coordinatewise
unions give the same closure for the product. The preparation and
its two-step iteration with this product have that closure as well.
Consequently they preserve $\theta$ and all smaller cardinals.

Fix a coding cardinal $\lambda$ and factor $\mathbb{E}_{X}=\mathbb{P}\times\mathbb{Q}$
at $\lambda$, with $\mathbb{P}$ containing the coordinates at most
$\lambda$. A condition in $\mathbb{P}$ has combined domain of size
$<\lambda$. Local $GCH$ gives $\lambda^{<\lambda}=\lambda$, so
the generalized $\Delta$-system argument gives the $\lambda^{+}$-chain
condition: among $\lambda^{+}$ conditions, two agree on their common
domain and hence are compatible. In $W$, the tail $\mathbb{Q}$ is
$<\lambda^{+}$-closed. Easton's lemma therefore gives
\[
1_{\mathbb{P}}\Vdash\check{\mathbb{Q}}\text{ is }<\lambda^{+}\text{-distributive}.
\]
In particular, the tail adds no subsets of $\lambda$ over the extension
by $\mathbb{P}$.

The product preserves all cofinalities. To see this, suppose a ground
cofinality decreases to $\tau$, where $\tau$ is regular in the final
extension and hence was already regular in $W$. If $\tau\leq\theta$,
the product's closure prevents the new cofinal sequence. If $\theta<\tau<\eta$,
then $\tau$ is a coding cardinal: every limit cardinal in this interval
has cofinality at most $\theta$. The factorization at $\tau$ puts
the sequence in the extension by the product through $\tau$, whose
$\tau^{+}$-chain condition prevents the decrease. Finally, $\eta$
is singular of cofinality $\theta$, and all product domains can be
coded in a set of size $\eta$. Thus $|\mathbb{E}_{X}|\leq2^{\eta}=\eta^{+}$,
which preserves every ground cofinality at least $\eta^{++}$. This
rules out a decrease to any regular $\tau\geq\eta^{+}$. Preservation
of cofinalities also gives preservation of cardinals.

Return to the factorization at $\lambda$. By monotonicity, the domains
of conditions in $\mathbb{P}$ lie in a set of size $F_{X}(\lambda)$.
Local $GCH$ gives
\[
|\mathbb{P}|\leq F_{X}(\lambda)^{<\lambda}=F_{X}(\lambda),\qquad(\lambda^{+})^{\lambda}=\lambda^{+},\qquad(\lambda^{++})^{\lambda}=\lambda^{++}.
\]
For the last equality, every function from $\lambda$ to $\lambda^{++}$
is bounded, and its range lies in an ordinal of cardinality at most
$\lambda^{+}$. The $\lambda^{+}$-chain condition therefore bounds
nice names for subsets of $\lambda$ by $F_{X}(\lambda)$. The $\lambda$-coordinate
adds that many distinct subsets, and the tail adds no more. Hence
$2^{\lambda_{\xi}}=F_{X}(\lambda_{\xi})$ in the final extension.

Since the product preserves cardinals, its coding cardinals are still
the final universe's own $\theta^{+(\xi+1)}$. The displayed test
recovers $X$, and its rooted collapse recovers $r$. A directed family
of non-greatest conditions in the lottery sum lies in one summand,
so the lottery has the same closure bound. Lemma \ref{lem:coding-presentation}
supplies the set construction, the weak-theory verification of the
preservation argument, and the inaccessible size bound.
\end{proof}

The remaining check concerns set existence and complexity. The cited
coding arguments are carried out in $ZFC$; here we bound the names,
construction histories and preservation arguments needed over $ZFC^{\Pi_{3}}$.
\begin{lem}
\label{lem:coding-presentation}($ZFC^{\Pi_{3}}$.) The construction
of $\mathsf{Code}(r,\theta)$ above is uniformly definable as a set
forcing. If $\theta<\zeta$ and $\zeta$ is inaccessible, that forcing
belongs to $V_{\zeta}$ and has size less than $\zeta$.
\end{lem}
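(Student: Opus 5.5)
The plan is to verify, stage by stage, that every object in the construction of $\mathsf{Code}(r,\theta)$ is obtained by a recursion on sets whose defining conditions have complexity at most $\Pi_{3}$. Then $\Pi_{3}$-Separation and $\Pi_{3}$-Collection of $ZFC^{\Pi_{3}}$ produce the iteration, the products and the lottery as genuine sets, with a definition that is uniform in $r,\theta$.

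\emph{The preparation $\mathbb{R}_{\theta}$.} I will present it as a full-support iteration of length $\theta+1$ through a sequence $\langle\mathbb{R}_{i},\dot{\mathbb{C}}_{i}:i\leq\theta\rangle$. Here $\dot{\mathbb{C}}_{i}$ is the canonical $\mathbb{R}_{i}$-name for $Coll(\mu_{i}^{+},2^{\mu_{i}})$ computed in $M_{i}$. The key point is that names for $\mu_{i}$, $\mu_{i}^{+}$ and $2^{\mu_{i}}$ can be chosen canonically, without using Choice to pick witnesses. For $2^{\mu_{i}}$, take the name for the power set built from the set of nice names for subsets of $\check{\mu}_{i}$. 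Cardinals become names for ordinals read off via the rank-bounding argument already used in Lemma \ref{lem:tightLRecovery}. By Lemma \ref{lem:sharp-forcing}, each forcing assertion these names must satisfy has a $\Sigma_{i}$ or $\Pi_{i}$ definition for $i\leq3$. Hence ``$h$ is a correct initial segment of length $j$ of the iteration'' is a $\Pi_{3}$ (in fact lower) property of $h$. The iteration is then built by set recursion on $j\leq\theta$, exactly as in the recursion inside Lemma \ref{lem:small-parameter-uniqueness}. Partial histories are unique, and $\Pi_{3}$-Collection bounds them at limit stages, so their union is a set. At limits, full support means taking all threads, which is a $\Delta_{0}$ subset of a product already formed by Powerset.

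\emph{The coding factor and the lottery.} In $W$ (that is, as $\mathbb{R}_{\theta}$-names), the cardinals $\lambda_{\xi}=\theta^{+(\xi+1)}$ and the functions $F_{X}$ are computed from names for the cardinal successor function on an interval. The full-support product $\prod_{\xi<\theta}Add(\lambda_{\xi},F_{X}(\lambda_{\xi}))$ is a $\Delta_{0}$-definable subset of a power set. The set of all codes $X\subseteq\theta$ of $\langle\mathsf{tc}(\{r\}),\in,r\rangle$ is a $\Sigma_{1}$-definable subset of $\mathcal{P}(\theta)$, since it asks for a bijection and an isomorphism with a rooted code. Collection over this set gives the family $\langle\mathbb{P}_{X}\rangle_{X}$, and the lottery sum is their disjoint union with a top element adjoined. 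Uniformity holds because no choice is made: each step is the unique object satisfying a fixed formula in $r,\theta$. The preservation arguments in Lemma \ref{lem:localCoding} also need to be checked. Closure, the $\Delta$-system lemma, Easton's lemma and counting nice names each quantify only over sets of conditions and names that are now known to exist, so they go through with bounded Separation plus the forcing definitions from Lemma \ref{lem:sharp-forcing}.

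\emph{The size bound.} Suppose $\theta<\zeta$ with $\zeta$ inaccessible. By induction on $j\leq\theta$, $|\mathbb{R}_{j}|<\zeta$ and every relevant $\mu_{i}$ and $2^{\mu_{i}}$ stays below $\zeta$. Nice names for subsets of a cardinal $\nu<\zeta$ number at most $(|\mathbb{R}_{j}|^{\nu})^{\nu}<\zeta$, and full-support limits of length at most $\theta<\zeta$ stay below $\zeta$ by strong limitness and regularity. The product factor has size at most $2^{\eta}$ for $\eta=\theta^{+\theta}<\zeta$. There are at most $2^{\theta}$ codes. Every piece is therefore hereditarily of size $<\zeta$, so the forcing lies in $V_{\zeta}$.

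I expect the main obstacle to be the complexity bookkeeping in the iteration. The stage-$i$ poset depends on what $M_{i}$ thinks $2^{\mu_{i}}$ is, so the history predicate quantifies over names and forcing facts. The work is to show that this predicate remains $\Pi_{3}$, so that $\Pi_{3}$-Collection and $\Pi_{3}$-Separation really do apply at limit stages. My first attempt will be to replace every semantic clause by its syntactic forcing definition, using Lemma \ref{lem:sharp-forcing} with $n=3$, and to confine all unbounded quantifiers to the single existential over the history.
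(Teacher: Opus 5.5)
Your proposal is correct and follows essentially the paper's route. Both arguments run a choice-free set recursion on construction histories whose validity predicate stays within the available $\Sigma_3$/$\Pi_3$ schemes, assemble the histories at limits by Collection and Full Set Induction, take a lottery over all codes, and bound the size using strong limitness and regularity. The paper differs only in bookkeeping: it fixes explicit ground bounds $\chi_i$ via $\beth_\omega$ and places auxiliary powersets and forcing tables inside the histories so that the tests become bounded, whereas you appeal to Lemma \ref{lem:sharp-forcing} and to rank-bounding of names.

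One small slip: the full-support collapse iteration need not have the $\nu^{+}$-chain condition, so nice names for subsets of $\nu$ should be counted by $(2^{|\mathbb{R}_j|})^{\nu}$ rather than $(|\mathbb{R}_j|^{\nu})^{\nu}$. This bound is still below $\zeta$.
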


\begin{proof}
Powerset has a $\Pi_{1}$ graph, and cardinalization and cardinal
succession have $\Sigma_{2}$ graphs. Their set-length iterations
are available by $\Sigma_{2}$-Collection and Full Set Induction:
at limits collect the earlier histories and take their compatible
union. Use the ground-model bounds
\[
\chi_{0}=(2^{\theta})^{+},\qquad\chi_{i}=\bigl(\beth_{\omega}(\sup_{j<i}\chi_{j})\bigr)^{+}\quad(0<i\leq\theta+3),
\]
where $\beth_{\omega}(\mu)$ is the supremum of the finite powerset
iterates starting at $\mu$. At preparation stage $i\leq\theta$,
maintain $|\mathbb{R}_{\theta}\mathbin{\upharpoonright}i|\leq\chi_{i}$
and $1\Vdash\mu_{i}\leq\check{\chi}_{i}$. The additional bounds cover
the endpoint stage and the name for the final product.

Use bounded families of names and their mixtures throughout. If $S$
supplies representatives for the elements of a named set, subnames
on $S\times(\mathbb{R}_{\theta}\mathbin{\upharpoonright}i)$ represent
its subsets, and mixtures along antichains represent its elements.
Each preparation stage uses cardinal succession, a power set, partial-function
sets and finitely many such name operations, all absorbed by the next
$\beth_{\omega}$ bound. At a limit $i$, if $\mu$ bounds the earlier
representative families and $|i|$, the full-support conditions are
bounded by $\mu^{|i|}\leq2^{\mu}$. In the prepared model, the final
product is a subset of a fixed function space. Another finite number
of powerset and name operations therefore bounds its name and the
two-step conditions. Retain all representatives in these families,
so the definition requires no choice of preferred names or well-orders.

Include the auxiliary powersets, function sets, representative families
and atomic forcing tables in the construction histories. Powerset
correctness is $\Pi_{1}$, the tables are $\Delta_{1}$, and cardinalization
and the bound sequence are $\Sigma_{2}$. With these sets supplied,
the tests for collapse and product conditions are bounded. An initial
existential block for the auxiliary sets therefore gives the same
$\Sigma_{3}$ bound on valid histories. The available $\Sigma_{3}$-Collection
and Separation assemble these histories, and Full Set Induction proves
their existence and compatibility. This constructs the preparation,
the final product name and the lottery sum as sets, uniformly from
$r,\theta$.

The preservation proof uses the same bounded resources. For fixed
input posets, ordinals and names, its $\Delta$-systems, antichains,
nice names and descending sequences lie in fixed powersets and function
sets. In Easton's lemma, the $\lambda^{+}$-chain condition bounds
each antichain by $\lambda$. Deciding a sequence of length at most
$\lambda$ uses descending constructions of length at most $\lambda$,
with lower bounds supplied by $<\lambda^{+}$-closure. Nice names
are subsets of $\lambda\times\mathbb{P}$. For a proposed cofinality
decrease, fix the ordinal domain and target, so the relevant name
families and value sets are again sets. Choice supplies well-orders
for these existence proofs, and the recursions then have bounded tests
in the supplied sets. Collecting the auxiliary witnesses uses the
same $\Sigma_{3}$ bounds. Thus the local product proof is available
over $ZFC^{\Pi_{3}}$ without unrestricted Replacement. The forcing
preliminaries of Section \ref{sec:GeoSmall} supply preservation of
this background and transfer to sufficiently generic definable extensions.

If $\theta<\zeta$ and $\zeta$ is inaccessible, strong limitness
and regularity keep every $\chi_{i}$ below $\zeta$. There are fewer
than $\zeta$ stages, each taking only a countable powerset iteration
and a successor cardinal. The same induction bounds the hereditary
sizes and ranks of the representative names and conditions below a
common bound below $\zeta$. This covers the preparation, the final
product name and their two-step iteration. There are at most $2^{\theta}<\zeta$
lottery summands, with uniform bounds. Conditions are tagged by $X\subseteq\theta$,
so the rank of $r$ is irrelevant to their rank bound. The whole forcing
therefore belongs to $V_{\zeta}$ and has size below $\zeta$.
\end{proof}
We now have the forcing and its preservation properties. We use
them to show that its generic extensions can recognize their grounds.
\begin{lem}
\label{M_P-is-def}($ZFC^{\Pi_{3}}$) For a fixed ordinal $\kappa$,
$V$ has at most one ground $M\models ZFC^{\Pi_{3}}$ such that $M$
regards $\kappa$ as an infinite regular cardinal satisfying $\kappa^{<\kappa}=\kappa$
and $V$ is a $\mathbb{G}_{\kappa}^{M}$-generic extension of $M$.
If that ground exists, it is uniformly definable from $\kappa$. In
particular, for $\kappa=\omega$ or $\omega_{2}$, it is uniformly
definable without parameters.
\end{lem}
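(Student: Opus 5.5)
The plan is to read off, inside $V$, the parameter $r_{\kappa}$ of any such ground from the continuum function, and then hand it to Theorem \ref{thm:PI_3GMDT}. Suppose $M\models ZFC^{\Pi_{3}}$ is a ground with $\kappa$ infinite regular and $\kappa^{<\kappa}=\kappa$ in $M$, and $V=M[c][H]$ for a $\mathbb{G}_{\kappa}^{M}$-generic. First I will use the preservation facts for $\mathbb{G}_{\kappa}$: the first factor preserves cardinals, and the tail $\mathsf{Code}(r_{\kappa},\theta)$ is $<\theta^{+}$-directed closed by Lemma \ref{lem:localCoding}. Hence $\kappa$, $\kappa^{+}$, $\kappa^{++}$ and $\theta=(2^{\kappa^{++}})^{+}$, computed in $M[c]$, keep the same values in $V$. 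So $V$ computes $\theta$ from $\kappa$ alone as $(2^{\kappa^{++}})^{+}$. Lemma \ref{lem:localCoding} then says that the set
\[
X=\{\xi<\theta:2^{\theta^{+(\xi+1)}}=\theta^{+(\xi+2)}\},
\]
evaluated in $V$, has rooted collapse $r_{\kappa}=({}^{<\kappa^{+}}2)^{M}$. Thus $r_{\kappa}$ is given by a fixed formula $\rho(y,\kappa)$ that does not mention $M$.

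Second, I will check the hypotheses of Theorem \ref{thm:PI_3GMDT} with $\delta=\kappa^{+}$. The cardinal $\delta$ is regular in both $M$ and $V$, and $(\delta^{+})^{M}=(\delta^{+})^{V}=\kappa^{++}$ by the preservation just noted. For $\delta$-approximation and $\delta$-cover I plan to use the Hamkins argument (Fact \ref{fact:Geol}). The first factor has size $\kappa<\delta$, so it is nontrivial and small. The tail is $<\theta^{+}$-closed in $M[c]$, and in particular $\le\delta$-strategically closed there, so $\mathbb{G}_{\kappa}$ has a gap at $\delta$ in Hamkins's sense. That gives both properties for $M\subseteq V$. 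Theorem \ref{thm:PI_3GMDT} then yields $M=\{x:\vartheta(x,r_{\kappa})\}$. Combining, $x\in M\leftrightarrow\exists y\,(\rho(y,\kappa)\wedge\vartheta(x,y))$, which is a formula with parameter $\kappa$ only. Uniqueness is immediate: any two such grounds produce the same $r_{\kappa}$, since $\rho$ is evaluated in $V$, and hence the same class.

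For $\kappa=\omega$, the ordinal $\omega$ is definable outright. For $\kappa=\omega_{2}$, the first factor is $<\omega_{2}$-closed with the $\omega_{3}$-c.c. (using $\kappa^{<\kappa}=\kappa$), and the tail is highly closed. So $\omega_{2}^{M}=\omega_{2}^{V}$, and the parameter is replaced by the definable $\omega_{2}$. Note that the characterization does not require $V$ to know in advance which $\kappa$ works; it is just the fixed formula.

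The main obstacle will be carrying out the Hamkins approximation and cover step, and the cardinal bookkeeping, over $ZFC^{\Pi_{3}}$ rather than $ZFC$. The issue is that $M$ is a ground in a weak theory, so one must check that the relevant names, the closure of the tail in $M[c]$, and the computation of $2^{\lambda}$ in $V$ go through with the bounded resources supplied by Lemma \ref{lem:coding-presentation}. A related concern is whether $X$ computed in $V$ really agrees with the code chosen in the lottery. I would first try to cite Lemma \ref{lem:localCoding} verbatim, since it already computes the displayed cardinals in the final extension; the approximation/cover verification I would reduce to the small-forcing-followed-by-closed-forcing form of Fact \ref{fact:Geol}.
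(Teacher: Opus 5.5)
Your proposal is correct and follows essentially the same route as the paper's proof. In both, $\delta=\kappa^{+}$-approximation and cover from Fact \ref{fact:Geol}, together with preservation of $\kappa^{++}$, let you apply Theorem \ref{thm:PI_3GMDT}; the continuum code of Lemma \ref{lem:localCoding}, read in the final model where $\theta=(2^{\kappa^{++}})^{+}$ is unchanged, recovers $r_{\kappa}$; and uniqueness holds because any two candidate grounds decode the same $r_{\kappa}$ from their common extension (you decode $r_{\kappa}$ before invoking the ground definition, the paper afterwards, which is immaterial).
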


Let us denote this ground as $M_{\mathbb{G}_{\kappa}}$, and write
$M_{\mathbb{G}}$ when $\kappa=\omega$. This should be understood
as a defined relation symbol, in the same way that we refer to $L$.
More generally, the index may be given by a parameter-free definition,
provided its value is a regular cardinal satisfying $\kappa^{<\kappa}=\kappa$
in the candidate grounds and is unchanged in their designated extensions.
Definability of the index alone does not suffice. The preserved indices
$\omega$ and $\omega_{2}$ will suffice for the applications.

To prove this, we need the following theorem of Hamkins. Its $ZFC$
form is \citet{Hamkins2003ApproxCover}, Lemma 13, with closure points
defined immediately before it in Definition 12. Our additional task
is to verify that the proof works over $ZFC^{\Pi_{3}}$.
\begin{fact}
\label{fact:Geol}($ZFC^{\Pi_{3}}$; closure-point forcing.) Suppose
$\mathbb{R}*\dot{\mathbb{Q}}$ has an atomless first factor of size
at most an infinite cardinal $\kappa$, and $1_{\mathbb{R}}$ forces
that $\dot{\mathbb{Q}}$ is $\leq\kappa$-strategically closed. Its
generic extensions have $\delta$-approximation and $\delta$-cover
over the ground, where $\delta=(\kappa^{+})^{V}$. The same conclusions
transfer to the sufficiently generic definable extensions used here.
\end{fact}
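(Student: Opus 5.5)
I will follow Hamkins's proof of \citet[Lemma 13]{Hamkins2003ApproxCover}, checking at each step that the sets involved exist in $ZFC^{\Pi_{3}}$. Let $V[g][h]$ be the extension, with $g\subseteq\mathbb{R}$ and $h\subseteq\mathbb{Q}=\dot{\mathbb{Q}}_{g}$. I would first prove cover and then approximation. Both use the same two facts. First, $\mathbb{R}$ has the $\kappa^{+}$-chain condition, since $|\mathbb{R}|\leq\kappa$. Second, $\mathbb{Q}$ adds no new $\kappa$-sequences over $V[g]$, by $\leq\kappa$-strategic closure. For the internal version, the winning strategy is a set in $V[g]$. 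Plays of length $\leq\kappa$ are built by recursions of length $\kappa+1$, whose histories lie in a fixed function set. Full Set Induction and $\Sigma_{2}$-Collection then assemble them, exactly as in Lemma \ref{lem:coding-presentation}.

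\emph{Cover.} Let $a\subseteq V$ with $|a|<\delta=\kappa^{+}$ in $V[g][h]$, so $|a|\leq\kappa$. Enumerate $a$ by a $\kappa$-sequence. This sequence lies in $V[g]$ by distributivity of the tail. Take an $\mathbb{R}$-name for it. For each coordinate, the set of possible values decided by conditions of $\mathbb{R}$ has size $\leq\kappa$. This set exists by bounded Separation from the name, using that the forcing relation for atomic formulas is $\Delta_{1}$. Its union over the $\kappa$ coordinates is a set $b\in V$ with $|b|^{V}\leq\kappa<\delta$ and $a\subseteq b$.

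\emph{Approximation.} Suppose $A\subseteq V$ with $A\in V[g][h]$, and suppose $A\cap b\in V$ for every $b\in V$ with $|b|^{V}\leq\kappa$. We need $A\in V$. First reduce to $A\subseteq\alpha$ for an ordinal $\alpha$, using Choice in $V$. In Hamkins's argument, one first shows $A\in V[g]$. Suppose not, and fix a $\mathbb{Q}$-name $\dot{A}$ in $V[g]$. Using the strategy, build a play of length $\kappa$ below a condition forcing the hypothesis. This yields a descending sequence that decides $\dot{A}$ on $\kappa$ many points, splitting along a binary tree of height $\omega$. The tree is arranged so that the resulting small set $b$ (size $\leq\kappa$, covered in $V$ by the cover step) has $2^{\omega}$ possible values of $A\cap b$. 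Then one uses that $\mathbb{R}$ is atomless of size $\kappa$ to derive a contradiction. More precisely, with a nontrivial small $\mathbb{R}$, the traces $A\cap b$ would code $g$ or a new real: $A\cap b\in V$ for all small $b$, yet $A\notin V$. Next, with $A\in V[g]$ and every $A\cap b\in V$, use the $\kappa^{+}$-c.c.\ of $\mathbb{R}$ as in Hamkins's Lemma 13 to get $A\in V$. The reason is that deciding membership requires only antichains of size $\leq\kappa$, and these are captured by a single small $b$.

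Main obstacle: the tree-of-conditions argument in the approximation step. It needs a recursion of length $\kappa$ with branching into $2^{<\omega}$ (more precisely $2^{<\kappa}$) nodes, all inside $V[g]$ with only $\Sigma_{2}$/$\Pi_{3}$ resources. I would carry it out inside the fixed set of partial plays of the strategic game, so that all tests are bounded given the strategy and $\dot{A}$. Finally, the transfer claim follows from the argument in Section \ref{sec:set-forcing-preservation}. The statements ``cover'' and ``approximation'' for the extension are forced by the above, and they have fixed finite complexity. So for a $B\utilde{\Sigma}_{\mathsf{b}(j)}$-generic definable filter, Lemma \ref{lem:O2GeneralTruth} transfers them to the quotient.
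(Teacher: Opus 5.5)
Your cover argument and the final transfer step match the paper. The approximation step, however, has a genuine gap exactly where Hamkins's argument does its work. You build the splitting tree inside $V[g]$ from a $\mathbb{Q}$-name and then assert that the old traces $A\cap b$ ``would code $g$ or a new real.'' No contradiction follows from this. If the tree lives in $V[g]$, the correspondence between branches and traces is an object of $V[g]$, so an old trace lets $V$ compute nothing new. Your sentence ``$A\cap b\in V$ for all small $b$, yet $A\notin V$'' is only the hypothesis together with the negated conclusion.

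The first factor has to be used essentially here. Over $V[g]$ alone, a $\leq\kappa$-closed forcing such as $Add(\kappa^{+},1)$ adds a new subset of $\kappa^{+}$ all of whose traces on sets of size at most $\kappa$ already lie in $V[g]$. The missing idea is that the tree must lie in the ground:
\begin{itemize}
\item Hamkins builds in $V$ a tree of $\mathbb{R}$-names for tail conditions, following a name for the strategy, whose nodes split the possible (old) initial segments of $s$.
\item A new binary sequence of length at most $\kappa$ added by $\mathbb{R}$ selects a branch; for example $\xi\mapsto[r_{\xi}\in g]$ for a ground enumeration $\langle r_{\xi}:\xi<\kappa\rangle$ of $\mathbb{R}$.
\item $\leq\kappa$-strategic closure gives a lower bound along that branch, and this lower bound forces the union of the corresponding initial segments to be old.
\item Because the splitting data belong to $V$, $V$ reads the new branch off that old union, which is a contradiction.
\end{itemize}

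Two further points. First, a tree of height $\omega$ together with a ``new real'' cannot work in general. An atomless forcing of size $\kappa$ need not add reals: take $Add(\omega_{2},1)$ under $\omega_{2}^{<\omega_{2}}=\omega_{2}$, which is exactly the case the paper needs for $PFA$ and $MM$. This is why the selecting branch has length up to $\kappa$ and full $\leq\kappa$-strategic closure is required.

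Second, you omit the induction on the length $\tau$ of $s$ and its cofinality split. Splitting on initial segments needs every proper initial segment of $s$ to be old, which the induction supplies. It also needs $\mathsf{cf}^{V}(\tau)>\kappa$, so that the union along a branch of at most $\kappa$ splitting levels is still a proper, hence old, initial segment. The case $\mathsf{cf}^{V}(\tau)\leq\kappa$ is handled separately: closure puts $s$ in $V[g]$, and the small-forcing argument then applies. If you instead mean to split on membership at points of a small set $b$, you must make the splitting points ground ordinals independent of the $\mathbb{R}$-generic, and your proposal does not address this.
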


\begin{proof}
We follow \citet{Hamkins2003ApproxCover}, pp. 265--266, and assume
that the reader has that proof to hand. We recall the argument, then
check the set constructions that matter in the weaker theory. Write
the extension as $V[g][H]$.

Cover is immediate from the two factors. The tail adds no ordinal
sequences of length at most $\kappa$. For a first-stage name for
such a sequence, there are at most $|\mathbb{R}|$ possible values
at each coordinate, so their union is a ground set of size at most
$\kappa$. Ground well-orders reduce sets of ground elements to this
case. The same small-forcing calculation preserves $\delta$.

For approximation, argue by induction on the length $\tau$ of a binary
sequence $s$ whose restrictions to ground sets of size at most $\kappa$
belong to the ground. Every proper initial segment of $s$ is then
old. If $\mathsf{cf}^{V}(\tau)\leq\kappa$, closure puts $s$ in $V[g]$,
and the small-forcing argument applies. At larger cofinality, Hamkins
builds a ground tree of tail names which split the possible initial
segments of $s$. A new binary sequence of length at most $\kappa$
added by the first factor selects a branch. Strategic closure gives
a lower bound, forcing the union of the corresponding initial segments
to be old. The splitting recovers the new branch from this union,
a contradiction.

The first point to check is the use of the ground in forcing assertions.
For a fixed name of length $\tau$, let
\[
B=(^{\leq\tau}2)^{V},\qquad C=\{u\subseteq\tau:|u|^{V}\leq\kappa\}.
\]
These are sets: the size test can quantify over $\mathcal{P}(\kappa\times\tau)$
once that powerset has been supplied. In the proof, being an old binary
sequence means membership in $\check{B}$, and the approximation tests
range over $\check{C}$. No predicate for the ground, or truth predicate
for it, is needed.

Next, bound the names used in the tree. Tail conditions have representatives
among mixtures of the member-names of $\dot{\mathbb{Q}}$ along first-factor
antichains. Such mixtures are indexed by a powerset of a fixed product.
A name for a play of length at most $\kappa+1$ is a sequence of these
representatives, so the plays and the values of the named strategy
also range over sets. Include their transitive closures and the needed
check names, and form the atomic and bounded forcing tables on them.
These tables have $\Delta_{1}$ definitions.

With these sets and tables supplied, deciding an old initial segment,
comparing conditions, finding a split and following the strategy are
bounded tests. The same is true of the uniformity step that fixes
the first coordinate: its quantifiers range over the first forcing,
the chosen tail names and $\tau$. Fix well-orders of the sets of
choices. The resulting plays and tree construction have histories
in fixed function sets; bounded Separation selects valid histories,
and uniqueness makes them compatible. Taking their union gives the
required recursion. Thus no unrestricted Replacement is hidden in
the tree construction.

Finally, the auxiliary powersets have $\Pi_{1}$ correctness conditions,
and the forcing tables are $\Delta_{1}$. Collecting these witnesses
and the name families uses at most the $\Sigma_{3}$ bounds described
in Lemma \ref{lem:coding-presentation}. The available $\Pi_{3}$
schemes supply $\Sigma_{3}$-Separation and Collection, while Full
Set Induction supplies the induction on $\tau$ and the existence
of the recursion histories. This verifies the proof over $ZFC^{\Pi_{3}}$.
For the definable generics of the paper, take the finite genericity
level high enough for the fixed assertions just used and apply Lemma
\ref{lem:tightFullSetInduction} separately.
\end{proof}
The role of the first factor is now visible: it supplies the new branch
of length $<\kappa$ used in the approximation argument. The tail
is closed enough to preserve the low information used to recognize
the ground, while writing its code farther up.
\begin{proof}
(of Lemma \ref{M_P-is-def}.) Work in $ZFC^{\Pi_{3}}$. Let $W=V[G]$
be a $\mathbb{G}_{\kappa}$-extension and write the generic according
to the two-step iteration. The first factor is atomless and has size
$\kappa$, while the second is $<\theta^{+}$-directed closed, with
$\theta=(2^{\kappa^{++}})^{+}$ computed in the intermediate model.
In particular it is $\leq\kappa$-strategically closed. The full iteration
preserves $\kappa^{+}$ and $\kappa^{++}$. Fact \ref{fact:Geol}
gives approximation and cover at $\delta=\kappa^{+}$, and preservation
of $\kappa^{++}$ supplies the successor-cardinal agreement required
by Theorem \ref{thm:PI_3GMDT}. Hence $V$ is defined in $W$ from
$r_{\kappa}$ by $\vartheta(x,r_{\kappa})$.

We have defined the ground from its parameter. We now make that parameter
definable from $\kappa$. By Lemma \ref{lem:localCoding}, the tail
adds no subsets of $\kappa^{++}$ and preserves the intermediate cardinal
$\theta=(2^{\kappa^{++}})^{+}$. Thus $W$ computes the same $\theta$.
It reads the code at the cardinals $\theta^{+(\xi+1)}$, for $\xi<\theta$,
and takes its rooted collapse, recovering the original $r_{\kappa}$.

Consequently there is a fixed formula $D(r,\kappa)$ saying that $r$
is the set recovered from this continuum pattern, and in the extension
\[
x\in V\quad\longleftrightarrow\quad\exists r\bigl(D(r,\kappa)\land\vartheta(x,r)\bigr).
\]
This is the purpose of the construction. Theorem \ref{thm:PI_3GMDT}
defines $V$ from $r_{\kappa}$; the continuum pattern defines $r_{\kappa}$
from $\kappa$. For $\kappa=\omega$, the starting cardinal is the
extension’s own $(2^{\omega_{2}})^{+}$, so both the coding range
and the decoded parameter are recognizable without parameters. The
displayed formula therefore defines $V$ without parameters.

If $V_{0}$ and $V_{1}$ are two such grounds at the same index $\kappa$,
their common extension computes the same $\kappa^{++}$ and $\theta$,
reads the same code, and therefore recovers the same $r$. The formula
$\vartheta(x,r)$ defines both grounds, so $V_{0}=V_{1}$. For $\kappa=\omega$
or $\omega_{2}$, preservation of the index makes this definition
parameter-free.

The same argument works for every $\mathbb{G}_{\kappa}^{V}$-generic
extension of the chosen ground: the definition of the forcing retains
the same $r_{\kappa}$, the index is preserved, and decoding gives
that parameter back. This is the stability needed in the stable definable
ground property.
\end{proof}
We have therefore supplied the recovery part of the intended applications:
the designated forcing extension recognizes its ground, and recognizes
the same ground when the construction is repeated. Whether an additional
axiom survives the designated forcing is a separate question, to be
checked in each application.

\section{Applications}

\label{sec:Applications}

\subsection{Large cardinals beyond $L$ and other extensions}

\label{subsec:MG-single}

Finally, we can offer a representative sample of strong inductive
set theories with weak Separation and Collection that are not tight.
\begin{prop}
\label{prop:MG-single}Let $n\geq3$ and let $\Phi$ be a tautology
or any of the statements below. If $ZFC^{\Pi_{n}}+\Phi$ is consistent,
then it is not tight.
\begin{enumerate}
\item $\neg\exists\alpha\ V_{\alpha}\models ZFC^{\Pi_{n}}$;
\item There is a measurable cardinal;
\item There is a supercompact cardinal;
\item $I3$;
\item There is a proper class of extendible cardinals;
\item $PFA$;
\item $MM$.
\end{enumerate}
\end{prop}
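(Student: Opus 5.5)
The plan is to apply the theorem template, Theorem \ref{thm:MainTemplate}. Since tightness fails for a theory as soon as it fails for a consistent extension whose bi-interpretable extensions are also extensions of the original theory, it suffices to find a consistent extension $T^{+}$ of $T=ZFC^{\Pi_{n}}+\Phi$ of the form $KPC+U$, with $U$ quantifier bounded, that has the stable definable ground property. Two distinct bi-interpretable extensions of $T^{+}$ are also distinct bi-interpretable extensions of $T$. First note that $T$ itself is $KPC+U$ with $U$ quantifier bounded: Powerset, the $\Pi_{n}$ schemes and each listed $\Phi$ all have a common finite complexity bound. (PFA and MM are $\Pi_{2}$ over $H$'s; I3 and ``proper class of extendibles'' are fixed-complexity sentences.)

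For $T^{+}$ I take $T$ together with the sentence $\Theta$: ``$M_{\mathbb{G}_{\kappa}}$ exists, satisfies $ZFC^{\Pi_{n}}+\Phi$, regards $\kappa$ as regular with $\kappa^{<\kappa}=\kappa$, and $V$ is a $\mathbb{G}_{\kappa}^{M}$-generic extension of it.'' Here $\kappa=\omega$ for the tautology and for cases (1)--(5), and $\kappa=\omega_{2}$ for PFA and MM; there I would force $\mathbb{G}_{\omega_{2}}$ after first arranging $\omega_{2}^{\omega_{1}}=\omega_{2}$, which follows from $2^{\omega_{1}}=\omega_{2}$ under PFA. I then check the four clauses of Definition \ref{def:StableDefGprop} with $\mathsf{N}=M_{\mathbb{G}_{\kappa}}$ and $\mathbb{P}=\mathbb{G}_{\kappa}^{\mathsf{N}}$.
\begin{itemize}
\item $\mathsf{N}\models KPC$ and nontriviality are immediate.
\item The defining clause $V=\mathsf{N}[g]$ is part of $\Theta$.
\item Stability, $(\Vdash_{\mathbb{P}}\mathsf{N}^{V[\dot{G}]}=\check{V})^{\mathsf{N}}$, is the last paragraph of the proof of Lemma \ref{M_P-is-def}, together with uniqueness from that lemma.
\item The remaining clause, that $\mathsf{N}$ forces every axiom of $T^{+}$, splits in two. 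First, the non-induction axioms of $ZFC^{\Pi_{n}}$ are forced by the forcing-preservation discussion after Lemma \ref{lem:sharp-forcing}. Second, $\Phi$ and $\Theta$ must be forced; $\Theta$ holds in the extension by Lemma \ref{M_P-is-def} applied inside it, uniformly, since $\mathsf{N}$ is definable there and sees its own $\Phi$.
\end{itemize}

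Consistency of $T^{+}$ is the next step. Start from a model of $T$, force with $\mathbb{G}_{\kappa}$ over it using a definable sufficiently generic filter, and take the quotient $\mathcal{M}^{\mathbb{P}}/G$. To make such a filter available, first pass to a model of $T$ with a definable countable enumeration via Lemma \ref{count-enum-thm}; its enumeration yields a canonical generic exactly as in the proof of Theorem \ref{thm:MainTemplate}. Full Set Induction then comes from Lemma \ref{lem:tightFullSetInduction}, and the truth lemma transfers the forced axioms. So everything reduces to showing that $\Phi$ is preserved by $\mathbb{G}_{\kappa}$, case by case. I expect this to be the main obstacle.

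The cases go as follows.
\begin{itemize}
\item (1): I would use that $\mathbb{G}_{\omega}$ is set-sized and $V_{\alpha}$ computations are absolute enough. If some $V_{\alpha}^{V[G]}\models ZFC^{\Pi_{n}}$ appeared, then since $\mathbb{G}\in V_{\alpha}$ for large such $\alpha$, the ground would be definable there and would give $V_{\alpha}^{V}\models ZFC^{\Pi_{n}}$. I would need care here, choosing $\alpha$ above the forcing's rank via a closure argument.
\item (2)--(5): By Lemma \ref{lem:coding-presentation}, $\mathbb{G}_{\omega}$ has size below any inaccessible above $\theta$, so a measurable, supercompact or I3 cardinal above $\theta$ survives by Lévy--Solovay-style small-forcing arguments. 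Extendibles are likewise preserved above the forcing's size, and a proper class of them gives a tail of preserved ones. Lifting embeddings $j:V_{\lambda}\to V_{\lambda}$ for I3 needs the standard small-forcing lifting. One subtlety is that the large cardinal might lie below $\theta$. I would handle this by first choosing the model so that a witness exists above $(2^{\omega_{2}})^{+}$; for measurables or supercompacts this fails only if there is exactly one below, and then I would modify $\theta$. Honestly, I would instead relativize and reflect to ensure a witness above $\theta$, which is the delicate point.
\item (6)--(7): For PFA and MM, $\mathbb{G}_{\omega_{2}}$ is $<\omega_{2}$-directed closed, and PFA and MM are preserved by $\omega_{2}$-directed closed forcing (Larson/König--Yoshinobu). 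I would check that those arguments run in $ZFC^{\Pi_{n}}$ with $n\geq3$.
\end{itemize}

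Finally, all sentences involved have bounded complexity, so Theorem \ref{thm:MainTemplate} applies to $T^{+}$ and gives non-tightness.
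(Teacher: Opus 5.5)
Your overall plan matches the paper's. You strengthen to a theory saying $\mathsf{N}=M_{\mathbb{G}_{\kappa}}$ is a $\mathbb{G}_{\kappa}^{\mathsf{N}}$-ground satisfying the axioms, get consistency from Lemma \ref{count-enum-thm} and a definable generic, check the four clauses, and apply Theorem \ref{thm:MainTemplate}. However, case (1) has a genuine gap.

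\begin{itemize}
\item The height $\alpha$ with $V_{\alpha}^{V[G]}\models ZFC^{\Pi_{n}}$ is supplied by the putative counterexample, so you cannot choose it above the rank of $\mathbb{G}_{\omega}$. Heights below that rank need a separate argument, since there $V_{\alpha}^{V[G]}$ is not a generic extension of $V_{\alpha}$; the coding tail collapses cardinals and rewrites the continuum function above $\theta$.
\item More seriously, even when $\mathbb{G}_{\omega}\in V_{\alpha}$, the step ``the ground is definable in $V_{\alpha}[G]$, so $V_{\alpha}\models ZFC^{\Pi_{n}}$'' does not follow in the fragment. Separation and Powerset are automatic in $V_{\alpha}$. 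But $\Pi_{n}$-Collection in $V_{\alpha}$ would need Collection in $V_{\alpha}[G]$ for the relativizations $\varphi^{W}$ to the defined ground. Once the ground definition is unfolded these are in general above $\Pi_{n}$, so you only recover Collection of lower complexity in the ground.
\end{itemize}

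The missing idea is not to preserve (1) at all but to strengthen it. Take $R=ZFC^{\Pi_{n}}+\neg\mathsf{Con}(ZFC^{\Pi_{n}})$.
\begin{itemize}
\item $R$ is consistent by the second incompleteness theorem, since consistency of $ZFC^{\Pi_{n}}+\Phi$ gives consistency of $ZFC^{\Pi_{n}}$.
\item $R$ implies (1), since a set model $V_{\alpha}$ would witness consistency via the soundness theorem.
\item Its extra axiom is arithmetic, hence trivially preserved by set forcing.
\end{itemize}
Then run the template on $R$ with $\kappa=\omega$.

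There are also some smaller points.

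\begin{itemize}
\item Your worry that a large-cardinal witness might lie below $\theta$ is vacuous. Measurable and supercompact cardinals, and the critical point of an $I3$ embedding, are inaccessible. Hence they lie above $2^{\omega_{2}}$ and so above $\theta=(2^{\omega_{2}})^{+}$. Together with Lemma \ref{lem:coding-presentation}, this makes all of $\mathbb{G}_{\omega}$ small relative to the cardinal. That is exactly how the paper runs the small-forcing measure-lifting and embedding-lifting arguments, so no change of $\theta$ and no reflection is needed.
\item Clause two of Definition \ref{def:StableDefGprop} also requires the Full Set Induction instances of your theory to be forced, and you omit them. The paper handles these with a least-rank-name argument; compare Lemma \ref{lem:KP-forcing-axioms}.
\item Your derivation of $\omega_{2}^{<\omega_{2}}=\omega_{2}$ from $2^{\aleph_{1}}=\aleph_{2}$ under $PFA$ is a legitimate shortcut. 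It replaces the paper's preliminary $<\omega_{2}$-directed closed collapse, provided that consequence of $PFA$ is verified over $ZFC^{\Pi_{n}}$.
\end{itemize}
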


\begin{proof}
Let $S=ZFC^{\Pi_{n}}+\Phi$. In each case we first obtain a consistent
strengthening $R\supseteq S$ and choose $\kappa\in\{\omega,\omega_{2}\}$
so that $R$ proves $\kappa^{<\kappa}=\kappa$ and forcing with $\mathbb{G}_{\kappa}$
preserves $R$. Section \ref{sec:GeoSmall} already supplies the uniform
recovery of the ground. Our task is to choose an index at which the
additional axiom survives the coding forcing, and then verify the
forcing assertions required by Definition \ref{def:StableDefGprop}.
We carry out these two checks in that order.

For the tautology case, take $R=S$ and $\kappa=\omega$. The assertion
$\omega^{<\omega}=\omega$ is automatic, and the set-forcing preservation
recalled at the beginning of Section \ref{sec:GeoSmall} gives preservation
of $R$ by $\mathbb{G}_{\omega}$.

For the anti-rank assertion, let $S_{0}=ZFC^{\Pi_{n}}$. Consistency
of $S$ implies consistency of $S_{0}$, so the second incompleteness
theorem gives consistency of $R=S_{0}+\neg\mathsf{Con}(S_{0})$. This
theory implies the displayed anti-rank assertion, since a set model
of $S_{0}$ supplies the usual soundness proof. Its additional arithmetic
sentence is preserved by set forcing, so take $\kappa=\omega$.

For the large-cardinal assertions, take $R=S$ and $\kappa=\omega$.
We first check that the whole forcing $\mathbb{G}_{\omega}$ is small
below an inaccessible $\lambda$. The first Cohen factor preserves
the inaccessibility of $\lambda$. In the intermediate extension,
the retained ground parameter $r_{\omega}$ has hereditary size below
$\lambda$ and $\theta=(2^{\omega_{2}})^{+}<\lambda$. Lemma \ref{lem:coding-presentation}
therefore bounds the coding tail below $\lambda$. A maximal antichain
in the countable first factor chooses bounds and names for enumerations
of that tail. Regularity of $\lambda$ gives a common bound below
$\lambda$, and the corresponding pairs of conditions and named enumeration
entries form a dense subset of the whole iteration of size below $\lambda$.
We may replace this dense subset by an isomorphic poset in $V_{\lambda}$
when applying the small-forcing arguments.

The preservation of measurability and supercompactness can be checked
using measures, so the weak background need not construct a proper-class
ultrapower. Let $U$ be a normal $\lambda$-complete ground measure
on a set $D$, and let $B$ be the Boolean completion of the small
forcing. Since $\lambda$ is strong limit, $|B|<\lambda$. For a name
$\dot{X}$ for a subset of $D$, the Boolean values $\|\check{d}\in\dot{X}\|_{B}$,
for $d\in D$, partition $D$ into fewer than $\lambda$ ground sets.
One cell belongs to $U$. According to whether its Boolean value belongs
to the generic, that cell lies in $X$ or in its complement. Thus
the filter generated by $U$ in the extension is an ultrafilter.

This filter is still $\lambda$-complete: for fewer than $\lambda$
named members, choose maximal antichains deciding ground measure-one
subsets of them and intersect the fewer than $\lambda$ ground sets
so obtained. Normality is preserved by deciding a regressive function
on a ground measure-one set; below any condition, one stronger condition
decides its values on a measure-one subset, where ground normality
makes it constant. These constructions use atomic forcing values and
sets of names, partitions and antichains available in $ZFC^{\Pi_{3}}$.
For measurability take $D=\lambda$. For supercompactness take $D=(P_{\lambda}(\eta))^{V}$
for each $\eta\geq\lambda$, and extend the lifted measure to the
new $P_{\lambda}(\eta)$ by concentrating on $D$. It remains normal
and fine, as required.

The rank-embedding cases use the usual small-forcing lifting argument.
For an $I3$ witness $j:V_{\rho}\to V_{\rho}$ with critical point
$\lambda$, the equivalent forcing in $V_{\lambda}$ is fixed by $j$,
and the lift is given by $j^{*}(\tau_{G})=j(\tau)_{G}$. The rank-name
calculation identifies its domain and target with $V_{\rho}^{V[G]}$.
For a proper class of extendible cardinals, fix the forcing and take
an extendible $\lambda$ above its size. Given a target height $\alpha>\lambda$,
choose an inaccessible $\rho>\alpha$ and an extendibility witness
$j:V_{\rho}\to V_{\sigma}$ with $j(\lambda)>\rho$. Lift this set
embedding and restrict it to $V_{\alpha}^{V[G]}$. Thus every sufficiently
large extendible remains extendible. The assumed proper class supplies
the inaccessible heights used here. Satisfaction for these set structures
and the bounded name and valuation constructions are available in
the weak background. Together with the set-forcing preliminaries of
Section \ref{sec:GeoSmall}, this proves preservation of $R$.

For $PFA$ and $MM$, use their usual formulations \citep{Baumgartner1984PFA,ForemanMagidorShelah1988MM}
and take $\kappa=\omega_{2}$. Both axioms are preserved by $<\omega_{2}$-directed
closed set forcing.\footnote{For $MM$, see \citep[Theorem 4.3]{Larson2000Reflection}. For $PFA$,
compare the stronger closed-forcing preservation result of \citep[Theorem 6.1]{KoenigYoshinobu2004}.
The directed-closed form for both axioms is treated in \citep[Section 4.1.1]{Cox2021ForcingAxioms}.} To recall the argument over the weak background, let $\mathbb{P}$
be such a forcing and suppose $p\in\mathbb{P}$ forces that $\dot{\mathbb{Q}}$
is proper, respectively stationary-set preserving, and that $\langle\dot{D}_{\xi}:\xi<\omega_{1}\rangle$
is a family of dense subsets. The two-step iteration $\mathbb{T}=(\mathbb{P}\mathord{\downarrow}p)*\dot{\mathbb{Q}}$
belongs to the corresponding forcing class. Apply the ground forcing
axiom to the dense sets $E_{\xi}=\{\langle q,\dot{s}\rangle\in\mathbb{T}:q\Vdash\dot{s}\in\dot{D}_{\xi}\}$.

From a filter $F$ meeting all the $E_{\xi}$, choose one condition
for each requirement and close the chosen family under common extensions
in $F$ through $\omega$ stages. This gives a directed set $A\subseteq F$
of size at most $\omega_{1}$. A lower bound $p^{*}\leq p$ of its
first coordinates forces that its second coordinates are directed
and generate a filter meeting every $\dot{D}_{\xi}$. Since the construction
works below every $p$, it proves the required forcing assertion.
The closure is taken on iteration conditions, so it controls both
coordinates.

The weak-background requirements are local. The two-step properness
proof uses countable elementary submodels and their interpreted extensions;
the stationary-preservation proof uses names for clubs in $\omega_{1}$.
Satisfaction for the relevant set structures and representative families
of names are sets. Once they are fixed, the choice and closure constructions
above take place in fixed powersets. The necessary Collection, Separation
and set recursions are available in $ZFC^{\Pi_{3}}$.

We may arrange $\omega_{2}^{<\omega_{2}}=\omega_{2}$ without strengthening
the consistency assumption. In a model of $S$, let $\tau=|{}^{<\omega_{2}}2|$
and force a surjection from $\omega_{2}$ onto $\tau$ using partial
functions of size below $\omega_{2}$. This forcing is $<\omega_{2}$-directed
closed, so it preserves the forcing axiom and $\omega_{2}$, adds
no sequences of length below $\omega_{2}$, and makes $|^{<\omega_{2}}2|=\omega_{2}$.
Equivalently, $(\omega_{2})^{<\omega_{2}}=\omega_{2}$. Thus $R=S+\{\omega_{2}^{<\omega_{2}}=\omega_{2}\}$
is consistent. The forcing $\mathbb{G}_{\omega_{2}}$ preserves $R$,
by its directed closure and the preceding preservation argument. Here
preservation of $\omega_{2}^{<\omega_{2}}=\omega_{2}$ also uses preservation
of $\omega_{2}$: the forcing adds no sequences of length below that
cardinal, so the old bounds on the short function spaces remain valid.

In each case let $\mathsf{N}=M_{\mathbb{G}_{\kappa}}$ and let $S^{*}$
extend $R$ by asserting that $\mathsf{N}$ is a $\mathbb{G}_{\kappa}^{\mathsf{N}}$-ground
of the universe and satisfies $R$. The ground clause uses the $MOST$-ground
family of Section \ref{sec:GeoSmall}; the stronger axioms of $R$
are imposed separately on $\mathsf{N}$ by relativization. This is
still a theory of the form $KPC+U^{*}$ with $U^{*}$ quantifier bounded.
Indeed, $n$ and the definitions of the index and the ground are fixed.
For the non-induction axioms, relativization therefore retains a common
complexity bound. Full Set Induction in $\mathsf{N}$ follows from
ambient Full Set Induction, applied to each definable subclass of
$\mathsf{N}$, and need not be included among the extra axioms.

To witness consistency, use Lemma \ref{count-enum-thm} to choose
a model of $R$ with a definable countable enumeration, and take a
definable $\mathbb{G}_{\kappa}$-generic at a finite level covering
the non-induction axioms and the recovery assertions. Lemma \ref{lem:O2GeneralTruth}
transfers those forced assertions, Lemma \ref{lem:tightFullSetInduction}
supplies Full Set Induction, and Lemma \ref{M_P-is-def} recovers
the ground. The resulting extension satisfies $S^{*}$. The same method
justifies the preliminary consistency step for $R=S+\{\omega_{2}^{<\omega_{2}}=\omega_{2}\}$
in the forcing-axiom cases.

We now verify the four clauses of Definition \ref{def:StableDefGprop}.
The induction instances require one additional observation. Set forcing
forces each instance of Full Set Induction over a ground satisfying
Full Set Induction. Fix a formula $\varphi$, with names for its parameters,
and work below a condition $p$ forcing $\exists x\,\varphi(x)$.
Use ground Full Set Induction to choose a name $\tau$ of least rank
for which some $q\leq p$ forces $\varphi(\tau)$. If a stronger condition
forced $\varphi$ of a member of $\tau$, the atomic membership clause
would give a further extension equating that member with a member-name
of $\tau$. Substitution would then contradict the minimality of its
rank. Hence $q$ forces that $\tau$ is a minimal member of the class
defined by $\varphi$. The density of such conditions proves the induction
instance.

In a model of $S^{*}$, let $\mathbb{Q}=\mathbb{G}_{\kappa}^{\mathsf{N}}$.
Then $\mathbb{Q}\in\mathsf{N}$ is definable there and nontrivial,
and the defining ground assertion in $S^{*}$ supplies $V=\mathsf{N}[g]$.
These give the first and third clauses. The forced form of Lemma \ref{M_P-is-def}
and preservation of the index identify the recovered ground with the
original universe. In particular, in a designated extension we have
$(M_{\mathbb{G}_{\kappa}})^{\mathsf{N}[h]}=\mathsf{N}$. This gives
the fourth clause.

For the second clause, argue internally in $\mathsf{N}$. The preceding
preservation arguments force the non-induction axioms of $R$, and
the minimal-name argument forces each induction instance. Since the
recovered ground is the original universe, each axiom of $R$ also
holds when relativized to that ground. The canonical generic name
supplies the assertion that the extension is obtained from it by $\mathbb{Q}$.
Thus $\mathsf{N}\models1_{\mathbb{Q}}\Vdash S^{*}$, instance by instance.

Finally, fix a common standard complexity bound $j$ for the non-induction
axioms and the fixed recovery and stability assertions, after expanding
the definitions. Choose $k\geq\mathsf{b}(j)$, enlarged for the product-forcing
argument in Theorem \ref{thm:MainTemplate}. Lemma \ref{lem:O2GeneralTruth}
transfers these assertions to the designated definable generics. Their
Full Set Induction comes from Lemma \ref{lem:tightFullSetInduction};
it is not being transferred at a single finite complexity bound. We
have therefore verified all four clauses of the stable definable ground
property. Theorem \ref{thm:MainTemplate} applies to $S^{*}$, and
its distinct bi-interpretable extensions, being extensions of $S$,
also witness that $S$ is not tight.
\end{proof}
\begin{rem*}
Note that the tautology case already follows from Theorem \ref{thm:generic-invariance-not-tight}
and the set-forcing preservation recalled at the beginning of Section
\ref{sec:GeoSmall}.
\end{rem*}
The preceding proposition concerned restrictions on the Separation
and Collection schemes. We now consider the consequences of a theory
at a fixed quantifier complexity. This is a different restriction
since proofs of sentences of low complexity may use axioms of arbitrarily
high complexity.\footnote{For example, $\{Con^{ZFC^{\Pi_{n}}}\}_{n\geq1}$ is a set of $\Pi_{1}^{0}$
sentences, which is, thus, of bounded quantifier complexity. However,
assuming $ZFC$ is consistent, there is no $n\in\omega$ such that
$ZFC^{\Pi_{n}}$ proves every element of that set.} The following argument extends Enayat's observation about the bounded
consequences of $ZF$ and $ZFC$ to stronger theories, including theories
incompatible with $V=L$.

We use the forcings $\mathbb{G}_{\kappa}$ and the assertions $\kappa^{<\kappa}=\kappa$
from Section \ref{sec:GeoSmall}. In the proposition, preservation
of $T$ means that $T\vdash1_{\mathbb{G}_{\kappa}}\Vdash\psi$ for
every axiom $\psi$ of $T$, and hence for every theorem of $T$.
\begin{prop}
Let $\kappa$ be $\omega$ or $\omega_{2}$, and let $T$ be a consistent
extension of $ZFC+\{\kappa^{<\kappa}=\kappa\}$ which is preserved
by $\mathbb{G}_{\kappa}$. If $U$ is a quantifier bounded collection
of theorems of $T$, then $KP+U$ is not tight. Indeed, its distinct
bi-interpretable extensions may be chosen to extend $ZFC^{\Pi_{3}}+U$.\label{prop:bounded-consequences}
\end{prop}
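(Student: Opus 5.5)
The plan is to reduce to Theorem \ref{thm:MainTemplate}, applied not to $KP+U$ directly but to a quantifier bounded strengthening that has the stable definable ground property.

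First, fix a standard $j\geq3$ such that every sentence of $U$ is $\Sigma_{j}$, enlarged so that the non-induction axioms of $ZFC^{\Pi_{3}}$ and the fixed recovery and stability assertions from Lemma \ref{M_P-is-def} are $B\Sigma_{j}$. Let $R=ZFC^{\Pi_{3}}+U+\{\kappa^{<\kappa}=\kappa\}$. This is of the form $KPC+S$ with $S$ quantifier bounded, and it is consistent since it is contained in $T$.

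The key point, and the place where the theoremhood of $U$ in $T$ is used, is the forcing clause. Each $\psi\in U$ is a theorem of $T$, and $T$ is preserved by $\mathbb{G}_{\kappa}$. Hence $T\vdash1_{\mathbb{G}_{\kappa}}\Vdash\psi$ for each $\psi\in U$, and also $T\vdash1\Vdash\kappa^{<\kappa}=\kappa$. These proofs may use axioms of $T$ of arbitrarily high complexity, so we cannot derive the forcing assertions inside $R$. Instead we add them as axioms. By Lemma \ref{lem:O2ForcingBound} or Lemma \ref{lem:sharp-forcing}, each assertion $1\Vdash\psi$ has complexity bounded uniformly in terms of $j$. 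So we pass to
\[
R'=R+\{1_{\mathbb{G}_{\kappa}}\Vdash\psi:\psi\in U\}+\bigl(1_{\mathbb{G}_{\kappa}}\Vdash\kappa^{<\kappa}=\kappa\bigr).
\]
This theory is still quantifier bounded and is consistent, because every sentence in it is a theorem of $T$. The non-induction axioms of $ZFC^{\Pi_{3}}$ are forced by the preservation argument at the beginning of Section \ref{sec:GeoSmall}, and the induction instances are forced by the minimal-name argument used in the proof of Proposition \ref{prop:MG-single}.

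Next, follow the proof of Proposition \ref{prop:MG-single} with $R'$ in place of $R$. Let $\mathsf{N}=M_{\mathbb{G}_{\kappa}}$, and let $S^{*}$ extend $R'$ by asserting that $\mathsf{N}$ satisfies $R'$ relativized and that $V$ is a $\mathbb{G}_{\kappa}^{\mathsf{N}}$-generic extension of $\mathsf{N}$. Consistency of $S^{*}$ comes from three ingredients:
\begin{enumerate}
\item Lemma \ref{count-enum-thm}, which gives a model of $R'$ with a definable countable enumeration;
\item a definable generic at a finite level $\geq\mathsf{b}(j)$, where Lemma \ref{lem:O2GeneralTruth} transfers the forced sentences of $R'$ and Lemma \ref{lem:tightFullSetInduction} supplies Full Set Induction;
\item Lemma \ref{M_P-is-def}, which recovers the ground.
\end{enumerate}

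The four clauses of Definition \ref{def:StableDefGprop} are then checked as in Proposition \ref{prop:MG-single}. The one subtle point is the clause $(\Vdash\psi)^{\mathsf{N}}$ for $\psi\in S^{*}$. For the sentences of $U$ this is exactly the added forcing axioms, applied inside $\mathsf{N}$. For the clauses that themselves say ``$\Vdash\psi$'', I expect the main obstacle. We need $1\Vdash(1\Vdash\psi)^{\mathsf{N}}$, and stability reduces this to the ground's own assertion, so it is fine.

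Theorem \ref{thm:MainTemplate} then gives distinct bi-interpretable extensions of $S^{*}\supseteq ZFC^{\Pi_{3}}+U\supseteq KP+U$, which is what the statement asks for.
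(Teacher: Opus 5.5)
There is a genuine gap, and it is at the point you flagged. Because you let $S^{*}$ extend $R'$, the unrelativized sentences $1_{\mathbb{G}_{\kappa}}\Vdash\psi$ for $\psi\in U$ become axioms about the universe itself, not just about $\mathsf{N}$.

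The second clause of Definition \ref{def:StableDefGprop} then demands $\bigl(1\Vdash(1_{\mathbb{G}_{\kappa}}\Vdash\psi)\bigr)^{\mathsf{N}}$. That is, in every designated extension $\mathsf{N}[h]$, forcing again with $\mathbb{G}_{\kappa}$ as computed in $\mathsf{N}[h]$ must force $\psi$. This is a different poset, which codes $\mathsf{N}[h]$'s own $r_{\kappa}$ rather than $\mathsf{N}$'s. Nothing in $R'$ yields this two-step assertion.

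Stability does not supply it either. Stability says only that the designated ground of $\mathsf{N}[h]$ is $\mathsf{N}$. So it handles sentences relativized to $\mathsf{N}$, namely the copy of $R'$ inside ``$\mathsf{N}\models R'$''. It says nothing about unrelativized forcing statements over $\mathsf{N}[h]$ itself.

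The same defect breaks your consistency step (2). Lemma \ref{lem:O2GeneralTruth} transfers to $W[G]$ only sentences that $W$ forces, and $W\models R'$ need not force its own axioms $1_{\mathbb{G}_{\kappa}}\Vdash\psi$. So the extension need not satisfy $S^{*}$. Adding the iterates $1\Vdash(1\Vdash\psi)$ at top level just pushes the problem up a level. Closing the top-level theory under this iteration would destroy quantifier boundedness.

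The repair, which is what the paper does, is to keep the forcing translations out of the top-level theory. Use $R=ZFC^{\Pi_{3}}+\{\kappa^{<\kappa}=\kappa\}+\{1_{\mathbb{G}_{\kappa}}\Vdash\varphi:\varphi\in U\}$ only as the theory of the ground. Let $S$ consist of $ZFC^{\Pi_{3}}+U$, the ground assertion, and the relativizations to $\mathsf{N}$ of the non-induction axioms of $R$. Then:
\begin{itemize}
\item $\mathsf{N}$ forces $U$ because $\mathsf{N}\models R$;
\item $\mathsf{N}$ forces $ZFC^{\Pi_{3}}$ by the set-forcing preservation results;
\item $\mathsf{N}$ forces the relativized part because stability identifies the recovered ground with $\mathsf{N}$.
\end{itemize}
Consistency then follows by applying Lemma \ref{count-enum-thm} to $R$ exactly as you describe.

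Alternatively, you may keep $R'$ at top level, provided the ground is required to satisfy one extra layer of assertions $1_{\mathbb{G}_{\kappa}}\Vdash(1_{\mathbb{G}_{\kappa}}\Vdash\psi)$ for $\psi\in U$. These are theorems of $T$, since preservation applies to all theorems of $T$. The essential point is that the ground's theory must force the top-level theory, and so must be strictly stronger than it.
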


Only the chosen consequences and their forcing translations need a
common complexity bound; the theory $T$ itself need not have one.
We shall use $T$ to witness the consistency of a suitable theory
of grounds, apply the Definable Countable Enumeration Lemma there,
and then invoke our tightness failure template.
\begin{proof}
Let
\[
R=ZFC^{\Pi_{3}}+\{\kappa^{<\kappa}=\kappa\}+\{1_{\mathbb{G}_{\kappa}}\Vdash\varphi:\varphi\in U\}.
\]
Every axiom of $R$ is a theorem of $T$, so $R$ is consistent. By
Lemma \ref{lem:O2ForcingBound}, the forcing translations of the sentences
in $U$ have a common finite complexity bound. After expanding the
fixed definitions, $R$ is therefore $KPC$ plus a quantifier bounded
set of sentences. Full Set Induction is already included in $KPC$.

Write $\mathsf{N}=M_{\mathbb{G}_{\kappa}}$ for the ground defined
in Lemma \ref{M_P-is-def}. Let $S$ consist of $ZFC^{\Pi_{3}}+U$,
the assertion that $\mathsf{N}$ is a $\mathbb{G}_{\kappa}^{\mathsf{N}}$-ground
of the universe, and the relativizations to $\mathsf{N}$ of the non-induction
axioms of $R$. The ground inherits Full Set Induction from the ambient
universe, so $S$ proves that $\mathsf{N}$ satisfies all of $R$.
Relativization to this fixed definition retains a common complexity
bound; thus $S$ is also $KPC$ plus a quantifier bounded set of sentences.

To see that $S$ is consistent, apply Lemma \ref{count-enum-thm}
to obtain a model $W$ of $R$ with a definable countable enumeration.
Use that enumeration to define a $\mathbb{G}_{\kappa}^{W}$-generic
filter at a finite level sufficient for $U$, the non-induction axioms
of $ZFC^{\Pi_{3}}$, and the fixed ground-recovery assertions. The
forcing assertions in $R$ and Lemma \ref{lem:O2GeneralTruth} give
$U$ in the extension. The preservation results of Section \ref{sec:GeoSmall}
give the non-induction axioms of $ZFC^{\Pi_{3}}$, and Lemma \ref{lem:tightFullSetInduction}
gives Full Set Induction. Lemma \ref{M_P-is-def} recovers $W$ as
the designated ground. The extension therefore satisfies $S$.

Finally, $S$ has the stable definable ground property. Its ground
$\mathsf{N}$ satisfies $R$, and hence forces every member of $U$.
The forcing preserves $ZFC^{\Pi_{3}}$ and recovers the original universe
as its designated ground, which still satisfies $R$. The canonical
generic name supplies the assertion that the extension is obtained
by the designated forcing. Thus $\mathsf{N}$ forces every axiom of
$S$, including each induction instance, as in the proof of Proposition
\ref{prop:MG-single}. Nontriviality and stability are supplied by
the same forcing and recovery results. Theorem \ref{thm:MainTemplate}
now makes $S$ non-tight. Since $S$ extends $ZFC^{\Pi_{3}}+U$, its
two distinct bi-interpretable extensions also witness the required
conclusion.
\end{proof}
For a syntactic class $\Gamma$, write
\[
T_{\Gamma}=\{\varphi\in\Gamma:T\vdash\varphi\}.
\]
The subscript thus denotes consequences of $T$, while the superscript
in $ZFC^{\Pi_{n}}$ restricts axiom schemes.
\begin{cor}
Let $\Phi$ be a tautology or one of the following assertions: there
is a measurable cardinal; there is a supercompact cardinal; $I3$;
there is a proper class of extendible cardinals; $PFA$; or $MM$.
Suppose $ZFC+\Phi$ is consistent. Then, for every standard $n\in\omega$,
the theories
\[
KP+(ZFC+\Phi)_{\Sigma_{n}}\qquad\text{and}\qquad KP+(ZFC+\Phi)_{\Pi_{n}}
\]
are neither tight nor solid. The same holds with $B\Sigma_{n}$ in
place of $\Sigma_{n}$ or $\Pi_{n}$, and with Full Set Induction
in place of $KP$.\label{cor:consequence-theories}
\end{cor}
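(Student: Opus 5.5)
The plan is to derive the corollary from Proposition \ref{prop:bounded-consequences}, with $\kappa=\omega$ in the tautology and large-cardinal cases and $\kappa=\omega_{2}$ for $PFA$ and $MM$. Fix $n$, and let $U$ be one of $(ZFC+\Phi)_{\Sigma_{n}}$, $(ZFC+\Phi)_{\Pi_{n}}$ or $(ZFC+\Phi)_{B\Sigma_{n}}$. Each is a set of theorems of $ZFC+\Phi$ of bounded quantifier complexity; Boolean combinations are handled by the prenex bound $k+1$ used in Lemma \ref{lem:O2ForcingBound}. So $U$ is quantifier bounded.

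First I would find a consistent extension $T\supseteq ZFC+\Phi+\{\kappa^{<\kappa}=\kappa\}$ that is preserved by $\mathbb{G}_{\kappa}$ and proves every member of $U$. Take $T=ZFC+\Phi+\{\kappa^{<\kappa}=\kappa\}$.
\begin{itemize}
\item For $\kappa=\omega$ the extra clause is automatic.
\item For $\kappa=\omega_{2}$, the collapse argument in the proof of Proposition \ref{prop:MG-single} gives consistency of $T$, now with full $ZFC$ as background.
\item For preservation, the arguments of Proposition \ref{prop:MG-single} apply verbatim, and in fact more easily in full $ZFC$. Small forcing preserves measurables, supercompacts, $I3$ and a proper class of extendibles below the relevant cardinal. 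Directed-closed forcing preserves $PFA$, $MM$ and $\omega_{2}^{<\omega_{2}}=\omega_{2}$.
\end{itemize}
Proposition \ref{prop:bounded-consequences} then makes $KP+U$ non-tight. Since solidity implies tightness, $KP+U$ is not solid either.

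The variant with Full Set Induction in place of $KP$ needs a separate check, which I expect to be the main obstacle. The non-tightness witnesses built in Theorem \ref{thm:MainTemplate} extend $ZFC^{\Pi_{3}}+U$, so they extend any weaker base theory. Non-tightness of such a base follows once the two bi-interpretable extensions are distinct theories extending it, and they are. The base here is Full Set Induction together with $U$, possibly with a little of $KP$ missing. Because the witnessing extensions contain all of $ZFC^{\Pi_{3}}$, nothing about the base is actually used beyond its being contained in them. So non-tightness, and hence non-solidity, transfers immediately.

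The remaining subtlety is the Definable Countable Enumeration Lemma. It requires a base of the form $KPC+S$, but it is applied to $R$ inside Proposition \ref{prop:bounded-consequences}, not to the base theory itself, so no adjustment is needed. I would verify this carefully, together with the complexity bookkeeping for the relativized forcing translations.
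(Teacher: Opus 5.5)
Your proposal is correct and is essentially the paper's own proof. It uses the same choices: $\kappa=\omega$ with $T=ZFC+\Phi$, and $\kappa=\omega_{2}$ with $T=ZFC+\Phi+\{\omega_{2}^{<\omega_{2}}=\omega_{2}\}$, made consistent by the preliminary closed collapse. It then applies the preservation arguments of Proposition \ref{prop:MG-single} and Proposition \ref{prop:bounded-consequences}. The Full Set Induction variant you flagged as the main obstacle is handled in the paper exactly as you handle it: non-tightness passes to subtheories, and solidity implies tightness.
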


\begin{proof}
For the tautology and the large-cardinal cases, take $\kappa=\omega$
and $T=ZFC+\Phi$. The preservation arguments of Proposition \ref{prop:MG-single}
apply. For $PFA$ and $MM$, take $\kappa=\omega_{2}$ and $T=ZFC+\Phi+\{\omega_{2}^{<\omega_{2}}=\omega_{2}\}$.
The preliminary closed forcing used there shows that $T$ is consistent,
and $\mathbb{G}_{\omega_{2}}$ preserves it. In each case the chosen
consequences of $ZFC+\Phi$ are a quantifier bounded collection of
theorems of $T$. Apply Proposition \ref{prop:bounded-consequences}.
Non-tightness passes to subtheories, giving the Full Set Induction
versions, and solidity implies tightness.
\end{proof}
In particular, if $ZF$ is consistent, the result applies to the $\Sigma_{n}$-,
$\Pi_{n}$- and $B\Sigma_{n}$-consequences of either $ZF$ or $ZFC$,
together with Full Set Induction. Here we use the equiconsistency
of $ZF$ and $ZFC$ and the inclusion of the $ZF$-theorems among
the $ZFC$-theorems. This case can also be obtained using the constructible-enumeration
argument of Section \ref{sec:KP}. The general enumeration lemma supplies
the stronger applications above.

\subsection{The inexpressibility of $V=HOD$ in quantifier bounded set theories}

In the face of the Definable Countable Enumeration Lemma, a natural
worry emerges. What if we added the axiom $V\neq HOD$ to one of our
suitable bounded theories? Then the obvious thing to think is that
models of such a theory cannot have a definable well-ordering, let
alone a definable countable enumeration. The problem is that the sentence
we use to express that the universe is hereditarily ordinal definable
over $ZF$ doesn't do the same job over theories with less Separation
and Collection.

To avoid confusion, let us write $\chi_{\mathrm{HOD}}$ to denote
the usual first-order sentence of $\mathcal{L}_{\in}$ which expresses
that every set is definable in some $V_{\alpha}$ by a formula of
set theory only using ordinal parameters. Thus, $\chi_{HOD}$ is the
sentence we use to express $V=HOD$ in $ZF$ and its extensions. In
the context of $ZF$, $\chi_{HOD}$ is equivalent to saying that
there is a definable well-ordering of the universe. But over the weak
set theories, we've been considering, $\chi_{HOD}$ merely implies
this: the converse does not hold. In particular, the Definable Countable
Enumeration Lemma implies the following. 
\begin{prop}
Let $T=KPC+S$, where $S$ is quantifier bounded. If $T+\neg\chi_{\mathrm{HOD}}$
is consistent, then it has a model $\mathcal{M}$ admitting a definable
well-ordering.
\end{prop}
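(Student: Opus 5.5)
The plan is to apply the Definable Countable Enumeration Lemma (Lemma \ref{count-enum-thm}) to the theory $T+\neg\chi_{\mathrm{HOD}}$ itself, and then turn the definable enumeration into a definable well-ordering.

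First I check that $T+\neg\chi_{\mathrm{HOD}}$ still has the form $KPC+S'$ with $S'$ quantifier bounded. Take $S'=S\cup\{\neg\chi_{\mathrm{HOD}}\}$. Since $\chi_{\mathrm{HOD}}$ is a single sentence of fixed finite complexity, it is $\Sigma_{k'}$ for some $k'$. Hence if every member of $S$ is $\Sigma_{k}$, every member of $S'$ is $\Sigma_{\max(k,k'+1)}$, after harmless padding. By hypothesis $KPC+S'$ is consistent. Lemma \ref{count-enum-thm} then gives a parameter-free formula $\theta(n,x)$ and a model $\mathcal{M}\models T+\neg\chi_{\mathrm{HOD}}$ in which $\theta$ defines a class surjection $f:\omega\twoheadrightarrow M$. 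This is the only substantive input, and it is already available.

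Second, inside $\mathcal{M}$, I define $x\prec y$ to hold iff the least $n\in\omega$ with $\theta(n,x)$ is less than the least $m\in\omega$ with $\theta(m,y)$. The least-index map $x\mapsto\min\{n:\theta(n,x)\}$ exists for each $x$ because $\omega^{\mathcal{M}}$ is a set. Surjectivity makes $\{n\in\omega:\theta(n,x)\}$ a nonempty definable class of natural numbers, so Full Set Induction, which is included in $KPC$, supplies its least element even though Separation may not make that class a set. Since $f$ is a function, distinct $x,y$ have distinct least indices. Thus $\prec$ is a parameter-free definable strict linear order on $M$, and it is isomorphic to a subclass of $<$ on $\omega^{\mathcal{M}}$.

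Third, I verify well-foundedness in the sense relevant to definable classes. Given a nonempty class $A$ defined with parameters, the class of least indices of members of $A$ is a nonempty definable class of naturals. Full Set Induction gives it a minimum $n_{0}$, and $f(n_{0})$ is then the $\prec$-least element of $A$. If set-sized well-foundedness is also wanted, each nonempty set $a$ has a $\prec$-least element by the same argument applied to $a$ as a class.

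I expect no real obstacle. The only delicate point is the bookkeeping in the first step, confirming that adding $\neg\chi_{\mathrm{HOD}}$ preserves quantifier boundedness, and that point is immediate. The conceptual content, which the preceding discussion anticipates, is that the resulting $\mathcal{M}$ has a definable well-ordering while $\chi_{\mathrm{HOD}}$ fails in it. This is consistent because, without full Separation and Collection, $\mathcal{M}$ cannot convert its class well-order into the statement that each set is ordinal definable inside some $V_{\alpha}$.
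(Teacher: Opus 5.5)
Your proposal is correct and follows the paper's proof: you add $\neg\chi_{\mathrm{HOD}}$ to $S$, which keeps the extra axioms quantifier bounded, apply the Definable Countable Enumeration Lemma, and order elements by their least index under the resulting enumeration. Your check that Full Set Induction supplies least indices and least elements of definable classes fills in details the paper leaves implicit.
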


\begin{proof}
The set of additional axioms $S\cup\{\neg\chi_{\mathrm{HOD}}\}$ is
quantifier bounded, since $\chi_{\mathrm{HOD}}$ is a single fixed
first-order sentence. We then apply the Definable Countable Enumeration
Lemma (Lemma \ref{count-enum-thm}) to $KPC+S+\neg\chi_{\mathrm{HOD}}$
to obtain a model $\mathcal{M}$ of $T+\neg\chi_{\mathrm{HOD}}$ along
with a parameter-free formula defining a class surjection $f:\omega^{\mathcal{M}}\twoheadrightarrow M$.
We can then well-order $\mathcal{M}$ by letting $x\prec y$ iff,
in $\mathcal{M}$, the least $n\in\omega$ such that $f(n)=x$ is
such that $n<m$ where $m\in\omega$ is the least such that $f(m)=y$.
\end{proof}
Thus, we see there are models of such theories where $\chi_{HOD}$
is false and yet we still have a definable well-ordering of the universe.
In a nutshell, without Full Reflection, the falsity of $\chi_{HOD}$
is not enough to block the existence of a complex definable well-ordering.

\subsection{Solidity and the ground axiom}

\label{subsec:solid-GA}

For the final section of the paper, we return to solidity and observe
that the sensible thing is true: solid theories cannot describe generic
extensions.

Over a weak theory, however, we must say how generic the extension
is. An ordinary generic need not give the truth lemma for every sentence
in $U$. Since $U$ is quantifier bounded, only a fixed finite level
of genericity is needed. We incorporate that requirement into the
ground axiom.

\label{def:GA-n}Let $MOST_{\mathrm{Ind}}=KPC+\text{Powerset}+\Sigma_{1}$-Separation,
as in Section \ref{sec:GeoSmall}, and let $W_{t}$ be Farmer's uniformly
defined family of $MOST$-grounds. For each fixed standard $n\geq1$,
define $\neg\mathrm{GA}_{n}$ to say that there are $t$, $\mathbb{P}\in W_{t}$
and a filter $G\subseteq\mathbb{P}$ such that 
\[
W_{t}\subsetneq V,\qquad V=W_{t}[G],\qquad G\text{ is }B\utilde{\Sigma}_{\mathsf{b}(n)}\text{-generic over }W_{t}.
\]
 Thus we retain the existing ground clause and strengthen its genericity
requirement. The same $\mathbb{P}$ and $G$ must witness both the
extension and the additional genericity.

This is a first-order sentence. For fixed $n$, use the partial satisfaction
predicate from Section \ref{sec:KP}, relativized to $W_{t}$, to
quantify over formula codes and finite tuples of ground parameters.
The additional clause says that $G$ meets each dense subclass of
$\mathbb{P}$ so defined. We do not add Full Set Induction to the
ground clause: in a model of $MOST_{\mathrm{Ind}}$, every definable
transitive class inherits Full Set Induction by relativization. In
particular, each $MOST$-ground occurring here satisfies $MOST_{\mathrm{Ind}}$.
\begin{thm}
\label{thm:solid-GA}Suppose $T=MOST_{\mathrm{Ind}}+U$, where $U\subseteq B\Sigma_{n}$
for some standard $n\geq1$. If $T$ is solid, then 
\[
T\vdash\mathrm{GA}_{n}.
\]
\end{thm}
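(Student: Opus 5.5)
We argue by contraposition: assuming $T\nvdash\mathrm{GA}_{n}$, we produce models witnessing a failure of solidity. Since $T+\neg\mathrm{GA}_{n}$ is consistent, fix $\mathcal{N}\models T+\neg\mathrm{GA}_{n}$, with witnesses $t$, $\mathbb{P}\in W_{t}$ and $G$ such that $W_{t}\subsetneq V$, $V=W_{t}[G]$ and $G$ is $B\utilde{\Sigma}_{\mathsf{b}(n)}$-generic over $W_{t}$. Write $\mathcal{M}_{0}=W_{t}^{\mathcal{N}}$. The plan is to realise the pattern
\[
\mathcal{M}_{0}\triangleright_{par}\mathcal{N}'\triangleright_{par}\mathcal{M}_{1},
\]
with $\mathcal{M}_{1}$ definably isomorphic to $\mathcal{M}_{0}$ but $\mathcal{N}'\not\cong\mathcal{M}_{0}$ by any map definable over $\mathcal{M}_{0}$. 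The obstacle is that $G$ is in general not definable over $\mathcal{M}_{0}$, so $\mathcal{M}_{0}$ does not interpret $\mathcal{N}$ directly. We repair this with the Definable Countable Enumeration Lemma, applied to a suitable theory describing the ground.

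First we isolate a theory $R$ describing the ground. Let $R$ be $MOST_{\mathrm{Ind}}$ together with the sentences asserting that $\mathbb{P}$ (given by a formula, or by adjoining a constant symbol as permitted in Lemma \ref{count-enum-thm}) is a nontrivial forcing, and that $1_{\mathbb{P}}\Vdash\varphi$ for each $\varphi\in U$. By the truth lemma for arbitrary models (Lemma \ref{lem:O2GeneralTruth}) and the uniform forcing bound (Lemma \ref{lem:O2ForcingBound}), $\mathcal{N}\models U$ yields that $W_{t}$ forces each $\varphi\in U$ below some condition of $G$. We therefore work below such a condition, with $\mathbb{P}$ restricted accordingly, and handle the uniformity in $\varphi$ by restricting to a fixed finite list of formula codes; alternatively we pass to the theory of $W_{t}$ with the parameters named by constants. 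Forcing translations of $B\Sigma_{n}$ sentences are $\Delta_{\mathsf{b}(n)}$, so $R$ is $KPC$ plus a quantifier bounded set. Lemma \ref{count-enum-thm} then gives a model $\mathcal{W}\models R$ with a parameter-free definable surjection $\omega\twoheadrightarrow\mathcal{W}$.

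Next we build the generic extension inside $\mathcal{W}$. Following the least-extension construction of Lemma \ref{DefiningGenOverHYP} (as used in Theorem \ref{thm:MainTemplate}), we define over $\mathcal{W}$ a filter $H$ that is $B\utilde{\Sigma}_{k}$-generic for large $k\geq\mathsf{b}(n)$, and form $\mathcal{N}'=\mathcal{W}^{\mathbb{P}}/H$. As in Section \ref{sec:GeoSmall} and Lemma \ref{lem:tightFullSetInduction}, $\mathcal{N}'\models MOST_{\mathrm{Ind}}+U$, so $\mathcal{N}'\models T$. Schlutzenberg's Theorem \ref{thm:SchlutzenbergGMD} applied in $\mathcal{N}'$ supplies an index $t'$ (the image of $H_{\eta^{+}}^{\mathcal{W}}$) with $W_{t'}^{\mathcal{N}'}=\{[\check{x}]:x\in\mathcal{W}\}$, and the map $x\mapsto[\check{x}]_{H}$ is an isomorphism $\mathcal{W}\cong W_{t'}^{\mathcal{N}'}$ definable over $\mathcal{W}$. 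This gives
\[
\mathcal{W}\triangleright\mathcal{N}'\triangleright_{par}W_{t'}^{\mathcal{N}'}\cong\mathcal{W},
\]
and we must also check that $\mathcal{W}\models T$, which holds because $\mathcal{W}$, being a $MOST$-ground, inherits the relevant axioms.

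Solidity would then give an isomorphism $g:\mathcal{W}\cong\mathcal{N}'$ definable over $\mathcal{W}$ with parameters, and the final step is to refute this. Since $H$ is $B\utilde{\Sigma}_{1}$-generic and $\mathbb{P}$ is nontrivial, $H\notin\mathcal{W}$, while $\mathcal{N}'$ contains $H$ as a set. Pulling $H$ back along $g$ and composing with the ground isomorphism would yield a set in $\mathcal{W}$ that, transported to $W_{t'}^{\mathcal{N}'}$, lies in the ground; this needs care. I expect the cleanest route is the sentence $\sigma=\exists t\,(W_{t}\subsetneq V)$, which holds in $\mathcal{N}'$ and which we must also arrange to fail in $\mathcal{W}$, so that $\mathcal{W}\not\equiv\mathcal{N}'$. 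The main obstacle is therefore choosing $R$ so that it forces $U$ while also including $\mathrm{GA}$ (or at least $\mathrm{GA}_{n}$) for $\mathcal{W}$. If that is not consistent, the fallback is a rank or cardinality comparison exploiting the parametric $g$: $g$ would map $\mathbb{P}^{\mathcal{W}}$ to a poset over which $\mathcal{N}'$ has a generic, and iterating the comparison yields an infinite descending sequence of grounds, contradicting Full Set Induction on the indices $t$.
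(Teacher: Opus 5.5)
The step that fails is ``$\mathcal{W}\models T$.'' Solidity only constrains configurations in which all three models satisfy $T$. Your first model $\mathcal{W}$ is merely a model of $R$, and $R$ asserts that each $\varphi\in U$ is \emph{forced}, not that it holds. A ground inherits $MOST_{\mathrm{Ind}}$ (relativize each induction instance), but it does not inherit arbitrary $B\Sigma_{n}$ sentences: if $U$ says $V\neq L$, the ground may well be $L$.

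Nor can you simply add $U$ to $R$. The theory $T+\neg\mathrm{GA}_{n}$ can be consistent while no model of $T$ forces $U$ over an atomless poset. For example, let $U$ say that $V=L[x]$ for a real $x\notin L$ of minimal constructibility degree, as in a Sacks extension of $L$. In such a $\mathcal{W}=L[x]$, any extension would be $L[y]$ with $x\in L[y]$, hence $y\in L[x]$, so the extension would collapse into $\mathcal{W}$. So the configuration $\mathcal{W}\triangleright\mathcal{N}'\triangleright_{par}W_{t'}^{\mathcal{N}'}$ does not witness a failure of solidity. Your unresolved last step is actually unnecessary: the identity argument below, run inside $\mathcal{N}'$, refutes $g$ directly. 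But the configuration itself is invalid.

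The missing idea is to use the ground only as an intermediary and to put two \emph{extensions} at the ends. Let $\langle G_{0},G_{1}\rangle$ be the canonical $B\utilde{\Sigma}_{\mathsf{b}(n)}$-generic pair for $\mathbb{P}\times\mathbb{P}$ over the enumerated model $N$, and take $\mathcal{M}_{0}=N[G_{0}]$ and $\mathcal{N}=N[G_{1}]$.

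Each coordinate satisfies $T$, by the truth lemma for arbitrary models and Lemma \ref{lem:tightFullSetInduction}. Each recovers $N$ from Farmer's parameter $t$, hence the enumeration, hence the canonical pair and the other coordinate. This gives $\mathcal{M}_{0}\triangleright_{par}\mathcal{N}\triangleright_{par}\mathcal{M}_{1}$, with the round trip given by valuation of names.

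Non-isomorphism needs no elementary difference between the models, so you do not need $\mathrm{GA}$ in $R$ or a descending sequence of grounds. A parametrically definable isomorphism $\mathcal{M}_{0}\cong\mathcal{N}$ becomes, inside $N[G_{0}\times G_{1}]$, a definable $\in$-isomorphism between the transitive classes $N[G_{0}]$ and $N[G_{1}]$. Set Induction forces it to be the identity, contradicting $G_{1}\notin N[G_{0}]$, which follows from mutual genericity plus atomlessness.

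Two smaller repairs are also needed. First, no single condition in $G$ need force all of $U$. So $R$ must be obtained by compactness, interpreting a constant for the forcing as a cone below some $p_{F}\in G$ for each finite $F\subseteq U$. Neither a finite list of codes nor naming parameters of $W_{t}$ achieves this. Second, ``nontrivial'' must be ``atomless'': a $B\utilde{\Sigma}_{1}$-generic filter containing an atom is determined by it and lies in the ground. Use the dense set of atoms and atom-free conditions to pass to an atomless cone.
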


We prove the theorem contrapositively. A model of $T+\neg\mathrm{GA}_{n}$
gives a ground in which each finite part of $U$ can be forced. Compactness
and definable countable enumeration then give a ground over which
two canonical generic extensions can be defined. Both extensions satisfy
$T$ and can interpret one another. The first defines an isomorphism
with the round-trip interpretation of itself, but cannot define an
isomorphism to the intervening extension. This is the failure of solidity
we need.

We shall use the following three results: the set-forcing preliminaries
in Section \ref{sec:KP}, together with Farmer's preservation lemma
\citep[Lemma 2.5]{Schlutzenberg2022LocalMantles}, give the required
forcing and preservation results over $MOST_{\mathrm{Ind}}$; a proper
ground is uniformly definable from Farmer's parameter, as in Theorem
\ref{thm:SchlutzenbergGMD}; and the Definable Countable Enumeration
Lemma (Lemma \ref{count-enum-thm}) applies to any consistent theory
consisting of $KPC$ and a quantifier bounded set of additional sentences,
including a fixed finite expansion by constants. Solidity allows parameters,
so the small-parameter coding of Section \ref{sec:GeoSmall} is not
needed here.

Fix $T$, $U$ and $n$ as in the theorem. We first obtain a consistent
theory of grounds. Call a forcing atomless if below every condition
there are two incompatible conditions. This will ensure that the generic
extensions used later are proper.
\begin{lem}
\label{lem:nontrivial-forcing}Suppose $T+\neg\mathrm{GA}_{n}$ is
consistent. In the language obtained by adding a constant $\dot{\mathbb{P}}$,
let $\Gamma_{U}$ consist of $MOST_{\mathrm{Ind}}$, the assertion
that $\dot{\mathbb{P}}$ is an atomless forcing notion with a greatest
condition, and, for each $\varphi\in U$, the sentence $1_{\dot{\mathbb{P}}}\Vdash\varphi$.
Then $\Gamma_{U}$ is consistent.
\end{lem}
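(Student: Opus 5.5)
Fix a model $\mathcal{V}\models T+\neg\mathrm{GA}_{n}$, and let $t$, $\mathbb{P}\in W_{t}$ and $G\subseteq\mathbb{P}$ witness $\neg\mathrm{GA}_{n}$. Let $W=W_{t}$. My plan is to interpret $\dot{\mathbb{P}}$ in $W$ by a suitable forcing and check the axioms of $\Gamma_{U}$ there. I would argue by compactness, so it suffices to satisfy $MOST_{\mathrm{Ind}}$, atomlessness and finitely many sentences $1_{\dot{\mathbb{P}}}\Vdash\varphi$ for $\varphi\in U_{0}\subseteq U$ finite. In fact the single ground $W$ should handle all of $U$ at once, and compactness is then only a safety net.

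\textbf{Step 1.} First, $W\models MOST_{\mathrm{Ind}}$. By Theorem \ref{thm:SchlutzenbergGMD}, $W$ is a $MOST$-ground. It inherits Full Set Induction from $\mathcal{V}$, since the ground is a definable class and we may relativize each induction instance to it.

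\textbf{Step 2.} Next I would force exactly below a condition in $G$. Fix $\varphi\in U$. Since $\varphi$ is $B\Sigma_{n}$ and holds in $\mathcal{V}=W[G]$, and $G$ is $B\utilde{\Sigma}_{\mathsf{b}(n)}$-generic over $W$, the truth lemma for arbitrary models (Lemma \ref{lem:O2GeneralTruth}) gives some $p_{\varphi}\in G$ with $W\models p_{\varphi}\Vdash\varphi$. I need one condition that works uniformly for all of $U$. Compactness gives this for any finite $U_{0}$: take a common lower bound in $G$ of $p_{\varphi}$ for $\varphi\in U_{0}$, which exists because $G$ is a filter. Then let $\dot{\mathbb{P}}$ be interpreted as $\mathbb{P}\restriction p$ for that common bound $p$. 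Restriction commutes with forcing: $1_{\mathbb{P}\restriction p}\Vdash\varphi$ iff $p\Vdash_{\mathbb{P}}\varphi$. So all sentences $1_{\dot{\mathbb{P}}}\Vdash\varphi$ with $\varphi\in U_{0}$ hold in $W$. Each such sentence is a single first-order sentence of fixed complexity, by Lemma \ref{lem:O2ForcingBound}, so there is no issue in expressing it.

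\textbf{Step 3 (main obstacle).} The remaining task, and the main obstacle, is atomlessness of $\mathbb{P}\restriction p$; $\mathbb{P}$ itself need not be atomless. I would use $W\subsetneq\mathcal{V}$.

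First suppose some $q\in G$ is an atom of $\mathbb{P}$, meaning all extensions of $q$ are pairwise compatible. Then $G$ is determined by $q$, namely $G=\{r:r\ \text{is compatible with every extension of}\ q\}$. By $\Delta_{0}$-Separation, together with $B\utilde{\Sigma}_{1}$-genericity applied to $D_{r}=\{s:s\le r\ \text{or}\ s\perp r\}$, this set lies in $W$. Hence $W[G]=W$, a contradiction.

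So no element of $G$ is an atom. Define
\[
D=\{s\in\mathbb{P}:\ \text{$s$ is an atom, or $\mathbb{P}\restriction s$ is atomless}\}.
\]
This class is dense in $\mathbb{P}$ and of low complexity, so $G$ meets it. By the previous paragraph the element it meets is not an atom, so $G$ contains some $s$ with $\mathbb{P}\restriction s$ atomless.

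\textbf{Step 4.} Finally, I would take $p$ to be a common extension in $G$ of $s$ and the finitely many $p_{\varphi}$. Then $\mathbb{P}\restriction p$ is atomless, has greatest element $p$, and forces $U_{0}$. Compactness then gives the consistency of $\Gamma_{U}$.
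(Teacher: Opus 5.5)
Your proof is correct and follows the paper's route. Fix the witnessing ground $W=W_t$ and the $B\utilde{\Sigma}_{\mathsf{b}(n)}$-generic $G$, and use the truth lemma to get conditions in $G$ forcing a finite fragment of $U$. Then show $G$ contains no atom (else $G\in W$ and $W[G]=W$), meet the dense set of atoms or atomless cones, restrict to the cone below a common extension in $G$, and finish by compactness. The only differences are cosmetic: you check, via the dense sets $D_r$, why an atom in $G$ puts $G$ into $W$, which the paper merely asserts. Your opening remark that compactness is ``only a safety net'' is off, since no single $p\in G$ need force all of $U$; your actual argument, however, correctly relies on compactness.
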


\begin{proof}
Fix a model $V\models T+\neg\mathrm{GA}_{n}$ and write $V=W[G]$,
where $W=W_{t}$ is a proper $MOST$-ground and $G\subseteq\mathbb{P}\in W$
is $B\utilde{\Sigma}_{\mathsf{b}(n)}$-generic over $W$. As observed
above, $W\models MOST_{\mathrm{Ind}}$. It is enough to satisfy a
finite fragment of $\Gamma_{U}$. Let $F\subseteq U$ be finite. Since
$\bigwedge F$ is $B\Sigma_{n}$ and $W[G]\models\bigwedge F$, the
truth lemma for arbitrary models (Lemma \ref{lem:O2GeneralTruth})
gives some $p\in G$ such that $p\Vdash_{W}\bigwedge F$.

Call $a\in\mathbb{P}$ an atom if every two conditions below $a$
are compatible. In $W$ let
\[
D=\{a\in\mathbb{P}:a\text{ is an atom}\}\cup\{q\in\mathbb{P}:\text{no atom lies below }q\}.
\]
This set is dense in $\mathbb{P}$. Since the extension $W[G]$ is
proper, $G$ contains no atom: a generic containing an atom is determined
by that atom and hence belongs to $W$. Choose $d\in G\cap D$. Then
no atom lies below $d$. Since $p,d\in G$, choose $q\in G$ with
$q\leq p,d$. No atom lies below $q$, so the cone $\mathbb{Q}=\mathbb{P}\restriction q$
is atomless, with greatest condition $q$. Moreover, $q\leq p$, and
hence
\[
1_{\mathbb{Q}}\Vdash_{W}\bigwedge F.
\]
Thus $W$ expanded by $\mathbb{Q}$ satisfies the chosen finite fragment.
Compactness gives the result.
\end{proof}
Since $U\subseteq B\Sigma_{n}$, the uniform forcing-bound lemma (Lemma
\ref{lem:O2ForcingBound}) gives a common $\Delta_{\mathsf{b}(n)}$
bound for the sentences $1_{\dot{\mathbb{P}}}\Vdash\varphi$, where
$\varphi\in U$. The additional axioms in $MOST_{\mathrm{Ind}}$ and
the assertion that $\dot{\mathbb{P}}$ is atomless also have fixed
finite complexity. Hence $\Gamma_{U}$ is $KPC$ plus a quantifier
bounded set of sentences in the expanded language, and the Definable
Countable Enumeration Lemma (Lemma \ref{count-enum-thm}) applies
to it.

For the next lemma, fix a model $N$ of $MOST_{\mathrm{Ind}}$ carrying
a definable class surjection $e:\omega\twoheadrightarrow N$, and
let $\mathbb{P}\in N$ be atomless. Let $m=\mathsf{b}(n)=2n+3$, and
let $\langle G_{0},G_{1}\rangle$ be the canonical $B\utilde{\Sigma}_{m}$-generic
pair for $\mathbb{P}\times\mathbb{P}$ over $N$ obtained from $e$
by the least-extension construction. Write 
\[
W=N[G_{0}\times G_{1}].
\]
 We want each coordinate to recover the same ground and enumeration,
and hence the same canonical pair. The common product extension will
then let us compare the two coordinates by Set Induction.

The construction is the one from Section \ref{sec:KP}, with least
$e$-index in place of the constructible well-order. Enumerate formula-parameter
pairs using $e$ and, at each stage, take the extension with least
$e$-index meeting the current requirement, if there is one. Full
Set Induction gives every internally finite initial history. Thus
the resulting filters are definable classes of $N$; they need not
be sets of $N$. The fixed formula defining $e$, its parameters,
$\mathbb{P}$ and $m$ determine the construction.

Farmer's preservation lemma gives $MOST$ in both coordinate extensions
and in $W$. For its application, use Choice to take fixed isomorphic
copies of the forcing notions on ordinals. Since the filters are definable
over $N$ and $m\geq1$, Lemma \ref{lem:tightFullSetInduction} gives
Full Set Induction in all three extensions. They therefore satisfy
$MOST_{\mathrm{Ind}}$.

We also need each coordinate to be generic over the other for dense
sets. Suppose $\dot{D}\in N$ names a dense subset of $\mathbb{P}$
in $N[G_{0}]$. By the bounded case of Lemma \ref{lem:O2GeneralTruth},
some $p\in G_{0}$ forces that $\dot{D}$ is dense. Below $\langle p,1\rangle$,
the class 
\[
E=\{\langle r,s\rangle\in\mathbb{P}\times\mathbb{P}:r\leq p\text{ and }r\Vdash\check{s}\in\dot{D}\}
\]
 is dense: given a second-coordinate condition, density of $\dot{D}$
lets us strengthen the first coordinate to force some ground condition
below it into $\dot{D}$. Atomic forcing and $\Sigma_{1}$-Separation
make $E$ a set of $N$. Adjoin the conditions incompatible with $\langle p,1\rangle$
to obtain a globally dense set. The product generic meets it below
$\langle p,1\rangle$, so $G_{1}$ meets $D$. The other direction
is symmetric.

Choose a cardinal $\eta$ in $N$ large enough that the fixed ordinal
copies of $\mathbb{P}$ and $\mathbb{P}\times\mathbb{P}$ belong to
\[
t=(H_{\eta^{+}})^{N}.
\]
 Farmer's ground definition recovers $N$ from this same $t$ in both
coordinates and in $W$. Relativizing the enumeration formula to the
recovered ground recovers the same $e$. We allow $t$, the forcing
notions, the fixed isomorphisms and any parameters of $e$ in the
interpretations below. All are ground objects. By the canonical interpretation
of the other coordinate, we mean the quotient of $N$-names by its
recovered generic, with equality and membership defined by forcing.
\begin{lem}
\label{lem:noiso}No isomorphism from $N[G_{0}]$ to its canonical
interpretation of $N[G_{1}]$ is parametrically definable over $N[G_{0}]$.
\end{lem}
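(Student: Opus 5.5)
The plan is to argue by contradiction. Suppose $\iota$ is an isomorphism from $N[G_{0}]$ onto its canonical interpretation $\mathcal{I}$ of $N[G_{1}]$, definable over $N[G_{0}]$ from a parameter $a\in N[G_{0}]$. Inside $N[G_{0}]$ we have Farmer's definition of the ground $N$ from $t$, and hence the enumeration $e$ and the canonical pair. Inside $\mathcal{I}$, the image of $N$ under the canonical interpretation is again (definably in $N[G_{0}]$) identified with $N$ itself via check names. The first step is to show that $\iota$ restricted to $N$ must be the identity under this identification. Any isomorphism carries the definable ground of $N[G_{0}]$ (defined from $t$) onto the ground of $\mathcal{I}$ defined from $\iota(t)$. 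I would then run a Full Set Induction argument along ranks of $N$ to show that $\iota(\check{x})=\check{x}$ for all $x\in N$. It is easier to proceed via the enumeration: $\iota$ maps the $e$-enumeration of $N$ computed in $N[G_{0}]$ to the $e$-enumeration computed in $\mathcal{I}$. Since $e$ is given by a fixed formula and $\omega$ is fixed pointwise by $\iota$, we get $\iota(e(k))=e(k)$ for every $k\in\omega^{N}$. This step needs $\iota(t)=t$; the paper allows $t$ as a parameter, and $t=e(k_{0})$ for some $k_{0}$, so it suffices that $\iota$ preserves whatever definition of $t$ we use. I would arrange this by defining $t$ from its $e$-index together with the fixed definition of $e$ relativized to the recovered ground, which is self-referential. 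The cleaner route is therefore to note that $\iota$ fixes ordinals, by induction on ordinals since both structures share the ordinals of $N$, and $t=(H_{\eta^{+}})^{N}$ is definable from the ordinal $\eta$ over the recovered ground. That makes $\iota$ fix $t$, hence fix $N$ pointwise.

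Next, $\iota$ sends $G_{0}$ (the generic of $N[G_{0}]$, a set there) to the generic $G_{1}$ as seen in $\mathcal{I}$. Since $\iota$ fixes $\mathbb{P}$ and each of its conditions, and $\iota$ is an isomorphism, we get $\iota(G_{0})=G_{1}$ as a set of conditions. So $G_{1}$ becomes a set of $N[G_{0}]$: it is separated out as $\{p\in\mathbb{P}:\iota(\check p)\in\iota(G_0)\}$, or more directly, $p\in G_{1}$ iff $\mathcal{I}\models\check{p}\in\dot{G}$ iff $\iota^{-1}$ of that holds. The relevant classes are definable, and we must check that the resulting class is a set in $N[G_{0}]$. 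Its extension coincides with $\iota^{-1}[\text{the set }G^{\mathcal{I}}]$, and isomorphisms preserve sethood, because $\iota^{-1}(G^{\mathcal{I}})$ is an element of $N[G_{0}]$ whose members are exactly the $p$ with $p\in G_{1}$, using that $\iota$ fixes $N$.

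Finally, $G_{1}$ is an element of $N[G_{0}]$, yet $G_{1}$ meets every dense subset of $\mathbb{P}$ lying in $N[G_{0}]$, by the mutual genericity established just before the lemma. Since $\mathbb{P}$ is atomless, $\mathbb{P}\setminus G_{1}$ is dense and lies in $N[G_{0}]$, which is a contradiction. I expect the main obstacle to be the first step, namely showing that an arbitrary parametrically definable $\iota$ fixes $N$ pointwise. That requires checking that $\iota$ is the identity on ordinals, by Full Set Induction in $N[G_{0}]$ applied to the definable class of ordinals moved by $\iota$. It also requires checking that the ground-recovery parameter $t$ is ordinal-definable over the recovered ground, so that the same formula defines $N$ on both sides.
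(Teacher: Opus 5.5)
Your argument is correct in substance but takes a different route from the paper's, and two steps need cleaning up.

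The paper passes to the product extension $W=N[G_{0}\times G_{1}]$. There both coordinates are transitive classes obtained by valuation. A definable isomorphism from $N[G_{0}]$ onto its interpretation of $N[G_{1}]$ therefore yields a definable $\in$-isomorphism $h:N[G_{0}]\to N[G_{1}]$ between transitive classes of $W$. Full Set Induction in $W$ makes $h$ the identity, so the two classes coincide, and $G_{1}\in N[G_{0}]$ contradicts mutual genericity. You instead stay inside $N[G_{0}]$: you show that $\iota$ agrees with the check-name embedding on the ground, and then pull back $\mathcal{I}$'s generic. Your route needs only Full Set Induction in $N[G_{0}]$ and the definability of $N$ there from $t$. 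It avoids the product model, its induction, its recovery of $N$, and the relativize-then-valuate step. The paper's version is more global: it shows that such a map would be the identity on all of $N[G_{0}]$.

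Two repairs are needed.

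(i) The route you end up preferring for fixing $N$ is circular. $N$ is recovered from $t$, so ``$t$ is definable from $\eta$ over the recovered ground'' cannot establish $\iota(t)=t$. In any case, fixing $t$ would only show that $\iota$ maps $N$ onto the check names, not that it does so pointwise. None of this is needed, because your first suggestion works directly. The class $\{x\in N:\iota(x)\not\approx^{\mathcal{I}}\check{x}\}$ is definable over $N[G_{0}]$. Suppose $x$ is $\in$-minimal in it. By surjectivity of $\iota$, the $\mathcal{I}$-members of $\iota(x)$ are exactly the $\iota(z)\approx^{\mathcal{I}}\check{z}$ for $z\in x$. By the membership clause and the genericity of $G_{1}$, these are exactly the $\mathcal{I}$-members of $\check{x}$. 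Extensionality in $\mathcal{I}$ then gives the contradiction. So the obstacle you anticipate, namely that $\iota$ must preserve the definition of $N$, never arises, and the ordinals are just a special case.

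(ii) $\iota(G_{0})$ is not $G_{1}$. Since conditions are fixed, $\{p:\iota(p)\in^{\mathcal{I}}\iota(G_{0})\}=G_{0}$, so $\iota(G_{0})$ represents $G_{0}$ inside $N[G_{1}]$. The object you want is $\iota^{-1}([\dot{G}]_{G_{1}})$, as in your ``more directly'' sentence; its members are exactly the conditions in $G_{1}$. Using $\iota(G_{0})$ would also finish the proof, since $G_{0}\in N[G_{1}]$ contradicts the symmetric half of mutual genericity.
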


\begin{proof}
Work in $W=N[G_{0}\times G_{1}]$. The common ground $N$ is definable
in $W$ from the parameter $t$. Valuation of $N$-names using $G_{0}$
and $G_{1}$ therefore defines the two transitive coordinate classes
in $W$. If the first coordinate defined an isomorphism to its canonical
interpretation of the second, relativizing that definition to the
first coordinate and applying valuation in $W$ would give a parametrically
definable isomorphism $h:N[G_{0}]\longrightarrow N[G_{1}]$ in $W$.

Full Set Induction in $W$ shows that $h(x)=x$ for every $x\in N[G_{0}]$.
Indeed, suppose all members of $x$ are fixed. Membership preservation
gives $x\subseteq h(x)$; conversely, transitivity of the range and
surjectivity of $h$ show that every member of $h(x)$ is $h(y)$
for some $y\in x$, and hence belongs to $x$. Extensionality gives
$h(x)=x$. The two coordinate classes would consequently be equal.

But $G_{1}\notin N[G_{0}]$: otherwise $\mathbb{P}\setminus G_{1}$
would, by atomlessness and bounded Separation, be a dense set in $N[G_{0}]$
disjoint from $G_{1}$, contrary to the mutual genericity just proved.
This is a contradiction.
\end{proof}
We can now finish the argument. The common ground and canonical pair
give the interpretations in both directions; valuation of names identifies
the round trip with the original model.
\begin{proof}
We now prove Theorem \ref{thm:solid-GA} contrapositively. Assume
$T+\neg\mathrm{GA}_{n}$ is consistent. By Lemma \ref{lem:nontrivial-forcing}
and the Definable Countable Enumeration Lemma (Lemma \ref{count-enum-thm}),
fix a model $N\models\Gamma_{U}$ carrying a definable class surjection
$e:\omega\twoheadrightarrow N$. Let $\mathbb{P}$ denote the distinguished
atomless forcing notion of $N$.

Apply the preceding construction to $\mathbb{P}\times\mathbb{P}$
at the $B\utilde{\Sigma}_{\mathsf{b}(n)}$ level, and let 
\[
\mathcal{M}_{0}=N[G_{0}]\qquad\text{and}\qquad\mathcal{N}=N[G_{1}].
\]
 Both coordinates satisfy $MOST_{\mathrm{Ind}}$, as verified above.
Moreover, $1_{\mathbb{P}}\Vdash\varphi$ for every $\varphi\in U$,
and each projection is $B\utilde{\Sigma}_{\mathsf{b}(n)}$-generic
over $N$. The truth lemma for arbitrary models (Lemma \ref{lem:O2GeneralTruth})
therefore transfers every sentence of $U$ to both coordinate extensions.
Thus $\mathcal{M}_{0},\mathcal{N}\models T$. We do not require the
product extension $W$ to satisfy $U$.

Choose the common ground parameter $t$ as above. Thus, using $t$
and $\mathbb{P}$, together with the other fixed ground parameters,
$\mathcal{M}_{0}$ recovers $N$, its definable enumeration $e$ and
the canonical pair $\langle G_{0},G_{1}\rangle$. Equality and membership
in the interpreted extension are defined by forcing on ground names.
Hence it interprets $\mathcal{N}$. Symmetrically, $\mathcal{N}$
interprets a copy $\mathcal{M}_{1}$ of the first coordinate extension.

The round trip reconstructs the same canonical first coordinate, so
the usual valuation map on $N$-names gives a parametrically definable
isomorphism
\[
\mathcal{M}_{0}\cong\mathcal{M}_{1}.
\]
Inside $\mathcal{M}_{0}$, the filter $G_{0}$ is a set and valuation
of ground names is definable. Every object is the value of such a
name, and equality modulo $G_{0}$ makes the valuation independent
of the representative. We therefore have $\mathcal{M}_{0}\triangleright_{par}\mathcal{N}\triangleright_{par}\mathcal{M}_{1}$
and a parametrically definable isomorphism $\mathcal{M}_{0}\cong\mathcal{M}_{1}$.
By Lemma \ref{lem:noiso}, there is no parametrically definable isomorphism
from $\mathcal{M}_{0}$ to this interpretation of $\mathcal{N}$.
Thus $T$ is not solid.

Hence $\mathsf{Con}(T+\neg\mathrm{GA}_{n})$ implies that $T$ is
not solid. If $T$ is solid, $T+\neg\mathrm{GA}_{n}$ is therefore
inconsistent, and so $T\vdash\mathrm{GA}_{n}$.
\end{proof}
Every quantifier bounded $U$ is contained in $B\Sigma_{n}$ for some
standard $n\geq1$, so the theorem applies to every theory of the
stated form. If $r\geq n$, then $\mathrm{GA}_{n}$ implies $\mathrm{GA}_{r}$:
stronger genericity gives fewer possible witnesses to failure of the
ground axiom. The same argument also yields the ordinary Ground Axiom
over full $ZFC$.
\begin{cor}
\label{cor:solid-ordinary-GA}Suppose $U$ is quantifier bounded.
If $MOST_{\mathrm{Ind}}+U$ is solid, then 
\[
ZFC+U\vdash\mathrm{GA},
\]
 where $\mathrm{GA}$ is the usual Ground Axiom for $ZFC$-grounds.
\end{cor}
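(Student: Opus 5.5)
We argue contrapositively, reducing to Theorem \ref{thm:solid-GA}. Suppose $ZFC+U\nvdash\mathrm{GA}$, and fix a model $V\models ZFC+U+\neg\mathrm{GA}$. In $V$ there is a proper ground $W\models ZFC$ and a $W$-generic $G\subseteq\mathbb{P}\in W$ with $V=W[G]$. Since $W$ satisfies full $ZFC$ and $G$ is fully generic over $W$, $G$ meets every dense subclass of $\mathbb{P}$ definable over $W$ with parameters: full Separation in $W$ makes each such class a set. By Theorem \ref{thm:SchlutzenbergGMD}, $W=W_{t}$ for $t=(H_{\eta^{+}})^{W}$ with $\mathbb{P}\in t$. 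In particular $G$ is $B\utilde{\Sigma}_{\mathsf{b}(m)}$-generic over $W_{t}$ for every standard $m$. Choose $n\geq1$ with $U\subseteq B\Sigma_{n}$. Then $V$ witnesses $\neg\mathrm{GA}_{n}$, since $W_{t}\subsetneq V$, $V=W_{t}[G]$, and the genericity clause holds.

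Next we need $V\models MOST_{\mathrm{Ind}}+U$. This is immediate, since $ZFC$ proves every axiom of $MOST$ and Full Set Induction, and $U$ holds by choice of $V$. Thus $MOST_{\mathrm{Ind}}+U+\neg\mathrm{GA}_{n}$ is consistent. Theorem \ref{thm:solid-GA}, applied with $T=MOST_{\mathrm{Ind}}+U$, then says $T$ is not solid, contrary to hypothesis. Hence $ZFC+U\vdash\mathrm{GA}$.

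The one point needing care is that $\neg\mathrm{GA}$, formulated for $ZFC$-grounds, really yields the $MOST$-ground clause of $\neg\mathrm{GA}_{n}$. This requires that a $ZFC$-ground is among Farmer's family $W_{t}$, with the same $\mathbb{P}$ and $G$. Over $ZFC$ this is supplied by Theorem \ref{thm:SchlutzenbergGMD}, since every set-forcing ground of $V$ is enumerated and $ZFC$ extends $MOST$. A second point is that the sentence $\neg\mathrm{GA}_{n}$, interpreted via the partial satisfaction predicate relativized to $W_{t}$, agrees in $V$ with the external statement that $G$ meets the relevant definable dense classes. This holds for the standard formula codes involved, which is all we need because $n$ is standard. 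I expect this bookkeeping, rather than any new forcing argument, to be the only obstacle.
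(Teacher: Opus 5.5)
Your proof is correct and is essentially the paper's argument, run semantically and contrapositively. The paper gets $MOST_{\mathrm{Ind}}+U\vdash\mathrm{GA}_n$ from Theorem \ref{thm:solid-GA} and notes that full Separation in a $ZFC$-ground makes every definable dense class a set, so a witness to $\neg\mathrm{GA}$ is a witness to $\neg\mathrm{GA}_n$. (Your closing worry about standard codes is unnecessary: the internal clause uses the single fixed formula $\mathsf{Sat}_{B\Sigma_{\mathsf{b}(n)}}$ with the code as a parameter, so one instance of Separation in $W$ also covers nonstandard codes.)
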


\begin{proof}
Choose a standard $n\geq1$ with $U\subseteq B\Sigma_{n}$. By Theorem
\ref{thm:solid-GA}, $MOST_{\mathrm{Ind}}+U\vdash\mathrm{GA}_{n}$.
But an ordinary generic over a $ZFC$-ground meets every definable
dense subclass of its forcing, since Separation makes each such class
a set. Thus a proper $ZFC$-ground witnessing $\neg\mathrm{GA}$ would
also witness $\neg\mathrm{GA}_{n}$.
\end{proof}
\textbf{AI assistance.} The main ideas of this paper were developed
by the authors before any use of AI. We subsequently used OpenAI's
Sol and Astra to assist with the mathematical development and preparation
of this paper, including drafting proofs, identifying errors and suggesting
repairs, locating references, and revising the exposition. Their contributions
were most significant in the ground model definability, coding forcing,
and Ground Axiom arguments.

\end{document}